\setlist{leftmargin=*}
\newtheorem{theorem}[equation]{Theorem}
\newtheorem{lemma}[equation]{Lemma}
\newtheorem{prop}[equation]{Proposition}
\newtheorem{corollary}[equation]{Corollary}
\newtheorem{definition}[equation]{Definition}
\theoremstyle{remark}
\newtheorem{remark}[equation]{Remark}
\newtheorem{notation}[equation]{Notation}
\newtheorem{convention}[equation]{Convention}
\newtheorem{assumption}[equation]{Assumption}
\numberwithin{equation}{section}
\newcommand{\phie}{{\phi_{even}}}
\newcommand{\phieder}{{\phi'_{even}}}
\newcommand{\phio}{{\phi_{odd}}}
\newcommand{\R}{\mathbb{R}}
\newcommand{\N}{\mathbb{N}}
\newcommand{\B}{\mathbb{B}}
\newcommand{\K}{\mathbb{K}}
\newcommand{\Ktilde}{\underline{\mathbb{K}}}
\newcommand{\Khat}{\widehat{\mathbb{K}}}
\newcommand{\D}{\mathbb{D}}
\newcommand{\Z}{\mathbb{Z}}
\newcommand{\A}{\mathbb{A}}
\newcommand{\Hyp}{\mathbb{H}}
\newcommand{\Sph}{\mathbb{S}}
\newcommand{\ThetaSph}{\Theta_{\Sph^2}}
\newcommand{\Fcal}{\mathcal{F}}
\newcommand{\Wcal}{\mathcal{W}}
\newcommand{\Wcaltilde}{\underline{\mathcal{W}}}
\newcommand{\Circle}{\mathcal{C}}
\newcommand{\Munder}{\underline{M}}
\newcommand{\Ctheta}{{\Circle_\theta}}
\newcommand{\Sigmatilde}{\Munder}
\newcommand{\Lcal}{\mathcal{L}}
\newcommand{\Bcal}{\mathcal{B}}
\newcommand{\Hcal}{\mathcal{H}}
\newcommand{\Rcal}{\mathcal{R}}
\newcommand{\Zcal}{\mathcal{Z}}
\newcommand{\Zcalhat}{\widehat{\mathcal{Z}}} 
\newcommand{\Jcal}{\mathcal{J}}
\newcommand{\Tcal}{\mathcal{T}}
\newcommand{\Sch}{\mathcal{S}}
\newcommand{\Schhat}{\widehat{\mathcal{S}}}
\newcommand{\Zscr}{\mathscr{Z}}
\newcommand{\Bscr}{\mathscr{B}}
\newcommand{\Ehat}{\widehat{E}}
\newcommand{\uhat}{\widehat{u}}
\newcommand{\psihat}{{\widehat{\psi}}}
\newcommand{\PiW}{\Pi_{\Wcal}}
\newcommand{\PiWtheta}{\Pi_{\Wcal_\theta}}
\newcommand{\PiSch}{\Pi_{\Sch}}
\newcommand{\xx}{\ensuremath{\mathrm{x}}}
\newcommand{\yy}{\ensuremath{\mathrm{y}}}
\newcommand{\YYbar}{\underline{\mathsf{Y}}}
\newcommand{\YYhbar}{\widehat{\underline{\mathsf{Y}}}}
\newcommand{\Cbar}{\underline{C}}
\newcommand{\abar}{\underline{a}}
\newcommand{\dtau}{\delta_\tau}
\newcommand{\dtheta}{\delta_\theta}
\newcommand{\ds}{\delta_s}
\newcommand{\ep}{\epsilon}
\newcommand{\group}{\mathscr{G}}
\newcommand{\phitilde}{{\widetilde{\varphi}}}
\newcommand{\thetatilde}{{\widetilde{\theta}}}
\newcommand{\Mtilde}{\widetilde{M}}
\newcommand{\rhotilde}{\widetilde{\rho}}
\newcommand{\tM}{\widetilde{M}}
\newcommand{\tX}{\widetilde{X}}
\newcommand{\tSi}{\widetilde{\Sigma}}
\newcommand{\tF}{\widetilde{F}}
\newcommand{\tnu}{\widetilde{\nu}}
\newcommand{\arcsinh}{\operatorname{arcsinh}}
\newcommand{\inj}{\operatorname{inj}}
\newcommand{\id}{\operatorname{Id}}
\newcommand{\Graph}{\operatorname{Graph}}
\newcommand{\Ric}{\operatorname{Ric}}
\newcommand{\Immer}{\operatorname{Immer}}
\newcommand{\area}{\operatorname{area}}
\newcommand{\beq}{\begin{equation}}
\newcommand{\eeq}{\end{equation}}
\begin{document}

\title[Free Boundary Minimal Surfaces in the Unit Three-Ball via Desingularization]{Free Boundary Minimal Surfaces in the Unit Three-Ball via Desingularization of the Critical Catenoid and the Equatorial Disk}


\author[N. Kapouleas]{Nikolaos Kapouleas}
\address{Department of Mathematics, Brown University, Providence, RI 02912, USA}
\email{nicos@math.brown.edu}

\author[M. Li]{Martin Man-chun Li}
\address{Department of Mathematics, The Chinese University of Hong Kong, Shatin, Hong Kong} 
\email{martinli@math.cuhk.edu.hk}




\date{\today}

\keywords{differential geometry, minimal surfaces, free boundary problem}

\begin{abstract}
We construct a new family of high genus examples of 
free boundary minimal surfaces in the Euclidean unit 3-ball 
by desingularizing the intersection of a coaxial pair of a critical catenoid and an equatorial disk. 
The surfaces are constructed by singular perturbation methods and have three boundary components. 
They are the free boundary analogue of the 
Costa-Hoffman-Meeks surfaces and the surfaces constructed by 
Kapouleas by desingularizing coaxial catenoids and planes. 
It is plausible that the minimal surfaces we constructed here are the same as the ones obtained recently by Ketover in \cite{Ketover16} using min-max method. 
\end{abstract}

\maketitle



\section{Introduction}
\label{S:intro}

Minimal surfaces have been a central object of study in differential geometry. 
They are defined as  critical points to the area functional in a Riemannian manifold. 
These minimal surfaces are interesting as they reveal important information about the geometry of the underlying spaces. 
For example, this idea has led to much success in the study of spaces with positive scalar or Ricci curvature 
(for instance, see \cite{Meeks-Simon-Yau82} \cite{Schoen-Yau79} \cite{Schoen-Yau82}). 
On the other hand, the theory of minimal surfaces is highly non-trivial even when the underlying space is homogeneous (e.g. $\R^n$, $\Sph^n$ and $\Hyp^n$). 
The solution to the classical Plateau problem guarantees the existence of (immersed) minimal disks with any prescribed Jordan curve in $\R^3$ as its boundary. 
For a long time, the only known embedded complete minimal surfaces of finite total curvature in $\R^3$ were the planes and the catenoids. 
It was a groundbreaking discovery when Costa \cite{Costa84} found new examples using Weierstrauss representation 
which were later proved to be embedded by Hoffman-Meeks \cite{Hoffman-Meeks85}. 
More examples were then found \cite{Hoffman-Meeks88} \cite{Hoffman-Meeks90a} \cite{Wohlgemuth91} \cite{Hoffman-Meeks90b},  
and Hoffman-Meeks effectively recognized them as desingularizations of a catenoid intersecting a plane through the waist \cite{Hoffman-Meeks90c}.  
Finally N.K. \cite{Kapouleas97} 
(see also \cite{Kapouleas05} and \cite{Kapouleas11} for a discussion of the approach and further developments),  
provided a more general construction for complete embedded minimal surfaces of finite total curvature 
in Euclidean 3-space by desingularizing intersecting coaxial catenoids and planes using the singular perturbation method. 

For the case of the round three-sphere $\Sph^3(1)$, 
Lawson \cite{Lawson70} constructed embedded closed minimal surfaces of arbitrary genus 
which were the first examples besides the round equatorial sphere and the Clifford torus. 
In retrospect the Lawson surfaces can be recognized as desingularizations 
(carried out by non-perturbative methods) 
of intersecting equatorial spheres symmetrically arranged around a great circle of intersection. 
Karcher-Pinkall-Sterling \cite{Karcher-Pinkall-Sterling88} employed Lawson's method to construct finitely many closed embedded minimal surfaces 
which can be interpreted as doublings of the equatorial $\Sph^2$ in $\Sph^3(1)$.
More recently further examples of embedded closed minimal surfaces in $\Sph^3(1)$ 
have been obtained by Lawson's method \cite{Choe-Soret16}  
and by singular perturbation methods 
\cite{Kapouleas-Yang10}, \cite{Kapouleas14}, \cite{Wiygul15} and \cite{Kapouleas-Wiygul}. 

If the ambient space has a boundary it is natural to search for critical points among 
the class of immersed surfaces whose boundary lies on the boundary of the ambient space. 
Such critical points are called free boundary minimal surfaces and they 
meet the boundary of the ambient space orthogonally along their boundary. 
The simplest example is the equatorial flat disk $\D$ in the Euclidean three-ball $\B^3$ or more generally 
the Euclidean $n$-ball $\B^n$. 
$\D$ is the unique (immersed) free boundary minimal disk in $\B^3$ by a result of Nitsche \cite{Nitsche85}, 
and by a surprising recent result of Fraser-Schoen \cite{Fraser-Schoen15a} 
the unique free boundary minimal disk in $\B^n$ for any $n\ge3$. 
The next non-trivial example is the so-called critical catenoid $\K$ (see \cite{Fraser-Schoen11}), 
which is a catenoid in $\R^3$ suitably rescaled so that it meets $\Sph^2(1)$ orthogonally. 
As we will later check in this article 
(see corollary \ref{C:unique-catenoid})
$\D$ and $\K$ are the only rotationally symmetric free boundary minimal surfaces in $\B^3$. 
In some sense they are analogous to the equatorial sphere and the Clifford torus in $\Sph^3(1)$. 

The first study of free boundary minimal surfaces was done by R. Courant \cite{Courant40}, 
and the existence and regularity theory was subsequently developed by Nitsche \cite{Nitsche76}, 
Taylor \cite{Taylor77}, Hildebrant-Nitsche \cite{Hildebrandt-Nitsche79}, 
Gr\"{u}ter-Jost \cite{Gruter-Jost86} and Jost \cite{Jost86}. 
A fundamental question is to classify the free boundary minimal surfaces, 
or at least understand the existence and uniqueness questions, as in the following:

\textbf{Question}: \textit{Given a smooth compact domain $\Omega$ in $\mathbb{R}^n$, 
or more generally a compact Riemannian manifold with boundary, 
what are the (immersed or embedded) free boundary minimal surfaces 
(that is meeting $\partial \Omega$ orthogonally along their boundary)  
contained in $\Omega$?}

Some general existence results along this direction have been established in the past decade. 
For immersed solutions, the most general existence result was obtained by A. Fraser \cite{Fraser00} for disk type solutions, 
and later by Chen-Fraser-Pang \cite{Chen-Fraser-Pang15} for incompressible surfaces. 
For embedded solutions in compact $3$-manifolds, a general existence result using min-max constructions was obtained by the M.L. \cite{Li15}. 
The min-max theory for free boundary minimal hypersurfaces in the Almgren-Pitts setting is recently developed by the M.L. with X. Zhou \cite{Li-Zhou}, 
completing Almgren's program in search for minimal hypersurfaces in Riemannian manifolds with possibly non-empty boundary (without any convexity assumption!).
Free boundary minimal surfaces are important tools in studying Riemannian manifolds with boundary since their properties are greatly affected by the ambient geometry. 
For example, Fraser \cite{Fraser07} \cite{Fraser02} used index estimates to study the topology of Euclidean domains with $k$-convex boundary. 
Fraser-Li \cite{Fraser-Li14} proved a smooth compactness result for embedded free boundary minimal surfaces 
when the ambient manifold has nonnegative Ricci curvature and convex boundary. 

In a recent breakthrough, Fraser-Schoen \cite{Fraser-Schoen11} discovered a deep connection between free boundary minimal surfaces 
in the Euclidean unit ball $\B^n\subset\R^n$ and extremal metrics on compact surfaces with boundary associated with the Steklov eigenvalue problem. 
This has led to much research activity on free boundary minimal surfaces in $\B^n$ (especially when $n=3$).
In a follow-up article \cite{Fraser-Schoen16} Fraser-Schoen constructed new examples of embedded free boundary minimal surfaces 
with genus zero and arbitrary number of boundary components.  
Recently, more examples were constructed by Ketover \cite{Ketover16} using min-max method. 
The main result of this article is Theorem \ref{T:finalthm}
which clearly implies the following. 

\begin{theorem}
\label{T:mainthm}
For any $g \in \N$ sufficiently large, there exists an embedded, orientable, smooth, compact surface $\Sigma_g\subset\B^3$ 
which is a free boundary minimal surface in $\B^3$ and satisfies: 
\\ 
(i). $\partial\Sigma_g=\Sigma_g\cap \partial\B^3$ has three connected components.\\
(ii). $\Sigma_g$ has genus $g$ and is symmetric under a dihedral group with $4g+4$ elements .\\
(iii). As $g \to \infty$, the sequence $\{\Sigma_g\}$ converges in the Hausdorff sense to $\D \cup \K$. 
Moreover, the convergence is smooth away from the circle of intersection $\D \cap \K$. Hence, 
\[ \lim_{g \to \infty} \area(\Sigma_g)=\area(\K)+\pi \approx 8.37898
.\]
\end{theorem}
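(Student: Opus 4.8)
The plan is to construct $\Sigma_g$ by the singular perturbation (gluing) method, starting from the singular configuration $\D\cup\K$ and replacing a neighborhood of the intersection circle $\Circle=\D\cap\K$ by a desingularizing model surface, then correcting to exact minimality. First I would fix the symmetry group: let $\group$ be the dihedral group of order $4g+4$ generated by the rotation by $2\pi/(2g+2)$ about the common axis of $\D$ and $\K$, together with the reflection fixing the plane of $\D$ (which swaps the two sheets of $\K$) and a vertical reflection. All surfaces, vector fields, and function spaces will be taken $\group$-invariant; this is what forces the genus to be exactly $g$ and simultaneously kills the low-dimensional cokernel coming from translations and rotations, since those Jacobi fields are not $\group$-invariant. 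Next I would recall, via Corollary \ref{C:unique-catenoid}, that $\D$ and $\K$ meet $\Sph^2$ orthogonally and intersect each other transversally along $\Circle$ at a fixed angle; the linearized operator (the Jacobi operator with Robin boundary condition from the free boundary condition) on each of $\D$ and $\K$, restricted to $\group$-invariant functions, should be shown to be invertible modulo a controlled finite-dimensional space — this is the ``linear theory on the pieces'' and it requires understanding $\group$-invariant Jacobi fields on the critical catenoid and on the disk with the appropriate Robin condition on $\partial\B^3$.

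The core construction proceeds in steps. \textbf{Step 1 (the model).} Singly periodic Scherk-type minimal surfaces desingularize two planes meeting at an angle; rescaled by a small parameter $\tau>0$ (with period $\sim\tau$), a piece of such a Scherk surface, bent to match the geometry of $\D\cup\K$ near $\Circle$, gives the local model near the intersection circle, with $2g+2$ periods fitting around $\Circle$. \textbf{Step 2 (the initial surface).} Glue this scaled, bent Scherk piece to the truncated surfaces $\D\setminus N(\Circle)$ and $\K\setminus N(\Circle)$ using cutoff functions in the transition regions, producing a smooth $\group$-invariant initial surface $M=M_g$ that is approximately minimal and approximately meets $\Sph^2$ orthogonally; the mean curvature and the boundary defect are small, of size a positive power of $\tau$, supported in the gluing annuli. \textbf{Step 3 (global linear theory).} Assemble a right inverse for the linearized operator $\Lcal_M$ on $M$ from the right inverses on the pieces and on the model, using the symmetry to handle the approximate kernel; the key estimate is a uniform (in $\tau$) bound $\|\phi\|\le C\|\Lcal_M\phi\|$ in suitably weighted Hölder norms that are adapted to the degenerating neck. \textbf{Step 4 (fixed point).} Write the free boundary minimal surface equation for a normal graph $u$ over $M$ as $\Lcal_M u = Q(u) + E$, where $E$ is the initial error and $Q$ collects the quadratic and higher terms and the boundary nonlinearity; solve by a contraction mapping argument in a ball of radius a small power of $\tau$, obtaining for all small $\tau$ (equivalently all large $g$) an exact $\group$-invariant free boundary minimal surface $\Sigma_g$. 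Embeddedness follows because $M$ is embedded and $u$ is small; items (i) and (ii) are read off from the construction and the symmetry; item (iii) follows since $u\to 0$ and the necks shrink, so $\Sigma_g\to\D\cup\K$ in Hausdorff distance, smoothly away from $\Circle$, giving $\area(\Sigma_g)\to\area(\K)+\area(\D)=\area(\K)+\pi$.

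The main obstacle, as in all such constructions, is \textbf{Step 3}: establishing the uniform invertibility of the linearized operator on the initial surface, i.e.\ ruling out an approximate kernel that does not come from the geometrically obvious Jacobi fields. Two subtleties are specific to this setting. First, the free boundary condition turns the linearized problem into a Robin-type boundary value problem on $\partial\B^3$, so the linear theory on $\D$ and on $\K$ must be carried out with these boundary conditions, and one must check that the critical catenoid has no nontrivial $\group$-invariant Jacobi fields satisfying the Robin condition (the relevant ``unbalancing'' and ``flexibility'' parameters must be accounted for, possibly by introducing a small family of model surfaces and a finite-dimensional extension of the domain, then using the implicit function theorem to kill the remaining cokernel). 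Second, the interaction of the shrinking Scherk neck with the nearby boundary $\partial\B^3$ near the two points where $\Circle$ would approach $\Sph^2$ — actually $\Circle$ is in the interior, so the neck stays away from $\partial\B^3$, which simplifies matters, but one must still check the cutoff gluing in the transition regions does not destroy the orthogonality at $\partial\B^3$ beyond what the linear theory can absorb. I expect the boundary terms in the energy estimate for $\Lcal_M$ to be the delicate point, handled by choosing the weighted norms so that the Robin boundary operator is coercive on the $\group$-invariant subspace.
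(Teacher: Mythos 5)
Your proposal follows essentially the same route as the paper: desingularize $\D\cup\K$ along $\D\cap\K$ by a bent, rescaled singly periodic Scherk surface with $2g+2$ periods, impose the dihedral symmetry to reduce the kernel, assemble a uniform right inverse for the linearized (Jacobi plus Robin) operator from the model pieces, and close with a fixed-point argument; your ``main obstacle'' paragraph correctly identifies the surviving one-dimensional cokernel and the need for an unbalancing parameter. The paper realizes that unbalancing concretely as a one-parameter family of configurations $\Wcal_\theta$ in which the catenoidal pieces still meet $\Sph^2$ orthogonally but meet the disk at angle $\theta+\pi/2$, uses ``twisted'' normal graphs so that perturbed surfaces keep their boundary on $\Sph^2$, and runs a Schauder (rather than contraction) fixed point jointly in $(\varphi,\theta)$ — all implementations of exactly the steps you outline.
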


The methodology we follow originates with a gluing construction of R. Schoen for constant scalar 
curvature metrics \cite{Schoen88} and a 
gluing construction of N.K. for constant mean curvature surfaces \cite{Kapouleas90a}.
The methodology was systematized and refined further in order to carry out
a challenging gluing construction for Wente tori
which provided the first genus two counterexamples to a celebrated question of Hopf 
\cite{kapouleas:wente:announce,kapouleas:wente}.
(The genus one case had been resolved by Wente \cite{Wente} and any genus at least three by N.K. \cite{Kapouleas:jdg}).

More directly related to the construction in this article is the desingularization construction 
in \cite{Kapouleas97} of coaxial catenoids and planes. 
The construction in \cite{Kapouleas97} is based on the methodology developed in 
\cite{kapouleas:wente:announce,kapouleas:wente} 
and utilizes the $O(2)$ symmetry of the given configuration of catenoids and planes. 
\cite{Kapouleas97} effectively settles desingularization constructions by gluing 
in the presence of $O(2)$ symmetry in any setting, 
except of course for the idiosyncratic aspects related to each setting (as in this article for example).  
Note that a desingularization construction for intersecting planes parallel to a given line 
carried out indepedently by Traizet \cite{Traizet96} 
is inadequate for our purposes, 
because in his case the intersection curves are straight lines, 
and therefore the main difficulties of our construction are not present. 

Free boundary minimal surfaces in the unit ball appear to be much more rigid 
than complete minimal surfaces in Euclidean space. 
For example in stark constrast to the Euclidean case (see \cite{Kapouleas97}),  
there is only one possible rotationally invariant 
configuration of free boundary minimal surfaces, 
because as we have already mentioned 
the equatorial disk $\D$ and the critical catenoid $\K$ are the only rotationally symmetric 
free boundary minimal surfaces in $\B^3$ 
(see \ref{C:unique-catenoid} for the proof). 
The $\K\cup\D$ configuration corresponds 
in \cite{Kapouleas97} 
to the special case of a catenoid intersecting a plane through its waist. 
The $\K\cup\D$ and the catenoid with plane through the waist configurations 
share extra symmetries (compared to the general case in \cite{Kapouleas97}) 
which can be used to substantially simplify the construction and proof (see \ref{r:seven}). 
This is the case also for some recent desingularization constructions of self-shrinkers of the 
Mean Curvature Flow \cite{nguyen} \cite{kapouleas:kleene:moller} which are also based on \cite{Kapouleas97}. 

The article is self-contained and we have carefully simpified the construction and proof in \cite{Kapouleas97} 
to take advantage of the extra symmetries available.
For the interested reader we remark also that the proof of the main linear estimate 
(proposition \ref{P:linear-global}) 
is closer to the one in \cite{Kapouleas14} rather than the one in \cite{Kapouleas97}.
The approach in \cite{Kapouleas97} is more robust because the comparison with the model 
standard regions is only at the level of the lower spectrum of the linearized equations. 
The approach in \cite{Kapouleas14} is more streanlined and more demanding computationally 
because it is based on a more detailed comparison with the model standard regions 
at the level of actual solutions.  
Note also that in section \ref{S:deformation}
we discuss for future reference the boundary conditions 
in more detail and generality than strictly needed in this article. 

Finally we mention that in an article under preparation, 
we construct free boundary minimal surfaces of arbitrary high genus with connected boundary,  
and also ones with two boundary components,  
by desingularizing two disks intersecting orthogonally along a diameter of the unit three-ball. 
The intersecting disks configuration is clearly not rotationally invariant,  
and the symmetry group is small and independent of the genus. 
These features make that construction much harder,   
but we can overcome the difficulties by following the approach in \cite{kapouleas:compact} with appropriate modifications 
(see also \cite{Kapouleas05} and \cite{Kapouleas11} for a detailed outline of the construction 
and proof of the theorem in \cite{kapouleas:compact}).
That construction can be extended also to apply to the case of more than two disks 
symmetrically arranged around a common diameter by using higher order Karcher-Scherk towers as models.

\subsection*{Organization of the presentation}
In section \ref{S:deformation} we study properly immersed hypersurfaces and their deformations in a Riemannian manifold with boundary. 
The boundary angle $\Theta$ is defined and we establish a uniform estimate \ref{P:local-estimate-Theta} 
on the change of $\Theta$ when one of the hypersurfaces is perturbed 
to the twisted graph of a small function over it. 
We also prove a strengthened version of the corresponding estimate on the mean curvature in \ref{P:local-estimate-H}. 
In section \ref{S:rotation-free} 
we study in detail the geometry of the initial configuration $\K\cup\D$ and its 
perturbations. 
We also check the uniqueness of the critical catenoid $\K$ 
as the only non-flat rotationally symmetric free boundary minimal surface in $\B^3$ 
in corollary \ref{C:unique-catenoid}. 
We finally establish the triviality of the rotationally symmetric kernels for the linearized equations on the standard pieces. 
In section \ref{S:desing} 
we first construct and study the geometry of the desingularizing surfaces which will be used 
to replace a neighborhood of the circle of intersection of $\K$ and $\D$, 
and then the one parameter families of initial surfaces $M_{\theta,m}$ (for each large $m$).  
In section \ref{S:linear} 
we study the linearized free boundary minimal surface equation on our model surfaces and then apply this information 
to solve the linearized equation on the initial surfaces with suitable decay estimates. 
Finally, in section \ref{S:main}, we estimate the nonlinear error terms and prove our main theorem \ref{T:mainthm} by the standard Schauder fixed point argument.

\subsection*{Notations and conventions}

Throughout this article, $\R^3$ will denote the Euclidean 3-space with Cartesian coordinates $(x,y,z)$ with standard orientation and orthonormal basis $\{ e_x,e_y,e_z\}$. 
We also have:  
\begin{notation} 
\label{E:BS+}
As in the introduction, we will use $\B^n$ to denote the closed unit ball in $\R^n$ whose boundary is the unit sphere $\Sph^{n-1}$. 
We will use $B^n(r) \subset \R^n$ to denote the Euclidean open $n$-ball of radius $r$ centered at the origin.  
We also define 
$\R^n_\pm:=\{(x^1,x^2,...,x^n)\in \R^n: \pm x^n\ge0\}$, 
$\B^n_\pm:=\B^n\cap\R^n_\pm$, 
$\Sph^{n-1}_\pm:=\Sph^{n-1}\cap\R^n_\pm$,  
and $B^n_+(r):=B^n(r) \cap \R^n_+ $. 
Note that $B^n_+(r)$ as a manifold with boundary has $\partial B^n_+(r) =B^{n-1}(r)=B^{n}(r)\cap\{x^n=0\}$.  
We may omit $r$ when $r=1$ and we assume them all equipped with the Euclidean metric which we will denote by $g_0$. 
(See Figure \ref{fig:half-ball})
\end{notation} 

\begin{figure}[h]
\includegraphics[height=5cm]{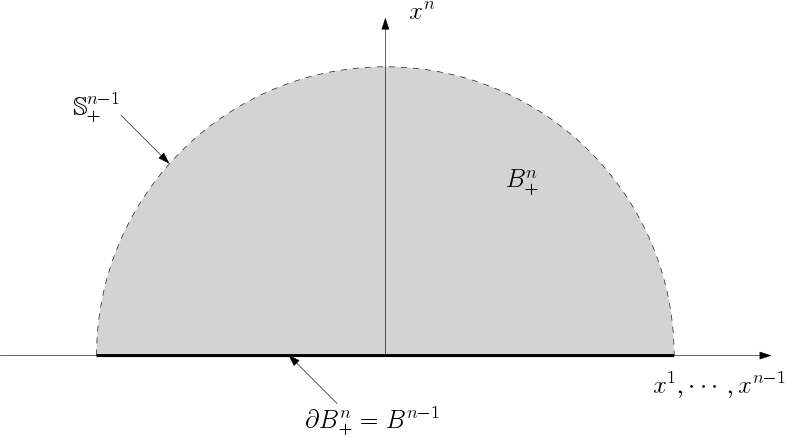}
\caption{The open unit half ball $B^n_+$ with its boundary $B^{n-1}$}
\label{fig:half-ball}
\end{figure}

Any surface $S \subset \R^3$ will be equipped with the induced metric $g$ (unless otherwise stated). 
We use the word ``surfaces'' to denote surfaces with or without boundary. 
We will often identify $\Sph^1$ with the quotient group $\R/2\pi \Z$. 
We always take the mean curvature $H$ of a surface in $\R^3$ to be the sum of its principal curvatures (so that the unit round sphere has mean curvature $2$).

\begin{notation} 
\label{n:exp}
If $(M,g)$ is a Riemannian manifold (without boundary) and $p\in M$ we will denote by $\exp_p^{M,g}$ the exponential map at $p$ 
(defined on the largest possible subset of $T_pM$) mapping to $M$.
We will denote by $\inj_p(M,g)$ the injectivity radius of $(M,g)$ at $p$.
In both cases we may omit $M$ or $g$ if clear from the context.
\end{notation}

Let $\Sigma$ be a smooth $n$-dimensional manifold (with or without boundary), and $(M,g)$ be a Riemannian manifold without boundary. Consider an immersion $X:\Sigma \to M$, we will use $X^*$ and $X_*$ to denote respectively the pullback of functions or tensors and the pushforward of vectors by the map $X$. For our purpose, we always assume that there exists a global unit normal $N: \Sigma \to TM$.

\begin{definition}
\label{d:imm}
Given an immersion $X:\Sigma \to M$ with unit normal $N:\Sigma \to TM$ as above,  
and a ``small enough'' function $\varphi$ defined on a domain $\Omega \subset \Sigma$. 
We define then the perturbation of $X$ by $\varphi$ over $\Omega$ 
(or of $\Omega$ by $\varphi$ when $X$ is an inclusion map) 
to be the map 
$\Immer[X,\varphi;\Omega] : \Omega \to M$ 
given by 
\beq
\Immer[X,\varphi;\Omega](p) :=\exp^{M,g}_{X(p)}(\varphi(p) N(p)) \quad \forall p\in\Omega. 
\eeq
We will also call the image of $\Immer[X,\varphi;\Omega]$ the graph of $\varphi$ over $X$ (or $\Omega$) 
and we will denote it by $\Graph[X,\varphi;\Omega]\subset M$.  
Finally we may omit $X$ when $X:\Sigma\to N$ is the inclusion map of an embedded $\Sigma\subset M$. 
\end{definition}

\begin{remark}
\label{R:graph} 
By ``small enough'' in the previous definition we mean any condition which ensures that 
$\exp^{M,g}_{X(p)}(\varphi(p) N(p))$ is well defined, 
as for example when $|\varphi(p)|<\inj_{X(p)}(M,g) $ (recall \ref{n:exp}) for each $p\in\Omega$.  
If $X$ is an inclusion of an embedded hypersurface $\Sigma\subset M$,  
and $\varphi$ is small---depending on $\Sigma$ this time---enough, 
then $\Immer[X,\varphi;\Omega]$ is the inverse of the nearest point projection to $\Omega$ 
restricted to $\Graph[X,\varphi;\Omega]$ . 
\end{remark}

We will be using extensively cut-off functions. To simplify the notation we introduce the following definition. 

\begin{definition}
\label{D:cutoff}
We fix a smooth function $\Psi:\R \to [0,1]$ with the following properties:\\
(i). $\Psi$ is weakly increasing.\\
(ii). $\Psi \equiv 0$ on $(-\infty,-1]$ and $\Psi \equiv 1$ on $[1,\infty)$.\\
(iii). $\Psi - \tfrac{1}{2}$ is an odd function.
\end{definition}

Given $a,b \in \R$ with $a \neq b$, we define the smooth function $\psi_{cut}[a,b]:\R \to [0,1]$ by
\beq
\label{E:cutab}
\psi_{cut}[a,b]:=\Psi \circ L_{a,b},
\eeq
where $L_{a,b}:\R \to \R$ is the unique linear function satisfying $L(a)=-3$ and $L(b)=3$. Clearly, the cutoff function $\psi_{cut}[a,b]$ satisfies the following properties (see Figure \ref{fig:cutoff}):\\
(i). $\psi_{cut}[a,b]$ is weakly monotone.\\
(ii). $\psi_{cut}[a,b]=0$ on a neighborhood of $a$ and $\psi_{cut}[a,b]=1$ on a neighborhood of $b$.\\
(iii). $\psi_{cut}[a,b]+\psi_{cut}[b,a]=1$ on $\R$.

\begin{figure}[h]
\includegraphics[height=6cm]{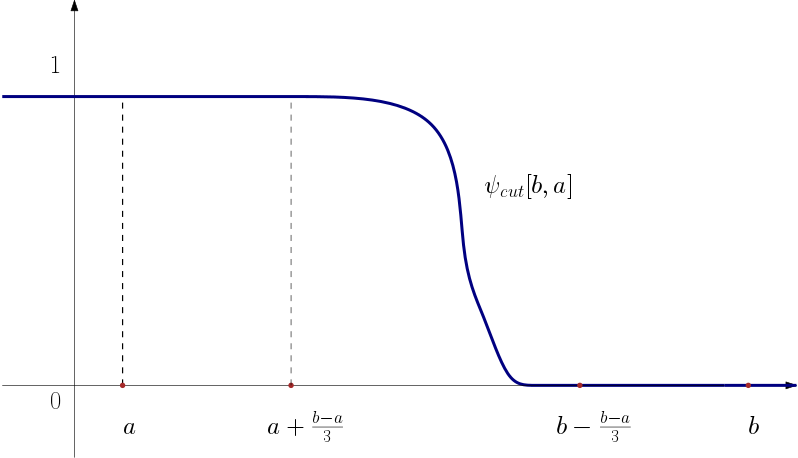}
\caption{A cutoff function $\psi_{cut}[b,a]$ for the case $a<b$}
\label{fig:cutoff}
\end{figure}

\begin{definition}
\label{D:cutoff-epsilon}
For each $\epsilon>0$, we define the symmetric cutoff function $\psi_{cut}^{\epsilon}: \R \to \R$ as
\[ \psi_{cut}^\epsilon := \psi_{cut}[\epsilon,0] \cdot \psi_{cut}[-\epsilon,0].\]
\end{definition}

Let $s:\Omega \to \R$ be a real-valued function defined on some domain $\Omega$. We will denote for any $c \in \R \cup \{ \pm \infty\}$,
\beq
\label{E:sub-super}
\Omega_{s \leq c} := \{ p \in \Omega : s(p) \leq c\} \qquad \text{ and } \qquad  \Omega_{s \geq c}:=\{p \in \Omega : s(p) \geq c\}.
\eeq

Suppose now we have two sections $f_0, f_1$ of some vector bundle over $\Omega$. We define a new section
\begin{equation}
\label{E:glue}
\Psi[a,b;s](f_0,f_1):=(\psi_{cut}[a,b] \circ s )\, f_1 + (\psi_{cut}[b,a] \circ s )\, f_0.
\end{equation}
Note that $\Psi[a,b;s](f_0,f_1)$ is a section of the same vector bundle, which is bilinear on the pair $(f_0,f_1)$ and transits from $f_0$ on a neighborhood of $\Omega_{s \leq a}$ to $f_1$ on a neighborhood of $\Omega_{s \geq b}$ when $a<b$. If $f_0, f_1, s$ are smooth, then $\Psi[a,b;s](f_0,f_1)$ is also smooth.

When comparing equivalent norms, it is handy to have the following definition.
\begin{definition}
\label{D:sim}
For real numbers (or metric tensors) $a,b >0$ and a real number $c>1$, 
we write $a \sim_c b$ to mean that the inequalities $a \leq cb$ and $b \leq c a$ simultaneously hold.
\end{definition}

In this article we will need the notion of weighted H\"{o}lder norms for functions on a domain $\Omega$ of a 
Riemannian $n$-manifold $(M,g)$ with possibly $\partial \Omega \neq \emptyset$.

\begin{definition}[Weighted H\"{o}lder norms]
\label{D:weighted-Holder}
Assuming $\Omega$ is a domain (possibly with boundary) inside a smooth Riemannian manifold $(M,g)$, 
$k \in \N_0$, $\beta \in [0,1)$, $u \in C^{k,\beta}_{loc}(\Omega)$ or more generally $u$ is a 
$C^{k,\beta}_{loc}$ tensor field (section of a vector bundle) on $\Omega$, $f:\Omega \to (0,\infty)$ is a given 
function, and that the injectivity radius of $(M, g)$ is larger than $1/10$ at each $p\in\Omega$, 
we define 
\[
\| u:C^{k,\beta}(\Omega, g, f)\| := \sup_{p \in \Omega} \frac{ \| \, u  \,:\, 
C^{k,\beta}(\Omega \cap B_p, g) \|}{f(p)}, 
\]
where $B_p$ is a geodesic ball centered at $p$ and of radius $1/100$ in the metric $g$. 
For simplicity we may omit either $\beta$ or $f$ when $\beta=0$ or $f \equiv 1$ respectively. 
We will also omit the metric $g$ if it is clear from the context.
\end{definition}

From the definition, one can easily verify a multiplicative, a scaling, and a monotonicity property as follows: 
\begin{alignat}{1} 
\label{E:norm-multi}
\| u_1 u_2 : C^{k,\beta}(\Omega,g,f_1f_2)\| \leq & \,\, C(k) \, \|u_1 :C^{k,\beta}(\Omega,g,f_1)\| \, \|u_2 :C^{k,\beta}(\Omega, g, f_2)\|,
\\
\label{E:norm-scale}
\|u:C^{k,\beta}(\Omega, \lambda^2 g,f)\| \leq & \,\, \lambda^{-(k+\beta)} \|u:C^{k,\beta}(\Omega,g,f)\| \quad (\forall\lambda \in (0,1)\,),
\\
\label{E:norm-f12}
\|u:C^{k,\beta}(\Omega, g,f_1)\| \leq & \,\, \|u:C^{k,\beta}(\Omega, g,f_2)\| \quad \text{ for } f_2\le f_1. 
\end{alignat} 

\subsection*{Acknowledgments}

The authors would like to thank Richard Schoen for his continuous support and interest in the results of this article. 
M. L. would like to thank the Croucher Foundation for the financial support 
and the Department of Mathematics at Massachusetts Institute of Technology, 
where part of the work in this paper was done. 
M. L. was partially supported by CUHK Direct Grant for Research C001-4053118 and a grant from the Research Grants Council of the Hong Kong SAR, China [Project No.: CUHK 24305115]. 
N. K. was partially supported by NSF grants DMS-1105371 and DMS-1405537.


\section{Deformations of properly immersed hypersurfaces}
\label{S:deformation}

In this section, we study the geometry of properly immersed hypersurfaces in a Riemannian manifold with boundary. 
In particular we describe the deformations of such hypersurfaces and the corresponding changes of the mean curvature and boundary angle. 

\begin{notation} 
\label{n:k-beta}
Throughout this section, $k \in \N$ and $\beta \in [0,1)$.
\end{notation}

\subsection*{Proper immersion and boundary angle}

Let $(M,g)$ be an $(n+1)$-dimensional Riemannian manifold with boundary $\partial M \neq \emptyset$. 
Without loss of generality, we can assume that $M$ is contained in a fixed $(n+1)$-dimensional Riemannian manifold $(\tM,g)$ without boundary 
\footnote{We can assume that $(\tM,g)$ is complete by \cite{Pigola-Veronelli}. 
For the purpose of this article, we just need the case that $M$ is a compact smooth domain of $\mathbb{R}^3$.}. 

\begin{definition}[Proper $C^{k,\beta}$-immersion]
\label{D:proper-immersion}
Let $\Sigma$ be a smooth $n$-dimensional manifold with (possibly empty) boundary $\partial \Sigma$.
A map $X:\Sigma \to M$ is said to be a \emph{proper $C^{k,\beta}$-immersion} if it satisfies both of the following:\\
(i). $X(\Sigma) \subset M$, $X(\partial \Sigma)=X(\Sigma) \cap \partial M$. 
\\
(ii). There exists an extension $X:\tSi \to \tM$ of $X:\Sigma \to M$ such that\\
(a). $\tSi$ is a smooth $n$-dimensional manifold without boundary where $\Sigma \subset \tSi$ and the closure of $\Sigma$ is a compact subset in $\tSi$.\\
(b). $X:\tSi \to \tM$ is a $C^{k,\beta}$-immersion which agrees with $X:\Sigma \to M$ on $\Sigma$.\\
(c). At each $p \in \tSi$ where $X(p) \in \partial M$, we have $X_*(T_p \tSi) + T_{X(p)} \partial M = T_{X(p)} M$.\\
\end{definition}

\begin{remark}
\label{R:metric}
For 
$X:\Sigma \to M$ as in \ref{D:proper-immersion}
we will always equip $\Sigma$ (and $\tSi$) with the induced metric $X^*g$ 
unless stated otherwise. 
Moreover when $X(\tSi)$ is embedded, we will usually take $X$ to be the inclusion map.  
\end{remark}

\begin{remark}
\label{R:extension}
Note that $X: \Sigma' \subset \tSi \to \tM$ is also an extension of $X:\Sigma \to M$ 
for any open subset $\Sigma' \subset \tSi$ whose closure is compact and contained inside $\tSi$. 
Therefore we can assume w.l.o.g. that the extension $X:\tSi\to\tM$ has  further extension satisfying 
(ii).(a)-(c) in \ref{D:proper-immersion}. 
We will assume this from now on. 
\end{remark}

\begin{remark}
Note that the transversality condition in \ref{D:proper-immersion}.ii.c and $X(\Sigma) \subset M$ imply that 
$X(\partial \Sigma)=X(\Sigma) \cap \partial M$ is equivalent to $X(\partial \Sigma) \subset \partial M$ 
since the immersed hypersurface cannot be tangent to $\partial M$ at an interior point of $\Sigma$ (see Figure \ref{fig:proper-immer}).
\end{remark}

\begin{figure}[h]
\includegraphics[height=7cm]{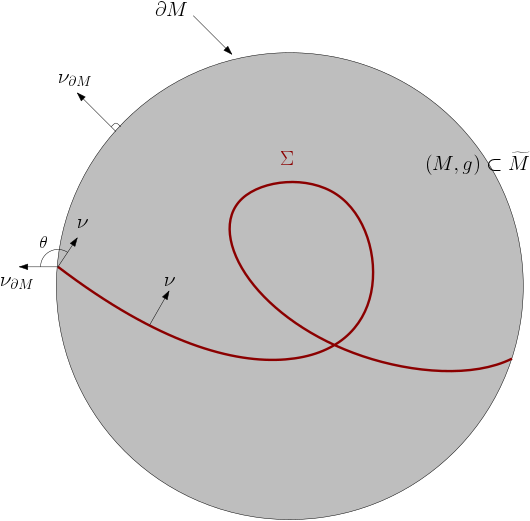}
\caption{A properly immersed hypersurface $\Sigma$ in $M$ with unit normal $\nu$ and boundary angle $\Theta=\cos \theta$}
\label{fig:proper-immer}
\end{figure}

Now we proceed to define the angle at which a proper immersion $X:\Sigma \to M$ makes with $\partial M$ along $\partial \Sigma$. Recall that an immersion $X: \Sigma \to M$ is \emph{2-sided} if there exists a continuous globally defined unit normal $\nu: \Sigma \to TM$ such that $\nu(p) \perp X_*(T_p \Sigma)$ for all $p \in \Sigma$. 

\begin{definition}[Boundary angle]
\label{D:boundary-angle}
Let $X:\Sigma \to M$ be a 2-sided proper $C^{k,\beta}$-immersion (recall \ref{D:proper-immersion}) with a chosen unit 
$C^{k-1,\beta}$ normal $\nu: \Sigma \to TM$. The \emph{boundary angle} $\Theta: \partial \Sigma \to \R$ is defined by
\[ \Theta(p):=g(\nu_{\partial M}(X(p)),\nu(p)),\]
where $\nu_{\partial M}:\partial M \to TM$ is the outward unit normal vector field of $\partial M$ relative to $(M,g)$.
\end{definition}

Note that the above definition makes sense since $X(p) \in \partial M$ for all $p \in \partial \Sigma$. Moreover, $\Theta$ is independent of the extension of $X$. 
The following lemma is clear from the definitions.

\begin{lemma}
\label{L:boundary-angle}
Let $X:\Sigma \to M$ be a 2-sided proper $C^{k,\beta}$-immersion with a chosen unit normal $\nu: \Sigma \to TM$. Then, the boundary angle $\Theta: \partial \Sigma \to \R$ defined in \ref{D:boundary-angle} is a $C^{k-1,\beta}$-function. Moreover, $\Theta \equiv 0$ if and only if $X:\Sigma \to M$ is a properly immersed free boundary hypersurface, i.e. $X(\Sigma)$ meets $\partial M$ orthogonally along $\partial \Sigma$.
\end{lemma}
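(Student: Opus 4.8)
The statement to prove is Lemma \ref{L:boundary-angle}, which asserts two things about the boundary angle $\Theta$ of a 2-sided proper $C^{k,\beta}$-immersion $X\colon\Sigma\to M$ with chosen unit normal $\nu$: first, that $\Theta$ is a $C^{k-1,\beta}$-function on $\partial\Sigma$; second, that $\Theta\equiv 0$ precisely when the immersion is free boundary, i.e. meets $\partial M$ orthogonally along $\partial\Sigma$.

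\smallskip

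\textbf{Plan.} The lemma is essentially a bookkeeping statement about the regularity of a composition of maps, together with an elementary linear-algebra observation. The plan is to unwind the definition of $\Theta$ in \ref{D:boundary-angle}, identify the regularity of each ingredient, and then invoke the standard fact that the $C^{k-1,\beta}$ functions form an algebra stable under composition with $C^\infty$ maps. For the second assertion, I would argue pointwise using the orthogonal decomposition of $T_{X(p)}M$ along $\partial M$ and the transversality hypothesis in \ref{D:proper-immersion}.ii.c, which guarantees that $X_*(T_p\Sigma)$ meets $\partial M$ transversally, hence that $\nu(p)$ is \emph{not} tangent to $\partial M$ unless the immersion is genuinely orthogonal.

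\smallskip

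\textbf{Key steps.} First I would fix $p\in\partial\Sigma$ and recall that $\Theta(p)=g\bigl(\nu_{\partial M}(X(p)),\nu(p)\bigr)$. The map $X\colon\partial\Sigma\to\partial M$ is the restriction of a $C^{k,\beta}$-immersion, hence $C^{k,\beta}$; the outward unit normal field $\nu_{\partial M}\colon\partial M\to TM$ is $C^\infty$ (or as smooth as $\partial M$, which we take to be smooth); and the chosen normal $\nu\colon\Sigma\to TM$ is $C^{k-1,\beta}$ by hypothesis, so its restriction to $\partial\Sigma$ is $C^{k-1,\beta}$. The metric $g$ is a $C^\infty$ fiberwise bilinear form. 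Therefore $\Theta$, as the composition $p\mapsto g_{X(p)}\bigl(\nu_{\partial M}(X(p)),\nu(p)\bigr)$, is a $C^{k-1,\beta}$-function on $\partial\Sigma$ — this follows from the standard product and chain rule estimates for Hölder spaces, and one should be slightly careful to phrase it in local coordinates/trivializations of $TM$ over a neighborhood of $X(\partial\Sigma)$. For the equivalence, I would argue as follows. At $p\in\partial\Sigma$, the ambient tangent space splits as $T_{X(p)}M = T_{X(p)}\partial M \oplus \R\,\nu_{\partial M}(X(p))$. Now $X$ is free boundary at $p$ by definition iff $X_*(T_p\Sigma)\perp T_{X(p)}\partial M$ as subspaces of $T_{X(p)}M$ — equivalently, since $X_*(T_p\Sigma)$ is a hyperplane with unit normal $\nu(p)$, iff $T_{X(p)}\partial M \subset \nu(p)^\perp{}^\perp = \R\,\nu(p)$... more precisely, iff the hyperplane $X_*(T_p\Sigma)$ contains the complementary line $\R\,\nu_{\partial M}$, which forces $\nu(p)\perp\nu_{\partial M}(X(p))$, i.e. $\Theta(p)=0$. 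Conversely, if $\Theta(p)=0$ then $\nu(p)\in T_{X(p)}\partial M$, hence $X_*(T_p\Sigma)=\nu(p)^\perp \supset \R\,\nu_{\partial M}(X(p))$; combined with transversality (\ref{D:proper-immersion}.ii.c) this gives $X_*(T_p\partial\Sigma) = X_*(T_p\Sigma)\cap T_{X(p)}\partial M$ and $X_*(T_p\Sigma) = X_*(T_p\partial\Sigma)\oplus\R\,\nu_{\partial M}$, which is exactly the orthogonality of the immersed hypersurface to $\partial M$ along its boundary. Running this pointwise over all $p\in\partial\Sigma$ proves $\Theta\equiv 0 \iff$ free boundary.

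\smallskip

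\textbf{Main obstacle.} There is no real obstacle — as the paper says, the lemma is ``clear from the definitions.'' The only points requiring a little care are: (a) making the Hölder-regularity claim precise, since $\nu$ is only $C^{k-1,\beta}$ (one loses a derivative from $X$ when differentiating to get the normal), so $\Theta$ cannot in general be better than $C^{k-1,\beta}$, and this is exactly what is claimed; (b) in the equivalence, being careful that ``free boundary'' is the statement that the immersed \emph{hypersurface} $X(\Sigma)$ (not merely the normal line) meets $\partial M$ orthogonally along $X(\partial\Sigma)$, which is why the transversality condition is needed in the converse direction to ensure $X_*(T_p\partial\Sigma)$ genuinely lies in $T_{X(p)}\partial M$ and spans the right complement. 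Both are routine, and the whole proof is two or three lines once these observations are in place.
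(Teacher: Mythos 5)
Your proof is correct and is exactly the routine unwinding of the definitions that the paper intends (the paper offers no proof at all, declaring the lemma ``clear from the definitions''): the regularity count correctly identifies $\nu$ as the least regular ingredient, and the pointwise linear algebra for the equivalence is right. The only blemish is the momentarily garbled clause ``iff $X_*(T_p\Sigma)\perp T_{X(p)}\partial M$ as subspaces,'' which you immediately correct to the proper characterization --- $X_*(T_p\Sigma)$ contains the line $\R\,\nu_{\partial M}(X(p))$, equivalently $\nu(p)\in T_{X(p)}\partial M$, equivalently $\Theta(p)=0$ --- so the argument stands as written.
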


\subsection*{Perturbations of proper immersions}

Let $X:\Sigma \to M$ be a proper $C^{k+1,\beta}$-immersion. We are interested in its deformation among the class of proper immersions, i.e. a family $X_t:\Sigma \to M$ of proper immersions such that $X_0=X$. 
We need to use a ``twisted exponential map'' to deform a properly immersed hypersurface in $M$ so that it remains properly immersed throughout the deformation. 
The basic idea is to modify the unit normal vector field of $\Sigma$ near its boundary $\partial \Sigma$ 
and extend it to a tubular neighborhood of $\Sigma$ so that the vector field is \emph{tangential to $\partial M$}. 
Then, we make use of this modified unit normal vector field to generate a flow which plays the role of the normal exponential map for hypersurfaces without boundary. 

Let $X:\Sigma \to M$ be a proper $C^{k+1,\beta}$-immersion, oriented by the global unit normal $\nu:\Sigma \to TM$ (which is of class $C^{k,\beta}$). 
Recall that $M$ is a smooth domain of $\tM$, which is a Riemannian manifold without boundary. 
To keep our discussion less technical, we will just focus on the case where the extension $X:\tSi \to \tM$ in \ref{D:proper-immersion} is an \emph{embedding}. 
This does not put any restriction to our applications as we are going to discuss local properties. 
Note that $X:\tSi \to \tM$ is no longer \emph{proper} (unless $\partial \Sigma = \emptyset$). 

\begin{definition}[Tubular neighborhoods]
\label{D:tube-nbd}
Let $S \subset \tM$ be a 2-sided embedded $C^{k+1,\beta}$-hypersurface without boundary 
and we assume we are given a global unit $C^{k,\beta}$ normal $\nu_S:S \to T\tM$. 
Furthermore we assume that there is an $\epsilon>0$ such that the map 
$E:S\times(-\epsilon,\epsilon)\to \tM$ defined by 
$E(p,t):=  \exp^{\tM}_p(t \nu_S(p))$ 
is (well defined and) a diffeomorphism onto an open subset of $\tM$ which we denote by 
$ V_\epsilon(S)$ 
and we call the \emph{tubular neighorbood of $S$} of size $\epsilon$.  
We denote the components of the inverse of $E$ by $\Pi_S$ and $\rho_S$ so that $\forall q\in V_\epsilon(S)$ 
we have $E^{-1}(q)=(\, \Pi_S(q)\, , \, \rho_S(q) \, )$.  
Note that $\Pi_S: V_\epsilon(S) \to S$ is the nearest point projection to $S$ which is $C^{k,\beta}$ 
and $\rho_S: V_\epsilon(S) \to \R$ is a (signed) distance function to $S$ which is $C^{k+1,\beta}$ by \cite{Foote84}. 
We finally extend the given $\nu_S$ to $\nu_S: V_\epsilon(S) \to T\tM$ by 
\[ \nu_S(p):= (\nabla^{\tM} \rho_{S})(p) \qquad \text{for all } p \in V_{\epsilon}(S). \]
\end{definition}

\begin{assumption}
\label{ass:e} 
From now on we will denote by $\epsilon>0$ a number which is small enough so that the assumptions in \ref{D:tube-nbd} hold 
for both $S=\partial M$ and $S=\tSi$ (recall \ref{R:extension} for the second case), 
and moreover $\forall q\in V_\epsilon(\tSi) \cap V_\epsilon(\partial M) $ we have 
$\nu_{\tSi}(q) \ne \pm \nu_{\partial M}(q)$.
\end{assumption}

\begin{definition}
\label{D:twisted-normal}
Using the notations as above and assuming \ref{ass:e} holds, 
define the \emph{twisted normal vector field} (with parameter $\epsilon$) as the map 
$\tnu: V_\epsilon(\tSi) \to T \tM$ given by
\begin{equation}
\label{E:tN}
\tnu:= \frac{\nu^\ep-g(\nu^\ep,\nu^\ep_{\partial M}) \nu^\ep_{\partial M} }{1- g(\nu^\ep,\nu^\ep_{\partial M})^2},
\end{equation}
where 
$\nu^\ep= (\psi_{cut}^\epsilon \circ \rho_{\tSi}) \nu$, $\quad\nu:=\nu_{\tSi}$,    
and $\nu^\ep_{\partial M}= (\psi^\epsilon_{cut} \circ \rho_{\partial M}) \nu_{\partial M}$  
(recall \ref{D:cutoff-epsilon}).  
\end{definition}

Note that $g(\nu^\ep,\nu^\ep) \leq 1$ and $g(\nu^\ep_{\partial M},\nu^\ep_{\partial M}) \leq 1$ everywhere. 
The denominator in \ref{E:tN} does not vanish by \ref{ass:e}. 

\begin{lemma}
\label{L:tN}
Using the notations in \ref{D:twisted-normal}, 
$\tnu$ satisfies the following properties:\\
(i). $\tnu$ is $C^{k,\beta}$ and supported inside $V_{2\epsilon/3}(\tSi)$.\\
(ii). $\tnu(p) \in T_p \partial M$ for all $p \in \partial M \cap V_\epsilon(\tSi)$.\\
(iii). $\tnu=\nu$ on $V_{\epsilon/3} (\tSi) \setminus V_{\epsilon}(\partial M)$.\\
(iv). $g(\tnu,\nu) \equiv 1$ in $V_{\epsilon/3}(\tSi)$.
\end{lemma}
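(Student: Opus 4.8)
The four claims are all local statements about the vector field $\tnu$ defined by formula \eqref{E:tN}, so the proof is essentially a careful bookkeeping of where the cutoff functions $\psi^\epsilon_{cut}\circ\rho_{\tSi}$ and $\psi^\epsilon_{cut}\circ\rho_{\partial M}$ vanish or equal $1$, together with the observation that the denominator never vanishes by \ref{ass:e}. I would organize the proof by first recording two elementary facts about the symmetric cutoff $\psi^\epsilon_{cut}$ from \ref{D:cutoff-epsilon}: namely that it is supported in $[-\epsilon,\epsilon]$ (more precisely in a slightly smaller set, since each factor $\psi_{cut}[\pm\epsilon,0]$ vanishes near $\pm\epsilon$, giving support in $(-2\epsilon/3,2\epsilon/3)$ say, using the linear rescaling in \eqref{E:cutab}), and that $\psi^\epsilon_{cut}\equiv 1$ on a neighborhood of $0$. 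Translating these via $\rho_{\tSi}$ and $\rho_{\partial M}$ gives: $\nu^\ep$ is supported in $V_{2\epsilon/3}(\tSi)$ and equals $\nu$ on $V_{\epsilon/3}(\tSi)$; similarly $\nu^\ep_{\partial M}$ is supported in $V_{2\epsilon/3}(\partial M)$ and equals $\nu_{\partial M}$ on $V_{\epsilon/3}(\partial M)$.

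For (i): smoothness of class $C^{k,\beta}$ follows because $\nu=\nu_{\tSi}$ and $\nu_{\partial M}$ are $C^{k,\beta}$ on the relevant tubular neighborhoods (by \ref{D:tube-nbd}, as gradients of the $C^{k+1,\beta}$ distance functions), the cutoffs are smooth, and the denominator $1-g(\nu^\ep,\nu^\ep_{\partial M})^2$ is a $C^{k,\beta}$ function bounded away from $0$ — here I invoke \ref{ass:e} together with the pointwise bounds $g(\nu^\ep,\nu^\ep)\le 1$, $g(\nu^\ep_{\partial M},\nu^\ep_{\partial M})\le 1$ noted after the definition, so that $|g(\nu^\ep,\nu^\ep_{\partial M})|\le 1$ with equality only where both vectors are unit and parallel, which is excluded on $V_\epsilon(\tSi)\cap V_\epsilon(\partial M)$; outside this intersection the numerator already vanishes so there is nothing to check. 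The support statement is immediate from the support of $\nu^\ep$. For (iii) and (iv): on $V_{\epsilon/3}(\tSi)\setminus V_\epsilon(\partial M)$ we have $\nu^\ep=\nu$ and $\nu^\ep_{\partial M}=0$ (since $\rho_{\partial M}$ is undefined / the point lies outside $V_{2\epsilon/3}(\partial M)$), so \eqref{E:tN} collapses to $\tnu=\nu/1=\nu$, proving (iii); and (iv) follows from (iii) on that set, while on $V_{\epsilon/3}(\tSi)\cap V_\epsilon(\partial M)$ one computes $g(\tnu,\nu)=\bigl(g(\nu^\ep,\nu)-g(\nu^\ep,\nu^\ep_{\partial M})g(\nu^\ep_{\partial M},\nu)\bigr)/(1-g(\nu^\ep,\nu^\ep_{\partial M})^2)$, and since $\nu^\ep=\nu$ there (unit), this is $\bigl(1-g(\nu,\nu^\ep_{\partial M})^2\bigr)/\bigl(1-g(\nu,\nu^\ep_{\partial M})^2\bigr)=1$.

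For (ii): let $p\in\partial M\cap V_\epsilon(\tSi)$. The point of the construction is that $\tnu$ is the component of $\nu^\ep$ orthogonal to $\nu^\ep_{\partial M}$, renormalized; I would verify directly that $g(\tnu(p),\nu^\ep_{\partial M}(p))=0$, which is the algebraic identity $g(\nu^\ep,\nu^\ep_{\partial M})-g(\nu^\ep,\nu^\ep_{\partial M})g(\nu^\ep_{\partial M},\nu^\ep_{\partial M})=g(\nu^\ep,\nu^\ep_{\partial M})\bigl(1-|\nu^\ep_{\partial M}|^2\bigr)$, which is \emph{not} identically zero in general — so here I must use that $p\in\partial M$, where $\rho_{\partial M}(p)=0$, hence $\psi^\epsilon_{cut}(\rho_{\partial M}(p))=1$ and therefore $\nu^\ep_{\partial M}(p)=\nu_{\partial M}(p)$ is a genuine unit vector, making $1-|\nu^\ep_{\partial M}(p)|^2=0$. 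Thus $g(\tnu(p),\nu^\ep_{\partial M}(p))=0$, and since $\nu^\ep_{\partial M}(p)=\nu_{\partial M}(p)$ is the unit normal to $\partial M$ at $p$, this says exactly $\tnu(p)\in T_p\partial M$.

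**Main obstacle.** None of the steps is deep; the only place requiring genuine care is (i), specifically the uniform positive lower bound on the denominator $1-g(\nu^\ep,\nu^\ep_{\partial M})^2$, since a priori $g(\nu^\ep,\nu^\ep_{\partial M})$ could approach $\pm 1$. The resolution is that this can only happen at points where $\nu^\ep$ and $\nu^\ep_{\partial M}$ are \emph{both} unit vectors (forcing the point into $V_{\epsilon/3}(\tSi)\cap V_{\epsilon/3}(\partial M)\subset V_\epsilon(\tSi)\cap V_\epsilon(\partial M)$, where $\nu^\ep=\nu_{\tSi}$ and $\nu^\ep_{\partial M}=\nu_{\partial M}$) and parallel, which \ref{ass:e} rules out; everywhere else at least one factor has norm strictly less than one on the compact closure, so by compactness $|g(\nu^\ep,\nu^\ep_{\partial M})|$ is bounded away from $1$ on the support of $\tnu$. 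Once this is in hand, everything else is the routine cutoff bookkeeping sketched above.
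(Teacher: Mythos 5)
Your proof is correct and follows essentially the same route as the paper's (which is much terser): the support/identity bookkeeping for the cutoffs, the observation that on $\partial M$ the vector $\nu^\ep_{\partial M}$ is the genuine unit normal so $\tnu$ is the normalized tangential projection of $\nu^\ep$, and the collapse of the formula to $\tnu=\nu$ where $\nu^\ep_{\partial M}=0$. Your extra care with the non-vanishing of the denominator is a correct elaboration of the remark the paper makes immediately after \ref{D:twisted-normal}.
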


\begin{proof}
Property (i) is clear from the definition and that $\nu^\ep =0$ outside $V_{2\epsilon/3}(\tSi)$ (recall \ref{D:cutoff-epsilon}). For (ii), note that at any $p \in \partial M$, $\tnu$ is parallel to $\nu^\ep-g(\nu^\ep,\nu_{\partial M}) \nu_{\partial M}$, which is the tangential component of $\nu^\ep$ along $T_p \partial M$. Property (iii) is clear since $\nu^\ep_{\partial M}=0$ outside $V_\epsilon(\partial M)$ and $\nu^\ep=\nu$ in $V_{\epsilon/3}(\tSi)$. Finally, (iv) follows from the fact that $\nu^\ep=\nu$ and $g(\nu,\nu) \equiv 1$ in $V_{\epsilon/3}(\tSi)$.
\end{proof}

\begin{definition}
\label{D:twisted-exp}
Let $X : \Sigma \to M$ be a 2-sided proper $C^{k+1,\beta}$-immersion with a choice of the unit normal $\nu:\Sigma \to TM$ and an extension $X:\tSi \to \tM$. 
Suppose we have fixed an $\epsilon>0$ small such that \ref{ass:e} holds and $\tnu$ is well-defined as in \ref{D:twisted-normal}.  
The \emph{twisted normal exponential map along $\Sigma$} is the flow $\{\tF_s\}_{s \in \R}$ 
generated by the twisted normal vector field $\tnu$ in \ref{E:tN}, i.e. for each $p \in \Sigma$, $s \mapsto \tF_s(p)$ is the unique solution to the ODE:
\[ \frac{\partial}{\partial s} \tF_s(p) = \tnu(\tF_s(p)), \qquad \text{for all } s \in (-\epsilon,\epsilon) \]
with initial value $\tF_0(p)=p$.
\end{definition}

Note that for each fixed $s \in (-\epsilon,\epsilon)$, 
the map $p \mapsto \tF_s(p)$ is a $C^{k,\beta}$-map from $\Sigma$ to $V_\epsilon(\tSi)$ by standard results from ODE theory. 
The following definition is the ``twisted'' version of \ref{d:imm} for the case with boundary.

\begin{definition}[Twisted graph]
\label{D:twisted-graph}
Under the same hypothesis of \ref{D:twisted-exp}, 
for any $\varphi \in C^{k,\beta}(\Omega)$ defined on some domain $\Omega \subset \Sigma$ with $|\varphi(p)| < \epsilon$ for all $p \in \Omega$, 
we define the \emph{twisted perturbation of $X:\Sigma \to M$ by $\varphi$ over $\Omega$ (or of $\Omega$ by $\varphi$ when $X$ is an inclusion map)} 
to be the map $\widetilde{\Immer}[X,\epsilon;\varphi,\Omega]: \Omega \to M$ given by
\[ \widetilde{\Immer}[X,\epsilon;\varphi,\Omega](p):=\tF_{\varphi(p)}(p) \qquad \forall p \in \Omega.\]
We also call the image of $\widetilde{\Immer}[X,\epsilon;\varphi,\Omega]$ the \emph{twisted graph of $\varphi$ over $X$ (or $\Omega$)} 
and we will denote it by $\widetilde{\Graph}[X,\epsilon;\varphi,\Omega] \subset M$. 
\end{definition}

\begin{definition}
\label{D:admissible}
Under the same hypothesis of \ref{D:twisted-exp}, a function $\varphi \in C^{k,\beta}(\Sigma)$ is said to be \emph{admissible} if 
(recall \ref{D:twisted-exp} and \ref{D:twisted-graph}) the map $\widetilde{\Immer}[X,\epsilon; \varphi,\Sigma]:\Sigma \to M$ 
is a $C^{k,\beta}$ proper immersion.
\end{definition}

\begin{remark}
Unlike the case of hypersurfaces \emph{without boundary}, our definitions above depends not only on the hypersurface $\Sigma$ but also on the parameter $\epsilon$. 
This creates additional difficulties as we need to give uniform estimates in terms of the parameter $\epsilon$. 
In addition, another subtle issue is that the constructions above in general also depend 
on the extension $X:\tSi \to \tM$ of the proper immersion $X:\Sigma \to M$. 
An important observation however is that the constructions above are independent of the extension 
in case $\partial M$ is \emph{convex} and that $X:\Sigma \to M$ is a \emph{free boundary} properly immersed hypersurface. 
We will return to these issues after we have given a more precise quantitative description in the next subsection.
\end{remark}

\begin{definition}
\label{D:perturbed-Theta}
Given an admissible function $\varphi \in C^{k,\beta}(\Sigma)$ as in \ref{D:admissible}, 
if we let 
\[ \tX_{\varphi}:=\widetilde{\Immer}[X,\epsilon;\varphi,\Sigma]: \Sigma \to M \] 
be the proper $C^{k,\beta}$-immersion obtained by the twisted perturbation of $X:\Sigma \to M$ by $\varphi$ (recall \ref{D:twisted-graph}), 
then we define the \emph{perturbed boundary angle} and \emph{perturbed mean curvature} respectively
\[ \Theta_\varphi: \partial \Sigma \to \R \quad \text{ and } \quad H_\varphi:\Sigma \to \R\]
to be the boundary angle and mean curvature respectively of the proper $C^{k,\beta}$ immersion $\tX_\varphi:\Sigma \to M$, 
which is oriented by the unit normal $\nu_\varphi:\Sigma \to TM$ depending continuously on $\varphi$.
\end{definition}

\begin{remark}
The continuity of the normal means that $\nu_{t \varphi} \to \nu$ as $t \to 0$. 
When $\partial \Sigma = \emptyset$ and that $X(\Sigma) \cap V_\epsilon(\partial M)=\emptyset$, 
the map $\tX_\varphi$ agrees with the usual normal graph $\Immer[X,\varphi;\Sigma]$ as defined in \ref{d:imm}. 
In this case, the boundary angle $\Theta_\varphi$ is not defined and the mean curvature 
$H_\varphi$ agrees with the standard notion as in \cite[Appendix A]{Kapouleas-Yang10}.
\end{remark}

\begin{figure}[h]
\includegraphics[height=6cm]{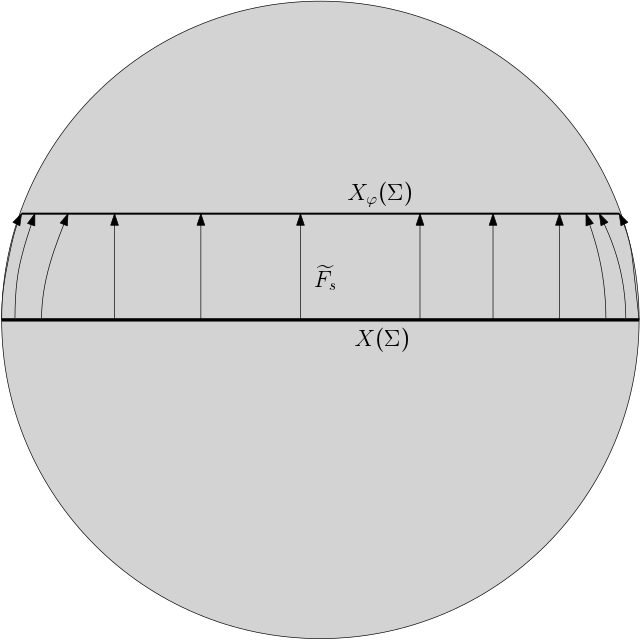}
\caption{A twisted perturbation $X_\varphi$ of a proper immersion $X:\Sigma \to M$}
\label{fig:twisted-graph}
\end{figure}

\subsection*{Local estimates for the boundary angle}

In \ref{P:local-estimate-Theta} we provide a first order expansion of the perturbed boundary angle $\Theta_\varphi$ (recall \ref{D:perturbed-Theta}) in terms of $\varphi$ 
(which we assume sufficiently small in terms of the geometry of $X$, $M$ and $\tM$) 
and prove uniform estimates for the nonlinear terms. 
As in other gluing constructions, these estimates are crucial for the fixed point theorem argument 
used to produce exact solutions to the nonlinear PDEs (see \cite{Kapouleas05} and \cite{Kapouleas11} for a general discussion). 
Note also that instead of estimating the nonlinear terms in terms of invariant geometric quantities 
as for example in \cite{Kapouleas90a}, 
it is easier to use local coordinates as for example in \cite[Appendix A]{Kapouleas-Yang10}: 
Given a proper $C^{k,\beta}$ immersion $X:\Sigma \to M$ as in \ref{D:proper-immersion} we express locally the immersion 
$X:\Sigma \to M$ as $X = (X^1,\ldots,X^{n+1})$ in local coordinate charts of $\Sigma$ and $M$ 
(see Figure \ref{fig:C-bounded}).
To make a quantitative statement we need bounds on the geometry as follows.

\begin{definition}[$c_1$-bounded geometry]
\label{D:C-bdd-geom}
Let $\tM=(B^{n+1},g)$ where $g$ is a smooth Riemannian metric with components $g_{KL}$ in standard coordinates of $B^{n+1} \subset \R^{n+1}$. 
Suppose $M \subset \tM$ is a smooth domain with a smooth ``boundary defining function'' $\rho:B^{n+1} \to \R$ such that $M=\rho^{-1}(-\infty,0]$. 
We say that the pair $(M,\tM)$ has \emph{$c_1$-bounded geometry} if
\begin{equation}
\label{E:M-C-bounded}
 \|g_{KL}, g^{KL}:C^{4,\beta}(B^{n+1},g_0) \| \leq c_1 \quad \text{and} \quad \|\rho:C^{5,\beta}(B^{n+1},g_0)\| \leq c_1, 
\end{equation}
where $g^{KL}$ is the inverse of the matrix $g_{KL}$ and $g_0$ is the Euclidean metric on $B^{n+1}$.
 
Suppose $X:\Sigma \to M$ is a proper $C^{5,\beta}$ immersion as in \ref{D:proper-immersion}, where $\Sigma=B^n$ or $B^n_+$ (recall \ref{E:BS+}), 
with an extension $X: B^n(2) \to \tM$ with $\tSi=B^n(3/2)$ (recall \ref{R:extension}). 
We say that $X:\tSi \to \tM$ has \emph{$c_1$-bounded geometry} if $(M,\tM)$ has $c_1$-bounded geometry and the following holds:
\begin{equation}
\label{E:C-bounded-a}
\|\partial X : C^{4,\beta}(B^n(2),g_0)\| \leq c_1, \quad g_0 \leq c_1 \, X^*g \quad \text{and} \quad X(B^n(2)) \subset B^{n+1}(3/4),
\end{equation}
where $\partial X$ are the partial derivatives of the coordinate functions of $X:B^n(2) \to \R^{n+1}$, here $g_0$ is the Euclidean metric; and
\begin{equation}
\label{E:C-bounded-b}
\inf \left\{ |\nabla (\rho \circ X)|(x): x \in B^n(2), |\rho(X(x))| < c_1^{-1} \right\} \geq c_1^{-1},
\end{equation}
where $\nabla (\rho \circ X)$ is the Euclidean gradient of the function $\rho \circ X:\tSi \to \R$.
\end{definition}

\begin{figure}
\includegraphics[height=8cm]{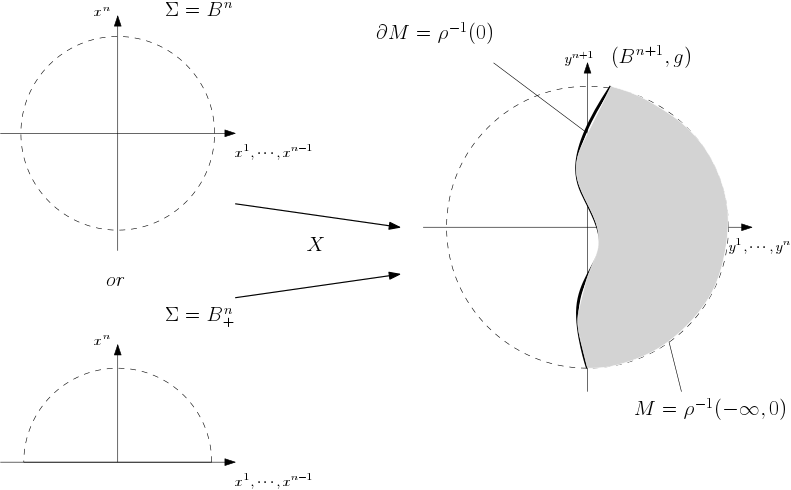}
\caption{A local description of a proper $C^{k,\beta}$ immersion $X:\Sigma \to M$}
\label{fig:C-bounded}
\end{figure}

Note that \ref{E:C-bounded-b} gives a quantitative measure of transversality 
by ensuring that the part of $X(\tSi)$ which is close to $\partial M$ cannot be approximately parallel to $\partial M$. 
Note that \ref{E:M-C-bounded} and \ref{E:C-bounded-a} (but not \ref{E:C-bounded-b})
can be arranged by appropriately magnifying the target (see \ref{P:nonlinear-estimate} for example). Definition \ref{D:C-bdd-geom} also covers the case where the coordinate neighborhood of $M$ under consideration lies completely in the interior of $M$ (in this case we simply take $\rho \equiv -1$ and \ref{E:C-bounded-b} would be trivially satisfied for any $c_1>1$).

In order to define our linear operators in 
\ref{D:linear-operators} we first define appropriately the second fundamental forms 
of an immersed hypersurface $S$ (with or without boundary) in $\tM$. 
For simplicity, we state the definition for an \emph{embedded} hypersurface $S \subset \tM$ but the case of immersion can be defined similarly since the definition is local.

\begin{definition}
\label{D:second-fundamental}
Let $S$ be a 2-sided embedded $C^{k,\beta}$ hypersurface in $(\tM,g)$ with a choice of the unit normal $\nu_S:S \to TM$.  
We define the \emph{second fundamental form of $S$ in $\tM$ at $p \in S$} as the symmetric bilinear form $A_S:T_p \tM \times T_p \tM \to \R$ defined by
\beq
\label{E:2ff}
A_S(u,v) := g(\nabla^{\tM}_{\widetilde{u}} \nu_S, \widetilde{v}  )(p)
\eeq
where $\widetilde{u}$,$\widetilde{v}$ are local vector fields 
in the vicinity of $p$ which are tangential to $S$ 
and agree at $p$ with the orthogonal projection (with respect to $g$)
of $u,v$ respectively to $T_pS$.
\end{definition}

We define now some linear operators which as we will see later 
in our main propositions \ref{P:local-estimate-Theta} and \ref{P:local-estimate-H} 
are the linearizations of the boundary angle 
and mean curvature operators (which are both nonlinear). Note that both operators are independent of the choice of an extension $X:\tSi \to \tM$.

\begin{definition}
\label{D:linear-operators}
Let $X:\Sigma \to M$ be a 2-sided proper $C^{k+1,\beta}$ immersion as in \ref{D:proper-immersion} with a choice of the unit normal $\nu:\Sigma \to TM$. 
We define the linear operators $\Lcal:C^{2,\beta}(\Sigma) \to C^{0,\beta}(\Sigma)$ 
and $\Bcal:C^{2,\beta}(\Sigma) \to C^{1,\beta}(\partial \Sigma)$ by 
(recall \ref{D:boundary-angle} and \ref{D:second-fundamental}) 
\[ \Lcal \varphi:=  \Delta_g \varphi +\Big(|A_\Sigma |_g^2  +  \Ric (\nu,\nu)\Big) \varphi, \] 
\[ \Bcal \varphi := - \sqrt{1-\Theta^2}  \frac{\partial \varphi}{\partial \eta} + \frac{1}{1-\Theta^2} 
\Big(A_{\partial M}(\nu,\nu)-\Theta A_\Sigma(\nu_{\partial M}, \nu_{\partial M}) \Big)\varphi ,\]
where $\Delta_g$ is the intrinsic Laplacian on $\Sigma$; $\eta: \partial \Sigma \to TM$ is the outward unit conormal of $\partial \Sigma$ relative to $\Sigma$, $|A_\Sigma|^2_g$ is its length squared of the second fundamental form of $\Sigma$,  
and $\Ric$ is the Ricci curvature of $(M,g)$. Notice that $\Theta^2 \neq 1$ everywhere on $\partial \Sigma$ by \ref{D:proper-immersion}.ii.c.
\end{definition}

Before we state the main proposition in this section 
we observe that the H\"{o}lder norms are uniformly equivalent with respect to different metrics. 

\begin{lemma}[Equivalence of norms on {$\tSi$}] 
\label{L:C-bounded-equiv}
There exists a constant $C=C(c_1)>0$ such that if $X:\tSi \to \tM$ has $c_1$-bounded geometry as in \ref{D:C-bdd-geom}, 
then we have $\|g_{ij}, g^{ij} :C^{4,\beta}(\tSi,g_0) \| \leq C$, 
where $g_{ij}$ are the components of the induced metric $X^*g$ in the local coordinates $x^1,\cdots,x^n$. 
Moreover, we have a uniform equivalence on the H\"older norms with respect to the induced metric $g$ and the Euclidean metric $g_0$ (recall \ref{D:sim}): 
\beq
\label{E:Holder-equiv}
\| f :C^{k,\beta}(\Omega',g)\| \sim_C \|f:C^{k,\beta}(\Omega',g_0)\| \qquad \text{ for } k=0,1,2, 3,
\eeq
where $(\Omega',g)$ stands either for $(B^{n+1},g)$ or $(B^n,X^*g)$. 
\end{lemma}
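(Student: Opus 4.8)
\emph{Proof proposal.} The plan is to reduce everything to the hypotheses of $c_1$-bounded geometry in \ref{D:C-bdd-geom} together with the elementary multiplicative and composition properties of the weighted H\"older norms (recall \eqref{E:norm-multi}); the first thing to observe is that the quantitative transversality condition \eqref{E:C-bounded-b} plays no role here, so only \eqref{E:M-C-bounded} and \eqref{E:C-bounded-a} get used.

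\textbf{Step 1 (induced metric).} In the local coordinates $x^1,\dots,x^n$ on $\tSi=B^n(3/2)$ and standard coordinates on $B^{n+1}$ one has $g_{ij}=(g_{KL}\circ X)\,\partial_i X^K\,\partial_j X^L$. Since $X(B^n(2))\subset B^{n+1}(3/4)$ stays well inside $B^{n+1}$ and $\|\partial X:C^{4,\beta}(B^n(2),g_0)\|\le c_1$ (hence $X$ itself is $C^{5,\beta}$-bounded), I would bound $g_{KL}\circ X$ in $C^{4,\beta}(\tSi,g_0)$ by $C(c_1)$ using the chain rule and \eqref{E:norm-multi}, and then multiply by the bound on $\partial X$, again via \eqref{E:norm-multi}, to get $\|g_{ij}:C^{4,\beta}(\tSi,g_0)\|\le C(c_1)$.

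\textbf{Step 2 (inverse metric).} The hypothesis $g_0\le c_1 X^*g$ says the least eigenvalue of the matrix $(g_{ij}(x))$ is $\ge c_1^{-1}$ for every $x$, so $\det(g_{ij})\ge c_1^{-n}>0$; combined with the $C^0$ bound from Step 1, $c_1^{-n}\le\det(g_{ij})\le C(c_1)$ pointwise. As $\det(g_{ij})$ is a polynomial in the entries $g_{ij}$ it is $C^{4,\beta}(\tSi,g_0)$-bounded by Step 1 and \eqref{E:norm-multi}, and composing with $t\mapsto 1/t$ (smooth with bounded derivatives on $[c_1^{-n},C(c_1)]$) shows $1/\det(g_{ij})$ is $C^{4,\beta}$-bounded. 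Since the adjugate entries are again polynomials in the $g_{ij}$, Cramer's rule $g^{ij}=\mathrm{adj}(g)^{ij}/\det(g_{ij})$ together with \eqref{E:norm-multi} gives $\|g^{ij}:C^{4,\beta}(\tSi,g_0)\|\le C(c_1)$, which proves the first assertion.

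\textbf{Step 3 (norm equivalence).} In either case, $(\Omega',g)=(B^{n+1},g)$ with $g$ as in \eqref{E:M-C-bounded} or $(\Omega',g)=(B^n,X^*g)$ with $X^*g$ as controlled by Steps 1--2, the metric $h$ in question satisfies $\|h_{ab},h^{ab}:C^{4,\beta}(\Omega',g_0)\|\le C(c_1)$; in particular $C(c_1)^{-1}g_0\le h\le C(c_1)g_0$, so the identity map $(\Omega',h)\to(\Omega',g_0)$ is bi-Lipschitz with constants depending only on $c_1$. That already yields comparability of the $C^{0,\beta}$ norms (the H\"older seminorms are taken with $h$-distances, comparable to $g_0$-distances) and of the geodesic balls of radius $1/100$ appearing in \ref{D:weighted-Holder}. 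For $k=1,2,3$ I would express the $h$-covariant derivatives $\nabla^j f$, $j\le 3$, in terms of the Euclidean partial derivatives $\partial^j f$ plus lower-order terms whose coefficients are universal polynomials in the Christoffel symbols $\Gamma^c_{ab}$ of $h$ and their derivatives up to order $2$, and conversely; since $\Gamma^c_{ab}$ is a universal rational expression in $h^{ab}$ and $\partial h_{ab}$, it is $C^{3,\beta}(\Omega',g_0)$-bounded by the above and \eqref{E:norm-multi}, which more than suffices to absorb all the coefficients. Feeding these relations (in both directions) together with the bi-Lipschitz comparison of distances into the definition of the norms gives \eqref{E:Holder-equiv} with $C=C(c_1)$.

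\textbf{Expected main obstacle.} No single step is deep; the only real work is the bookkeeping in Step 3 — tracking exactly how many derivatives of $h$ (equivalently, of $\Gamma$) are needed to pass between covariant and partial derivatives up to order $3$ and to treat the H\"older seminorm of the top derivative, and verifying that every constant produced depends only on $c_1$ (and on $n,k,\beta$). This is precisely why \ref{D:C-bdd-geom} demands $C^{4,\beta}$ rather than merely $C^{3,\beta}$ control of the ambient and induced metrics, providing one derivative of slack.
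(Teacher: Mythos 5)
Your proposal is correct and follows exactly the route the paper intends: the paper's own proof is a one-line assertion that the lemma is "an easy consequence of \ref{E:M-C-bounded} and \ref{E:C-bounded-a}," and your three steps (chain rule for $g_{ij}$, Cramer's rule for $g^{ij}$, and the standard covariant-versus-partial-derivative bookkeeping for the norm equivalence) are precisely the details being suppressed. Your observation that \ref{E:C-bounded-b} is not needed here is also consistent with the paper.
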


\begin{proof} 
This is an easy consequence of \ref{E:M-C-bounded} and \ref{E:C-bounded-a}. 
\end{proof}

Because of \ref{L:C-bounded-equiv}, from now on we will often omit the dependence of our H\"{o}lder norms on the metric 
as they are equivalent up to a uniform constant depending only on $c_1$. 
We collect some bounds on the geometry implied by \ref{E:M-C-bounded}, \ref{E:C-bounded-a} and \ref{E:C-bounded-b}. 
In the lemma below we will use $| \cdot |$ to denote the norm of a vector with respect to the Euclidean metric $g_0$.

\begin{lemma}
\label{L:C-bounded}
There exists a constant $C=C(c_1)>0$ such that if $X:\tSi \to \tM$ has $c_1$-bounded geometry as in \ref{D:C-bdd-geom}, then we have the following:\\
(i). $\inj_y(B^{n+1},g) \geq C^{-1}$ at each $y \in B^{n+1}(3/4)$. The exponential map $\exp (y,v)=\exp_y(v)$ for $(B^{n+1},g)$ is a $C^3$-map on $(y,v) \in B^{n+1}(3/4) \times B^{n+1}(C^{-1})$ and satisfies \ref{E:A3}. \\
(ii). $\|\nu: C^{4,\beta}(\tSi)\| \leq C$ and $\| A_{\tSi}: C^{3,\beta}(\tSi)\| \leq C$ (recall \ref{E:2ff}).\\
(iii). $\|\nu_{\partial M}: C^{4,\beta}(\partial M)\| \leq C$ and $\| A_{\partial M}: C^{3,\beta}(\partial M)\| \leq C$ (recall \ref{E:2ff}).\\
(iv). $\| \Ric_{KL} : C^{2,\beta}(B^{n+1})\| \leq C$, where $\Ric_{KL}$ are the components of the Ricci curvature of $(B^{n+1},g)$ in the local coordinates $y^1,\cdots,y^{n+1}$.\\
(v). $\| \Theta : C^{4,\beta}(\partial \Sigma)\| \leq C$, where $\Theta$ is the boundary angle of the proper immersion $X:\Sigma \to M$ when $\Sigma=B^n_+$. 
\end{lemma}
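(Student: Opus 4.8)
The plan is to track derivatives through the chain rule, using the multiplicative property \ref{E:norm-multi} of the weighted H\"older norms, the norm equivalence \ref{L:C-bounded-equiv}, and standard comparison geometry; I treat the five items in turn. For (i), \ref{E:M-C-bounded} gives $g \sim_{c_1} g_0$ and, since the sectional curvature is a universal expression in $g_{KL}$, $g^{KL}$ and their first two partial derivatives, a two-sided sectional curvature bound by $C(c_1)$ on $B^{n+1}$. As $\overline{B^{n+1}(3/4)}$ sits at definite $g_0$-distance---hence definite $g$-distance---from $\partial B^{n+1}$, every $y \in B^{n+1}(3/4)$ admits a $g$-ball $B^g_y(r_0) \subset B^{n+1}$ with $r_0 = r_0(c_1)$, and $g \sim_{c_1} g_0$ provides a lower volume bound for such balls; the standard Cheeger--Gromov--Taylor estimate then yields $\inj_y(B^{n+1},g) \ge C^{-1}$. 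Since the geodesic ODE has $C^{3,\beta}$ coefficients (the Christoffel symbols of $g$) with norms $\le C(c_1)$, smooth dependence on initial conditions for ODEs makes $(y,v) \mapsto \exp_y(v)$ a $C^3$-map on $B^{n+1}(3/4) \times B^{n+1}(C^{-1})$ with bounds, and Taylor expansion in $v$ about $v=0$ gives \ref{E:A3}.

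For (ii) and (iii) the argument is essentially the same for both hypersurfaces. In the coordinates $x^1,\dots,x^n$ on $\tSi$, the frame $\{\partial_i X\}$ is $C^{4,\beta}$-bounded by \ref{E:C-bounded-a}, while $g_{KL}\circ X$ and $g^{KL}\circ X$ are $C^{4,\beta}$-bounded by \ref{L:C-bounded-equiv}, \ref{E:norm-multi} and the chain rule, and $g_0 \le c_1 X^*g$ keeps the Gram matrix of $\{\partial_i X\}$ in $g$ uniformly non-degenerate; hence $\nu$ (obtained from $\{\partial_i X\}$ by the cross-product formula followed by normalization) lies in $C^{4,\beta}(\tSi)$ with norm $\le C(c_1)$, and then by \ref{E:2ff} the components $A_{ij} = g(\nabla^{\tM}_{\partial_i X}\nu, \partial_j X)$ of $A_{\tSi}$, which involve one further derivative of $\nu$ plus Christoffel symbols, are $C^{3,\beta}$-bounded. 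For $\partial M = \{\rho = 0\}$ one uses $\rho \in C^{5,\beta}$ with norm $\le c_1$, so $\nabla^{\tM}\rho \in C^{4,\beta}$; the transversality bound \ref{E:C-bounded-b}, together with the upper bound on $\partial X$, forces $|(\nabla^{\tM}\rho)\circ X| \ge C^{-1}$ on the portion of $\partial M$ meeting $X(\tSi)$, so $\nu_{\partial M} = \nabla^{\tM}\rho / |\nabla^{\tM}\rho|_g$ is $C^{4,\beta}$ and, exactly as before, $A_{\partial M} \in C^{3,\beta}$, all with norms $\le C(c_1)$.

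For (iv), $\Ric_{KL}$ is a universal polynomial in $g_{KL}$, $g^{KL}$ and the first two partial derivatives of $g_{KL}$, so \ref{E:M-C-bounded} and \ref{E:norm-multi} give $\|\Ric_{KL} : C^{2,\beta}(B^{n+1})\| \le C(c_1)$. For (v), $\Theta = g(\nu_{\partial M}(X(\cdot)),\nu(\cdot))$ on $\partial\Sigma$ (recall \ref{D:boundary-angle}) is a $C^{4,\beta}$-bounded expression in $\nu \in C^{4,\beta}(\tSi)$ from (ii), in $\nu_{\partial M}\circ X \in C^{4,\beta}$ (since $\nu_{\partial M}\in C^{4,\beta}$ from (iii) and $X \in C^{5,\beta}$), and in $g$, by \ref{L:C-bounded-equiv} and \ref{E:norm-multi}; restricting to $\partial\Sigma$ finishes the proof. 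I expect the only step going beyond bookkeeping with \ref{E:norm-multi} and \ref{L:C-bounded-equiv} to be the injectivity-radius and exponential-map assertion in (i), which invokes the standard (but non-elementary) comparison-geometry estimates; items (ii)--(v) are routine once the multiplicative and equivalence properties of the norms are in hand.
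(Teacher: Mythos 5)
Your proof is correct, and items (ii)--(v) are exactly the routine chain-rule and product-rule bookkeeping (via \ref{E:norm-multi}, \ref{L:C-bounded-equiv}, \ref{E:M-C-bounded}, \ref{E:C-bounded-a} and \ref{E:C-bounded-b}) that the paper intends; the paper's one-line proof simply points to these facts and to Appendix \ref{A:exp}. The genuine divergence is in part (i). You obtain the injectivity radius bound by the Cheeger--Gromov--Taylor route: two-sided sectional curvature bounds from \ref{E:M-C-bounded}, a volume lower bound for metric balls from $g\sim_{c_1}g_0$, and then the comparison-geometry estimate. The paper instead stays entirely inside the fixed coordinate chart: Proposition \ref{P:exp-estimate} derives the estimates \ref{E:A3} by differentiating the geodesic system with respect to the initial data and running a Kato/Gronwall argument with the $C^3$-bounded Christoffel symbols, and Corollary \ref{C:inj-bound} then reads off the injectivity radius lower bound from the non-degeneracy of $D_v\exp(x,v)$ for $|v|\le C^{-1}$ together with the uniform equivalence $g\sim_{c_1}g_0$ (which rules out short geodesic loops in the chart). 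The paper's approach is more elementary and self-contained, and it produces the quantitative expansion \ref{E:A3} --- which part (i) explicitly asserts and which is needed later in \ref{P:local-estimate-Theta} and \ref{P:local-estimate-H} --- as the primary object, with the injectivity radius as a byproduct; your approach gets the injectivity radius faster but at the cost of quoting a non-elementary theorem, and you would still need the ODE analysis (your ``Taylor expansion in $v$'' is really the Gronwall estimates of \ref{P:exp-estimate}, since the derivative bounds in $x$ and the mixed derivatives do not follow from Taylor's theorem alone) to establish \ref{E:A3}. So the comparison-geometry step buys you nothing you do not already have to prove.

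One minor remark on (iii): as you correctly note, \ref{E:C-bounded-b} only gives the lower bound $|\nabla^{\tM}\rho|\ge C^{-1}$ on the portion of $\partial M$ near $X(\tSi)$, which is the only region where the bound is subsequently used (e.g.\ in \ref{L:C-bounded-eps}); your honest restriction to that portion is the right reading of the statement.
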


\begin{proof}
All these statements follow easily from \ref{E:M-C-bounded}, \ref{E:C-bounded-a}, 
\ref{D:boundary-angle}, \ref{L:C-bounded-equiv} and Appendix \ref{A:exp}. 
\end{proof}

The lemma below says that the parameter $\epsilon>0$ used to construct deformations of properly immersed hypersurfaces can be uniformly controlled (depending only on $c_1$) for $X:\tSi \to \tM$ with $c_1$-bounded geometry.

\begin{lemma}
\label{L:C-bounded-eps}
There exists $\epsilon=\epsilon(c_1)>0$ such that if $X:\tSi \to \tM$ has $c_1$-bounded geometry as in \ref{D:C-bdd-geom}, then \ref{ass:e} holds.
\end{lemma}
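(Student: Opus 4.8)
The plan is to extract from the $c_1$-bounded geometry hypothesis a uniform $\epsilon = \epsilon(c_1) > 0$ for which \ref{ass:e} holds, i.e. for which: (a) the map $E(p,t) = \exp^{\tM}_p(t\nu_{\tSi}(p))$ is a diffeomorphism from $\tSi \times (-\epsilon,\epsilon)$ onto an open subset of $\tM$; (b) the analogue holds with $\partial M$ in place of $\tSi$, at least on the portion of $\partial M$ near $\tSi$ which is all that actually enters the deformation constructions; and (c) $\nu_{\tSi}(q) \neq \pm \nu_{\partial M}(q)$ for every $q \in V_\epsilon(\tSi) \cap V_\epsilon(\partial M)$. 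The only inputs I would use are the uniform bounds already recorded in \ref{L:C-bounded}: the lower bound on $\inj(B^{n+1},g)$ and the $C^3$-control on the exponential map of $(B^{n+1},g)$ over $B^{n+1}(3/4)$ (with the estimates of Appendix \ref{A:exp}), the $C^{4,\beta}$-bounds on $\nu_{\tSi}$ and $\nu_{\partial M}$, the $C^{3,\beta}$-bounds on $A_{\tSi}$ and $A_{\partial M}$, and the quantitative transversality \ref{E:C-bounded-b}.

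For the tubular neighborhood of $S = \tSi$ (and identically for $S = \partial M$) I would first note that $dE_{(p,0)}$ maps $w \in T_pS$ to $w$ and $\partial_t$ to the unit vector $\nu_S(p) \perp T_pS$, hence is an isomorphism whose norm and inverse-norm are bounded in terms of $c_1$; combining the $C^1$-bound on $\exp^{\tM}$ from Appendix \ref{A:exp} with the $C^1$-bound on $\nu_S$ gives $\|dE_{(p,t)} - dE_{(p,0)}\| \le C(c_1)\,|t|$, so $dE$ stays nonsingular for $|t| < \epsilon_1(c_1)$. For injectivity I would argue in the usual bounded-geometry manner: if $E(p_1,t_1) = E(p_2,t_2)$ then the realizing geodesics have length $< \epsilon$, so $\dist_g(p_1,p_2) < 2\epsilon$; in $g$-normal coordinates at $p_1$ the hypersurface $S$ is a graph over $T_{p_1}S$ of $C^2$-norm $\le C(c_1)$ while $g$ is $C^0$-close to Euclidean, so on a ball of radius $r(c_1)$ the map $E$ is a $C(c_1)$-bi-Lipschitz perturbation of the injective Euclidean normal exponential map of a nearly flat graph, and choosing $\epsilon < \min(\epsilon_1, r(c_1)/4)$ forces $p_1 = p_2$, then $t_1 = t_2$. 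For $S = \partial M$ one also uses that, near $V_\epsilon(\tSi)$, the estimate $|\rho(X(x))| \le C(c_1)\dist_g(X(x),\partial M)$ together with \ref{E:C-bounded-b} exhibits $\partial M$ as a uniform submersion level set of $\rho$, hence of bounded geometry there. Once the maps $E$ are diffeomorphisms, the signed-distance functions $\rho_{\tSi}, \rho_{\partial M}$ are $C^{k+1,\beta}$ by \cite{Foote84} and the extended normals $\nu_{\tSi} = \nabla^{\tM}\rho_{\tSi}$, $\nu_{\partial M} = \nabla^{\tM}\rho_{\partial M}$ are $C^{k,\beta}$ with norms bounded by $C(c_1)$.

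For the transversality I would begin with the qualitative fact that at any $x \in \partial\Sigma$ the point $X(x)$ lies in $\partial M \cap \tSi$ and \ref{D:proper-immersion}.ii.c says precisely that $\nu_{\tSi}$ and $\nu_{\partial M}$ are not parallel there. To make this quantitative I would note that $\nu_{\partial M}$ is parallel to $\nabla^{\tM}\rho$ while $\nabla(\rho\circ X)$ is, up to the $c_1$-controlled factors of $X^*g$, the $\tSi$-tangential part of $\nabla^{\tM}\rho$; hence \ref{E:C-bounded-b} gives $1 - g(\nu_{\tSi},\nu_{\partial M})^2 \ge c(c_1) > 0$ wherever $|\rho\circ X| < c_1^{-1}$. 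Finally, any $q \in V_\epsilon(\tSi) \cap V_\epsilon(\partial M)$ has $\dist_g(q,\tSi), \dist_g(q,\partial M) < \epsilon$, so $\Pi_{\tSi}(q) = X(x)$ with $|\rho(X(x))| \le C(c_1)\epsilon < c_1^{-1}$ once $\epsilon$ is small, the previous bound applies at $x$, and the $C^1$-bounds on $\nu_{\tSi}$ and $\nu_{\partial M}$ over the tubular neighborhoods propagate it to $q$ with constant $c(c_1)/2$. Taking $\epsilon = \epsilon(c_1)$ below every threshold above then yields \ref{ass:e}.

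I expect the only genuinely non-routine point to be the injectivity of the normal exponential map in the second paragraph — that is, promoting the uniform \emph{local} diffeomorphism statement to an honest diffeomorphism of $S \times (-\epsilon,\epsilon)$ with scale depending only on $c_1$. The nonsingularity of $dE$ and the transversality inequality are, by contrast, single-step perturbations of their $t = 0$, respectively $\partial\Sigma$, counterparts using the bounds of \ref{L:C-bounded}.
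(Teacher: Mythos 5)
Your proposal is correct and follows essentially the same route as the paper's proof, which simply invokes the uniform bounds of \ref{L:C-bounded}.i--iii to obtain the tubular neighborhoods of $\tSi$ and $\partial M\cap B^{n+1}(3/4)$ and then cites \ref{E:C-bounded-b} for the uniform non-parallelism of the two normals; you have merely filled in the standard bounded-geometry details (nonsingularity and injectivity of the normal exponential map, and the identification of $\nabla(\rho\circ X)$ with the tangential part of $\nabla^{\tM}\rho$) that the paper leaves implicit.
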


\begin{proof}
By \ref{L:C-bounded}.i, ii and iii (recall \ref{R:extension}), such an $\epsilon>0$ (depending only on $c_1$) exists for $S=\tSi$ and $\partial M \cap B^{n+1}(3/4)$ satisfying all the hypotheses in \ref{D:tube-nbd}. Furthermore, by \ref{E:C-bounded-b}, there exists an $\epsilon>0$ (depending only on $c_1$) such that $\nu_{\tSi}(q) \neq \pm \nu_{\partial M}(q)$ at every $q \in V_\epsilon(\tSi) \cap V_{\epsilon}(\partial M \cap B^{n+1}(3/4))$.
\end{proof}

We can now state the main proposition in this section which gives a local uniform estimate on the nonlinear terms 
in the expansion of the perturbed boundary angle $\Theta_\varphi$ (recall \ref{D:perturbed-Theta}) in terms of $\varphi$.  
Note that in the proof we use the coordinates of $\R^{n+1}\supset \tM$ to interpret $T\tM$-valued vector fields or maps to $M$ 
as $\R^{n+1}$-valued maps, as for example in \ref{E:nuV} and \ref{E:tX-bound}:

\begin{prop}[Linear and nonlinear parts of the boundary angle]
\label{P:local-estimate-Theta}
There exists a small constant $\epsilon_\Theta(c_1)>0$ such that if $X:\tSi \to \tM$ has $c_1$-bounded geometry as in \ref{D:C-bdd-geom}  with $\Sigma=B^n_+$, and $\varphi:\Sigma \to \R$ is a $C^{2,\beta}$ function satisfying
\[ \|\varphi:C^{2,\beta}(\Sigma,g)\| < \epsilon_\Theta(c_1),\]
then $\varphi$ is admissible (recall \ref{D:admissible}) 
and we have the uniform estimate (recall \ref{D:perturbed-Theta} and \ref{D:linear-operators}) for some constant $C=C(c_1)>0$,
\beq
\label{E:Theta-estimate}
 \|\Theta_\varphi - \Theta - \Bcal \varphi : C^{1,\beta}(\partial \Sigma,g)\| \leq C \|\varphi:C^{2,\beta}(\Sigma,g)\|^2,
\eeq
where $g$ can be either the induced metric $X^*g$ or $g_0$ in accordance with 
\ref{L:C-bounded-equiv}. 
\end{prop}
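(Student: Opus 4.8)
The plan is to compute $\Theta_\varphi$ explicitly as a function of $\varphi$ and its first derivatives along $\partial\Sigma$ using the twisted exponential map, extract the zeroth- and first-order terms in $\varphi$ (which will produce $\Theta + \Bcal\varphi$), and Taylor-estimate the remainder by a quadratic bound. First I would reduce everything to the local-coordinate picture set up in \ref{D:C-bdd-geom}: the immersion $X:\tSi=B^n(3/2)\to\tM=(B^{n+1},g)$, with $\Sigma=B^n_+$ so that $\partial\Sigma=B^{n-1}\subset\{x^n=0\}$, and recall that by \ref{L:C-bounded-eps} the parameter $\epsilon$ can be taken to depend only on $c_1$, so the twisted normal $\tnu$ of \ref{E:tN} and its flow $\tF_s$ are defined with uniform bounds. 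Since we work near $\partial\Sigma$ where $\psi^\epsilon_{cut}\circ\rho_{\tSi}\equiv1$, on a neighborhood of $\partial\Sigma$ we have $\nu^\ep=\nu$, and $\tnu$ is simply the normalized tangential-to-$\partial M$ projection of $\nu$ (with the denominator $1-g(\nu^\ep,\nu^\ep_{\partial M})^2$ bounded away from zero by \ref{ass:e} and \ref{E:C-bounded-b}). The smoothness and uniform $C^{3}$-bounds on $\exp^{\tM}$ from \ref{L:C-bounded}.i and Appendix \ref{A:exp}, together with the $C^{4,\beta}$-bounds on $\nu,\nu_{\partial M},A_{\tSi},A_{\partial M},\Theta,\Ric$ from \ref{L:C-bounded}, give uniform control of all the ingredients; the smallness hypothesis $\|\varphi:C^{2,\beta}\|<\epsilon_\Theta(c_1)$ is what keeps the twisted graph inside the tubular neighborhood and keeps all the maps below well defined, so admissibility (\ref{D:admissible}) follows once the quantitative estimates are in place.

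The key computational step is: the perturbed surface is $\tX_\varphi(p)=\tF_{\varphi(p)}(p)$, with unit normal $\nu_\varphi(p)$ depending continuously on $\varphi$, and at a boundary point $p\in\partial\Sigma$ one has $\Theta_\varphi(p)=g\big(\nu_{\partial M}(\tX_\varphi(p)),\nu_\varphi(p)\big)$. I would write $\nu_\varphi$ as a normalized cross product (in the $\R^{n+1}$-coordinates of $\tM$, using the metric $g$) of the pushed-forward coordinate frame $\partial_i(\tX_\varphi)=\partial_i(\tF_{\varphi}(\cdot))$, which by the chain rule equals $(\partial_i\tF_s)(p)\big|_{s=\varphi(p)}+\partial_i\varphi(p)\,\tnu(\tX_\varphi(p))$. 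Expanding $\tF_s(p)=p+s\,\tnu(p)+O(s^2)$ and differentiating, and expanding $\nu_{\partial M}$ and $g$ to first order along the flow, one gets a first-order-in-$(\varphi,\nabla\varphi)$ expansion of $\Theta_\varphi$. The zeroth-order term is $\Theta$; the linear term must, by the identification of $\Bcal$ as the linearization of the boundary-angle operator (as asserted just before \ref{D:linear-operators}), equal $\Bcal\varphi = -\sqrt{1-\Theta^2}\,\partial_\eta\varphi + \tfrac{1}{1-\Theta^2}\big(A_{\partial M}(\nu,\nu)-\Theta A_\Sigma(\nu_{\partial M},\nu_{\partial M})\big)\varphi$. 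Rather than rederiving this coefficient from scratch, I would note that the $\partial_\eta\varphi$ term comes from the normal derivative contribution to $\nu_\varphi$ tilting against $\nu_{\partial M}$ (the factor $\sqrt{1-\Theta^2}$ being $\sin$ of the contact angle, and the sign from the conormal orientation), while the $\varphi$-coefficient combines the variation of $\nu_{\partial M}$ along $\partial M$ (contributing $A_{\partial M}(\nu,\nu)$, after accounting for how the footpoint moves) and the variation of $\nu_\varphi$ from the twisting of $\tnu$ (contributing the $A_\Sigma(\nu_{\partial M},\nu_{\partial M})$ piece), both normalized by $1-\Theta^2$ from the twisted-normal denominator in \ref{E:tN}; a direct check in a convenient adapted frame at $p$ confirms the formula.

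For the remainder estimate \ref{E:Theta-estimate}, the strategy is Taylor's theorem with quadratic remainder applied to the map $\varphi\mapsto\Theta_\varphi$ viewed, pointwise on $\partial\Sigma$, as a smooth function $F$ of the finitely many real quantities $\big(\varphi(p),\nabla_{\partial\Sigma}\varphi(p),\partial_\eta\varphi(p)\big)$ together with the coordinate data of $X$ at points within distance $\sim\epsilon$; the $C^{1,\beta}(\partial\Sigma,g)$ estimate then follows by differentiating this expression once more along $\partial\Sigma$ and using the multiplicative and scaling properties \ref{E:norm-multi}--\ref{E:norm-f12} of the weighted Hölder norms, the equivalence of $g$- and $g_0$-norms from \ref{L:C-bounded-equiv}, and the uniform $C^{4,\beta}$-bounds of \ref{L:C-bounded}, so that $\|\Theta_\varphi-\Theta-\Bcal\varphi:C^{1,\beta}(\partial\Sigma,g)\|\le C(c_1)\,\|\varphi:C^{2,\beta}(\Sigma,g)\|^2$ once $\epsilon_\Theta(c_1)$ is chosen small enough to keep the argument of $F$ in the range where the second derivatives of $F$ are uniformly bounded. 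The main obstacle I anticipate is bookkeeping rather than conceptual: carefully tracking how the footpoint $\tX_\varphi(p)$ deviates from $p$ inside $\partial M$ — since the twisted flow is designed to keep boundary points on $\partial M$, one must verify that $\tF_{\varphi(p)}(p)\in\partial M$ for $p\in\partial\Sigma$ (this uses \ref{L:tN}.ii) and then correctly expand $\nu_{\partial M}$ at that moved footpoint — and organizing all the derivative terms so that the linear part collapses exactly to $\Bcal\varphi$ and everything genuinely quadratic (in particular the $O(s^2)$ terms of the flow, the normalization denominators, and the cross terms between $\partial_\eta\varphi$ and $\varphi$) is absorbed into the remainder with a $c_1$-uniform constant. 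A parallel (and slightly simpler) computation for the mean curvature, carried out in \ref{P:local-estimate-H}, serves as a useful template for the Taylor-remainder machinery.
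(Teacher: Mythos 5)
Your proposal is correct and follows essentially the same route as the paper: fix $\epsilon=\epsilon(c_1)$ via \ref{L:C-bounded-eps}, bound $\tnu$ uniformly using \ref{E:C-bounded-b} and \ref{L:C-bounded}, expand the flow to get $\tX_\varphi = X + \varphi\tnu + O(\varphi^2)$ (whence admissibility), expand $\nu_\varphi$ and $\nu_{\partial M}\circ\tX_\varphi$ to first order in $\varphi$, and combine with the boundary restriction $\tnu=(\nu-\Theta\nu_{\partial M})/(1-\Theta^2)$ to identify the linear part with $\Bcal\varphi$ and absorb the rest quadratically. The paper's proof is organized around exactly these intermediate estimates (\ref{E:tN-bound}, \ref{E:tX-bound}, \ref{E:N-bound}, \ref{E:nu-bound}, \ref{E:tN-along-boundary}) and leaves the final coefficient bookkeeping at a comparable level of detail to yours.
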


\begin{proof}
Recall $\Sigma=B^n_+$. 
Let $X:\Sigma \to M$ be a proper $C^{5,\beta}$-immersion with an extension $X:\tSi \to \B^{n+1}$ 
and a choice of the unit normal $\nu:\tSi \to \R^{n+1}$ such that $X:\tSi \to \tM$ has $c_1$-bounded geometry as in \ref{D:C-bdd-geom}. 
We fix once and for all an $\epsilon>0$ given by \ref{L:C-bounded-eps}, which depends only on $c_1$. 
From now on we will use $C>0$ to denote any constant depending only on $c_1$.

Suppose $\varphi:B^n_+ \to \R$ is a $C^{2,\beta}$-function satisfying $\|\varphi:C^{2,\beta}(B^n_+,g)\| < \epsilon_\Theta$. 
First, we show that $\varphi$ is admissible in the sense of \ref{D:admissible} when $\epsilon_\Theta$ is sufficiently small (depending only on $c_1$). 
In other words, we have to prove that $\widetilde{\Immer}[X,\epsilon; \varphi,\Sigma]:\Sigma \to M$ is a proper $C^{2,\beta}$ immersion (recall \ref{D:twisted-graph}).
First of all, we prove the following uniform bound on the twisted normal vector $\tnu$ defined in \ref{D:twisted-normal} 
\begin{equation}
\label{E:tN-bound}
\| \tnu:C^{4,\beta}(V_{\epsilon/3}(\tSi),g)\| \leq C.
\end{equation}
To establish \ref{E:tN-bound}, first observe that $\nu^\ep=\nu$ inside $V_{\epsilon/3}(\tSi)$ and thus
\[ \tnu= \frac{\nu-g(\nu,\nu^\ep_{\partial M}) \nu^\ep_{\partial M}}{1- g(\nu,\nu^\ep_{\partial M})^2} \qquad \text{in $V_{\epsilon/3}(\tSi)$}. \]
By \ref{E:C-bounded-b}, we have the uniform estimate $1-g(\nu,\nu^\ep_{\partial M})^2 \geq C^{-1}$ on $V_{\epsilon/3}(\tSi)$ for $\epsilon$ sufficiently small depending only on $c_1$. Moreover, by \ref{L:C-bounded}.ii and iii and that $\|\psi_{cut}^{\epsilon}\|_{C^k} \leq C(k)/\epsilon^k$ for all $k \in \N$, we have 
\begin{equation}
\label{E:nuV} 
\|\nu: C^{4,\beta}(V_{\epsilon/3}(\tSi),g)\| \leq C \quad \text{and} \quad \|\nu^\ep_{\partial M}: C^{4,\beta}(V_{\epsilon/3}(\tSi),g)\| \leq C.  
\end{equation} 
All of these estimates together yield \ref{E:tN-bound}.

Now, denote $\tX_\varphi=\widetilde{\Immer}[X,\epsilon; \varphi,\Sigma]$ as in \ref{D:perturbed-Theta}. 
By \ref{L:tN}.iv, the twisted normal exponential map $\{\tF_s\}$ generated by $\tnu$ (recall \ref{D:twisted-exp}) satisfies $\tF_t(p) \in V_{\epsilon/3}(\tSi)$ for all $t \in (-\epsilon/3,\epsilon/3)$, $p \in \Sigma$. Using this and \ref{E:tN-bound}, we have for all $t \in (-\epsilon/3,\epsilon/3)$,
\begin{equation*}
\| \tF_t - X - t \tnu : C^{2,\beta}(\Sigma,g)\| \leq C t^2.
\end{equation*}
Using \ref{E:norm-multi}, it is easy to see that the estimate above implies
\begin{equation}
\label{E:tX-bound}
\| \tX_\varphi - X - \varphi \tnu: C^{2,\beta}(\Sigma,g)\| \leq C \|\varphi:C^{2,\beta}(\Sigma,g)\|^2.
\end{equation}
By \ref{E:C-bounded-a} and \ref{E:tN-bound}, we can conclude from \ref{E:tX-bound} that when $\epsilon_\Theta$ is sufficiently small (but depending only on $c_1$), $\tX_\varphi:\Sigma \to M$ is a $C^{2,\beta}$-immersion (recall \ref{D:twisted-graph}).
Hence, we have proved that $\varphi$ is admissible when $\epsilon_\Theta$ is sufficiently small (depending only on $c_1$).

It remains to prove the uniform estimate \ref{E:Theta-estimate}. First of all, from definitions \ref{D:perturbed-Theta} and \ref{D:linear-operators}, the function $\Theta_\varphi - \Theta - \Bcal \varphi: \partial \Sigma \to \R$ depends only on the values of $\varphi$ in an arbitrarily small tubular neighborhood of $\partial \Sigma$ in $\Sigma$. Note that we have 
\[ \tnu= \frac{\nu-g(\nu,\nu_{\partial M}) \nu_{\partial M}}{1-g(\nu,\nu_{\partial M})^2} \qquad \text{ in $V_{\epsilon/3}(\tSi) \cap V_{\epsilon/3}(\partial M)$}.\]
In particular, we have
\begin{equation}
\label{E:tN-along-boundary}
\tnu=\frac{\nu-\Theta \nu_{\partial M}}{1-\Theta^2} \qquad \text{along $\partial \Sigma$},
\end{equation}
where $\Theta:\partial \Sigma \to \R$ is the boundary angle (recall \ref{D:boundary-angle}) for the proper immersion $X:\Sigma \to M$. 
Let $\nu_\varphi:\Sigma \to TM$ be the unit normal of the proper immersion $\tX_\varphi: \Sigma \to M$ (recall \ref{D:perturbed-Theta}). Then, the perturbed boundary angle $\Theta_\varphi: \partial \Sigma \to \R$ is given by
\begin{equation}
\label{E:perturbed-Theta}
\Theta_\varphi(p)=\left. g\Big(\nu_\varphi(p)\, ,\, \nu_{\partial M}\right|_{\tX_\varphi(p)} \Big).
\end{equation}

From \ref{E:tX-bound} and \ref{E:tN-bound},  we have the following estimate on the unit normals:
\begin{equation}
\label{E:N-bound}
\| \nu_\varphi - \nu - \varphi \nabla_{\tnu_{||}} \nu + X_*(\nabla \varphi): C^{1,\beta}(\Sigma,g)\| \leq C \|\varphi:C^{2,\beta}(\Sigma,g)\|^2,
\end{equation}
where $\tnu_{||}=X^*(\tnu- \nu)$ is the tangential (to $\Sigma$) component of $\tnu$, $\nabla$ is the pullback connection by $X:\Sigma \to \tM$.
On the other hand, by \ref{E:tX-bound}, \ref{L:C-bounded}.iii and \ref{L:tN}.ii, we have
\begin{equation}
\label{E:nu-bound}
\| \nu_{\partial M} \circ \tX_\varphi - \nu_{\partial M} \circ X - \varphi (\nabla_{\tnu} \nu_{\partial M}) \circ X:C^{2,\beta}(\partial \Sigma,g)\| \leq C \|\varphi:C^{2,\beta}(\Sigma,g)\|^2.
\end{equation}
Finally, the estimate \ref{E:Theta-estimate} follows directly from \ref{E:perturbed-Theta}, \ref{E:N-bound}, \ref{E:nu-bound}, \ref{E:tN-along-boundary}, \ref{D:boundary-angle}, \ref{D:second-fundamental} and \ref{D:linear-operators}.
\end{proof}

\subsection*{Local estimates for mean curvature}

At the end of this section, we prove a uniform local estimate on the perturbed mean curvature $H_\varphi$ defined in \ref{D:perturbed-Theta}. 
Recall that the mean curvature is defined to be the sum of the principal curvatures.
Note that all the norms on $\Sigma$ in the next proposition can be taken with respect to either $g$ or $g_0$, according to \ref{L:C-bounded-equiv}.

\begin{prop}[Linear and nonlinear parts of the mean curvature]
\label{P:local-estimate-H}
There exists a small constant $\epsilon_H=\epsilon_H(c_1)>0$ such that 
if $X:\tSi \to \tM$ has $c_1$-bounded geometry as in \ref{D:C-bdd-geom}  with $\Sigma=B^n_+$ or $B^n$, and $\varphi:\Sigma \to \R$ is a $C^{2,\beta}$ function satisfying
\begin{equation}
\label{E:eH} 
\|\varphi:C^{2,\beta}(\Sigma,g)\| < \epsilon_H(c_1),
\end{equation}
then $\varphi$ is admissible (recall \ref{D:admissible}) and we have the uniform estimate for some constant $C=C(c_1)>0$ (recall \ref{D:linear-operators} and \ref{D:perturbed-Theta}),
\begin{equation}
\label{E:H-varphi-bound}
\| H_\varphi -H -\Lcal \varphi - \tnu_{||}(H) \varphi :C^{0,\beta}(\Sigma,g)\| \leq C \| \varphi:C^{2,\beta}(\Sigma,g)\|^2,
\end{equation}
where $\tnu_{||}(H)$ is the directional derivative of $H$ along the tangent vector $\tnu_{||}:=X^*(\tnu-\nu)$.

Moreover, 
if $X':\tSi \to \tM$ is the extension of another proper immersion $X':\Sigma \to M$ such that $X':\tSi \to \tM$ has $c_1$-bounded geometry 
and that $X$ and $X'$ agree up to first order at $0$ 
(that is $X(0)=X'(0)$ and $\partial X(0)=\partial X'(0)$), and the same parameter $\epsilon$ as in \ref{L:C-bounded-eps} is chosen for both pairs,  
then we have the following estimates 
\begin{alignat}{1} 
\label{E:metric-est}
\| \partial \tX'_\varphi - \partial \tX_\varphi :C^{1,\beta}(\Sigma)\| \, \leq 
&\, C \, \| \partial^2 X' - \partial^2 X:C^{1,\beta}(\Sigma)\|, 
\\
\label{E:normal-est}
\| \nu'_\varphi - \nu_\varphi :C^{1,\beta}(\Sigma)\| \, \leq &\, C \, \| \partial^2 X' - \partial^2 X:C^{1,\beta}(\Sigma)\|, 
\\
\label{E:local-estimate-H}
\| Q_{X',\varphi} - Q_{X,\varphi}:C^{0,\beta}(\Sigma) \| \, \leq &\, C \, \|\partial^2 X'-\partial^2 X :C^{1,\beta} (\Sigma)\| \, \|\varphi:C^{2,\beta}(\Sigma)\|^2, 
\end{alignat} 
where $\tX_\varphi=\widetilde{\Immer}[X,\epsilon;\varphi,\Sigma]:\Sigma \to M$ is the immersion defined in \ref{D:twisted-graph} with unit normal $\nu_\varphi:\Sigma \to TM$ and $Q_{X,\varphi}:= H_\varphi -H -\Lcal \varphi- \tnu_{||}(H) \varphi$ is the error term in \ref{E:H-varphi-bound}, and similarly for $X'$.
\end{prop}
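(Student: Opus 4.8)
The plan is to handle the two halves separately, reusing the machinery from the proof of \ref{P:local-estimate-Theta}. For the expansion \ref{E:H-varphi-bound}, admissibility of $\varphi$ and the basic first order expansion $\| \tX_\varphi - X - \varphi\tnu : C^{2,\beta}(\Sigma,g)\| \le C\|\varphi : C^{2,\beta}(\Sigma,g)\|^2$ (the analogue of \ref{E:tX-bound}, valid verbatim when $\Sigma=B^n$, in which case $\tnu=\nu$, $\tnu_{||}=0$, and the statement reduces to the standard normal graph computation of \cite[Appendix A]{Kapouleas-Yang10}) are obtained exactly as there, after fixing $\epsilon=\epsilon(c_1)$ via \ref{L:C-bounded-eps}. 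The idea is then to write $H_\varphi$ in the local coordinates of \ref{D:C-bdd-geom} as a universal smooth function $\mathcal H$, quasilinear in the top order, of the $1$-jet and $2$-jet of $\tX_\varphi$ together with the ambient metric coefficients $g_{KL}$ and their first derivatives evaluated along $\tX_\varphi$; all of these are uniformly bounded in the appropriate $C^{\cdot,\beta}$ norms in terms of $c_1$ by \ref{L:C-bounded}, \ref{E:C-bounded-a} and \ref{E:M-C-bounded}, and for the second derivatives of the perturbation one uses that $\partial^2(\varphi\tnu)$ lies in $C^{0,\beta}$ with norm controlled by $\|\varphi:C^{2,\beta}\|$ together with \ref{E:tN-bound}. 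Taylor expanding $\mathcal H$ to first order along the perturbation $\varphi\tnu=\varphi\nu+\varphi\,\tnu_{||}$ identifies the linear part: the normal piece $\varphi\nu$ contributes the Jacobi operator $\Lcal\varphi$ by the classical second variation formula (using \ref{D:second-fundamental} and $\Ric(\nu,\nu)$), and the tangential piece $\varphi\,\tnu_{||}$ contributes the reparametrization term $\tnu_{||}(H)\varphi$. The quadratic remainder $Q_{X,\varphi}=H_\varphi-H-\Lcal\varphi-\tnu_{||}(H)\varphi$ is then estimated in $C^{0,\beta}(\Sigma,g)$ by $C\|\varphi:C^{2,\beta}(\Sigma,g)\|^2$ using Taylor's theorem with integral remainder, the uniform bounds above, and the multiplicative property \ref{E:norm-multi}; the interchangeability of $g$ and $g_0$ is \ref{L:C-bounded-equiv}.

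For the comparison estimates \ref{E:metric-est}--\ref{E:local-estimate-H}, the key point is that every object in the construction depends on the coordinate $2$-jet of $X$ in a locally Lipschitz manner with constants depending only on $c_1$ (and on the fixed $\epsilon=\epsilon(c_1)$), while first order agreement of $X$ and $X'$ at $0$ forces the lower order differences to be subordinate: on the bounded domain $\tSi=B^n(3/2)$ one has $\|\partial X'-\partial X:C^0\| \le C\|\partial^2 X'-\partial^2 X:C^0\|$ and $\|X'-X:C^0\|\le C\|\partial X'-\partial X:C^0\|$, hence $\|X'-X:C^{2,\beta}(\tSi)\|\le C\|\partial^2 X'-\partial^2 X:C^{1,\beta}(\tSi)\|$. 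The plan is to propagate this difference through the construction: first the unit normals $\nu,\nu'$, the signed distance functions $\rho_{\tSi},\rho_{\tSi'}$ and nearest point projections, hence the twisted normal fields $\tnu,\tnu'$ (using the uniform lower bound $1-g(\nu,\nu^\epsilon_{\partial M})^2\ge C^{-1}$ from \ref{E:C-bounded-b} on the common denominator), and then the flows $\tF_s,\tF'_s$ via a Gronwall estimate applied to the defining ODE, all differ by $C\|\partial^2 X'-\partial^2 X:C^{1,\beta}\|$ in the relevant norms; this yields \ref{E:metric-est}, and \ref{E:normal-est} follows since $\nu_\varphi$ is a smooth, $c_1$-controlled function of $\partial\tX_\varphi$ and the ambient metric along $\tX_\varphi$. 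For \ref{E:local-estimate-H} one writes $Q_{X,\varphi}$ in integral remainder form along the segment $t\mapsto t\varphi$, so that it becomes an integral of a smooth $c_1$-controlled function of the $2$-jet of $X$ which is manifestly quadratic in $\varphi$; differencing in the $X$-data and applying the mean value theorem keeps both $\varphi$-factors and produces the bound $C\|\partial^2 X'-\partial^2 X:C^{1,\beta}(\Sigma)\|\,\|\varphi:C^{2,\beta}(\Sigma)\|^2$.

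I expect the main obstacle to be the bookkeeping in the second part: verifying that the twisted normal field $\tnu$ and its flow $\tF_s$ --- which involve the cutoffs $\psi_{cut}^\epsilon$ at scale $\epsilon$, the distance functions $\rho_{\tSi}$ and $\rho_{\partial M}$, and the normalizing denominator $1-g(\nu^\epsilon,\nu^\epsilon_{\partial M})^2$ --- depend on the $2$-jet of $X$ in a Lipschitz way with constants depending only on $c_1$, uniformly in the (now fixed) $\epsilon$, and then carrying this control through the ODE defining $\tF_s$ all the way to $C^{1,\beta}$ bounds on $\partial\tX_\varphi$ and $\partial^2\tX_\varphi$. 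A secondary delicate point is ensuring that the quadratic-in-$\varphi$ structure of $Q_{X,\varphi}$ genuinely survives differentiation in the $X$-data, so that one gets the product bound with the factor $\|\varphi:C^{2,\beta}\|^2$ and not merely $\|\varphi:C^{2,\beta}\|$; the integral remainder representation is designed precisely to make this transparent.
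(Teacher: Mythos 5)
Your proposal is correct and follows essentially the same route as the paper: the paper likewise establishes admissibility via the analogues of \ref{E:tX-bound} and \ref{E:N-bound}, writes $H_\varphi=\sum_{i,j}g^{ij}_\varphi\langle\nu_\varphi,\partial_i\partial_j\tX_\varphi\rangle$ in local coordinates, extracts $\Lcal\varphi+\tnu_{||}(H)\varphi$ as the linear part with a quadratic remainder, and obtains \ref{E:metric-est}--\ref{E:local-estimate-H} by propagating the first-order agreement of $X$ and $X'$ through the twisted-normal construction and the mean curvature formula. Your integral-remainder packaging of $Q_{X,\varphi}$ and the explicit Gronwall comparison of the flows are just slightly more detailed renderings of steps the paper treats tersely.
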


\begin{proof}
For simplicity we will just present the proof for $\tM=\R^{n+1}$ with the Euclidean metric $g_0$. 
Note that \ref{E:tX-bound} and \ref{E:N-bound} holds for both cases $\Sigma=B^n_+$ and $\Sigma=B^n$, from which the admissibility of $\varphi$ follows. 
The perturbed mean curvature (recall \ref{D:perturbed-Theta}) can be expressed (with $g=g_0$ denoted by $\langle \cdot, \cdot \rangle$) by the formula
\begin{equation}
\label{E:H-local-coord}
H_\varphi=\sum_{i,j=1}^n g^{ij}_\varphi \langle \nu_\varphi, \partial_i \partial_j \tX_\varphi \rangle,
\end{equation}
where $g^{ij}_\varphi$ is the inverse of the induced metric from $\tX_\varphi:\Sigma \to B^{n+1}$ which satisfies the estimate:
\begin{equation}
\label{E:g-bound}
\| g^{ij}_\varphi - g^{ij} +2 g^{ik} g^{j\ell} \langle \partial_\ell X,\tnu \rangle \partial_k \varphi +2 g^{ik} g^{j\ell} \langle \partial_k X, \partial_\ell \tnu \rangle \varphi: C^{2,\beta}(\Sigma)\| \leq C \|\varphi:C^{2,\beta}(\Sigma)\|^2.
\end{equation}
Using the estimates \ref{E:tX-bound}, \ref{E:N-bound} and \ref{E:g-bound} in \ref{E:H-local-coord}, we obtain the uniform estimate \ref{E:H-varphi-bound}. Note that we have the extra zeroth order term $\tnu_{||}(H) \varphi$ in the linearization because $\tnu$ is not normal to $\Sigma$ (see for example \cite[Lemma B.2]{Kapouleas97}). 
For the second part, under the assumption that $X$ and $X'$ agree up to first order at $0$, we have the following simple estimates:
\beq
\label{E:local-estimate-g}
\begin{aligned}
\|\partial X'-\partial X : C^{2,\beta}(\Sigma)\| \, \leq &\, C \, \| \partial^2 X' - \partial^2 X : C^{1,\beta}(\Sigma)\|, 
\\ 
\| g' - g :C^{2,\beta}(\Sigma)\| \, \leq &\, C \, \|\partial^2 X'-\partial^2 X :C^{1,\beta} (\Sigma)\|, 
\\ 
\|\nu'-\nu:C^{2,\beta}(\Sigma)\| \leq &\, C \, \| \partial^2 X' - \partial^2 X : C^{1,\beta}(\Sigma)\|,
\end{aligned} 
\eeq
where $g$ and $g'$ are the induced metric on $\Sigma$ by the immersions $X:\Sigma \to M$ and $X':\Sigma \to M$ respectively, 
whose unit normals are given by $\nu$ and $\nu'$. 
The estimates \ref{E:local-estimate-g} and \ref{E:tN-bound} then imply \ref{E:metric-est}, from which \ref{E:normal-est} follows. 
Finally, \ref{E:local-estimate-H} follows from the expression of mean curvature \ref{E:H-local-coord} 
together with the estimates \ref{E:metric-est} and \ref{E:normal-est}, 
and \ref{E:norm-multi}. 
\end{proof} 

\begin{remark}
\label{R:H-linearized}
Note that there are two special cases of \ref{P:local-estimate-H} that are particularly interesting. 
If $X:\Sigma \to M$ is a minimal immersion (i.e. $H \equiv 0$), then the linearized operator is the same as the standard Jacobi operator. 
The same happens if $\tnu=\nu$ everywhere on $\Sigma$ (for example, if $X(\Sigma) \cap V_\epsilon(\partial M) = \emptyset$). 
In this article, we will have either one of the scenarios so the linearized problem reduced to the standard one. 
Note that in case $\tnu=\nu$ everywhere on $\Sigma$, we have $\widetilde{\Immer}[X,\epsilon;\varphi,\Sigma]=\Immer[X,\varphi;\Sigma]$ (recall \ref{d:imm}).
\end{remark}


\section{Rotationally symmetric free boundary minimal surfaces}
\label{S:rotation-free}

In this section we study free boundary minimal surfaces in $\B^3$ with rotational symmetry. 
For convenience and without loss of generality (see \ref{C:unique-catenoid}) 
we will assume that the axis of symmetry is the $z$-axis: 

\begin{definition}
\label{D:G'}
We define $\group_\infty$ to be the subgroup of isometries of $\B^3$ generated by 
$O(2)$ acting as usual on the $xy$-plane and trivially on the $z$-axis 
and by the reflection about the $xy$-plane defined by
$ (x,y,z) \mapsto (x,y,-z)$. 
\end{definition}

\begin{figure}
\includegraphics[height=6cm]{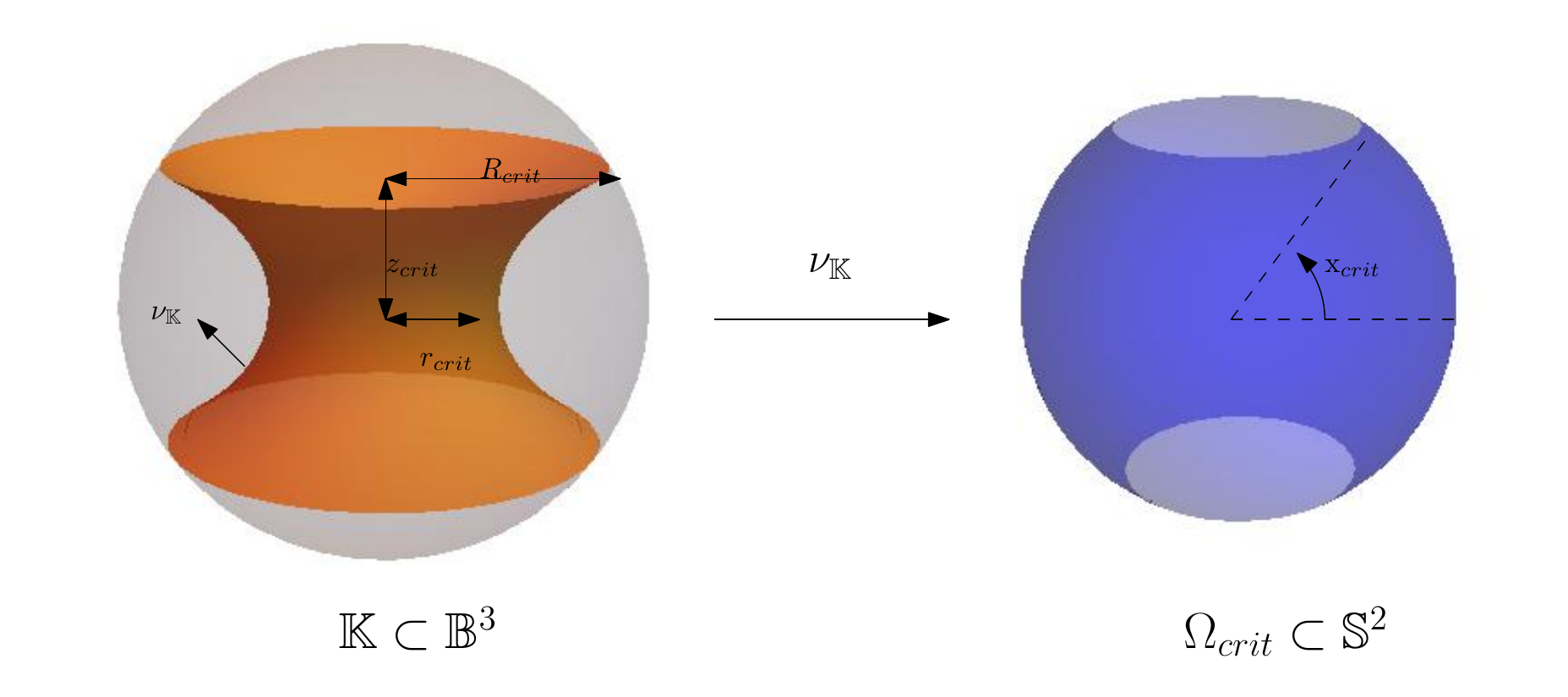}
\caption{The critical catenoid $\K$ and its Gauss map $\nu_\K$}
\label{fig:K}
\end{figure}

In \cite{Fraser-Schoen11}, Fraser-Schoen discovered a rotationally invariant example 
of a free boundary minimal surface in $\B^3$ other than the equatorial disk, which they called the critical catenoid: 

\begin{lemma}[\cite{Fraser-Schoen11}]
\label{L:K-property}
There is a compact embedded free boundary minimal surface in $\B^3$ called 
the critical catenoid, denoted by $\K$ 
(see Figure \ref{fig:K}) 
which satisfies the following: \\ 
(i). $\K$ meets $\Sph^2$ orthogonally along two circles of radius $R_{crit}$ 
lying on the planes $\{z=\pm z_{crit}\}$ where 
$R_{crit} \approx 0.834$ is the unique positive solution to the equation $R^{-1}_{crit}= \coth R_{crit}^{-1}$ 
and $z_{crit}:=\sqrt{1-R_{crit}^2} \approx 0.552$.\\
(ii) $\K$ meets the unit disc $\D:=\{(x,y,z)\in \B^3:z=0\}$ orthogonally 
along a circle of radius $r_{crit}=z_{crit} R_{crit} \approx 0.460$.\\
(iii). $\K$ is invariant under $\group_\infty$ and is the portion inside $\B^3$ of the catenoid 
obtained by rotating the graph of $x=r_{crit}\cosh(z/r_{crit})$. 
\end{lemma}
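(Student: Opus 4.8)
The plan is to exhibit $\K$ explicitly as a suitably truncated catenoid and to show that the free boundary condition pins down the catenoid's scale parameter through the transcendental equation $R^{-1}=\coth R^{-1}$. First I would recall the classical one-parameter family of catenoids: for $a>0$ let $\Circle_a\subset\R^3$ be the surface of revolution about the $z$-axis with profile curve $x=\rho_a(z):=a\cosh(z/a)$. A direct computation verifies the profile equation $\rho_a\rho_a''=1+(\rho_a')^2$, which is equivalent to $H\equiv 0$; hence each $\Circle_a$ is a complete minimal surface, and it is embedded because $\rho_a$ is positive and strictly convex (its profile is an embedded graph, and $\Circle_a\cap\B^3$ is an embedded annulus as soon as $\rho_a(0)=a<1$). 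Each $\Circle_a$ is manifestly invariant under $\group_\infty$: rotations about and reflections through the $z$-axis give the $O(2)$ factor, while $\rho_a$ is even in $z$, giving invariance under $(x,y,z)\mapsto(x,y,-z)$.

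Next I would compute, in the parametrization $(z,\theta)\mapsto(\rho_a(z)\cos\theta,\rho_a(z)\sin\theta,z)$, that the unit normal of $\Circle_a$ is a positive multiple of $(-\cos\theta,-\sin\theta,\rho_a'(z))$. The intersection $\Circle_a\cap\Sph^2$ consists of the two circles $\{z=\pm z_0(a)\}$, where $z_0(a)>0$ solves $\rho_a(z_0)^2+z_0^2=1$; these circles lie in the planes $z=\pm z_0$ and have radius $R:=\rho_a(z_0)$, so $R^2+z_0^2=1$ and $z_0=\sqrt{1-R^2}$ automatically. Since the outward unit normal of $\Sph^2=\partial\B^3$ at a point is the position vector $X$ itself, the condition that $\Circle_a\cap\B^3$ be free boundary in $\B^3$ is exactly $\langle X,\nu\rangle=0$ along these circles (recall \ref{L:boundary-angle} and \ref{D:boundary-angle}); from the formula for $\nu$ this reduces to the single scalar equation $\rho_a(z_0)=z_0\,\rho_a'(z_0)$.

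Then I would solve the resulting system. Setting $u:=z_0/a$, the free boundary condition $a\cosh u=(au)\sinh u$ becomes $\coth u=u$. As $u\mapsto\coth u$ strictly decreases from $+\infty$ to $1$ on $(0,\infty)$ while $u\mapsto u$ strictly increases from $0$, there is a unique positive root $u_*\approx 1.1997$. Using $\cosh u_*=u_*\sinh u_*$ together with $\cosh^2-\sinh^2=1$ gives $\sinh u_*=(u_*^2-1)^{-1/2}$ and $\cosh u_*=u_*(u_*^2-1)^{-1/2}$; substituting into the sphere relation $a^2(\cosh^2 u_*+u_*^2)=1$ yields $a=\sqrt{u_*^2-1}/u_*^2$, hence $z_0=a u_*=\sqrt{u_*^2-1}/u_*$ and $R=a\cosh u_*=1/u_*$. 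Thus $u_*=1/R$, and feeding this back into $\coth u_*=u_*$ produces exactly $R^{-1}=\coth(R^{-1})$, the defining equation of $R_{crit}$. Therefore $R=R_{crit}$, $z_0=z_{crit}=\sqrt{1-R_{crit}^2}$, and $a=z_0R=z_{crit}R_{crit}=r_{crit}$. Defining $\K:=\Circle_{r_{crit}}\cap\B^3$ then yields (i) and (iii) at once, since $\rho_{r_{crit}}(z)=r_{crit}\cosh(z/r_{crit})$. For (ii): $\K$ meets $\D=\{z=0\}\cap\B^3$ along the waist circle of radius $\rho_{r_{crit}}(0)=r_{crit}$, and because $\rho_{r_{crit}}'(0)=0$ the normal $\nu$ is horizontal there, hence orthogonal to the normal $e_z$ of $\D$, so $\K$ and $\D$ meet orthogonally along that circle.

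All the computations are elementary; the only steps requiring a little care are the existence and uniqueness analysis for the root of $\coth u=u$ (equivalently, of $R_{crit}$) and the bookkeeping that translates the geometric free boundary condition into $\rho_a(z_0)=z_0\rho_a'(z_0)$. I do not expect any substantive obstacle here — one could instead simply cite \cite{Fraser-Schoen11} — but I would include the computation so that the article remains self-contained and the explicit constants $R_{crit}, z_{crit}, r_{crit}$ are available for later use.
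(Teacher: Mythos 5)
Your proposal is correct and follows essentially the same route as the paper: both reduce the lemma to the system consisting of the catenoid profile equation, the orthogonality condition $\rho(z_0)=z_0\rho'(z_0)$ (the paper's second displayed equation), and the sphere relation $R^2+z_0^2=1$, and then solve to obtain $R^{-1}=\coth R^{-1}$ and $r_{crit}=z_{crit}R_{crit}$. The only differences are cosmetic — you substitute $u=z_0/a$ and add the monotonicity argument for uniqueness of the root, while the paper manipulates the three equations directly.
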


\begin{proof}
By defining $\K$ as in (iii) for any $r_{crit}\in(0,1)$ we obtain clearly 
a surface with the desired properties except that we have to arrange 
that (i) and (ii) are satisfied. 
This amounts to satisfying the equations 
$$
\frac{R_{crit}}{r_{crit}}=\cosh\frac{z_{crit}}{r_{crit}},
\qquad
\frac{R_{crit}}{z_{crit}}=\sinh\frac{z_{crit}}{r_{crit}},
\qquad
R^2_{crit}+z^2_{crit}=1, 
$$
where the first equation follows from the equation in (iii), 
the second equation amounts to the orthogonality to $\Sph^2$ at the boundary and is obtained 
by differentiating the equation in (iii), 
and the third equation ensures that the circles are the intersections of the catenoid with $\Sph^2$. 
By dividing the third equation by $z^2_{crit}$ 
and using the second we obtain $z^{-2}_{crit}=\cosh^2(z_{crit}/r_{crit})$. 
By using the first equation we conclude then that 
$r_{crit}=z_{crit} R_{crit}$. 
We complete then the proof by dividing the first equation by the second. 
\end{proof}

We adopt now some notation from \cite{Kapouleas14} which we will find useful: 
We will use the spherical coordinates $(\xx,\yy)$ on $\Sph^2 \setminus \{(0,0,\pm 1)\}$ defined by
\beq
\label{E:spherical-coord}
\ThetaSph(\xx,\yy) :=(\cos \xx \cos \yy, \cos \xx \sin \yy, \sin \xx), \qquad \xx \in \left(-\frac{\pi}{2},\frac{\pi}{2} \right), \, \yy \in \R. 
\eeq
Note that $\xx$ and $\yy$ are the geometric latitude and longitude on $\Sph^2$. 
The equator $\Sph^1_{eq}\subset\Sph^2$ is thus identified with $\{\xx =0\}$. 
Note that we orient $\Sph^2$ by the outward unit normal so the map $\ThetaSph$ defined above is orientation-reversing.
We also have 

\begin{definition}[{\cite[Definition 2.18 and lemma 2.19]{Kapouleas14}}] 
\label{Dphieo}
We define smooth rotationally invariant functions
$\phio$ on $\Sph^2$ and  
$\phie$ on $\Sph^2$ punctured at the poles 
by
$$
\phio=\sin\xx,
\qquad
\phie=
1- \sin\xx \, \log\frac{1+\sin\xx}{\cos\xx} 
=
1+ \sin\xx \, \log\frac{1-\sin\xx}{\cos\xx} .
$$
Moreover $\phie$ as a function of $\xx$ has a unique root on $(0,\pi/2)$ 
which in this article we will denote by $\xx_{crit}$ 
(in \cite{Kapouleas14} it was denoted by $\xx_{root}$). 
\end{definition}

\begin{lemma}
\label{L:K-Gauss}
The Gauss map $\nu_\K:\K \to \Sph^2$ 
chosen to point away from the $z$-axis 
is an anti-conformal diffeomorphism onto the spherical domain 
$\Omega_{crit}:= \{ \ThetaSph(\xx,\yy): \xx \in [-\xx_{crit},\xx_{crit}], \yy \in \R\} \subset \Sph^2$. 
Moreover we have \\ 
(i). $\nu_\K\cdot e_z=\phio\circ\nu_\K$
and
$\nu_\K\cdot X_\K=\phie\circ\nu_\K$ 
where $X_\K:\K\to\B^3\subset\R^3$ is the inclusion map. 
Therefore,  
$\phio\circ\nu_\K$
and 
$\phie\circ\nu_\K$ 
are Jacobi fields induced by the translation along the $z$-axis 
and by scaling respectively. \\ 
(ii). 
$R_{crit}=\sin \xx_{crit}$,  
$z_{crit}=\cos \xx_{crit}$, 
$r_{crit}=\frac12 \sin (2\xx_{crit})$,  
and 
$\xx_{crit} \approx 0.986 > \pi/4 $.  
\end{lemma}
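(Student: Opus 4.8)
The plan is to reduce everything to the already-proven computations of \ref{L:K-property} and the definition \ref{Dphieo} of $\phie,\phio$ together with the root $\xx_{crit}$, and to identify the Gauss map of the catenoid explicitly. First I would parametrize $\K$ by the profile curve: writing a point of $\K$ as $(r_{crit}\cosh(z/r_{crit})\cos\yy,\; r_{crit}\cosh(z/r_{crit})\sin\yy,\; z)$, the outward-pointing unit normal (pointing away from the $z$-axis) is, up to the standard catenoid computation, $\nu_\K = (\sech(z/r_{crit})\cos\yy,\; \sech(z/r_{crit})\sin\yy,\; -\tanh(z/r_{crit}))$ --- wait, one must check the sign so that it points away from the axis; the correct choice is $\nu_\K=(\sech(z/r_{crit})\cos\yy,\sech(z/r_{crit})\sin\yy, -\tanh(z/r_{crit}))$ reflected appropriately, but in any case $|\nu_\K\cdot e_z|=\tanh(|z|/r_{crit})$ and the horizontal part has length $\sech(z/r_{crit})$. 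Comparing with \ref{E:spherical-coord}, this means $\nu_\K=\ThetaSph(\xx,\yy)$ with $\sin\xx = -\tanh(z/r_{crit})$ (so $\cos\xx=\sech(z/r_{crit})$ and $\xx$ ranges monotonically as $z$ does), which already shows $\nu_\K$ is a diffeomorphism onto a latitude band $\{|\xx|\le \xx_0\}$ and, since the catenoid is a conformally flat minimal surface whose Gauss map is the (anti)holomorphic stereographic-type map, that $\nu_\K$ is anti-conformal. The value $\xx_0$ is determined by the endpoint $z=\pm z_{crit}R_{crit}/\,$? --- more precisely by plugging $z=z_{crit}R_{crit}=r_{crit}$ into $\sin\xx_0=\tanh(z_{crit}/r_{crit})$; I will show below this forces $\xx_0=\xx_{crit}$.

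Next I would prove (i). For $\nu_\K\cdot e_z$: from the parametrization above $\nu_\K\cdot e_z = -\tanh(z/r_{crit}) = \sin\xx = \phio(\nu_\K)$, which is exactly the first identity; that $\phio\circ\nu_\K=\sin\xx\circ\nu_\K$ is the Jacobi field from vertical translation is the standard fact that $N\cdot e_z$ is Jacobi for any minimal surface, and $\phio=\sin\xx$ by \ref{Dphieo}. For $\nu_\K\cdot X_\K$: compute $X_\K\cdot\nu_\K = r_{crit}\cosh(z/r_{crit})\sech(z/r_{crit}) + z\cdot(-\tanh(z/r_{crit})) = r_{crit} - z\tanh(z/r_{crit})$. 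Writing $t:=z/r_{crit}$ and using $\sin\xx=-\tanh t$ one wants to check this equals $r_{crit}\cdot\phie(\xx)$; but since scaling is a Jacobi field with normal component $X\cdot\nu$ (the position vector dotted with the normal, for any minimal surface this is Jacobi because dilations preserve minimality), and since $\phie\circ\nu_\K$ is by construction the unique rotationally invariant Jacobi field on the catenoid's Gauss image independent of $\phio$, it suffices to verify that $r_{crit}-z\tanh(z/r_{crit})$, expressed through $\xx$, matches $r_{crit}\phie(\xx)=r_{crit}(1-\sin\xx\log\frac{1+\sin\xx}{\cos\xx})$. The hyperbolic identities $t=\operatorname{arctanh}(-\sin\xx)=\tfrac12\log\frac{1-\sin\xx}{1+\sin\xx}$ and $\cosh t=1/\cos\xx$ turn $z\tanh(z/r_{crit}) = r_{crit}\,t\,(-\sin\xx) = r_{crit}\sin\xx\cdot\tfrac12\log\frac{1+\sin\xx}{1-\sin\xx}$; combining with $\cos^2\xx=1-\sin^2\xx$ gives $\tfrac12\log\frac{1+\sin\xx}{1-\sin\xx}=\log\frac{1+\sin\xx}{\cos\xx}$, so indeed $r_{crit}-z\tanh(z/r_{crit})=r_{crit}(1-\sin\xx\log\frac{1+\sin\xx}{\cos\xx})=r_{crit}\,\phie(\xx)$. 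Strictly one should note $X_\K\cdot\nu_\K$, as the normal component of the scaling Jacobi field, agrees with $\phie\circ\nu_\K$ up to a multiplicative constant fixed by normalization; the explicit computation pins the constant to $r_{crit}$, and for the stated identity (with no constant) one divides $X_\K$ appropriately or absorbs it --- I would state the identity as $\nu_\K\cdot X_\K=\phie\circ\nu_\K$ exactly if the normalization in \ref{Dphieo} was chosen for this, and otherwise carry the harmless constant. Both being Jacobi then follows, as noted, from the standard fact that the normal parts of ambient Killing/conformal fields restrict to Jacobi fields on minimal surfaces.

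Finally (ii): the boundary of $\K$ sits at $z=\pm z_{crit}R_{crit}$? No --- the boundary circles of $\K$ on $\Sph^2$ are at height $z=\pm z_{crit}$ only after we recall $R_{crit},z_{crit}$ from \ref{L:K-property}, and the catenoid relation there gives $R_{crit}/r_{crit}=\cosh(z_{crit}/r_{crit})$ and $R_{crit}/z_{crit}=\sinh(z_{crit}/r_{crit})$. At the boundary, $\xx$ takes its extreme value $\xx_0$ with $\cos\xx_0=\sech(z_{crit}/r_{crit})=r_{crit}/R_{crit}$ and $\sin\xx_0=\tanh(z_{crit}/r_{crit})= (R_{crit}/z_{crit})/(R_{crit}/r_{crit}) = r_{crit}/z_{crit}$. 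Thus I need $\sin\xx_0=r_{crit}/z_{crit}$ and $\cos\xx_0=r_{crit}/R_{crit}$; multiplying, $\tfrac12\sin(2\xx_0)=\sin\xx_0\cos\xx_0 = r_{crit}^2/(z_{crit}R_{crit}) = r_{crit}^2/r_{crit}=r_{crit}$ using \ref{L:K-property}(ii), i.e. $r_{crit}=\tfrac12\sin(2\xx_0)$. Also $\sin^2\xx_0+\cos^2\xx_0=1$ reads $r_{crit}^2/z_{crit}^2 + r_{crit}^2/R_{crit}^2=1$; with $z_{crit}^2+R_{crit}^2=1$ and $r_{crit}=z_{crit}R_{crit}$ this is an identity, consistently. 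To see $\xx_0=\xx_{crit}$ (the root of $\phie$ from \ref{Dphieo}), I use that the free boundary condition for $\K$ --- orthogonality to $\Sph^2$ --- is exactly the vanishing of the scaling Jacobi field's normal component at $\partial\K$ interpreted correctly, or more directly: the critical catenoid condition $R_{crit}^{-1}=\coth(R_{crit}^{-1})$? --- here I would instead argue that the equation $\phie(\xx_0)=0$, i.e. $1=\sin\xx_0\log\frac{1+\sin\xx_0}{\cos\xx_0}$, is equivalent under the substitutions above to the defining relation of $r_{crit},z_{crit},R_{crit}$; unwinding, $\sin\xx_0\log\frac{1+\sin\xx_0}{\cos\xx_0}= \tanh(z_{crit}/r_{crit})\cdot(z_{crit}/r_{crit})=1$ is precisely $z_{crit}\sinh(z_{crit}/r_{crit})/R_{crit}\cdot$(stuff) reducing via \ref{L:K-property} to $R_{crit}^2+z_{crit}^2=1$ --- I will spell this out; the cleanest route is to observe that $\phie\circ\nu_\K = (\nu_\K\cdot X_\K)/r_{crit}$ vanishes exactly where $X_\K\perp\nu_\K$, but on $\partial\B^3$ the free boundary condition makes $\nu_\K$ tangent to $\Sph^2$, i.e. $\nu_\K\perp X_\K$ there, so $\phie(\xx_0)=0$, forcing $\xx_0=\xx_{crit}$ by uniqueness of the root in $(0,\pi/2)$. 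Then $R_{crit}=\sin\xx_{crit}$ and $z_{crit}=\cos\xx_{crit}$ follow from $R_{crit}/z_{crit}=\sin\xx_0/\cos\xx_0\cdot$? --- recompute: we found $\sin\xx_0=r_{crit}/z_{crit}=R_{crit}$ (using $r_{crit}=z_{crit}R_{crit}$) and $\cos\xx_0=r_{crit}/R_{crit}=z_{crit}$, giving immediately $R_{crit}=\sin\xx_{crit}$, $z_{crit}=\cos\xx_{crit}$, $r_{crit}=\tfrac12\sin(2\xx_{crit})$; the numerical bound $\xx_{crit}>\pi/4$ is equivalent to $R_{crit}>z_{crit}$, i.e. $R_{crit}>1/\sqrt2$, which holds since $R_{crit}\approx0.834$.

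The main obstacle is the bookkeeping of signs and orientations --- matching the outward normal convention on $\Sph^2$, the ``away from the $z$-axis'' choice of $\nu_\K$, and the anti-conformal (orientation-reversing) nature of $\ThetaSph$, so that the identities come out with the correct signs; and verifying cleanly, without circular reasoning, that the free boundary condition for $\K$ is equivalent to $\phie(\xx_{crit})=0$, i.e.\ that the endpoint latitude of the Gauss image really is the root of $\phie$. Once the parametrization $\sin\xx\circ\nu_\K=-\tanh(z/r_{crit})$ is in hand, everything else is the hyperbolic-to-trigonometric dictionary already implicit in \ref{Dphieo} and the three defining equations in the proof of \ref{L:K-property}.
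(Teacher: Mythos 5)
Your proposal is correct and follows essentially the same route as the paper: the paper defers the explicit Gauss-map computation of (i) to \cite{Kapouleas14} and then derives (ii) exactly as you do, from $\Theta\equiv 0$ on $\partial\K$ forcing $X_\K\cdot\nu_\K=0$, hence $\phie\circ\nu_\K=0$, hence boundary latitude $\pm\xx_{crit}$, combined with \ref{L:K-property}. Your observation that the explicit computation yields $X_\K\cdot\nu_\K=r_{crit}\,\phie\circ\nu_\K$ (e.g.\ both sides at the waist give $r_{crit}$ versus $\phie(0)=1$) is a legitimate catch of a harmless normalization constant in the stated identity, which affects nothing since only the vanishing locus is used.
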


\begin{proof}
(i) follows by straightforward calculation as in \cite{Kapouleas14}. 
By \ref{L:boundary-angle} we have $\Theta\equiv0$ on $\partial\K$ and then by \ref{D:boundary-angle} we conclude 
$X_\K\cdot\nu_\K=0$ which by (i) implies 
$\phie\circ\nu_\K=0$. 
By \ref{L:K-property}.i and the definition of $\xx_{crit}$ in \ref{Dphieo}
we have then $(z_{crit},-R_{crit})=(\cos\xx_{crit},-\sin\xx_{crit})$. 
This and \ref{L:K-property}.ii imply the result. 
\end{proof}

\subsection*{Catenoidal annuli orthogonal to $\Sph^2_+$}

We proceed now to classify the $O(2)$-invariant,  
immersed in the upper half ball 
$\B_+^3$ (recall \ref{E:BS+})  
minimal surfaces,  
which meet the upper hemisphere 
$\Sph^2_+$ orthogonally. 
Any such minimal surface is contained in a complete catenoid (or plane) $\Ktilde$ 
whose axis is the $z$-axis, 
so we lose no generality if we classify the catenoids (and planes) with these properties. 
If $\Ktilde$ is a plane it has to be the $xy$-plane, 
so we concentrate on the case where $\Ktilde$ is a catenoid. 
Each such catenoid $\Ktilde$  
is a translation along the $z$-axis of a scaling of the standard complete catenoid. 
Such a catenoid $\Ktilde$ can at most intersect the upper hemisphere $\Sph^2_+$ 
orthogonally once, 
as the intersection must happen above the waist of the catenoid, 
where $\Ktilde$ can be written as the graph of a monotonically increasing radial function 
over the exterior of some disk (with center at the origin) in the $xy$-plane. 
$\Ktilde$ clearly has to intersect the $xy$-plane exactly once. 
Therefore $\Ktilde$ either does not intersect the interior of the upper half ball 
$\B_+^3$ at all, 
or its intersection with $\B^3_+$ is an annulus 
with one boundary circle on $\Sph^2_+$ and the other on $\D$.  
The intersection along the first circle $\Ktilde\cap\Sph^2$ is orthogonal by assumption. 
We define $\theta$ by requiring that 
the angle between the outward normal of $\Ktilde$ 
and $e_z$ along the latter circle $\Ktilde\cap\D$ 
is $\theta+\pi/2 \in (0,\pi)$. 
As we will see there is at most one $\Ktilde$ for each $\theta$, 
so there is no ambiguity if we denote the radius of $\Ktilde\cap\D$ by $r_\theta$.  
Note that for the critical catenoid we have $\theta=0$ and $r_0 = r_{crit}$. 
A positive $\theta$ implies that $\Ktilde\cap\D$ 
lies above the waist of $\Ktilde$ and a negative $\theta$ 
that it lies below the waist, 
$\theta=\pi/2$ corresponds to the $xy$-plane.

\begin{lemma}
\label{L:cat-ODE}
There exists some $\theta_{min} \in(-\pi/2,0)$ such that the following hold: \\ 
(i). There is no $\Ktilde$ as above with $\theta<\theta_{min}$. \\ 
(ii). 
For each $\theta\in  [\theta_{min},\frac{\pi}{2})$ 
there is exactly one $\Ktilde$ as above, 
which we will denote by $\Ktilde_\theta$. 
$\Ktilde_\theta$ can be obtained by rotating the graph of $x=f_\theta(z)$ 
around the $z$-axis where 
$f_\theta:\R \to \R_+$ is given by
\[ f_\theta(z):=r_\theta \cosh \frac{z}{r_\theta \cos \theta} + r_\theta \sin \theta \sinh \frac{z}{r_\theta \cos \theta},\]
where $r_\theta>0$ is a constant depending smoothly on $\theta$. 
Moreover, $r_\theta$ is a decreasing function of $\theta$ with $r_{\theta_{min}}=1$ and 
$r_{\theta}\to e^{-1}$ as $\theta\to\frac{\pi}2 -$. See Figure \ref{fig:catenoid-annulus}.
\end{lemma}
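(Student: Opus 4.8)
The plan is to reduce everything to an ODE analysis for the generating curve of a catenoid with axis the $z$-axis. Since any $O(2)$-invariant minimal surface in $\R^3$ with the $z$-axis as axis of symmetry is (part of) a catenoid or a plane, and we have already discarded the plane in the preceding discussion, I would start from the general catenoid $\Ktilde$, which is a $z$-translate of a scaling of the standard catenoid $x=c\cosh(z/c)$. Writing $\Ktilde$ as the rotation of a graph $x=f(z)$ with $f$ a nonnegative solution of the catenary equation $f f'' = 1 + (f')^2$, I would impose the two constraints from the definition of the problem: (a) the generating curve meets $\Sph^2$ orthogonally at the upper boundary circle (equivalently, the line from the origin to the boundary point is tangent to the curve there, i.e. the curve is ``radial'' at that point), and (b) at the circle $\Ktilde\cap\D$, where $z=0$, the outward normal makes angle $\theta+\pi/2$ with $e_z$; since the outward normal to the graph $x=f(z)$ at $z=0$ is proportional to $(1,-f'(0))$ (in the $xz$-half-plane), condition (b) is exactly $f'(0)=\tan\theta$. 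Setting $r_\theta:=f(0)$, the general solution of the catenary ODE with these two initial conditions at $z=0$ is precisely
\[
f_\theta(z)=r_\theta\cosh\frac{z}{r_\theta\cos\theta}+r_\theta\sin\theta\,\sinh\frac{z}{r_\theta\cos\theta},
\]
which one checks directly solves $ff''=1+(f')^2$ and satisfies $f_\theta(0)=r_\theta$, $f_\theta'(0)=\tan\theta$. (One may rewrite it as $f_\theta(z)=r_\theta\sec\theta\,\cosh\!\big(\tfrac{z}{r_\theta\cos\theta}+\operatorname{arcsinh}(\sin\theta)\cdot(\text{sign})\big)$ to recognize it as a genuine catenary.) So the family is parametrized by the single scalar $r_\theta>0$ once $\theta$ is fixed.

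The next step is to turn the orthogonality condition (a) at the sphere into a single equation determining $r_\theta$ from $\theta$, and to analyze its solvability. Let $z_*=z_*(\theta)>0$ be the (unique, by the monotonicity of $f_\theta$ for $z>$ the waist, as noted in the text) value where $f_\theta(z_*)^2+z_*^2=1$; orthogonality to $\Sph^2$ there is the condition that $(f_\theta(z_*),z_*)$ is parallel to $(f_\theta'(z_*)^{-1}\cdot\text{something})$—concretely $f_\theta'(z_*)=z_*/f_\theta(z_*)$ after writing out that the position vector is normal to the surface, equivalently tangent to the profile curve is perpendicular to position, which gives $f_\theta(z_*)-z_* f_\theta'(z_*)\cdot(\ldots)$. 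I would combine ``point on sphere'' and ``orthogonal there'' into one scalar equation $G(r,\theta)=0$, exactly as in the proof of Lemma \ref{L:K-property} where the analogous three equations were solved for the critical catenoid (the case $\theta=0$). Using $ff''=1+(f')^2$ one gets closed-form expressions: writing $t=z_*/(r_\theta\cos\theta)$, the condition becomes an equation of the form $\cosh t + \sin\theta\sinh t = (\text{expression in }t,\theta)$ together with $r_\theta(\cosh t+\sin\theta\sinh t)$ and $r_\theta\cos\theta(\sinh t+\sin\theta\cosh t)$ summing in squares to $1$. Eliminating $r_\theta$ yields a single transcendental equation $F(t,\theta)=0$; I would show that for each $\theta\in(-\pi/2,\pi/2)$ it has at most one solution $t=t(\theta)$, hence at most one $\Ktilde_\theta$.

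For existence and the endpoint behavior, I would argue as follows. As $\theta\to(\pi/2)^-$ the profile $f_\theta$ degenerates toward a vertical line unless $r_\theta\to 0$; a careful expansion (using that $r_\theta\cos\theta$ stays bounded away from $0$ and $\infty$ for the sphere-tangency to be possible) shows $r_\theta\to e^{-1}$, matching the fact that the limiting catenary $x=e^{-1}\cosh(z+1)$—shifted—is tangent to $\Sph^2$ at the north pole in the limiting configuration where the boundary circle shrinks to the pole; this is the standard ``half-catenoid touching the sphere at a pole'' limit. At the other end, I would show $r_\theta$ is strictly decreasing in $\theta$ (differentiate $F(t(\theta),\theta)=0$ and use $ff''=1+(f')^2$, monotonicity of $\cosh,\sinh$), so as $\theta$ decreases $r_\theta$ increases; since $r_\theta$ is the radius of the circle $\Ktilde_\theta\cap\D$ and this circle must lie inside the open disk $\D$, we need $r_\theta<1$, and the constraint $r_\theta\le 1$ forces a least value $\theta_{min}$ characterized by $r_{\theta_{min}}=1$ (the circle hitting $\Sph^2\cap\{z=0\}$). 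For $\theta<\theta_{min}$ we would have $r_\theta>1$, which is geometrically impossible—the would-be surface escapes $\B^3$ before reaching $z=0$, equivalently $F(t,\theta)=0$ has no admissible solution—proving (i). That $\theta_{min}\in(-\pi/2,0)$ follows because $r_0=r_{crit}\approx0.46<1$ (Lemma \ref{L:K-property}) while $r_\theta\to 1$ as $\theta$ decreases to some value, which by continuity and strict monotonicity occurs at a negative $\theta$ bounded below by $-\pi/2$ (one checks $r_\theta$ cannot reach $1$ only at $\theta=-\pi/2$, since there $f_{-\pi/2}$ again degenerates). Smooth dependence of $r_\theta$ on $\theta$ is the implicit function theorem applied to $F(t,\theta)=0$, once $\partial_t F\ne0$ is verified—which is exactly the ``at most one solution'' transversality already established.

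The main obstacle I expect is the endpoint analysis as $\theta\to(\pi/2)^-$ and the identification of the sharp constant $\theta_{min}$: both require delicate asymptotics of the transcendental equation $F(t,\theta)=0$ rather than soft arguments, and one must be careful that the relevant branch of solutions is the geometrically correct one (intersection above versus below the waist, and the boundary circle genuinely lying on $\D$ rather than on its extension). Establishing strict monotonicity of $r_\theta$ cleanly—ideally via a monotonicity formula or a sign computation using $ff''=1+(f')^2$—is the technical heart; everything else is bookkeeping modeled on the proof of Lemma \ref{L:K-property}.
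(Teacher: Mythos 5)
Your geometric setup is correct and the formula for $f_\theta$ is the right one, but the proof has a genuine gap: every step that actually carries content --- uniqueness of $\Ktilde_\theta$ for each $\theta$, strict monotonicity of $r_\theta$ in $\theta$, and the endpoint facts $r_{\theta_{min}}=1$ and $r_\theta\to e^{-1}$ --- is deferred to ``I would show'' statements about a transcendental equation $F(t,\theta)=0$ that you never write down, let alone analyze. You flag these yourself as ``the technical heart,'' and that is exactly the problem: without them nothing is proved. The missing idea that makes the computation tractable is a change of parameter. Instead of fixing $\theta$ and trying to solve for $r_\theta$, parametrize the candidate catenoids by the height $h\in(0,1)$ of the circle along which they meet $\Sph^2_+$ orthogonally. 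Writing the profile as $f(z)=\tilde r\cosh\frac{z-\tilde z}{\tilde r}$, the three conditions (the point $(\sqrt{1-h^2},0,h)$ lies on the catenoid, orthogonality to $\Sph^2$ there, and $f'(0)=\tan\theta$) solve in closed form: $\tilde r=h\sqrt{1-h^2}$ and $\tilde z=h-\tilde r\arcsinh\frac{\sqrt{1-h^2}}{h}$. From these, $\frac{d}{dh}\bigl(\tilde z/\tilde r\bigr)=\frac{1}{h(1-h^2)^{3/2}}>0$ gives $d\theta/dh<0$, so $h\mapsto\theta$ is injective --- this is precisely the ``at most one solution'' claim you asserted without argument. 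An explicit differentiation of $\widehat r=\tilde r\cosh(\tilde z/\tilde r)$, together with the elementary inequality $\frac{3-2h^2}{2\sqrt{1-h^2}}\ge\sqrt2>\coth\frac{1}{\sqrt{1-h^2}}$, gives $d\widehat r/dh>0$, hence $dr_\theta/d\theta<0$. The endpoint behavior then falls out of the same formulas: as $h\to0^+$ one finds $\theta\to\tfrac{\pi}{2}^-$ and $\widehat r\to e^{-1}$, while as $h$ increases $\widehat r$ increases through $1$, which is what defines $\theta_{min}$ (for larger $h$, equivalently smaller $\theta$, the circle $\Ktilde\cap\{z=0\}$ has radius exceeding $1$ and the catenoid misses the interior of $\B^3_+$, proving (i)). Your plan of implicitly differentiating an unwritten $F(t,\theta)=0$ could in principle be pushed through, but as it stands it is a promissory note, not a proof.

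One further point: your heuristic for the $\theta\to\tfrac{\pi}{2}^-$ limit is incorrect. In that limit $h\to0^+$, so the boundary circle on $\Sph^2$ tends to the \emph{equator}, not the north pole, and the annulus $\K^+_\theta$ collapses onto the planar annulus $\{e^{-1}\le\sqrt{x^2+y^2}\le1,\ z=0\}$; this is consistent with the remark preceding the lemma that $\theta=\pi/2$ corresponds to the $xy$-plane. This does not affect the logic of your outline, but it suggests the asymptotic analysis you were planning would have been aimed at the wrong degeneration.
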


\begin{figure}
\includegraphics[height=6cm]{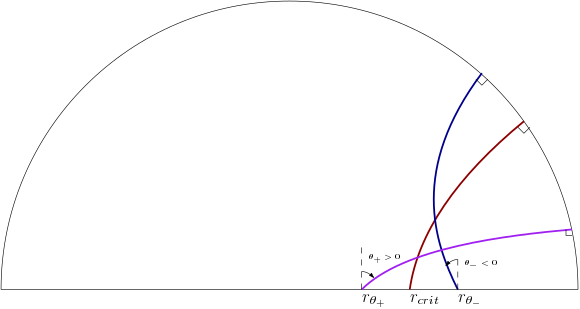}
\caption{The generating curves of the catenoids $\Ktilde_\theta$ inside $\B^3_+$ with $\theta_- <0<\theta_+$}
\label{fig:catenoid-annulus}
\end{figure}

\begin{proof}
Given $h\in(0,1)$ 
there is clearly a unique catenoid $\Khat_h$ whose axis is the $z$-axis 
and which intersects $\Sph^2_+$ orthogonally along a circle 
which contains the points $(\pm\sqrt{1-h^2},0,h)$.
It is well known that $\Khat_h$ can be obtained by rotating the graph of 
$x=f(z)$ around the $z$-axis where $f$ is given by 
\beq
\label{E:cat-ODE-sol} 
f(z)=\tilde{r} \cosh \frac{z-\tilde{z}}{\tilde{r}},
\eeq
for some $\tilde{r}>0$, $\tilde{z} \in \mathbb{R}$ which depend on $h$. 
$\Khat_h$ interects then the $xy$-plane along a circle of some radius $\widehat{r}>0$ 
with angle $\theta$ in the sense that 
the angle between the outward normal of $\Khat_h$ 
and $e_z$ 
is $\theta+\pi/2$. 
We have then 
\beq
\label{E:cat-ODE-sol-a}
-\sinh \frac{\tilde{z}}{\tilde{r}}=\tan \theta, 
\qquad
\sinh \frac{h-\tilde{z}}{\tilde{r}} =\frac{\sqrt{1-h^2}}{h}, 
\qquad
\tilde{r} \cosh \frac{h-\tilde{z}}{\tilde{r}} = \sqrt{1-h^2}, 
\eeq 
where the first equation amounts to $f'(0)=\tan\theta$ (by the definition of $\theta$),  
the third equation amounts to that $(\pm\sqrt{1-h^2},0,h)$ is contained in $\Khat_h$, 
and the second equation amounts to $f'(h)=\frac{\sqrt{1-h^2}}{h}$ 
(the orthogonality of $\Khat_h$ to $\Sph^2_+$ along the circle containing $(\pm\sqrt{1-h^2},0,h)$.  

To complete the proof it is enough to check that $d\theta/dh<0$ and $d\widehat{r}/dh>0$, 
so that $d\widehat{r}/d\theta<0$ would follow. 
We solve 
the last two equations in 
\ref{E:cat-ODE-sol-a} 
for $\tilde{r}$ and $\tilde{z}$ 
to get
\[ \tilde{r}=h \sqrt{1-h^2}, \qquad  \tilde{z}=h - \tilde{r} \arcsinh \frac{\sqrt{1-h^2}}{h}.\]
From these we see that 
\[ \frac{d}{dh} \left( \frac{\tilde{z}}{\tilde{r}} \right) = \frac{1}{h (1-h^2)^{3/2}} >0.\]
By the first equation in \ref{E:cat-ODE-sol-a} this implies $d\theta/dh <0$.

Clearly now $\widehat{r}=f(0)=\tilde{r} \cosh \frac{\tilde{z}}{\tilde{r}}$. 
Hence by differentiating with respect to $h$ we obtain 
\[ \frac{d \widehat{r}}{dh}= \frac{h}{1-h^2} \left( (3-2h^2) \sinh \frac{1}{\sqrt{1-h^2}} -2 \sqrt{1-h^2} \cosh \frac{1}{\sqrt{1-h^2}} \right).\]
Using the elementary inequality 
\[ \frac{3-2h^2}{2 \sqrt{1-h^2}} \geq \sqrt{2} > \coth (1) > \coth \frac{1}{\sqrt{1-h^2}} \qquad \text{ for } h \in (0,1), \]
we conclude that $d\widehat{r}/dh>0$ 
and the proof is complete. 
\end{proof}

From the proposition and our discussion above, we have the following uniqueness theorem 
(see also \cite{Fraser-Schoen11}, \cite{Fraser-Schoen15a} and \cite{Maximo-Nunes-Smith13} for similar uniqueness theorems). 
Note that in our uniqueness theorem 
there is no apriori assumption on the topology of the surface 
or rotational invariance requirement in the interior. 

\begin{corollary}
\label{C:unique-catenoid}
The only embedded free boundary minimal surfaces in $\B^3$ with at least one rotationally invariant 
(about the $z$-axis) boundary component on $\Sph^2$ are the equatorial disk $\D$ and the critical catenoid $\K$.
\end{corollary}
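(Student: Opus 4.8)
The plan is to reduce the classification of embedded free boundary minimal surfaces in $\B^3$ with one rotationally invariant boundary circle on $\Sph^2$ to the ODE analysis already carried out in \ref{L:cat-ODE}, using the line of argument that goes back to Nitsche and that was developed in the rotationally invariant setting. First I would take such a surface $\Sigma$ and observe that, since one boundary component is a circle lying in a plane $\{z = z_0\}$ orthogonal to the $z$-axis, reflection across that plane maps $\partial \B^3$ to itself and fixes the boundary circle; by the reflection principle for free boundary minimal surfaces (Schwarz reflection adapted to the orthogonality condition, as in \cite{Nitsche85}), the union of $\Sigma$ with its reflection is a smooth minimal surface, and moreover $\Sigma$ meets the reflection plane orthogonally along that circle. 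The core of the argument is then to promote this single symmetry to full rotational invariance of $\Sigma$ in a neighbourhood of that boundary circle, and then globally, via a continuation/unique continuation argument: a minimal surface meeting $\Sph^2$ orthogonally along a round circle agrees to infinite order along that circle with the rotationally invariant catenoidal annulus having the same contact data, hence coincides with it by Aronszajn unique continuation for the minimal surface equation.

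The key steps, in order, would be: (1) Normalize so that the given rotationally invariant boundary circle lies on $\Sph^2_+$ (after possibly applying an element of $O(3)$ to move the $z$-axis, which is legitimate since the statement is invariant under such rotations — this is the remark preceding the corollary). (2) Use that $\Sigma$ is embedded and its boundary circle is round to conclude, via the maximum principle / Alexandrov-type moving planes or directly via the reflection principle, that near that circle $\Sigma$ is a graph over an annular region and inherits the rotational symmetry of its boundary data; more precisely, the Cauchy data of the minimal surface equation along the circle (position and normal derivative, the latter fixed by orthogonality to $\Sph^2$) is rotationally invariant, so the solution is too, in a collar. (3) Invoke unique continuation: the rotationally invariant solution with this Cauchy data is, by \ref{L:cat-ODE}, exactly one of the $\Ktilde_\theta$ (or the $xy$-plane), so $\Sigma$ coincides with the corresponding $\Ktilde_\theta \cap \B^3_+$ on a collar and hence, being connected and real-analytic away from possible other boundary, on all of its upper half. (4) Now use that $\Sigma \cap \B^3_+$ is this catenoidal annulus $\Ktilde_\theta \cap \B^3_+$, which has a second boundary circle on $\D = \{z=0\}$, meeting it at the angle $\theta + \pi/2$; embeddedness and the free boundary condition on the full surface $\Sigma$ (which may extend below) force, again by reflection across $\D$ when $\theta = 0$, that $\theta = 0$, i.e. $\Sigma$ is the critical catenoid $\K$ — unless $\Sigma$ was the plane case, giving $\D$. (5) Handle the alternative that $\Sigma$ is a topological disk or otherwise does not cross $\{z = z_0\}$ transversally, which is where Nitsche's theorem \cite{Nitsche85} (uniqueness of the free boundary minimal disk) and \ref{L:cat-ODE}.i (the constraint $\theta \ge \theta_{min}$ together with $r_{\theta_{min}} = 1$) pin down the finitely many possibilities.

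The main obstacle I expect is step (2)–(3): rigorously going from ``one boundary circle is round and rotationally invariant'' to ``the whole surface is rotationally invariant,'' without any a priori topological or interior-symmetry hypothesis. One clean way is the reflection principle argument: the plane $\{z=z_0\}$ containing the round boundary circle is a plane of mirror symmetry for $\B^3$ fixing $\partial\B^3$ setwise and the circle pointwise; since $\Sigma$ meets $\Sph^2$ orthogonally and the circle is round, $\Sigma$ meets $\{z=z_0\}$ orthogonally along that circle, so $\Sigma \cup \operatorname{Refl}_{z_0}(\Sigma)$ is a smooth minimal surface. Iterating with the one-parameter family of planes through the $z$-axis does not directly apply because those do not fix the boundary circle; instead, the correct route is the unique continuation statement: minimality plus the prescribed real-analytic Cauchy data along the round circle (position = the circle, conormal direction = fixed by orthogonality to $\Sph^2$) determines $\Sigma$ near the circle uniquely, and the rotationally invariant $\Ktilde_\theta$ is a competitor with the same data, so they agree by Aronszajn's theorem; analytic continuation then spreads this to all of $\Sigma$. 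Once this is in hand, everything else is the bookkeeping of \ref{L:cat-ODE} and the free boundary condition on $\D$, plus a citation to \cite{Nitsche85} for the disk case.
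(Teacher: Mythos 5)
Your steps (1)--(3) are essentially the paper's proof: the paper invokes Bj\"orling's uniqueness theorem \cite{Bjorling} to get rotational invariance of the surface near the rotationally invariant boundary circle (this is exactly your ``rotationally invariant Cauchy data determines the surface'' argument, since orthogonality to $\Sph^2$ fixes the tangent plane along the circle), then unique continuation to conclude the whole surface lies on a coaxial complete catenoid or the equatorial plane. So far, so good, and your recognition that the moving-planes/reflection detour is the wrong tool is also correct.

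The genuine gap is in your step (4), where you conclude $\theta=0$. As written, the argument is circular (``\dots force, again by reflection across $\D$ when $\theta=0$, that $\theta=0$''), and the auxiliary claims feeding into it are false: reflection across the plane $\{z=z_0\}$ containing the boundary circle does not preserve $\B^3$ when $z_0\neq 0$, and a surface meeting $\Sph^2$ orthogonally along a circle at height $z_0$ is never orthogonal to the plane $\{z=z_0\}$ (its tangent plane there contains the radial direction, not $e_z$), so there is no Schwarz reflection available. There is also no ``free boundary condition on $\D$'' to invoke: the free boundary condition lives only on $\Sph^2$. The correct closing move, which is what the paper does, is to observe that the complete catenoid must meet \emph{both} hemispheres $\Sph^2_{\pm}$ orthogonally (all of $\partial\Sigma$ lies on $\Sph^2$), so \ref{L:cat-ODE} applies separately to the upper and lower half-balls; the single circle where the catenoid crosses $\{z=0\}$ then has radius $r_\theta$ from the upper half and $r_{-\theta}$ from the lower half, whence $r_\theta=r_{-\theta}$, and the \emph{strict monotonicity} of $\theta\mapsto r_\theta$ established in \ref{L:cat-ODE} forces $\theta=0$. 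Without invoking that monotonicity your argument cannot exclude a balanced pair $r_\theta=r_{-\theta}$ with $\theta\neq 0$. (The appeal to Nitsche \cite{Nitsche85} in step (5) is likewise unnecessary: once the surface is known to lie on the equatorial plane, it equals $\D$ by properness and connectedness.)
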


\begin{proof}
By Bj\"{o}rling's uniqueness theorem \cite{Bjorling}, 
the minimal surface is rotationally invariant in a neighborhood of the rotationally invariant boundary component. 
By unique continuation of minimal surfaces, the entire surface is a piece of either a complete catenoid or the equatorial plane. 
In the first case, 
applying \ref{L:cat-ODE} to the upper and lower half of this complete catenoid we get 
$r_\theta=r_{-\theta}$, 
where $\theta+\frac\pi2$ is the angle at which the catenoid intersects the equatorial plane. 
By monotonicity of $r_\theta$ with respect to $\theta$, we have $\theta=0$, 
which implies that the free boundary minimal surface is the critical catenoid $\K$.
\end{proof}

\begin{definition}
\label{E:Kthetaplus}
For $\theta \in (\theta_{min},\pi/2)$ 
we define 
$
\Ktilde^+_\theta := \Ktilde_\theta\cap\R^3_+   
$
and 
$
\K^+_\theta
:= \Ktilde_\theta\cap\B^3_+   
$.
We also define $\Ktilde^-_\theta\subset\R^3_-$ and $\K^-_\theta\subset\B^3_-$ to be the mirror images under reflection with respect to the $xy$-plane of 
$\Ktilde^+_\theta$ and $\K^+_\theta$ respectively.  
For future reference we define 
\beq
\label{D:initial-config}
\Wcal_\theta:= \K^+_\theta \cup \K^-_\theta \cup \D,  
\qquad
\Wcaltilde_\theta:= \Ktilde^+_\theta \cup \Ktilde^-_\theta \cup \{z=0\} \supset \Wcal_\theta , 
\qquad
\Ctheta := \K^\pm_\theta \cap \D.  
\eeq
\end{definition}

By 
\ref{L:cat-ODE} 
each  $\K^+_\theta$ meets $\Sph^2_+$ orthogonally and $\D$ at an angle $\theta+\frac{\pi}2$ along 
$\Ctheta$ which is the circle of radius $r_\theta$ on the plane $\{z=0\}$.  
Note that 
$\K^\pm_0=\K \cap \B^3_\pm$ and 
$\Wcal_\theta$ is a perturbartion of our initial configuration $\Wcal_0=\K\cup\D$.  
Clearly $\Wcal_\theta$ and its 
complete extension $\Wcaltilde_\theta$ contain the circle $\Ctheta$ 
and $\Wcaltilde_\theta\setminus \Ctheta$ is smooth and embedded. 
$\Wcal_\theta$ and $\Wcaltilde_\theta$ are symmetric under reflections 
with respect to the $xy$-plane and lines on the $xy$-plane through the origin. 
We parametrize now $\K^\pm_\theta$ by a fixed cylinder independent of $\theta$: 

\begin{definition} 
\label{D:half-catenoids}
For each $\theta \in (\theta_{min},\frac{\pi}{2})$ 
we define a diffeomorphism $X_{\K^+_\theta}$ (or $X_{\K^-_\theta}$)  
from the cylinder $[0,z_{crit}] \times \Sph^1$ 
(recall that $\Sph^1 = \R/2\pi\Z$) 
onto  
$\K^+_\theta$ (or $\K^-_\theta$) by 
\[ X_{\K^\pm_\theta}(s,y):=(f_\theta( z) \cos y, f_\theta( z) \sin y, \pm z), \qquad \text{with } z=\frac{z_\theta}{z_{crit}} s \]
where $z_{crit}$ is as in \ref{L:K-property}.i, $f_\theta$ as in \ref{L:cat-ODE}, 
and $\{z=z_\theta\}$ is the plane containing the circle $\partial \K^+_\theta \cap \Sph^2_+$. 
We also define 
two families of diffeomorphisms 
$\Fcal_{\K^\pm_\theta}:=X_{\K^{\pm}_\theta} \circ X_{\K^{\pm}_0}^{-1} :\K^{\pm}_0 \to \K^{\pm}_\theta$. 
\end{definition}

\begin{definition}
\label{D:half-disks}
For each $\theta \in (\theta_{min},\frac{\pi}{2})$, we define the annulus $\A_\theta$ and the disk $\D_\theta$ contained in the equatorial disk $\D$ by
\[ \A_\theta:=\{(x,y,z) \in \D : x^2+y^2 \geq r_\theta^2\}, \qquad 
\D_\theta:=\{(x,y,z) \in \D : x^2+y^2 \leq r_\theta^2\},\]
where $r_\theta$ is defined as in \ref{L:cat-ODE}, and they are oriented by the unit normals $\nu_{\A_\theta}=-e_z$ and 
$\nu_{\D_\theta}=e_z$ respectively. 
Moreover, we define the family of diffeomorphisms $\Fcal_{\A_\theta}:\A_0 \to \A_\theta$ and $\Fcal_{\D_\theta}: \D_0 \to \D_\theta$ by
\[ \Fcal_{\A_\theta}(x,y,z):=\left( r_\theta+ \frac{1-r_\theta}{1-r_{crit}} (\sqrt{x^2+y^2}-r_{crit})\right)(x,y,z), \]
\[ \Fcal_{\D_\theta}(x,y,z):=\left( \frac{r_\theta}{r_{crit}} \sqrt{x^2+y^2}\right)(x,y,z). \]
\end{definition}

\begin{lemma}[Norm comparison]
\label{L:K-norms}
For $\epsilon>0$ and $|\theta|$ sufficiently small in terms of $\epsilon$, 
and for any function $u:\Omega \to \R$ defined on a domain $\Omega \subset S$  
where $S$ is any of $\K^+_0$, $\K^-_0$, $\A_0$, or $\D_0$,  
we have (recall \ref{D:sim})
\[ \| u \circ \Fcal_{S_\theta}^{-1} : C^{k,\beta}(\,\Fcal_{S_\theta}(\Omega)\, ) \| \sim_{1+\epsilon} \| u : C^{k,\beta}(\Omega)\|,\]
where $S_\theta$ is the corresponding 
$\K^+_\theta$, $\K^-_\theta$, $\A_\theta$, or $\D_\theta$,  
and the norms are taken with respect to the induced metric on $S_\theta \subset \B^3$. 
The same estimate holds if $\Omega$ is assumed to be a (one-dimensional) domain in the circle $S\cap\Sph^2$. 
\end{lemma}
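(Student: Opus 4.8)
The plan is to deduce the estimate from the smooth dependence of the diffeomorphisms $\Fcal_{S_\theta}$ on $\theta$ together with the isometry-invariance of the weighted H\"older norm of Definition~\ref{D:weighted-Holder} (with weight $f\equiv1$), which is built purely from geodesic balls, covariant derivatives and Riemannian distance. Writing $g_{S_\theta}$ for the metric induced on $S_\theta\subset\B^3$, the map $\Fcal_{S_\theta}\colon(S_0,\Fcal_{S_\theta}^*g_{S_\theta})\to(S_\theta,g_{S_\theta})$ is by construction a Riemannian isometry, carrying $\Omega$ to $\Fcal_{S_\theta}(\Omega)$ and pulling back $u\circ\Fcal_{S_\theta}^{-1}$ to $u$; hence
\[
\| u\circ\Fcal_{S_\theta}^{-1}:C^{k,\beta}\big(\Fcal_{S_\theta}(\Omega),g_{S_\theta}\big)\|
=\| u:C^{k,\beta}\big(\Omega,\Fcal_{S_\theta}^*g_{S_\theta}\big)\|
\]
for every domain $\Omega\subset S_0$. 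It therefore suffices to show that, given $\epsilon>0$, for $|\theta|$ small enough (in terms of $\epsilon$, $k$ and $S_0$) the metric $\Fcal_{S_\theta}^*g_{S_\theta}$ is $C^{k+1}$-close on the compact surface $S_0$ to $g_{S_0}$, and then to apply the standard fact that $C^{k+1}$-closeness of the underlying metrics forces the associated weighted $C^{k,\beta}$ norms to be $\sim_{1+\epsilon}$-equivalent, uniformly over all subdomains of $S_0$ at once.

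The $C^{k+1}$-closeness I would check case by case. For $S_0=\A_0$ or $\D_0$ the maps $\Fcal_{\A_\theta}$, $\Fcal_{\D_\theta}$ are the explicit maps of Definition~\ref{D:half-disks}; they depend smoothly on $\theta$ (as does $r_\theta$, by \ref{L:cat-ODE}) and equal the identity of $S_0$ at $\theta=0$ because $r_0=r_{crit}$, so $\theta\mapsto\Fcal_{S_\theta}^*g_{S_\theta}$ is a smooth family of metrics on $S_0$ with value $g_{S_0}$ at $\theta=0$. For $S_0=\K^\pm_0$, Lemma~\ref{L:cat-ODE} and its proof give that $r_\theta$ and $z_\theta$, hence $f_\theta$, hence the parametrization $X_{\K^\pm_\theta}$ of Definition~\ref{D:half-catenoids}, all depend smoothly on $\theta$; thus $X_{\K^\pm_\theta}^*g_{\K^\pm_\theta}$ is a smooth family of metrics on the fixed cylinder $[0,z_{crit}]\times\Sph^1$, equal at $\theta=0$ to $X_{\K^\pm_0}^*g_{\K^\pm_0}$, and since $\Fcal_{\K^\pm_\theta}=X_{\K^\pm_\theta}\circ X_{\K^\pm_0}^{-1}$,
\[
\Fcal_{\K^\pm_\theta}^*g_{\K^\pm_\theta}=\big(X_{\K^\pm_0}^{-1}\big)^*\big(X_{\K^\pm_\theta}^*g_{\K^\pm_\theta}\big)
\]
is again a smooth family of metrics on $\K^\pm_0$ with value $g_{\K^\pm_0}$ at $\theta=0$. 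In every case $\theta\mapsto\Fcal_{S_\theta}^*g_{S_\theta}$ is smooth, in particular continuous, as a map into $C^{k+1}(S_0)$, so the desired closeness holds for $|\theta|$ small. The one-dimensional case is identical: $\Fcal_{S_\theta}$ carries $S_0\cap\Sph^2$ onto $S_\theta\cap\Sph^2$ (immediate from Definitions~\ref{D:half-catenoids} and \ref{D:half-disks}), the induced length metrics on $S_\theta\cap\Sph^2$ form a smooth family equal at $\theta=0$ to that of $S_0\cap\Sph^2$, and one argues as above.

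The only non-formal ingredient, and thus the ``main obstacle'' of the argument, is the standard stability statement invoked at the end of the first paragraph: two uniformly $C^{k,\beta}$-close metrics on $S_0$ — with injectivity radius bounded below, which is automatic since $S_0$ is compact and the bound is stable under the small perturbations at hand — induce $(1+\epsilon)$-equivalent weighted H\"older norms, uniformly over all subdomains. This holds because in local coordinates the metric coefficients, the Christoffel symbols together with their derivatives to the order relevant for the $C^{k,\beta}$ norm, and the geodesic balls of radius $1/100$ all vary by as little as one wishes, so the supremum over such balls of the usual H\"older norms changes by a factor arbitrarily close to $1$; everything else is bookkeeping with the smooth dependence on $\theta$ supplied by Lemma~\ref{L:cat-ODE} and Definitions~\ref{D:half-catenoids}--\ref{D:half-disks}.
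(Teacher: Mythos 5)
Your argument is correct and is exactly the route the paper takes: its one-line proof reads ``It follows directly from the smooth dependence on $\theta$ of $\Fcal_{S_\theta}^* g_0$ on $S$,'' and your proposal simply fleshes out that assertion (isometry-invariance of the norm, smoothness of $\theta\mapsto\Fcal_{S_\theta}^*g_{S_\theta}$ via \ref{L:cat-ODE} and \ref{D:half-catenoids}--\ref{D:half-disks}, and the standard $(1+\epsilon)$-equivalence of H\"older norms for $C^{k+1}$-close metrics). No discrepancy to report.
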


\begin{proof}
It follows directly from the smooth dependence on $\theta$ 
of $\Fcal_{S_\theta}^* g_0$ on $S$. 
\end{proof}

We have now the following so that we can identify different $\Wcal_\theta$'s: 

\begin{definition}
\label{D:FW}
For each $\theta \in (\theta_{min},\frac{\pi}{2})$ 
we define a bijection $\Fcal_{\Wcal_\theta}:\Wcal_0 \to \Wcal_\theta$ 
by requiring 
$\left.  \Fcal_{\Wcal_\theta} \right|_{S_0} =\Fcal_{S_\theta}$, 
where $S_\theta$ is any of 
$\K^+_\theta$, $\K^-_\theta$, $\A_\theta$, or $\D_\theta$   
(recall \ref{D:half-catenoids} and \ref{D:half-disks}). 
Note that the restriction of $\Fcal_{\Wcal_\theta}$ to 
$\Wcal_0\setminus\Circle_0$ is a smooth diffeomorphism onto 
$\Wcal_\theta\setminus\Ctheta$.  
\end{definition}

\subsection*{Kernels of the Standard Pieces}
In this subsection, 
we study the kernels of the linearized equations on the four standard pieces 
$\K^+_0$, $\K^-_0$, $\A_0$, and $\D_0$.  
Note that these standard pieces are subsets of the equatorial disk $\D$ or the critical catenoid $\K$, which are minimal (recall \ref{R:H-linearized}), 
with $\D=\A_0 \cup \D_0$ and $\K=\K^+_0 \cup \K^-_0$. 
We will show that there is no rotationally invariant solutions to the linearized equations on each of these standard pieces.

\begin{lemma}
\label{L:linear-disk}
There is no non-trivial harmonic function on $\D_0$ with homogeneous Dirichlet boundary data.
\end{lemma}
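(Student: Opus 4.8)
The plan is to reduce the statement to a classical fact about the Euclidean Laplacian. First I would note that $\D_0$ is a flat round disk of radius $r_{crit}$ lying in the plane $\{z=0\}\subset\R^3$, so its second fundamental form vanishes identically and the ambient space $\R^3$ is Ricci flat; hence by \ref{R:H-linearized} the linearized (Jacobi) operator $\Lcal$ on the standard piece $\D_0$ is simply $\Delta_{g_0}$, the Laplacian of the flat induced metric. Thus ``harmonic function on $\D_0$'' means precisely a function $u$ with $\Delta u=0$ in the interior, and ``homogeneous Dirichlet boundary data'' means $u\equiv 0$ on $\partial\D_0=\Circle_0$.

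Given such a $u$, I would integrate by parts over $\D_0$:
\[
\int_{\D_0}|\nabla u|^2 \;=\; -\int_{\D_0} u\,\Delta u \;+\; \int_{\partial\D_0} u\,\frac{\partial u}{\partial \eta} \;=\; 0,
\]
where the first term vanishes because $\Delta u=0$ and the boundary term vanishes because $u\equiv 0$ on $\partial\D_0$. Therefore $\nabla u\equiv 0$, so $u$ is constant on the connected domain $\D_0$; since $u$ vanishes on $\partial\D_0$ we conclude $u\equiv 0$. Alternatively one can simply invoke the strong maximum principle for harmonic functions on a bounded domain.

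There is no genuine obstacle in this lemma: it records the triviality of the Dirichlet kernel on the disk standard piece, which will later be combined with the corresponding statements for $\A_0$ and $\K^\pm_0$ to control the rotationally invariant kernels of the linearized equations on all four standard pieces. The only point worth stating carefully is the identification of the relevant operator with the ordinary Laplacian via \ref{R:H-linearized}, after which the conclusion is immediate.
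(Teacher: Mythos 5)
Your proof is correct and essentially matches the paper's, which simply invokes the maximum principle for harmonic functions; your energy (integration-by-parts) argument is an equivalent elementary alternative that you also supplement with the maximum principle. The preliminary identification of the Jacobi operator on the flat disk with the ordinary Laplacian is a reasonable clarification but not a substantive difference.
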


\begin{proof}
This follows directly from the maximum principle for harmonic functions.
\end{proof}

\begin{lemma}
\label{L:linear-annulus}
There is no non-trivial solution to the following boundary value problem on $\A_0$:
\[ \left\{ \begin{array}{cl}
\Delta f=0 & \text{ on }\A_0,\\
f=0 & \text{ along } \partial \A_0 \setminus \Sph^2, \\
-\frac{\partial f}{\partial \eta} +f=0 & \text{ along } \partial \A_0 \cap \Sph^2. \end{array} \right.\]
\end{lemma}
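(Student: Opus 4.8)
The plan is to reduce to a family of ordinary differential equations by Fourier decomposition in the angular variable, and then to check that only the trivial solution is compatible with the two boundary conditions; the one place where the geometry of the critical catenoid really enters is a single non-degeneracy inequality in the lowest mode.

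First I would use polar coordinates $(\rho,\phi)$ on $\{z=0\}$, so that $\A_0=\{(\rho\cos\phi,\rho\sin\phi,0):r_{crit}\le\rho\le1\}$, with $\partial\A_0\setminus\Sph^2=\{\rho=r_{crit}\}$, $\partial\A_0\cap\Sph^2=\{\rho=1\}$, and outward unit conormal $\partial_\rho$ along the latter. The system then reads $\Delta f=0$, $f|_{\rho=r_{crit}}=0$, $\partial_\rho f|_{\rho=1}=f|_{\rho=1}$. Since the inner circle carries a pure Dirichlet condition and the outer circle a pure Robin condition and the two are disjoint, elliptic regularity gives $f\in C^\infty(\A_0)$, so $f$ has a Fourier expansion $f(\rho,\phi)=\sum_{n\ge0}\bigl(a_n(\rho)\cos n\phi+b_n(\rho)\sin n\phi\bigr)$ converging in $C^\infty$. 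Because $\Delta$ and the Robin operator commute with rotations, each coefficient solves the same scalar problem on $(r_{crit},1)$: the Euler equation $u''+\rho^{-1}u'-n^2\rho^{-2}u=0$ with $u(r_{crit})=0$ and $u'(1)=u(1)$.

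Next I would solve this mode by mode. For $n\ge1$, imposing $u(r_{crit})=0$ on $A\rho^n+B\rho^{-n}$ gives $u=A(\rho^n-r_{crit}^{2n}\rho^{-n})$, and the Robin condition at $\rho=1$ becomes $A\bigl[(n-1)+(n+1)r_{crit}^{2n}\bigr]=0$; the bracket is strictly positive for every $n\ge1$, so $A=0$ and $u\equiv0$. For $n=0$ the general solution is $A+B\log\rho$; the Dirichlet condition gives $u=B\log(\rho/r_{crit})$, and the Robin condition at $\rho=1$ becomes $-B(1+\log r_{crit})=0$. Thus everything comes down to $1+\log r_{crit}\neq0$, i.e.\ $r_{crit}\neq e^{-1}$. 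This follows from \ref{L:cat-ODE}: $r_\theta$ is decreasing on $(\theta_{min},\tfrac\pi2)$ and $r_\theta\to e^{-1}$ as $\theta\to\tfrac\pi2-$, hence $r_{crit}=r_0>e^{-1}$ (alternatively one may just quote $r_{crit}\approx0.460$ from \ref{L:K-property}). So $B=0$, every Fourier coefficient vanishes, and $f\equiv0$.

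The only genuinely nontrivial point is the $n=0$ mode: for $n\ge1$ the two boundary conditions are incompatible with any nonzero solution purely for sign reasons, whereas for $n=0$ one needs the quantitative input $r_{crit}>e^{-1}$. Equivalently, and perhaps more conceptually: multiplying $\Delta f=0$ by $f$ and integrating by parts gives $\int_{\A_0}|\nabla f|^2=\int_{\partial\A_0\cap\Sph^2}f^2$, so a nonzero $f$ would be a Steklov--Dirichlet eigenfunction of $\A_0$ with eigenvalue $1$; the computation above shows the spectrum is $\{1/\log(1/r_{crit})\}\cup\{\,n(1+r_{crit}^{2n})/(1-r_{crit}^{2n}):n\ge1\,\}$, all of whose members exceed $1$ precisely because $r_{crit}>e^{-1}$.
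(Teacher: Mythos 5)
Your proof is correct and follows essentially the same route as the paper's: separation of variables into Fourier modes, the Euler ODE for the radial part, non-degeneracy of each mode $n\ge1$ for sign reasons, and the crucial inequality $r_{crit}>e^{-1}$ (from the monotonicity in \ref{L:cat-ODE}) to kill the $n=0$ mode. The closing Steklov--Dirichlet reformulation is a pleasant extra observation but not needed.
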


\begin{proof}
Let $(r,\theta)$ be the polar coordinate system on the $xy$-plane. If $f=f(r,\theta)$ is a solution to the boundary value problem, then 
\begin{equation*}
\left\{ \begin{array}{c}
\frac{\partial^2 f}{\partial r^2} + \frac{1}{r} \frac{\partial f}{\partial r} +\frac{1}{r^2} \frac{\partial^2 f}{\partial \theta^2} =0 \\ 
f(r_{crit},\theta)=0\\
-\frac{\partial f}{\partial r}(1,\theta)+f(1,\theta) =0
\end{array} \right. .
\end{equation*}
By separation of variables, write $f(r,\theta)=\sum_{m=0}^\infty R_m(r) \Theta_m(\theta)$, 
the angular component $\Theta_m(\theta)$ is a linear combinations of $\sin (m\theta)$ and $\cos (m \theta)$, 
$m=0,1,2,3,\ldots$, and the radial component $R_m(r)$ satisfies the following ODE:
\begin{equation*}
\left\{ \begin{array}{c}
r^2 R_m''(r)+r R'_m(r) -m^2 R_m(r) =0 \\
R_m(r_{crit})=0 \\
R'_m(1)=R_m(1) 
\end{array} \right. .
\end{equation*}
The general solutions to the ODE is $R_0(r)=A+B \log r$ when $m=0$, and $R_m(r)=A r^m + B r^{-m}$ when $m=1,2,3,\ldots$. For $m>0$, the boundary conditions imply
\begin{equation*}
\left\{ \begin{array}{rl}
A r_{crit}^m  + B r_{crit}^{-m} &=0\\
(m-1)A-(m+1)B&=0 
\end{array} \right. ,
\end{equation*}
which has no nontrivial solutions. When $m=0$, the boundary conditions imply 
\begin{equation*}
\left\{ \begin{array}{rl}
A+B \log r_{crit} & =0\\
A-B&=0 
\end{array} \right. ,
\end{equation*}
which has no nontrivial solution since $r_{crit} > e^{-1}$. This proves the lemma.
\end{proof}

\begin{lemma}
\label{L:linear-cat}
Let $S=\K^+_0$ or $\K^-_0$. Then there is no non-trivial rotationally symmetric solution to the following boundary value problem on $S$:
\[ \left\{ \begin{array}{cl}
\Delta_S f +|A_S|^2 f=0 & \text{ on }S,\\
f=0 & \text{ along } \partial S \setminus \Sph^2 = \K\cap\D , \\
-\frac{\partial f}{\partial \eta} +f=0 & \text{ along } \partial S \cap \Sph^2. \end{array} \right.\]
\end{lemma}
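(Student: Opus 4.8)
The plan is to reduce the problem to a second-order linear ODE on an interval. First I would parametrize $\K$ conformally: writing $\tau:=z/r_{crit}$, the critical catenoid is the image of the map $(\tau,y)\mapsto(r_{crit}\cosh\tau\cos y,\,r_{crit}\cosh\tau\sin y,\,r_{crit}\tau)$ for $\tau\in[-\tau_{crit},\tau_{crit}]$ and $y\in\Sph^1$, where $\tau_{crit}:=z_{crit}/r_{crit}$, and $S=\K^\pm_0$ corresponds to $\tau\in[0,\tau_{crit}]$ (the two sign choices being interchanged by the reflection in the $xy$-plane, so it suffices to treat one of them). The induced metric is $g=\rho^2(d\tau^2+dy^2)$ with $\rho=r_{crit}\cosh\tau$, and $|A_\K|^2\rho^2=2\sech^2\tau$ (for instance from $|A_\K|^2=-2K_\K$ with Gauss curvature $K_\K=-\sech^4\tau/r_{crit}^2$). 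Since $\Delta_g=\rho^{-2}\Delta_{g_0}$ in dimension two, a rotationally symmetric $f=f(\tau)$ solves $\Delta_S f+|A_S|^2 f=0$ if and only if $f''+2\sech^2\tau\,f=0$, whose solution space is spanned by $\tanh\tau$ and $1-\tau\tanh\tau$; up to constant multiples these are the rotationally symmetric Jacobi fields $\phio\circ\nu_\K$ and $\phie\circ\nu_\K$ from \ref{L:K-Gauss}, and in particular $\tanh\tau$ vanishes at $\tau=0$ while $1-\tau\tanh\tau$ vanishes at $\tau=\pm\tau_{crit}$.

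Next I would impose the boundary conditions on the general solution $f=a\tanh\tau+b(1-\tau\tanh\tau)$. The circle $\partial S\setminus\Sph^2=\K\cap\D$ lies at $\tau=0$, so the Dirichlet condition $f(0)=0$ forces $b=0$ and hence $f=a\tanh\tau$. The circle $\partial S\cap\Sph^2$ lies at $\tau=\tau_{crit}$, where the outward unit conormal of $\partial S$ in $S$ is $\eta=\rho^{-1}\partial_\tau$; thus at $\tau=\tau_{crit}$ we have $\partial_\eta f=(r_{crit}\cosh\tau_{crit})^{-1}f'(\tau_{crit})=a\,(r_{crit}\cosh^3\tau_{crit})^{-1}$ and $f=a\tanh\tau_{crit}$, so the Robin condition $-\partial_\eta f+f=0$ becomes
\[ a\left(\tanh\tau_{crit}-\frac{1}{r_{crit}\cosh^3\tau_{crit}}\right)=0. \]
It then remains only to show the bracketed quantity is nonzero. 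By \ref{L:K-property} and \ref{L:K-Gauss}, $\tau_{crit}=1/R_{crit}$, hence (from $R_{crit}^{-1}=\coth R_{crit}^{-1}$, equivalently $\phie\circ\nu_\K=0$ on $\partial\K$) $\tau_{crit}\tanh\tau_{crit}=1$, and (from $R_{crit}/r_{crit}=\cosh(z_{crit}/r_{crit})$) $r_{crit}=R_{crit}/\cosh\tau_{crit}=1/(\tau_{crit}\cosh\tau_{crit})$. Substituting these, the bracket equals $\tfrac1{\tau_{crit}}-\tfrac{\tau_{crit}}{\cosh^2\tau_{crit}}$, which vanishes exactly when $\cosh\tau_{crit}=\tau_{crit}$; since $\cosh t>t$ for every $t\in\R$ (as $\cosh t\ge 1+t^2/2>t$), this is impossible. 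Therefore $a=0$ and $f\equiv 0$.

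I do not expect a genuine obstacle here: the only real content is the two routine computations (the identity $|A_\K|^2\rho^2=2\sech^2\tau$ and the conormal derivative) together with the bookkeeping of the normalizations relating $r_{crit}$, $R_{crit}$, $z_{crit}$, $\tau_{crit}$. As a remark, integrating $f\,(\Delta_S f+|A_S|^2 f)=0$ by parts and using the two boundary conditions only yields the stability-type identity $\int_S|\nabla f|^2=\int_S|A_S|^2 f^2+\int_{\partial S\cap\Sph^2}f^2$, which is not by itself conclusive, so the explicit ODE computation above is the natural route.
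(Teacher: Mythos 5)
Your proof is correct and follows essentially the same route as the paper's: reduce to the rotationally symmetric Jacobi ODE on the catenoid, observe that the Dirichlet condition along $\K\cap\D$ singles out the odd solution, and check that this solution fails the Robin condition on $\Sph^2$. The paper works in the Gauss-map coordinates of \ref{L:K-Gauss}, where the two ODE solutions are $\phio$ and $\phie$ and the final obstruction is $\xx_{crit}\neq\pi/4$ (resting on the numerical value $\xx_{crit}\approx 0.986$ from \ref{L:K-Gauss}.ii); your cylinder parametrization turns the same obstruction into $\cosh\tau_{crit}\neq\tau_{crit}$, which you dispose of with the elementary inequality $\cosh t>t$, so your endgame is self-contained and avoids the numerical estimate.
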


\begin{proof}
Using \ref{L:K-property}, \ref{E:spherical-coord}, and \ref{L:K-Gauss}, 
we can write the equations in spherical coordinates $(\xx,\yy)$ as (assuming the solution is independent of $\yy$) :
\[
\left\{ \begin{array}{cl}
f''(\xx)- (\tan \xx) f'(\xx)+2 f(\xx) =0  & \text{ for } \xx \in (0,\xx_{crit})\\
f(0)=0 & \\
- (\cot \xx_{crit}) \,  f'(\xx_{crit})+f(\xx_{crit})=0. &
\end{array} \right. 
\]
By \ref{L:K-Gauss} a solution of the ODE is a linear combination of 
$\phie$ and $\phio$. 
Since $\phie(0)=1$ and $\phio(0)=0$ the space of the ODE solutions which 
satisfy the Dirichlet boundary condition is spanned by 
$\phio$. 
The Robin condition for $f_{odd}$ amounts to 
$\cot \xx_{crit} \cos \xx_{crit} =\sin \xx_{crit} $ which is equivalent to 
$\xx_{crit}=\pi/4$. 
This is not true by \ref{L:K-Gauss}.ii and the proof is complete.

Alternatively we can consider the space of ODE solutions which satisfy the Robin condition. 
$\phie$ does not satisfy the Robin condition because it vanishes at $\xx_{crit}$ and if its derivative 
also vanished it would vanish identically. 
An easy calculation shows that 
\beq
\label{E:robin}
\phi_{Robin}:=(\cos2\xx_{crit}) \, \phie - (\cos\xx_{crit})\,\phieder(\xx_{crit}) \, \phio  
\eeq
spans this space. 
Clearly $\phi_{Robin}(0)\ne0$ and the lemma follows. 
\end{proof}

\begin{remark}
Note that clearly $\phi'_{Robin}(0)\ne0$ 
also, 
and therefore \ref{L:linear-cat} holds as well with a Neumann condition along $\K\cap\D$ instead of Dirichlet. 
This implies 
the existence of $\K^\pm_\theta$ for small $\theta$ independently of \ref{L:cat-ODE}. 
\end{remark}


\section{The initial surfaces}
\label{S:initial}
\label{S:desing}

In this section we first construct and study the desingularizing surfaces,  
and then we use them to replace a neighborhood of $\Ctheta$ in $\Wcal_\theta$ (recall \ref{D:initial-config}) 
so that we obtain the initial surfaces which are smooth and embedded. 
As in \cite{Kapouleas97} for example, 
the desingularizing surfaces are modeled in general on the classical singly periodic Scherk surfaces \cite{Scherk35} which form a one-parameter family 
of embedded minimal surfaces parametrized by $\alpha \in (0,\pi/2)$ where $2 \alpha$ is the angle between two of their four asymptotic half-planes 
(see \cite{Kapouleas11} for example). 
Because of the extra symmetries in our construction, 
we only need to use the Scherk surface (unique up to rigid motions and scaling) with $\alpha=\pi/4$ whose asymptotic planes are perpendicular. 

\subsection*{The singly periodic Scherk surface}

\begin{definition}
\label{D:scherk}
We denote $\Sch$ to be the Scherk surface defined by 
\begin{equation*}
\Sch  :=  \{  (x,y,z)\in\R^3 : \sinh x \sinh z = \cos y  \},
\end{equation*}
oriented by the unit normal $\nu_\Sch$ such that $\nu_\Sch \cdot e_z < 0$ on $\Sch \cap \{x>0\}$. 
\end{definition}

Note that $\Sch$ is also the most symmetric surface in the one-parameter family of Scherk surfaces. 
Some of the extra symmetries it possesses can be imposed in our constructions (see \ref{r:seven}). 
The Scherk surface $\Sch$ is singly periodic along the $y$-axis with period $2\pi$. 
Moreover, away from the $y$-axis, $\Sch$ is asymptotic to the planes $\{z=0\}$ and $\{x=0\}$ near infinity. 
The symmetries of $\Sch$ are summarized in the lemma below 
(see also Figures \ref{fig:planes} and \ref{fig:lines}).

\begin{figure}
\centering
\begin{minipage}{0.45\linewidth}
\includegraphics[width=.8\linewidth]{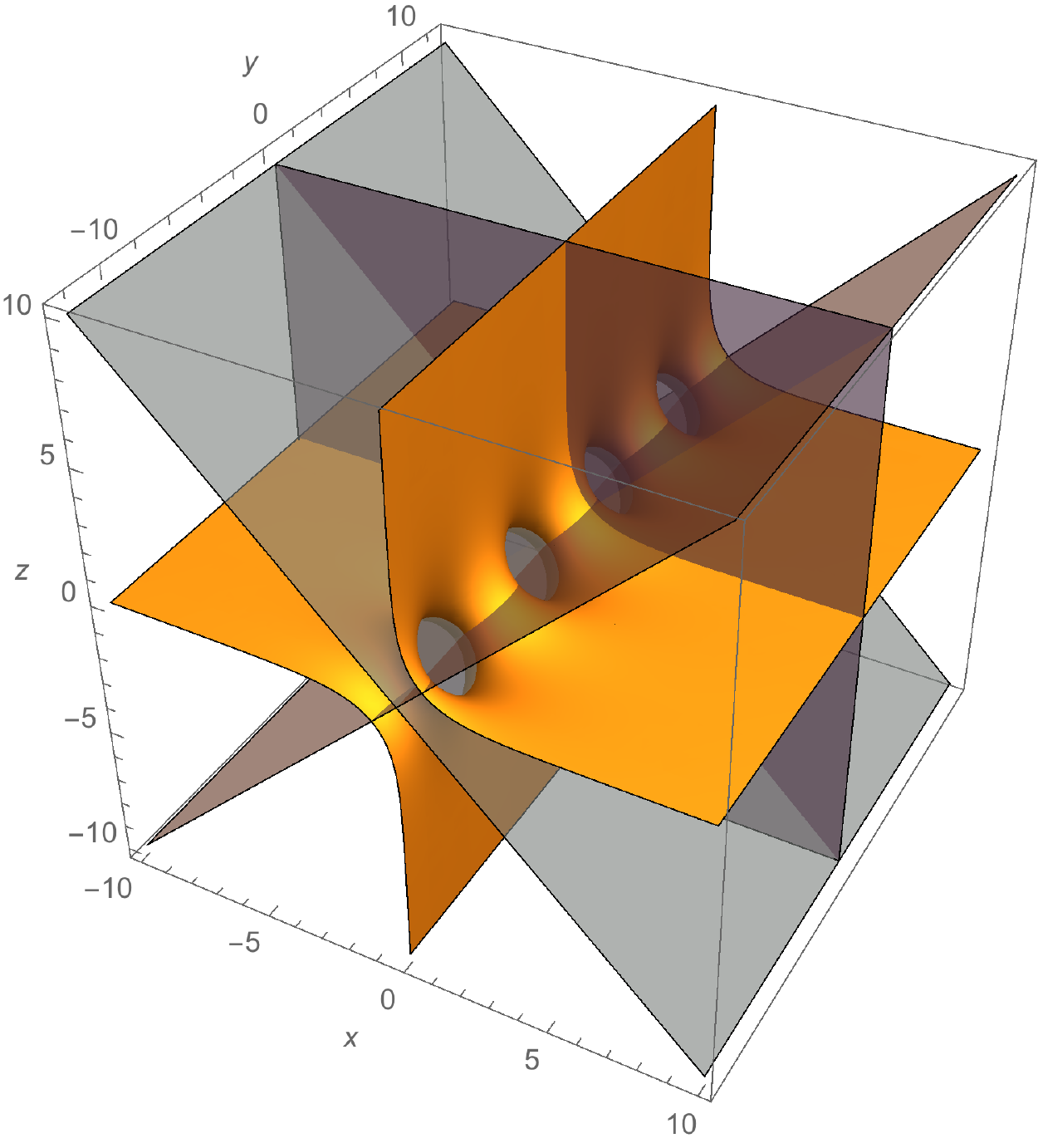}
\caption{$\mathcal{S}$ with its planes of symmetry}
\label{fig:planes}
\end{minipage}
\quad 
\begin{minipage}{0.45\linewidth}
\includegraphics[width=.8\linewidth]{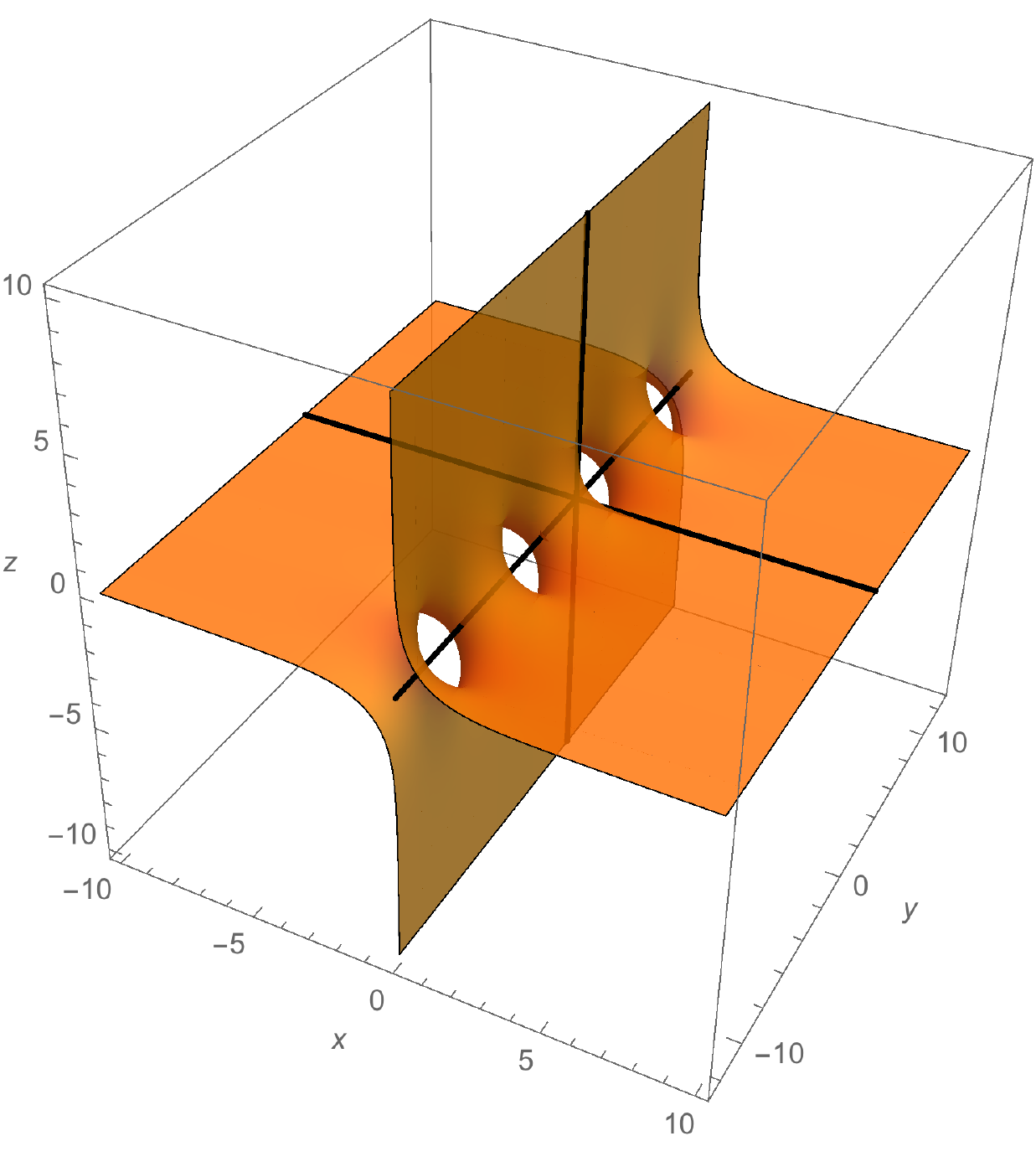}
\caption{$\mathcal{S}$ with its lines of symmetry}
\label{fig:lines}
\end{minipage}
\end{figure}

\begin{lemma}
\label{L:ScherkSym}
The Scherk surface $\mathcal{S}$ is a singly periodic complete embedded minimal surface 
with period $2\pi$ along the $y$-axis and it is invariant under reflections about \\
(i). the planes $\{y=n\pi\}$ ($n\in\Z$), $\{x=z\}$ and $\{x=-z\}$;\\
(ii). the lines $\{x=z=0\}$, $\{y=(n+\frac12)\pi,z=0\}$ and $\{x=0,y=(n+\frac12)\pi\}$ ($n\in\Z$).\\ 
The group $\group'_\Sch$ generated by these reflections is the group of symmetries of $\mathcal{S}$. 
\end{lemma}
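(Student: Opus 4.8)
The plan is to prove the three assertions in the lemma separately: (a) $\Sch$ is a complete embedded singly periodic minimal surface; (b) the listed reflections are symmetries of $\Sch$; and (c) they generate the full symmetry group of $\Sch$.

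For (a), I would observe that the rigid motion $(x,y,z)\mapsto(x,\,z,\,\tfrac{\pi}{2}-y)$ carries $\Sch$ onto $\{\sin z=\sinh x\sinh y\}$, which is the classical singly periodic Scherk surface (cf.\ \cite{Scherk35}, \cite{Kapouleas11}); hence $\Sch$ is minimal, properly embedded (so complete in the induced metric), and periodic of period $2\pi$ along the $y$-axis. One can also see embeddedness directly: writing $F(x,y,z):=\sinh x\sinh z-\cos y$, the gradient $\nabla F=(\cosh x\sinh z,\ \sin y,\ \sinh x\cosh z)$ never vanishes on $\{F=0\}$ (vanishing of the first and third components forces $x=z=0$, whence $\cos y=0$ and the middle component is $\pm1$), so $\Sch=F^{-1}(0)$ is a smooth closed, hence properly embedded, surface; only minimality genuinely needs the identification above (or the minimal surface equation computation).

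For (b), I would substitute each listed isometry $R$ into $F$ and check $F\circ R=\pm F$; since $\nabla F\neq0$ on $\Sch$, this forces $R(\Sch)=\Sch$. The reflections about $\{x=z\}$ and $\{x=-z\}$ and the $\pi$-rotation about the $y$-axis $\{x=z=0\}$ all leave $\sinh x\sinh z$ and $\cos y$ unchanged; the reflection about $\{y=n\pi\}$ leaves $\cos y$ unchanged; the $\pi$-rotation about $\{y=(n+\tfrac12)\pi,\,z=0\}$ sends $(y,z)\mapsto((2n+1)\pi-y,\,-z)$ and hence $F\mapsto-F$, and likewise for the $\pi$-rotation about $\{x=0,\,y=(n+\tfrac12)\pi\}$. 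Composing the reflections about $\{y=0\}$ and $\{y=\pi\}$ shows that the translation $(x,y,z)\mapsto(x,y+2\pi,z)$ lies in $\group'_\Sch$, recovering the periodicity; composing the $\pi$-rotation about $\{x=0,\,y=\tfrac{\pi}{2}\}$ with the reflection about $\{y=0\}$ shows $(x,y,z)\mapsto(-x,\,y+\pi,\,z)\in\group'_\Sch$ as well (this element is used below).

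The main obstacle is (c), that $\group'_\Sch$ exhausts the symmetry group $G$ of $\Sch$. The key input is the asymptotic geometry: $\Sch$ has exactly four ends, the two with $|x|\to\infty$ asymptotic to the plane $\{z=0\}$ and the two with $|z|\to\infty$ asymptotic to $\{x=0\}$, and these are the only two planes containing an asymptotic half-plane of an end. Hence any $g\in G$ permutes the four ends, so permutes the pair $\{\{z=0\},\{x=0\}\}$, and therefore preserves both their union and their intersection, the $y$-axis. Restricted to the $y$-axis, $g$ is some $y\mapsto\pm y+c$; since $g$ preserves $\Sch\cap\{x=z=0\}=\{(0,\tfrac{\pi}{2}+k\pi,0):k\in\Z\}$, we get $c\in\pi\Z$. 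By the computations in (b), $\group'_\Sch$ realizes every isometry $y\mapsto\pm y+n\pi$ of the $y$-axis, so after composing $g$ with a suitable element of $\group'_\Sch$ we may assume $g$ fixes the $y$-axis pointwise. Then $g$ is linear and acts on the orthogonal $xz$-plane as an element of $O(2)$ preserving the union of the two coordinate axes, i.e.\ $g$ lies in the order-$8$ dihedral group $\langle(x,z)\mapsto(z,x),\ (x,z)\mapsto(-x,z)\rangle$. Testing $F\circ g=\pm F$ on its eight elements, exactly the four maps $\mathrm{id}$, $(x,z)\mapsto(-x,-z)$, $(x,z)\mapsto(z,x)$, $(x,z)\mapsto(-z,-x)$ survive (the rotations by $\pm\pi/2$ and the two axis-reflections each turn $\sinh x\sinh z$ into $-\sinh x\sinh z$, which is incompatible with $\cos y$ being fixed), and all four already lie in $\group'_\Sch$. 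Hence $g\in\group'_\Sch$. The one step requiring care is the passage ``permutes the ends $\Rightarrow$ preserves the $y$-axis''; this is the standard fact that a symmetry carries ends to ends and asymptotic half-planes to asymptotic half-planes, made precise via the explicit asymptotics of $F$ above.
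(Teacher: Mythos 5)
Your proposal is correct, but it does far more than the paper, whose entire proof is the one line ``this can be checked directly using the defining equation''. Your part (b) — substituting each listed isometry into $F=\sinh x\sinh z-\cos y$ and checking $F\circ R=\pm F$ — is exactly that direct check, so there you coincide with the paper. What you add is a genuine argument for the last sentence of the lemma (that $\group'_\Sch$ is the \emph{full} symmetry group), which the paper asserts without proof: your route through the asymptotic geometry (a symmetry permutes the four ends, hence the two asymptotic planes $\{z=0\}$ and $\{x=0\}$, hence preserves the $y$-axis; reduce to an element of $O(2)$ in the $xz$-plane and test the eight dihedral candidates) is the standard and correct way to do this, and the reduction steps (pinning down $c\in\pi\Z$ from $\Sch\cap\{x=z=0\}$, realizing all of $y\mapsto\pm y+n\pi$ inside $\group'_\Sch$) all check out. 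One half-sentence worth adding in the final elimination: $F\circ g=\pm F$ is sufficient but not necessary for $g(\Sch)=\Sch$, so to discard the four remaining dihedral elements you should say explicitly that their image surface $\{-\sinh x\sinh z=\cos y\}$ is the translate of $\Sch$ by $(0,\pi,0)$ and differs from $\Sch$ (e.g.\ compare the slices at $y=0$); your parenthetical remark gestures at exactly this, so this is a presentational refinement, not a gap. The identification $(x,y,z)\mapsto(x,z,\tfrac{\pi}{2}-y)$ with the classical form $\sin z=\sinh x\sinh y$, and the regular-value argument for embeddedness, are also correct.
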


\begin{proof}
This can be checked directly using the defining equation for $\Sch$ in \ref{D:scherk}. 
\end{proof}

Note that the lines of symmetry $\{y=(n+\frac12)\pi,z=0\}$ and $\{x=0,y=(n+\frac12)\pi\}$ lie on the surface $\Sch$ 
(see Figure \ref{fig:lines}).
We now pick some of the symmetries which we would like to preserve in our constructions: 

\begin{definition}
\label{D:GSch}
Let 
$\YYbar$, $\YYbar_{\pi}$, and $\YYhbar_{\frac{\pi}{2}}$, 
be the reflections about the planes $\{y=0\}$, $\{y=\pi\}$ and the line $\{y=\pi/2,z=0\}$, respectively,  
or equivalently given by 
$$
\YYbar (x,y,z):=(x,-y,z), \quad \YYbar_{\pi}(x,y,z):=(x,2\pi-y,z), \quad \YYhbar_{\frac{\pi}{2}}(x,y,z)=(x,\pi-y,-z).  
$$
Note that all the reflections defined above are orientation-reversing diffeomorphisms on $\Sch$. 
We define subgroups of symmetries 
$\group^0_\Sch \subset \group_\Sch \subset \group'_\Sch$ 
as the subgroups generated by $\YYbar$ and $\YYbar_{\pi}$, 
and by $\YYbar$, $\YYbar_{\pi}$, and  $\YYhbar_{\frac{\pi}{2}}$ respectively. 
\end{definition}

\begin{remark}
\label{r:seven}
The symmetries in $\group_\Sch$ will be imposed on our constructions.
Because of the extra symmetries corresponding to reflections with respect to lines 
contained in the surfaces one can reduce the dimension of the extended substitute kernel 
(and therefore the number of parameters for the family of initial surfaces also) 
from six (per circle of intersection) as in \cite{Kapouleas97} to one, 
thus greatly simplifying the construction. 
In particular there is no need to dislocate the wings relative to the core, 
and so the $\underline{\varphi}$ parameters of \cite{Kapouleas97} are not needed. 
\end{remark}

\begin{figure}
\includegraphics[height=6cm]{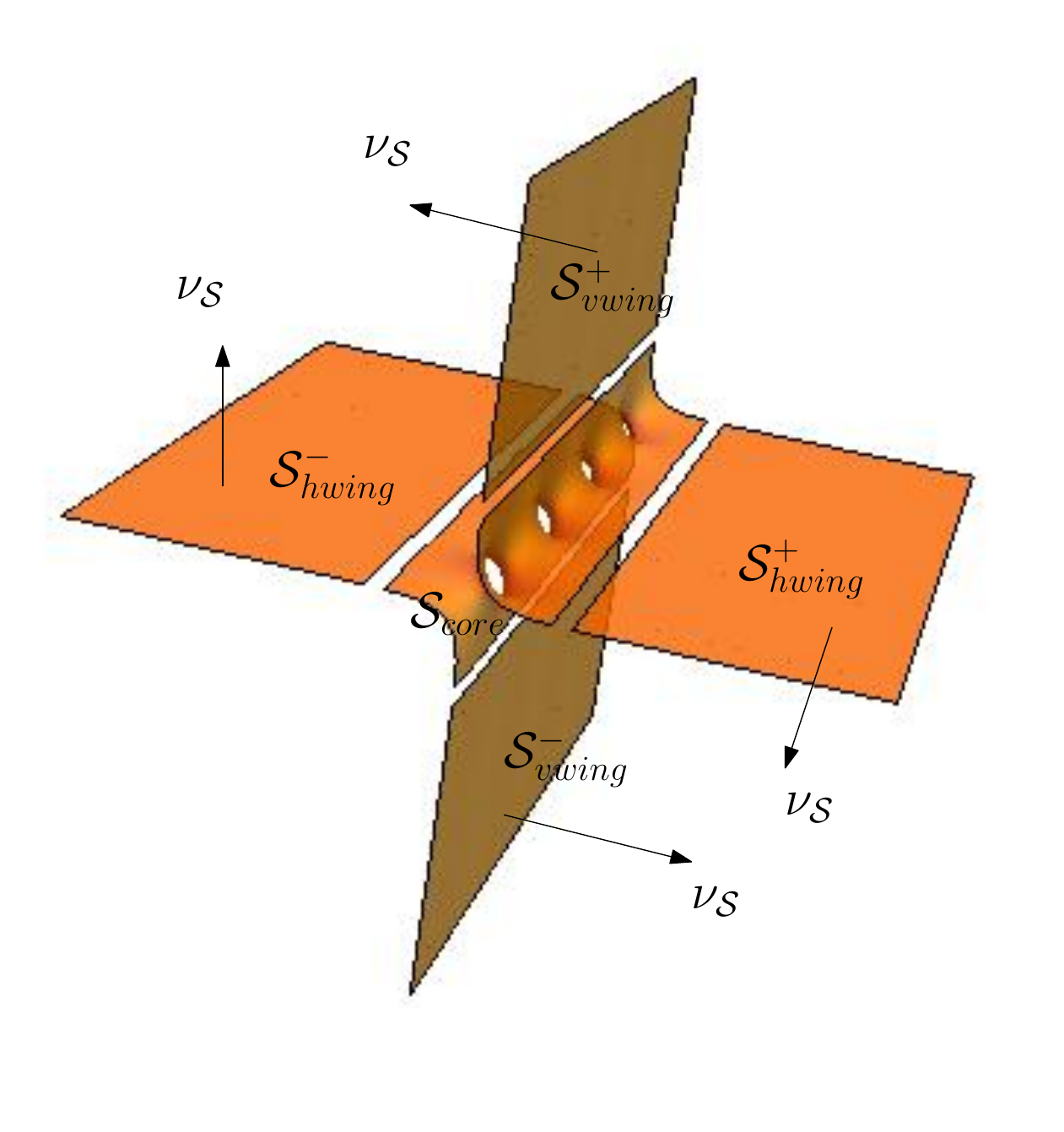}
\caption{A core-wings decomposition of $\Sch$ with its unit normal $\nu_\Sch$}
\label{fig:core-wing}
\end{figure}

The model Scherk surface $\Sch$ can be divided roughly into five regions: 
a central core, two wings asymptotic to the horizontal plane $\{z=0\}$ and two wings asymptotic to the vertical plane $\{x=0\}$. 
(see Figure \ref{fig:core-wing}).
The core is within a finite distance from the $y$-axis and contains all the topology of the surface. 
Each wing is simply connected and can be expressed as the graph of a small function over its asymptotic plane near infinity. 
The location of the transition from the core to the wings is not important 
as long as it is far enough from the axis 
to ensure that the wings are sufficiently close to the asymptotic planes in order to get good uniform estimates. 
Lemma \ref{L:phi-wing} below tells us that the wings decay exponentially fast to their asymptotic planes near infinity. 
Notice that it suffices to give the description of one wing since the others can be similarly described by reflecting across the planes $\{x=z\}$ and $\{x=-z\}$. 
Recall that $\R^2_+:=\{(y,s): s \ge 0\}$ is the half-space equipped with the standard orientation and flat metric $g_0$.

\begin{definition}[Core-wings decomposition]
\label{D:core-wings}
We assume given $a>2$. 
We define the immersions $X^\pm_{hor}, X^\pm_{ver}:\R^2_+ \to \R^3$ by
\[ X^\pm_{hor}(y,s):=(\pm(a+s),y,0), \quad X^\pm_{ver}(y,s):=(0,-y,\pm(a+s)), \]
of the horizontal and vertical asymptotic half-planes of $\mathcal{S}$. Moreover, we define a function $\varphi_{wing}:\R^2_+ \to \R$ by
\[ \varphi_{wing}(y,s):=-\log \left( \frac{\cos y}{\sinh (a+s)}+\sqrt{1+\frac{\cos^2 y}{\sinh^2 (a+s)}} \right), \]
and the immersions $X^\pm_{hwing},X^\pm_{vwing}:\R^2_+ \to \R^3$ of the horizontal and vertical wings (recall \ref{d:imm}) and their images by
\[ X^\pm_{hwing}:=\Immer[X^\pm_{hor},\varphi_{wing};\R^2_+], \quad X^\pm_{vwing}:=\Immer[X^\pm_{ver},\varphi_{wing};\R^2_+],\]
\[ \Sch^\pm_{hwing}:=\Graph[X^\pm_{hor},\varphi_{wing};\R^2_+], \quad \Sch^\pm_{vwing}:=\Graph[X^\pm_{ver},\varphi_{wing};\R^2_+].\]
Note that $\Sch^\pm_{hwing}$ and $\Sch^\pm_{vwing}$ are disjoint subsets of $\Sch$ and we define the core of the Scherk surface as
\[ \Sch_{core}:=\Sch \setminus ( \Sch^\pm_{hwing} \cup \Sch^\pm_{vwing}).\]
We also define a smooth function $s:\Sch \to \R$ to be equal to the coordinate $s$ induced on $\Sch^\pm_{hwing} \cup \Sch^\pm_{vwing}$ by the immersions $X^\pm_{hwing},X^\pm_{vwing}:\R^2_+ \to \R^3$ and equal to any smooth negative function on $\Sch_{core}$ which is symmetric with respect to $\group_\Sch$.
\end{definition}

Therefore (depending on $a>2$) we have the following core-wings decomposition of $\Sch$:
\[ \Sch = \Sch_{core} \cup \Sch^\pm_{hwing} \cup \Sch^\pm_{vwing},\]
with $\Sch_{s < 0}=\Sch_{core}$ and $\Sch_{s \ge 0}= \Sch^\pm_{hwing} \cup \Sch^\pm_{vwing}$ (recall \ref{E:sub-super}). 
The fact that the wings approach their asymptotic half planes exponentially fast is given by the lemma below.

\begin{lemma}
\label{L:phi-wing}
$\|\varphi_{wing}:C^5(\R^2_+,g_0,e^{-s})\| < C\, e^{-a}$ for some absolute constant $C$.
\end{lemma}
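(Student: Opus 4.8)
The plan is to work directly with the explicit formula for $\varphi_{wing}$ given in Definition \ref{D:core-wings} and to estimate it together with its first five derivatives on $\R^2_+$ against the weight $e^{-s}$. Set $u := \cos y / \sinh(a+s)$, so that $\varphi_{wing}(y,s) = -\log(u + \sqrt{1+u^2}) = -\arcsinh u$. First I would record the elementary bound $|u| \le 1/\sinh(a+s) \le C e^{-(a+s)} \le C e^{-a} e^{-s}$, valid on $\R^2_+$ since $a > 2$; this already shows $u$ is small and, in particular, $u$ lies in a fixed compact subset of the domain of $\arcsinh$. Because $\arcsinh$ is a smooth odd function with $\arcsinh 0 = 0$, on that compact set we have $|\arcsinh u| \le C|u|$, which gives the $C^0$ part of the estimate with the correct weight $e^{-s}$ and constant $C e^{-a}$.

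Next I would treat the derivatives. The key structural observation is that every $y$-derivative of $u$ is again a smooth bounded trigonometric multiple of $1/\sinh(a+s)$ (namely $\pm\cos y$ or $\pm\sin y$ over $\sinh(a+s)$), and every $s$-derivative of $1/\sinh(a+s)$ is a bounded smooth combination of products of $\sech$ and $1/\sinh$ evaluated at $a+s$, each of which is itself $O(e^{-(a+s)}) = O(e^{-a}e^{-s})$ on $\R^2_+$. Hence for every multi-index $\alpha$ with $|\alpha| \le 5$ one has $|\partial^\alpha u(y,s)| \le C e^{-a} e^{-s}$ on $\R^2_+$, with $C$ absolute. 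Then $\varphi_{wing} = -\arcsinh \circ u$ is a composition; by the chain rule (Faà di Bruno), $\partial^\alpha \varphi_{wing}$ is a universal polynomial in the derivatives of $\arcsinh$ at $u$ — all bounded on the relevant compact $u$-range — and in the derivatives $\partial^\beta u$ with $|\beta| \le |\alpha|$. Each such term contains at least one factor $\partial^\beta u$ with $|\beta| \ge 1$ (since the pure zeroth-order term $\arcsinh(u)$ only contributes $\arcsinh u$ itself, already handled), so each term is bounded by $C e^{-a} e^{-s}$, using that the remaining factors of $u$ are bounded. Summing over the finitely many terms gives $|\partial^\alpha \varphi_{wing}| \le C e^{-a} e^{-s}$ for $|\alpha| \le 5$, and by Definition \ref{D:weighted-Holder} (with $f = e^{-s}$, $\beta = 0$, and noting the supremum over unit geodesic balls of the flat $\R^2_+$ reduces to pointwise $C^5$ bounds up to an absolute constant) this is exactly the claimed inequality $\|\varphi_{wing}:C^5(\R^2_+,g_0,e^{-s})\| < C e^{-a}$.

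The only mild subtlety — and the step I would be most careful about — is the bookkeeping ensuring the exponential decay factor is uniformly $e^{-(a+s)}$ rather than merely $e^{-s}$ up to an $a$-dependent constant: this is what produces the small prefactor $e^{-a}$ and uses crucially that $a > 2$ keeps $a+s$ bounded away from $0$, so that $1/\sinh(a+s)$, $\sech(a+s)$, $\coth(a+s)$ and all their products stay bounded and decay like $e^{-(a+s)}$. Everything else is a routine, if slightly tedious, differentiation exercise, so I would state it compactly: observe that $\varphi_{wing} = -\arcsinh(\cos y \,/\sinh(a+s))$, note $\sinh(a+s) \ge \tfrac12 e^{a+s}$ and hence the argument and all its $C^5$-derivatives are bounded by $C e^{-(a+s)} \le C e^{-a} e^{-s}$, and conclude by smoothness of $\arcsinh$ near $0$ and the chain rule.
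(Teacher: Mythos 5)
Your proof is correct and follows the same route the paper intends: the paper's own proof is the one-line remark that the lemma "follows from the exact expression for $\varphi_{wing}$," and your computation (rewriting $\varphi_{wing}=-\arcsinh\bigl(\cos y/\sinh(a+s)\bigr)$, bounding the argument and its derivatives by $Ce^{-(a+s)}$ using $a>2$, and applying the chain rule) is exactly the elaboration of that remark. The bookkeeping of the $e^{-a}$ prefactor and the reduction of the weighted norm of \ref{D:weighted-Holder} to pointwise bounds are both handled correctly.
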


\begin{proof}
This follows from the exact expression for $\varphi_{wing}$ in \ref{D:core-wings}. 
\end{proof}

\begin{lemma}
\label{L:Scherk-equivariant}
(i) $\varphi_{wing}$ satisfies the following symmetries:
\[ \varphi_{wing}(-y,s)=\varphi_{wing}(y,s), \quad \varphi_{wing}(2\pi-y,s)=\varphi_{wing}(y,s),\]
\[ \varphi_{wing}(\pi-y,s)=-\varphi_{wing}(y,s).\]
(ii) $X^\pm_{hor}$ satisfies the following symmetries (recall \ref{D:GSch}):
\[ X^\pm_{hor}(-y,s)= \YYbar \circ X^\pm_{hor}(y,s), \quad X^\pm_{hor}( 2\pi -y,s) =\YYbar_{\pi} \circ X^\pm_{hor}(y,s),\]
\[ X^\pm_{hor}(\pi-y,s)=\YYhbar_{\frac{\pi}{2}} \circ X^\pm_{hor}(y,s).\]
(iii) $X^\pm_{ver}$ satisfies the following symmetries (recall \ref{D:GSch}):
\[ X^\pm_{ver}(-y,s)= \YYbar \circ X^\pm_{ver}(y,s), \quad X^\pm_{ver}( 2\pi -y,s) =\YYbar_{\pi} \circ X^\pm_{ver}(y,s),\]
\[ X^\pm_{ver}(\pi-y,s)=\YYhbar_{\frac{\pi}{2}} \circ X^\mp_{ver}(y,s).\]
\end{lemma}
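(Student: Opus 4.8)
The plan is to verify all three parts by direct substitution into the explicit formulas of \ref{D:core-wings}, using the affine expressions for $\YYbar$, $\YYbar_{\pi}$, $\YYhbar_{\frac{\pi}{2}}$ recorded in \ref{D:GSch}; the computation is entirely analogous to the one behind \ref{L:ScherkSym}. For (i), I would first note that $\varphi_{wing}$ depends on $(y,s)$ only through $\cos y$ and $\sinh(a+s)$: since $\cos$ is even and $2\pi$-periodic, the identities $\varphi_{wing}(-y,s)=\varphi_{wing}(y,s)$ and $\varphi_{wing}(2\pi-y,s)=\varphi_{wing}(y,s)$ are immediate. For the third identity, write $v:=\cos y/\sinh(a+s)$, so that $\varphi_{wing}(y,s)=-\log\bigl(v+\sqrt{1+v^2}\bigr)$; the elementary identity $\bigl(v+\sqrt{1+v^2}\bigr)\bigl(\sqrt{1+v^2}-v\bigr)=1$ shows this expression is odd in $v$, and since $\cos(\pi-y)=-\cos y$ replaces $v$ by $-v$ we obtain $\varphi_{wing}(\pi-y,s)=-\varphi_{wing}(y,s)$.

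For (ii) and (iii) one substitutes $y\mapsto -y$, $y\mapsto 2\pi-y$, $y\mapsto \pi-y$ into the linear maps $X^\pm_{hor}(y,s)=(\pm(a+s),y,0)$ and $X^\pm_{ver}(y,s)=(0,-y,\pm(a+s))$ and compares with the corresponding reflections applied to $X^\pm_{hor}(y,s)$ and $X^\pm_{ver}(y,s)$. For the horizontal half-planes the second coordinate is exactly $y$ and $z\equiv 0$, so all three identities hold on the nose (the sign flip of $z$ in $\YYhbar_{\frac{\pi}{2}}$ being harmless). For the vertical half-planes the parametrization carries $-y$ in the second slot and $\pm(a+s)$ in the third, and $\YYhbar_{\frac{\pi}{2}}$ reverses $z$ and therefore interchanges the upward and downward vertical wings; this is exactly why the last identity of (iii) pairs $X^\pm_{ver}$ with $X^\mp_{ver}$. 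There is no genuine obstacle in this lemma: the only point requiring care is the bookkeeping of these sign and index conventions, together with the observation that, because of the $-y$ appearing in $X^\pm_{ver}$, the $\YYbar_{\pi}$ and $\YYhbar_{\frac{\pi}{2}}$ identities in (iii) are to be read modulo the $2\pi$-translation symmetry of $\Sch$ along the $y$-axis (equivalently, with the first coordinate of $\R^2_+$ taken in $\R/2\pi\Z$) — which is all that the subsequent gluing construction, where this periodicity is built in, requires.
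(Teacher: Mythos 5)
Your verification is correct, and it coincides with what the paper intends: the lemma is stated without proof precisely because it is a direct substitution into the formulas of \ref{D:core-wings} and \ref{D:GSch}, which is exactly what you carry out (the oddness of $v\mapsto -\log(v+\sqrt{1+v^2})$ being the only non-immediate point in (i)).

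Your caveat about (iii) is also correct, and worth spelling out: with $X^\pm_{ver}(y,s)=(0,-y,\pm(a+s))$ as defined, one computes for instance $X^+_{ver}(2\pi-y,s)=(0,y-2\pi,a+s)$ while $\YYbar_{\pi}\circ X^+_{ver}(y,s)=(0,y+2\pi,a+s)$, and similarly the $\YYhbar_{\frac{\pi}{2}}$ identity is off by $(0,2\pi,0)$; so the last two identities of (iii) hold only modulo the $2\pi\Z$-translations along the $y$-axis, i.e.\ modulo deck transformations of the singly periodic $\Sch$. The sign $-y$ is forced by the orientation convention (so that the normal induced by the standard orientation of $\R^2_+$ is $-e_x$, matching $\nu_\Sch$ on the upper vertical wing), and the discrepancy is absorbed because $\varphi_{wing}$, $\psi_{trun}$ and the surface $\Sch$ itself are all $2\pi$-periodic in $y$. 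One further point you could add for completeness: when these identities are fed into \ref{E:equivariant} via \ref{D:vertical-asymp} and \ref{L:vertical-asymp}.iv, the residual translation of the parameter $y$ by a multiple of $2\pi$ is no longer killed by the maps $(X^\pm_{ver})_{\theta,\tau}$ themselves (whose period in $y$ is $2\pi m$), so the induced action on the parameter must be normalized consistently with the $-y$ convention there as well; this is a bookkeeping adjustment of the same kind, not an obstruction. Your proof is otherwise complete.
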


\subsection*{The desingularizing surfaces}

In this subsection we perturb the model Scherk surface $\Sch$ to a family of surfaces $\Sch_{\theta,\tau}$ 
depending smoothly on two small continuous parameters $\tau$ and $\theta$. 
These surfaces will be constructed as the images of a smooth family of immersions $\Zscr_{\theta,\tau}:\Sch \to \R^3$ 
such that $\Zscr_{0,0}$ is the identity map on $\Sch$ and $\Zscr_{\theta,\tau}$ converges locally uniformly to $\Zscr_{0,0}$ as $\tau,\theta \to 0$. 
This allows us to study the geometric and analytic properties of $\Sch_{\theta,\tau}$ 
from the corresponding properties of $\Sch$ by a perturbation argument 
(additional care needs to be taken as these surfaces are non-compact). 
In the next subsection, we will use these surfaces, after suitably translated and rescaled, to desingularize the singularity circle $\Ctheta$ 
in the initial configuration $\Wcal_\theta$ (recall \ref{D:initial-config}) to obtain a family of smooth initial surfaces.
We first have the following.

\begin{convention}
\label{Con:parameters}
\label{Con:gamma}
\label{Con:delta}
\label{Con:initial}
We will assume that the parameters $\tau$ and $\theta$ satisfy
\[ |\tau| < \dtau \qquad \text{ and } \qquad |\theta|< \dtheta,\]
for some small constants $\dtau,\dtheta>0$.
For future use we fix constants $\beta,\gamma \in (0,1)$, for example $\beta=\gamma=3/4$, and also 
a small constant $\ds>0$.
We will always assume that $a$ is as large as needed in absolute terms, 
$\dtheta$ and $\ds$ are as small as needed in terms of $a$, 
and $\dtau$ is as small as needed in terms of $a, \dtheta, \ds, \beta, \gamma$. 
Finally the initial surfaces $M_{\theta,m}$ we will construct in \ref{D:initial}  
will depend on parameters $\theta$ and $m$,  
where $\theta\in [-\dtheta,\dtheta]$ as above,  
$m\in\N\cap(1/\dtau,\infty)$, 
and when $m$ is chosen we have $\tau = m^{-1}$. 
\qed
\end{convention}

We discuss now the geometric meaning of the parameters: 
The parameter $\theta$ measures the amount of unbalancing which must be introduced due to the existence of a one-dimensional kernel 
(modulo the symmetry group $\group_\Sch$) to the linearized equation on $\Sch$ (see \ref{L:Scherk-kernel}). 
The parameter $\tau$ will describe the bending needed to wrap the axis ($y$-axis) of the Scherk surface into a circle of radius $\tau^{-1}$, 
which will later be rescaled and translated to fit the circle of singularity $\Ctheta$ in the initial configuration $\Wcal_\theta$. 
We start by defining the family of maps $\Xi_\theta$ which create unbalancing.

\begin{definition}
\label{D:unbalance}
We define a family of smooth maps $\Xi_\theta:\R^3 \to \R^3$ by 
\[ \Xi_\theta:=\psi \id_{\R^3}+(1-\psi) R_\theta,\]
where $\psi:=\psi_{cut}[2,1](z):\R^3 \to \R$ (recall \ref{E:cutab}), 
$\id_{\R^3}$ is the identity map on $\R^3$,  
and $R_\theta:\R^3 \to \R^3$ is a map that acts on $\{ \pm z > 1\}$ by rotation around the $y$-axis with angle $\mp \theta$.  
\end{definition}

The following properties of $\Xi_\theta$ are easy to verify from the definitions.

\begin{lemma}
\label{L:unbalance}
(i). $\Xi_\theta$ depends smoothly on the parameter $\theta$ for $|\theta| < \dtheta$ and $\Xi_0=\id_{\R^3}$.\\
(ii). For $|\theta|$ sufficiently small, 
$\Xi_\theta(\Sch)$ is an embedded surface and 
$\Xi_\theta$ restricts to a diffeomorphism from $\Sch$ to $\Xi_\theta(\Sch)$. 
Moreover $\Xi_\theta$ rotates the vertical wings $\Sch^\pm_{vwing}$ about the $y$-axis by an angle $\mp \theta$, 
and keeps the horizontal wings $\Sch^\pm_{hwing}$ pointwise fixed (recall \ref{D:core-wings}).\\
(iii). $\Xi_\theta$ is $\group_\Sch$-equivariant, 
that is it commutes with all the symmetries in $\group_\Sch$ (recall \ref{D:GSch}).
\end{lemma}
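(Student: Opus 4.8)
The plan is to read off all three assertions directly from the formula $\Xi_\theta=\psi\,\id_{\R^3}+(1-\psi)R_\theta$ of \ref{D:unbalance}, recalling that $\psi=\psi_{cut}[2,1](z)$ depends only on the coordinate $z$, is symmetric under $z\mapsto -z$, and equals $1$ on $\{|z|\le 1\}$ and $0$ on $\{|z|\ge 2\}$; the only point that needs real care is the global injectivity of $\Xi_\theta$ claimed in (ii). For (i), I would observe that $\psi$ is a fixed smooth function, that $\id_{\R^3}$ does not depend on $\theta$, and that $\theta\mapsto R_\theta$ is a smooth (indeed real-analytic) family of rigid motions, so $(p,\theta)\mapsto\Xi_\theta(p)$ is jointly smooth; this yields the asserted smooth dependence on $\theta$ for $|\theta|<\dtheta$. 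Since $R_0=\id_{\R^3}$ we get $\Xi_0=\psi\,\id_{\R^3}+(1-\psi)\,\id_{\R^3}=\id_{\R^3}$.

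For (ii) the key observation is that $\Xi_\theta$ acts as a rigid motion on each of the four wings of $\Sch$ (recall \ref{D:core-wings}): for $a$ as large as in \ref{Con:parameters}, on the horizontal wings $\Sch^\pm_{hwing}$ the coordinate $|z|$ is uniformly small, so $\psi\equiv 1$ and $\Xi_\theta=\id_{\R^3}$ there, whereas on the vertical wings $\Sch^\pm_{vwing}$ the coordinate $\pm z$ is large, so $\psi\equiv 0$ and $\Xi_\theta=R_\theta$, which on $\{\pm z>1\}$ is rotation about the $y$-axis by $\mp\theta$; this already gives the last two assertions of (ii). Hence the nonrigid part of $\Xi_\theta$ is confined to the slab $\{1\le |z|\le 2\}$, whose intersection with $\Sch$ is compact modulo the period-$2\pi$ translation along the $y$-axis (on $\Sch$, $\sinh x\sinh z=\cos y$, so $|z|\in[1,2]$ bounds $x$ while $y$ ranges over $\R$). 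Since $\Xi_\theta$ fixes the $y$-coordinate and commutes with $y$-translation, it suffices to work on a single compact fundamental domain, on which $\Xi_\theta\to\id_{\R^3}$ in $C^1$ as $\theta\to 0$, so by the inverse function theorem $\Xi_\theta$ is an embedding there for $|\theta|$ small. Combining this with the facts that $\Xi_\theta$ is a rigid motion on each wing and that for $|\theta|$ small the images of the four wings are pairwise disjoint and disjoint from the image of the core (the vertical wings stay far out in $\{|z|$ large$\}$ under the small rotation, the horizontal wings stay near $\{z=0\}$), one concludes that for $|\theta|$ small $\Xi_\theta$ restricts to a diffeomorphism from $\Sch$ onto the embedded surface $\Xi_\theta(\Sch)$.

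For (iii) it suffices to check that $\Xi_\theta$ commutes with each of the generators $\YYbar,\YYbar_{\pi},\YYhbar_{\frac{\pi}{2}}$ of $\group_\Sch$ (recall \ref{D:GSch}), and since each of these is an isometry of $\R^3$ this reduces to checking that it commutes with multiplication by $\psi$ and with $R_\theta$. The reflections $\YYbar$ and $\YYbar_{\pi}$ preserve the $z$-coordinate and the $y$-axis, so they commute with $\psi$ and with $R_\theta$; for $\YYhbar_{\frac{\pi}{2}}\colon(x,y,z)\mapsto(x,\pi-y,-z)$, invariance of $\psi$ is exactly the $z\mapsto -z$ symmetry of $\psi$, while the identity $\YYhbar_{\frac{\pi}{2}}\circ R_\theta=R_\theta\circ\YYhbar_{\frac{\pi}{2}}$ is precisely why $R_\theta$ was defined with the opposite angles $\mp\theta$ on $\{\pm z>1\}$: conjugating a rotation about the $y$-axis by the reflection $z\mapsto -z$ reverses its angle, and this cancels the sign flip built into $R_\theta$. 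Hence $\Xi_\theta$ is $\group_\Sch$-equivariant.

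I expect the only genuinely nontrivial step to be the global injectivity in (ii): ruling out that the far ends of $\Sch$ --- which are moved through large Euclidean distances by the rotations $R_{\pm\theta}$ --- collide with one another or with the core. This is handled by the rigidity of $\Xi_\theta$ on the wings, which keeps the image of each wing inside a well-separated region of $\R^3$; after that the remaining verification is the standard ``small $C^1$-perturbation of the identity'' argument on the compact core-plus-transition piece, together with the $y$-periodicity. Everything else is immediate from the explicit definition.
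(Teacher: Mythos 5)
Your verification is correct, and the paper itself offers no proof of this lemma beyond the remark that the properties are easy to verify from the definitions, so your argument supplies exactly the intended routine checks --- the only genuinely delicate point, global injectivity of $\Xi_\theta|_{\Sch}$, is correctly reduced to a single rigid rotation on each component of $\Sch\cap\{|z|\ \text{large}\}$ together with a small $C^1$-perturbation of the identity on the compact-mod-$y$-periodicity transition slab (where $\sinh x\sinh z=\cos y$ bounds $x$). The one point worth flagging is that your argument, like the lemma itself, requires reading the cutoff $\psi=\psi_{cut}[2,1](z)$ of \ref{D:unbalance} as a function of $|z|$ (so that it is even in $z$ and vanishes on \emph{both} vertical wings); as literally written it equals $1$ for all $z\le 4/3$, including the lower vertical wing, which would leave $\Sch^-_{vwing}$ pointwise fixed and break parts (ii) and (iii) --- your tacit symmetric reading is clearly the intended one.
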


Next, we define the family of maps $\Bscr_\tau$ which introduce the bending wrapping the $y$-axis 
around a circle of radius $\tau^{-1}$. 
In order to get an embedded surface, we are mainly interested in the values of $\tau$ such that $\tau^{-1}=m \in \N$, where $m$ is large. 
To facilitate the presentation we first define a discrete subgroup 
$\group_m$ 
of the continuous group $\group_\infty$ of symmetries defined in \ref{D:G'}: 

\begin{definition}
\label{D:Gm}
For any $m \in \N$, $m \geq 3$, we define $\group_m$ to be the group of isometries of $\R^3$ generated by
\[ \YYbar'(x,y,z):=(x,-y,z), \qquad \YYbar'_{\frac{\pi}{m}}(x,y,z):=(x \cos \frac{2\pi}{m} + y \sin \frac{2\pi}{m},x \sin \frac{2\pi}{m} - y \cos \frac{2\pi}{m},z), \]
\[ \YYhbar'_{\frac{\pi}{2m}}(x,y,z):=(x \cos \frac{\pi}{m} + y \sin \frac{\pi}{m},x \sin \frac{\pi}{m} - y \cos \frac{\pi}{m}, -z), \]
which are respectively the reflections about the planes $\{y=0\}$, $\{y= x \tan \frac{\pi}{m}\}$ and the line $\{ y = x \tan \frac{\pi}{2m}, z=0\}$.
\end{definition}

\begin{definition}
\label{D:bending}
We define the family of smooth maps $\Bscr_\tau:\R^3 \to \R^3$ by taking $\Bscr_0=\id_{\R^3}$ and for $\tau \neq 0$,
\[ \Bscr_\tau(x,y,z):=(-\tau^{-1},0,0)+ ((\tau^{-1}+x) \cos \tau y, (\tau^{-1}+x) \sin \tau y ,z). \]
\end{definition}

\begin{lemma}
\label{L:bending}
(i). $\Bscr_\tau$ depends smoothly on the parameter $\tau \in \R$ and $\Bscr_\tau(0)=0$.\\
(ii). When $\tau \neq 0$, $\Bscr_\tau$ wraps the $y$-axis isometrically onto the circle in the plane $\{z=0\}$ centered at $(-\tau^{-1},0,0)$ through the origin. Moreover, it restricts to an isometry on each vertical plane $\{y=c\}$, $c\in \R$, onto its image.\\
(iii). When $\tau^{-1} =m \in \Z$, the maps $\Bscr'_\tau(x,y,z):=\Bscr_\tau(x,y,z) + (\tau^{-1},0,0)$ 
are equivariant with respect to $\group_\Sch$ and $\group_m$:
\[  \Bscr'_\tau \circ \YYbar= \YYbar' \circ \Bscr'_\tau, \quad \Bscr'_\tau \circ \YYbar_{\pi}= \YYbar'_{\frac{\pi}{m}} \circ \Bscr'_\tau, \quad \Bscr'_\tau \circ \YYhbar_{\frac{\pi}{2}} = \YYhbar'_{\frac{\pi}{2m}} \circ \Bscr'_\tau. \]
\end{lemma}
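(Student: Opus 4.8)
The plan is to verify all three parts by direct computation from the explicit formula for $\Bscr_\tau$ in \ref{D:bending}; the computations are elementary, the only points needing a little care being the removable singularity at $\tau=0$ in (i) and the matching of reflections in $\group_\Sch$ with reflections in $\group_m$ in (iii). For (i) I would first rewrite the first two coordinate functions of $\Bscr_\tau(x,y,z)$ as
\[ -\tau^{-1}+(\tau^{-1}+x)\cos\tau y = x\cos\tau y + y\,\frac{\cos\tau y-1}{\tau y}, \qquad (\tau^{-1}+x)\sin\tau y = x\sin\tau y + y\,\frac{\sin\tau y}{\tau y}, \]
and then note that $t\mapsto(\cos t-1)/t$ and $t\mapsto(\sin t)/t$ extend to entire functions of $t$, with values $0$ and $1$ at $t=0$. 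Hence both expressions, together with the unchanged third coordinate $z$, are real-analytic in $(\tau,x,y,z)\in\R^4$, in particular across $\tau=0$, where they reduce to $(x,y,z)$; this gives smooth dependence on $\tau\in\R$ and $\Bscr_0=\id_{\R^3}$ as required, and $\Bscr_\tau(0)=(-\tau^{-1},0,0)+(\tau^{-1},0,0)=0$ is immediate.

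For (ii) I would restrict $\Bscr_\tau$ to the $y$-axis, obtaining $\Bscr_\tau(0,y,0)=(-\tau^{-1},0,0)+\tau^{-1}(\cos\tau y,\sin\tau y,0)$, the arclength parametrization (the velocity $(-\sin\tau y,\cos\tau y,0)$ is a unit vector) of the circle in $\{z=0\}$ of radius $|\tau|^{-1}$ centered at $(-\tau^{-1},0,0)$, which passes through the origin at $y=0$. For a vertical plane $\{y=c\}$ I would compute the partial derivatives of $\Bscr_\tau$ there, namely $(\cos\tau c,\sin\tau c,0)$ in the $x$-direction and $(0,0,1)$ in the $z$-direction; these are orthonormal at every point, so $\Bscr_\tau^*g_0=g_0$ on $\{y=c\}$, and since $\Bscr_\tau$ is clearly injective on that plane it is an isometry onto its image.

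For (iii), with $\tau=1/m$ the map is $\Bscr'_\tau(x,y,z)=\big((m+x)\cos(y/m),(m+x)\sin(y/m),z\big)$. The generators $\YYbar$, $\YYbar_{\pi}$, $\YYhbar_{\frac{\pi}{2}}$ of $\group_\Sch$ fix $x$, send $y$ to $-y$, $2\pi-y$, $\pi-y$ respectively, and the last one additionally sends $z$ to $-z$. Precomposing $\Bscr'_\tau$ with these replaces the angle $y/m$ by $-y/m$, $2\pi/m-y/m$, $\pi/m-y/m$; expanding the resulting cosines and sines by the angle-addition formulas, the three compositions become $\Bscr'_\tau(x,y,z)$ acted on by the orthogonal transformations of $\R^3$ described in \ref{D:Gm} — the reflection $\YYbar'$ about $\{y=0\}$, the reflection $\YYbar'_{\frac{\pi}{m}}$ about $\{y=x\tan(\pi/m)\}$, and the composition $\YYhbar'_{\frac{\pi}{2m}}$ of the reflection about $\{y=x\tan(\pi/2m)\}$ with $z\mapsto-z$ — which yields the three equivariance identities by comparing coordinates.

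I do not expect a real obstacle. The step most likely to cause a slip is (iii): one must keep straight which $y$-reflection of $\group_\Sch$ corresponds to which reflection of $\group_m$ and, in the third identity, confirm that the orientation-reversing factor $z\mapsto-z$ on each side is matched; in (i) the only thing to notice is that $\tau^{-1}$ should be absorbed into the entire functions $(\sin t)/t$ and $(\cos t-1)/t$ rather than read literally.
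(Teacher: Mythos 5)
Your proof is correct, and it carries out exactly the direct verification from \ref{D:bending}, \ref{D:GSch} and \ref{D:Gm} that the paper intends — the lemma is stated there without proof as a routine computation. All three parts check out, including the rewriting via $(\sin t)/t$ and $(\cos t-1)/t$ for smoothness at $\tau=0$ and the angle-addition computations matching the reflections of $\group_\Sch$ with those of $\group_m$.
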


Roughly speaking, we will first apply the unbalancing map $\Xi_\theta$ to the Scherk surface $\Sch$ and then apply the bending map $\Bscr_\tau$ to wrap the axis around a circle. However, the resulting surface would not be approximately minimal since the vertical asymptotic half planes of $\Sch$ would be bent into cones, which are not minimal surfaces. Therefore, the resulting surface will be asymptotic to these non-minimal cones near the vertical ends, whose error in the mean curvature would be too large to be corrected by a fixed point argument. To remedy this situation, we need to introduce a further bending so that the vertical asymptotic half planes become half catenoids, which are minimal. We will then build the wings of our desingularizing surface as the graph of $\varphi_{wing}$ (recall \ref{D:core-wings}) over such bent catenoids. Since $\varphi_{wing}$ is a function defined on $\R^2_+$, we need to give a parametrization $(X^\pm_{ver})_{\theta,\tau}:\R^2_+ \to \R^3$ of the bent catenoids. The formula in our definition is motivated by \ref{L:vertical-asymp}.ii. Note that the definitions \ref{D:vertical-asymp} and \ref{D:horizontal-asymp} are consistent with Definition 3.7 and 3.8 in \cite{Kapouleas97}.

\begin{definition}
\label{D:vertical-asymp}
We define the smooth maps 
$(X^\pm_{ver})_{\theta,\tau}:\R^2_+ \to \R^3$ by taking $(X^\pm_{ver})_{\theta,0}=\Xi_\theta \circ X^\pm_{ver}$ 
(recall \ref{D:core-wings} and \ref{D:unbalance}) and for $\tau \neq 0$,
\[ (X^\pm_{ver})_{\theta,\tau}(y,s):= (-\tau^{-1},0,0) + 
\tau^{-1} (\,\rhotilde_{\theta,\tau}(s)\, \cos \tau y, \rhotilde_{\theta,\tau}(s) \sin \tau y,\pm \tau (a+s)\cos \theta),\]
where 
$\rhotilde_{\theta,\tau}(s):= \rho_\theta(\tau (a+s))$ and  
$\rho_\theta(t):=\cosh t + \sin \theta \sinh t$. 
\end{definition}

Using \ref{L:cat-ODE} it is clear that the function $\rho_\theta$ actually generates a catenoid which meets the plane $\{z=0\}$ at an angle $\pi/2+\theta$.  

\begin{lemma}
\label{L:vertical-asymp}
(i). $(X^\pm_{ver})_{\theta,\tau}$ depends smoothly on the parameters $\tau \in \R$ and $\theta \in (-\pi/2,\pi/2)$.\\
(ii). For $\tau \neq 0$, 
$(X^\pm_{ver})_{\theta,\tau}$ is a conformal minimal immersion into a subset of the complete catenoid 
which meets the plane $\{z=0\}$ with angle $\pi/2+\theta$ along the circle $\Bscr_\tau(\{x=z=0\})$ 
through the origin centered at $(-\tau^{-1},0,0)$ and the conformal factor is $\rhotilde_{\theta,\tau}^2$.  
\\
(iii). 
Assuming \ref{Con:parameters} 
we have the following uniform estimates 
where $A$ and $g$ are the second fundamental form (recall \ref{D:second-fundamental}) 
and the induced metric of the immersion $(X^\pm_{ver})_{\theta,\tau}$ respectively 
and $\partial$ denotes the partial derivatives with respect to the standard coordinates of $\R^2_+$ 
(recall \ref{E:sub-super} and \ref{D:weighted-Holder} and note that $g_0$ can be replaced by $g$):\\
$\phantom{kk}$ 
(a) $\| \rhotilde^2_{\theta,\tau}-1 : C^3((\R^2_+)_{s \le 5 \delta_s |\tau|^{-1}},g_0,a+s)\| \leq C |\tau|$,\\
$\phantom{kk}$ 
(b) 
$\| \partial^2 \, (X^\pm_{ver})_{\theta,\tau} :C^3(\, (\R^2_+)_{s \leq 5 \delta_s |\tau|^{-1}} , g_0)\| \leq C |\tau| ,$\\ 
$\phantom{kk}$ 
(c) 
$\| |A|_g^2:C^3(\, (\R^2_+)_{s \leq 5 \delta_s |\tau|^{-1}} , g_0)\| \leq C \tau^2 ,$\\
$\phantom{kk}$ 
(d) $\| g-g_0:C^{3}(\, (\R^2_+)_{s \leq 5 \delta_s |\tau|^{-1}} , g_0)\| \leq C \delta_s.$  
\\
(iv). When $\tau^{-1}=m \in \N$, the maps $(X^\pm_{ver})'_{\theta,\tau}(y,s):=(X^\pm_{ver})_{\theta,\tau}(y,s)+(\tau^{-1},0,0)$ satisfy the symmetries (recall \ref{D:Gm}):
\[ (X^\pm_{ver})'_{\theta,\tau}(s,-y)= \YYbar' \circ (X^\pm_{ver})'_{\theta,\tau}(y,s), \quad (X^\pm_{ver})'_{\theta,\tau}(s, 2\pi -y) =\YYbar'_{\frac{\pi}{m}} \circ (X^\pm_{ver})'_{\theta,\tau}(y,s),\]
\[ (X^\pm_{ver})'_{\theta,\tau}(s,\pi-y)=\YYhbar'_{\frac{\pi}{2m}} \circ (X^\mp_{ver})'_{\theta,\tau}(y,s).\]
\end{lemma}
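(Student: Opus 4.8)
The entire lemma amounts to computing with the explicit formula for $(X^\pm_{ver})_{\theta,\tau}$ in \ref{D:vertical-asymp}, so the plan is to organize that computation part by part, the only genuine care being the uniformity of constants in (iii). For (i), for $\tau\neq0$ the formula is manifestly smooth in $(\theta,\tau)$, so the only point is smoothness across $\tau=0$: I would Taylor-expand $\rhotilde_{\theta,\tau}(s)=\rho_\theta(\tau(a+s))=1+\sin\theta\,\tau(a+s)+O(\tau^2)$ together with $\cos\tau y=1+O(\tau^2)$, $\sin\tau y=\tau y+O(\tau^3)$, and observe that the two singular $\tau^{-1}$ contributions -- from the translation by $(-\tau^{-1},0,0)$ and from the first two coordinates $\tau^{-1}(\rhotilde_{\theta,\tau}\cos\tau y,\rhotilde_{\theta,\tau}\sin\tau y,\cdot)$ -- cancel, leaving a function smooth in $(\theta,\tau)$ across $\tau=0$ whose value at $\tau=0$ coincides, where the relevant points of $X^\pm_{ver}$ lie in $\{\pm z>1\}$, with $\Xi_\theta\circ X^\pm_{ver}$; this matches the stipulation $(X^\pm_{ver})_{\theta,0}=\Xi_\theta\circ X^\pm_{ver}$.

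For (ii) I would argue directly from the formula. As $y$ varies, the first two coordinates trace the circle of radius $\tau^{-1}\rhotilde_{\theta,\tau}(s)$ about $(-\tau^{-1},0)$ in the plane $\{z=\pm(a+s)\cos\theta\}$, so the image is a surface of revolution about the vertical line through $(-\tau^{-1},0,0)$ whose generating curve is $\tau^{-1}$ times the curve generated by $\rho_\theta$; by \ref{L:cat-ODE} and the remark following \ref{D:vertical-asymp} this is part of a catenoid meeting $\{z=0\}$ at angle $\pi/2+\theta$ along the circle $\Bscr_\tau(\{x=z=0\})$. Differentiating the formula gives $\langle\partial_yX,\partial_yX\rangle=\langle\partial_sX,\partial_sX\rangle=\rhotilde_{\theta,\tau}^2$ and $\langle\partial_yX,\partial_sX\rangle=0$ -- the first using the identity $(\rho_\theta')^2+\cos^2\theta=\rho_\theta^2$ -- so the immersion is conformal with conformal factor $\rhotilde_{\theta,\tau}^2$; and $\partial_y^2X+\partial_s^2X=0$ because $\rho_\theta''=\rho_\theta$, so $X$ is a conformal harmonic, hence minimal, immersion.

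For (iii), the key point is that on $(\R^2_+)_{s\le 5\ds|\tau|^{-1}}$ one has $\tau(a+s)\le|\tau|a+5\ds$, which is as small as desired by \ref{Con:parameters} (choosing $\dtau$ small in terms of $a$ and $\ds$ small). On this region $\rhotilde_{\theta,\tau}^2-1=2\sin\theta\,\tau(a+s)+O((\tau(a+s))^2)$ with every $s$-derivative carrying an extra power of $\tau$; since on each unit ball $a+s$ varies negligibly compared with its value (as $a$ is large), dividing by the weight $a+s$ yields (a), and (d) is then immediate from $g-g_0=(\rhotilde_{\theta,\tau}^2-1)g_0$ and the bound $\tau(a+s)\le C\ds$. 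For (b) one reads off that $\partial^2X$ is $|\tau|$ times a product of the uniformly bounded functions $\rhotilde_{\theta,\tau}$, $\rho_\theta'(\tau(a+s))$, $\cos\tau y$, $\sin\tau y$ and $\cos\theta$, and that $k$-fold differentiation only produces further bounded factors and powers of $\tau$; for (c) one uses that the image is $\tau^{-1}$ times a fixed catenoid depending only on $\theta$ (with $|\theta|$ bounded), so $|A|_g^2$ equals $\tau^2$ times a bounded smooth function of $\tau(a+s)$ and $\theta$, again with derivatives gaining powers of $\tau$.

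Finally (iv) is a direct verification: with $(X^\pm_{ver})'_{\theta,\tau}(y,s)=\tau^{-1}(\rhotilde_{\theta,\tau}(s)\cos\tau y,\rhotilde_{\theta,\tau}(s)\sin\tau y,\pm\tau(a+s)\cos\theta)$ and $\tau^{-1}=m$, each of the three identities follows from the trigonometric addition formulas applied to $\cos(\tau(k\pi-y))$ and $\sin(\tau(k\pi-y))$ with $\tau k\pi=k\pi/m$, for $k=0,1,2$; the switch $\pm\to\mp$ in the third identity is exactly the effect of the $-z$ in $\YYhbar'_{\frac{\pi}{2m}}$, which also flips the sign of the third coordinate $\pm(a+s)\cos\theta$. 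The only real obstacle is the bookkeeping in (iii): keeping all constants absolute (or depending only on the admissible parameter ranges) and using the restriction to $(\R^2_+)_{s\le 5\ds|\tau|^{-1}}$ precisely where $\tau(a+s)$ must be kept small.
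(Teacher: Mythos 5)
Your proposal is correct and follows essentially the same route as the paper's (very terse) proof: direct computation for (i), (ii) and (iv) — with the key identities $(\rho_\theta')^2+\cos^2\theta=\rho_\theta^2$ and $\rho_\theta''=\rho_\theta$ doing the work for conformality and harmonicity — and, for (iii), bounding everything in terms of the small quantity $\tau(a+s)\le|\tau|a+5\delta_s$ on the truncated region together with the observation that the image is a $\tau^{-1}$-scaling of a fixed catenoid, so that derivatives and $|A|^2$ pick up the stated powers of $\tau$. The paper phrases (b)–(c) as "set $\tau=1$ and rescale," which is the same scaling argument you carry out explicitly.
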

\begin{proof}
(i) and (ii) follow easily by calculations. 
To prove the estimates in (iii), 
first of all we note that if $\ds>0$ is sufficiently small in absolute terms, 
then we have $|\cosh t| < C$ and $|\sinh t| < C |t|$ for all $t \in (-10\ds,10\ds)$, which imply that 
\[ \| \rho_\theta^2 -1 : C^3((-10\ds,10\ds),dt^2,|t|)\| \leq C.\]
If $|\tau|$ is small enough in terms of $a$ and $\ds$, then we have $|\tau a| < 5 \ds$. 
Therefore, $|\tau (a+s)| < 10 \ds$ when $s \leq 5 \ds |\tau^{-1}|$ and (a) follows by the definition of $\rhotilde_{\theta,\tau}$ and scaling. 
(d) follows from (a) and (ii). 
For (b-c), again we observe that if $|\theta|$ is small in absolute terms, 
then these are valid if we substitute $\tau=1$. 
By scaling we conclude their proof. 
(iv) follows from the definitions.
\end{proof}

The situation for the horizontal wings is simpler since the horizontal asymptotic half planes are fixed pointwise by the map $\Xi_\theta$ 
and remain planar after the action of $\Bscr_\tau$. 
However the parametrization does get distorted during the process. 
Therefore, the graph of $\varphi_{wing}$ over the perturbed immersion still loses minimality. 
For this reason, we have to understand the perturbation on the immersions of the horizontal asymptotic half planes as well.

\begin{definition}
\label{D:horizontal-asymp}
We define the smooth maps $(X^\pm_{hor})_{\tau}:\R^2_+ \to \R^3$ by taking $(X^\pm_{hor})_0=X^\pm_{hor}$ (recall \ref{D:core-wings}) and for $\tau \neq 0$,
\[ (X^\pm_{hor})_\tau (y,s):= (-\tau^{-1},0,0)+ \tau^{-1} \rhotilde^\pm(s)\, (\cos \tau y,\sin \tau y,0), \]  
where 
$\rhotilde^\pm_\tau(s):= \rho^\pm(\tau (a+s)) = e^{\pm \tau (a+s)} .$ 
and 
$\rho^\pm(t):=e^{\pm t}$. 
\end{definition}

\begin{lemma}
\label{L:horizontal-asymp}
(i). $(X^\pm_{hor})_\tau$ depends smoothly on the parameter $\tau \in \R$.\\
(ii). For $\tau \neq 0$, $(X^+_{hor})_\tau$ ($(X^-_{hor})_\tau$) is a conformal minimal immersion onto 
the exterior (punctured at the origin interior) 
of the circle of radius $\tau^{-1}$ centered at $(-\tau^{-1},0,0)$  
in the plane $\{z=0\}$.  
The conformal factor is $(\rhotilde^\pm_\tau)^2$.  
\\
(iii). 
Assuming \ref{Con:parameters} 
we have the following uniform estimates 
where $g$ is the induced metric of the immersion $(X^\pm_{hor})_\tau$ 
and $\partial$ denotes the partial derivatives with respect to the standard coordinates of $\R^2_+$ 
(recall \ref{E:sub-super} and \ref{D:weighted-Holder} and note that $g_0$ can be replaced by $g$):\\
$\phantom{kk}$ 
(a). $\| (\rhotilde^\pm_\tau)^2-1 : C^3((\R^2_+)_{s \leq 5 \delta_s |\tau|^{-1}},g_0,a+s)\| \leq C |\tau|$,\\
$\phantom{kk}$ 
(b). $\| g-g_0:C^{3}((\R^2_+)_{s \leq 5 \delta_s |\tau|^{-1}},g_0)\| \leq C \delta_s,$  
\\ 
$\phantom{kk}$ 
(c). 
$\| \partial^2 \, (X^\pm_{hor})_{\tau} :C^3(\, (\R^2_+)_{s \leq 5 \delta_s |\tau|^{-1}} , g_0)\| \leq C |\tau| .$
\\
(iv). When $\tau^{-1}=m \in \N$, 
the maps $(X^\pm_{hor})'_{\tau}(y,s):=(X^\pm_{hor})_{\tau}(y,s)+(\tau^{-1},0,0)$ 
satisfy the symmetries (recall \ref{D:Gm}):
\[ (X^\pm_{hor})'_{\tau}(s,-y)= \YYbar' \circ (X^\pm_{hor})'_{\tau}(y,s), \quad (X^\pm_{hor})'_{\tau}(s, 2\pi -y) =\YYbar'_{\frac{\pi}{m}} \circ (X^\pm_{hor})'_{\tau}(y,s),\]
\[ (X^\pm_{hor})'_{\tau}(s,\pi-y)=\YYhbar'_{\frac{\pi}{2m}} \circ (X^\pm_{hor})'_{\tau}(y,s).\]
\end{lemma}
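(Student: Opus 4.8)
The plan is to derive all four parts by direct computation from the explicit formula for $(X^\pm_{hor})_\tau$ in \ref{D:horizontal-asymp}, following the proof of \ref{L:vertical-asymp} but with the simplification that the bent horizontal wings stay planar. For (i), I would rewrite the coordinate functions of $(X^\pm_{hor})_\tau(y,s)$ as $\tau^{-1}\bigl(e^{\pm\tau(a+s)}\cos(\tau y)-1\bigr)$, $\tau^{-1}e^{\pm\tau(a+s)}\sin(\tau y)$ and $0$; each of the first two has the shape $\tau^{-1}h(\tau)$ with $h$ entire in $\tau$ and $h(0)=0$, so the quotient extends to an entire function of $\tau$ with coefficients polynomial in $a+s$ and $y$. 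Hence $(X^\pm_{hor})_\tau$ depends real-analytically, in particular smoothly, on $\tau\in\R$ (and jointly smoothly in $(y,s,\tau)$), and setting $\tau=0$ recovers $(\pm(a+s),y,0)=X^\pm_{hor}(y,s)$.

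For (ii), I would compute $\partial_y(X^\pm_{hor})_\tau=e^{\pm\tau(a+s)}(-\sin\tau y,\cos\tau y,0)$ and $\partial_s(X^\pm_{hor})_\tau=\pm e^{\pm\tau(a+s)}(\cos\tau y,\sin\tau y,0)$; these are orthogonal and of common length $\rhotilde^\pm_\tau(s)=e^{\pm\tau(a+s)}>0$, so the induced metric satisfies $g=(\rhotilde^\pm_\tau)^2g_0$, which exhibits a conformal immersion with the asserted conformal factor, and minimality is automatic since the image lies in the plane $\{z=0\}$. I would then identify the image by reading off that, in polar coordinates about $(-\tau^{-1},0,0)$, the radial coordinate is $\tau^{-1}\rhotilde^\pm_\tau(s)$ and the angle is $\tau y$: the ``$+$'' map sweeps the exterior and the ``$-$'' map the punctured interior of the circle of radius $\tau^{-1}$ (the value $\tau^{-1}$ being attained under the natural analytic continuation of the parametrization across $s=-a$, where $\rhotilde^\pm_\tau=1$), with the full angular range covered, when $\tau^{-1}=m$, as $y$ ranges over $\R$.

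For (iii), the preliminary point is that on $\Omega_\tau:=(\R^2_+)_{s\le 5\delta_s|\tau|^{-1}}$ one has $|\tau|(a+s)\le|\tau|a+5\delta_s<10\delta_s$, by the hierarchy of smallness in \ref{Con:parameters} ($\dtau$ small in terms of $a$ and $\delta_s$); thus $(\rhotilde^\pm_\tau)^2=e^{\pm2\tau(a+s)}$ and all its $s$-derivatives are controlled there. Estimate (a) then follows by writing $(\rhotilde^\pm_\tau)^2-1=v(\tau(a+s))$ with $v(t)=e^{\pm2t}-1$, using $|v(t)|\le C|t|$ and boundedness of $v',v'',v'''$ on $|t|\le 10\delta_s$, and converting $t$-derivatives into $s$-derivatives (each $\partial_s$ producing a factor $\tau$) while trading the weight $|t|=\tau(a+s)$ for $a+s$ at the cost of one power of $\tau$ --- precisely the weighted-norm bookkeeping in the proof of \ref{L:vertical-asymp}.iii(a), compare \ref{E:norm-scale}. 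Estimate (b) is the coarser statement that on $\Omega_\tau$ the quantity $(\rhotilde^\pm_\tau)^2-1$ together with its derivatives of order $\le 3$ is bounded by $\max(e^{20\delta_s}-1,\,C|\tau|)\le C\delta_s$, whence $g-g_0=\bigl((\rhotilde^\pm_\tau)^2-1\bigr)g_0$ has $C^3$-norm $\le C\delta_s$ (this also gives $g\sim g_0$ on $\Omega_\tau$ with constant near $1$, which justifies interchanging $g$ and $g_0$). Estimate (c) comes from differentiating the formula twice: the components of $\partial^2(X^\pm_{hor})_\tau$ are $\pm\tau\,e^{\pm\tau(a+s)}$ times $\cos\tau y$ or $\sin\tau y$, hence $\le C|\tau|$, and every further derivative of order $\le 3$ only brings a bounded factor ($e^{\pm\tau(a+s)}\le C$ or another $\tau$), giving $\|\partial^2(X^\pm_{hor})_\tau:C^3(\Omega_\tau,g_0)\|\le C|\tau|$.

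For (iv), since $(X^\pm_{hor})'_\tau(y,s)=\tau^{-1}e^{\pm\tau(a+s)}(\cos\tau y,\sin\tau y,0)$ with $\tau=1/m$, each of the three identities is a one-line angle-addition check: sending $y$ to $-y$, $2\pi-y$ or $\pi-y$ changes the angle $\tau y$ to $-\tau y$, $\tfrac{2\pi}{m}-\tau y$ or $\tfrac{\pi}{m}-\tau y$, and these are exactly the actions of $\YYbar'$, $\YYbar'_{\frac{\pi}{m}}$, $\YYhbar'_{\frac{\pi}{2m}}$ of \ref{D:Gm} on points of the plane $\{z=0\}$ (with no swap of the ``$\pm$'' sign, since those points have zero $z$-coordinate, unlike the vertical case in \ref{L:vertical-asymp}.iv). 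I do not expect a genuine obstacle here: every step is elementary. The one place needing care is (iii), where the estimates must be made uniform in $\tau$ and $\delta_s$ within the weighted-H\"older formalism on the \emph{unbounded} half-plane --- this is why the computation is localized to $\Omega_\tau$ and why the ordering $|\tau|a<5\delta_s$ from \ref{Con:parameters} is needed --- while the only minor subtleties are noticing in (i) that the apparent $\tau^{-1}$ singularity cancels, and in (ii) that ``onto'' is to be understood via the natural continuation of the parametrization past $s=-a$.
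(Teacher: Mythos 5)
Your proposal is correct and follows essentially the same route as the paper: the paper's proof of this lemma is a one-line reference to the proof of \ref{L:vertical-asymp}, which proceeds by exactly the direct computations you carry out (smoothness via cancellation of the apparent $\tau^{-1}$ singularity, conformality from the explicit tangent vectors, the estimates via $|\tau(a+s)|<10\ds$ on the truncated region together with scaling, and the symmetries by angle addition). Your write-up is simply a more explicit version of the same argument, with the horizontal-wing simplifications (planarity, no $\pm$ swap in the last symmetry) correctly noted.
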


\begin{proof}
The proofs are similar to the ones in \ref{L:vertical-asymp}.
\end{proof}

We are ready to define now the family of desingularizing surfaces (recall \ref{E:glue}, \ref{d:imm} and \ref{D:core-wings}).
Note that we truncate the function $\varphi_{wing}$ 
before we use it to build the graphs over the perturbed immersions defined in \ref{D:vertical-asymp} and \ref{D:horizontal-asymp}. 
This is necessary so that the desingularizing surface 
(after translating and rescaling to fit the singularity circle $\Ctheta$ in $\Wcal_\theta$) 
glues back to the rest of $\Wcal_\theta$ to form a  smooth surface.

\begin{definition}[Desingularizing surfaces]
\label{D:desing}
\label{N:truncation}
We define 
$\Sch_{\theta,\tau} :=   \Zscr_{\theta,\tau} \,(\, \Sch_{s \leq 5 \delta_s |\tau|^{-1}} \,)\,$, 
where 
the map $\Zscr_{\theta,\tau}:\Sch \to \R^3$ is defined by 
\[ \Zscr_{\theta,\tau}:=\left\{ \begin{array}{cl}
\Bscr_\tau \circ \Xi_\theta & \text{ on } \Sch_{core},\\
\Psi[0,1;s](\Bscr_\tau \circ \Xi_\theta,\Immer[(X^\pm_{hor})_{\tau},\psi_{trun} \varphi_{wing};\R^2_+] \circ (X^\pm_{hor})^{-1}) & \text{ on } \Sch^\pm_{hwing},\\
\Psi[0,1;s](\Bscr_\tau \circ \Xi_\theta,\Immer[(X^\pm_{ver})_{\theta,\tau},\psi_{trun} \varphi_{wing};\R^2_+] \circ (X^\pm_{ver})^{-1})  & \text{ on } \Sch^\pm_{vwing}, 
\end{array} \right. \]
where $\psi_{trun} :\R^2_+\to[0,1]$ is defined by 
$\psi_{trun} :=\psi_{cut}[4 \ds |\tau|^{-1},3\ds |\tau|^{-1}](s)$ when $\tau\ne0$ and simply by 
$\psi_{trun} \equiv 1$ when $\tau=0$. 
\end{definition}

\begin{lemma}
\label{L:desing}
$\Zscr_{\theta,\tau}$ is a family of smooth immersions depending smoothly on the parameters 
$\tau \in \R$ and $\theta \in (-\pi/2,\pi/2)$ with $\Zscr_{\theta,0}=\Xi_\theta|_{\Sch}$. 
Moreover (assuming \ref{Con:parameters}) we have the uniform estimates 
\begin{equation} 
\label{E:gS-est}
\begin{alignedat}{2} 
\|\, \Zscr^*_{\theta,\tau}  g- g_\Sch \, :\,  C^4(\,  \Sch_{s \leq 5 \delta_s |\tau|^{-1}} \, , g_\Sch \, ) \, \| 
\,& \le &\, 
C \, (\,\dtheta+\ds), 
\\ 
\|\, |A|^2\circ \Zscr_{\theta,\tau} - |A|^2_\Sch \, : \,  C^3(\,  \Sch_{s \leq 5 \delta_s |\tau|^{-1}} \, , g_\Sch \, ) \, \| 
\,& \le &\, 
C \, (\,\dtheta+\ds), 
\end{alignedat} 
\end{equation} 
where $\Zscr^*_{\theta,\tau}  g$ and  $|A|^2\circ \Zscr_{\theta,\tau}$ are the pullbacks by $\Zscr_{\theta,\tau}$  
of the induced metric and the squared length of the second fundamental form of $\Sch_{\theta,\tau}$, 
and $g_\Sch$ and $ |A|^2_\Sch $ are the induced metric and the squared length of the second fundamental form of $\Sch$. 
\end{lemma}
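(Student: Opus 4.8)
The plan is to establish the three assertions --- smooth dependence on $(\theta,\tau)$, the identity $\Zscr_{\theta,0}=\Xi_\theta|_\Sch$, and the two uniform estimates --- by working separately on the core $\Sch_{s<0}=\Sch_{core}$, on the two transition collars where $0\le s\le 1$ in each wing, and on the truncated wings where $1\le s\le 5\ds|\tau|^{-1}$, and then recombining. The recombination is harmless because the gluing cutoff $\psi_{cut}[0,1](s)$ entering \ref{E:glue} is fixed and has all derivatives bounded by an absolute constant, so by \ref{E:norm-multi} the operation \ref{E:glue} preserves estimates of the type asserted.

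The structural claims are soft. All ingredients of $\Zscr_{\theta,\tau}$ --- the bending $\Bscr_\tau$, the unbalancing $\Xi_\theta$, and the bent immersions $(X^\pm_{hor})_\tau$, $(X^\pm_{ver})_{\theta,\tau}$ --- depend smoothly on $(\theta,\tau)$ by \ref{L:bending}, \ref{L:unbalance}, \ref{L:horizontal-asymp}, \ref{L:vertical-asymp}, while $\varphi_{wing}$, $\psi$, $\psi_{cut}[0,1]$ are fixed and $\psi_{trun}$ is identically $1$ on any fixed compact subset of $\Sch$ once $|\tau|$ is small; so $\Zscr_{\theta,\tau}$ is a smooth family on such sets, and \ref{E:glue} keeps it smooth. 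It is an immersion because its pulled-back metric is $C^1$-close to $g_\Sch$ on the core and collars (by the estimates below) and on the wings it is the $\psi_{trun}\varphi_{wing}$-graph over the immersions of \ref{L:horizontal-asymp}, \ref{L:vertical-asymp} with $\psi_{trun}\varphi_{wing}$ small. For $\tau=0$ we have $\Bscr_0=\id$, $\psi_{trun}\equiv 1$, $(X^\pm_{hor})_0=X^\pm_{hor}$, $(X^\pm_{ver})_{\theta,0}=\Xi_\theta\circ X^\pm_{ver}$; on the horizontal wings $\Xi_\theta$ is the identity by \ref{L:unbalance}.ii and the graph piece reproduces $\Sch^\pm_{hwing}$, whereas on the vertical wings --- which for $a$ large lie in $\{|z|>2\}$, where $\Xi_\theta$ acts as the rigid rotation $R_\theta$ --- the rotation commutes with the exponential-graph construction, so both constituents of the glued formula equal $\Xi_\theta$ there; hence $\Zscr_{\theta,0}=\Xi_\theta|_\Sch$.

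For the estimates: on $\Sch_{core}$, where $\Zscr_{\theta,\tau}=\Bscr_\tau\circ\Xi_\theta$, the elementary identity $\Bscr_\tau^*g_0=dx^2+(1+\tau x)^2dy^2+dz^2$ shows $\Bscr_\tau^*g_0$ is $C^4$-close to $g_0$ with error $O(\tau a)$, since $|x|$ is bounded in terms of $a$ on $\Sch_{core}$, uniformly in $y$ by the $2\pi$-periodicity of $\Sch$ and of all tensors involved; combined with the $C^5$-smoothness of $\theta\mapsto\Xi_\theta$ near $\Xi_0=\id$ this gives $\|\Zscr^*_{\theta,\tau}g-g_\Sch:C^4(\Sch_{core},g_\Sch)\|\le C(\dtheta+\dtau)$, and by the standard dependence of the second fundamental form on a $C^4$ ambient metric and a $C^5$ immersion, the corresponding $C^3$-estimate for $|A|^2\circ\Zscr_{\theta,\tau}-|A|^2_\Sch$ follows. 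On the truncated wings $\Zscr_{\theta,\tau}$ is the $\psi_{trun}\varphi_{wing}$-graph over $(X^\pm_{hor})_\tau$ or $(X^\pm_{ver})_{\theta,\tau}$; there $\tau(a+s)=O(\ds)$, so \ref{L:horizontal-asymp}.iii, \ref{L:vertical-asymp}.iii and \ref{L:phi-wing} give that the conformal factors deviate from $1$ by $O(\ds)$ in the weighted $C^3$-norm, the second fundamental forms of the bent immersions are $O(|\tau|)$ small, and $\varphi_{wing}=O(e^{-a})$; since $|A|^2_\Sch$ is itself exponentially small on the wings, both $\Zscr^*_{\theta,\tau}g-g_\Sch$ and $|A|^2\circ\Zscr_{\theta,\tau}-|A|^2_\Sch$ are $O(\ds)$ there (the single extra derivative over the $C^3$ of \ref{L:horizontal-asymp}, \ref{L:vertical-asymp} needed for the metric follows from the explicit exponential form of $\rhotilde$, and $\psi_{trun}$ only contributes derivatives of size $O((\ds|\tau|^{-1})^{-k})$, which is harmless). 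On the collars, each of the two constituents of the glued formula has pulled-back metric and $|A|^2$ within $O(\dtheta+\dtau)$ of $g_\Sch$ and $|A|^2_\Sch$: for $\Bscr_\tau\circ\Xi_\theta$ this is the core estimate restricted; for the $\psi_{trun}\varphi_{wing}$-graph constituent it is because on the bounded collar region the bent immersions differ in $C^5$ by $O(\tau)$ from $\Xi_\theta\circ X^\pm_{hor}$, resp.\ $\Xi_\theta\circ X^\pm_{ver}$, on which $\Xi_\theta$ acts trivially, resp.\ rigidly. Recombining the three regions via \ref{E:glue} and collecting $O(\dtheta+\dtau+\ds)=O(\dtheta+\ds)$ --- recall $\dtau\ll\dtheta,\ds$ by \ref{Con:parameters} --- completes the proof.

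The step I expect to be the main obstacle is securing constants uniform in $\tau$ on the wings, which reach out to $s\sim\ds|\tau|^{-1}$ and so lie beyond the scope of a soft smooth-dependence argument on a fixed compact set. What rescues it is precisely the truncation at $s\le 5\ds|\tau|^{-1}$, which keeps the amount of bending $\tau\cdot(\text{ambient length scale})$ bounded by $O(\ds)$, together with the fact that the model wing immersions in \ref{L:horizontal-asymp} and \ref{L:vertical-asymp} are minimal with explicitly controlled conformal factors; the remaining work is the careful bookkeeping of the three cutoffs $\psi$, $\psi_{trun}$, $\psi_{cut}[0,1]$ so that none of them spoils the estimates.
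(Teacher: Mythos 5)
Your proof is correct and follows essentially the same route as the paper's (which is a three-sentence citation of the constituent lemmas): smoothness from \ref{L:unbalance}.i, \ref{L:bending}.i, \ref{L:vertical-asymp}.i, \ref{L:horizontal-asymp}.i; the estimates on the compact part $\Sch_{s\le 5}$ by smooth dependence (your explicit computation of $\Bscr_\tau^*g_0$ is a valid, more quantitative substitute); and the estimates on the far wings from \ref{L:vertical-asymp}.iii, \ref{L:horizontal-asymp}.iii, \ref{L:phi-wing} together with the graph-perturbation estimates \ref{E:metric-est}--\ref{E:normal-est}. Your extra care about uniformity in $y$, the derivative count ($C^4$ for the metric from $C^3$ control of $\partial^2 X$), and the verification of $\Zscr_{\theta,0}=\Xi_\theta|_\Sch$ on the vertical wings are all correct fillings-in of details the paper leaves implicit.
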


\begin{proof}
The first part of the lemma follows easily from \ref{L:unbalance}.i, \ref{L:bending}.i, 
\ref{L:vertical-asymp}.i and \ref{L:horizontal-asymp}.i. 
The estimates on $\Sch_{s\le5}$ follow by smooth dependence on a fixed compact set with 
$C\,(|\tau|+|\theta|)\le C\,\dtheta$ in the right hand side 
(recall \ref{Con:parameters}). 
The estimates on the remaining region follow from 
\ref{L:vertical-asymp}.iii and \ref{L:horizontal-asymp}.iii using 
\ref{L:phi-wing}, \ref{E:metric-est} and \ref{E:normal-est}. 
\end{proof}

\subsection*{Mean curvature of desingularizing surfaces}

In this subsection we estimate the mean curvature of the (immersed) desingularizing surface $\Sch_{\theta,\tau}$. 
Each of the maps $\Xi_\theta$ and $\Bscr_\tau$ introduces some mean curvature 
and there are also some non-zero mean curvature in the transition regions connecting different regions.
We first consider the mean curvature caused by the unbalancing map $\Xi_\theta$, 
which will also serve the purpose of our (extended) substitute kernel later: 

\begin{definition}[Substitute kernel]
\label{D:sub-kernel}
Let $H_\theta$ be the mean curvature of the immersed surface $\Xi_\theta(\Sch)$ pulled back to a function on $\Sch$. We define $w:\Sch \to \R$ to be
\[ w:= \left. \frac{d}{d\theta} \right|_{\theta =0} H_\theta. \]
\end{definition}

The function $w$ above gives the linearization of the mean curvature of $\Sch_{\theta,\tau}$ in the $\theta$-direction at $\theta=\tau=0$. 
By smooth dependence of parameters, it is easy to get uniform estimates in a fixed compact subset (modulo symmetries). 
To get uniform estimates on the wings, which converge to an unbounded set, 
we use the exponential decay provided by \ref{L:phi-wing}. 
To accommodate the truncation error 
created by the cutoff function $\psi_{trun}$ (recall \ref{N:truncation}) 
we only establish slower decay like $e^{-\gamma s}$.

\begin{prop}
\label{L:desing-H-estimate}
Let $H_{\theta,\tau}$ be the mean curvature of the (immersed) desingularizing surface $\Sch_{\theta,\tau}$ defined in \ref{D:desing} pulled back as a function on $\Sch$. 
Assuming \ref{Con:gamma} we have the following uniform estimates:
\[ \| H_{\theta,\tau} - \theta w : C^{0,\beta}(\Sch_{s \leq 5 \ds |\tau|^{-1}},g_\Sch, e^{-\gamma s})\| \leq C (|\tau|+|\theta|^2),\]
where $g_\Sch$ is the induced metric on $\Sch$ as a surface in $\R^3$.
\end{prop}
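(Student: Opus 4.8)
The plan is to estimate $H_{\theta,\tau}$ separately on a fixed compact region of $\Sch$ and on the expanding wing region, exploiting that $w$ is supported in $\Sch_{core}$. Indeed, on the horizontal wings $\Xi_\theta$ is the identity and on the vertical wings it is a rigid rotation about the $y$-axis (recall \ref{D:unbalance} and that $a>2$, so $\psi_{cut}[2,1](z)$ is locally constant off a bounded zone), hence $H_\theta\equiv0$ and $\mathrm{supp}\,w\subset\Sch_{core}=\Sch_{s<0}$. Fix $s_0=1$, so that for $s\ge s_0$ the map $\Zscr_{\theta,\tau}$ of \ref{D:desing} equals, on each wing, the graph of $\psi_{trun}\varphi_{wing}$ over the base immersion $(X^\pm_{hor})_\tau$ or $(X^\pm_{ver})_{\theta,\tau}$, with no interference from the core. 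Since $e^{-\gamma s}\sim_C 1$ on the (modulo $\group_\Sch$ compact) set $\Sch_{s\le s_0}$, and $w\equiv0$ on $\Sch_{s\ge s_0}$, the proposition reduces to the two estimates
\[
 \|H_{\theta,\tau}-\theta w:C^{0,\beta}(\Sch_{s\le s_0},g_\Sch)\|\le C(|\tau|+|\theta|^2),\qquad \|H_{\theta,\tau}:C^{0,\beta}(\Sch_{s_0\le s\le 5\ds|\tau|^{-1}},g_\Sch,e^{-\gamma s})\|\le C|\tau|,
\]
which combine because the decomposition respects the gluing in \ref{D:desing}.

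For the first (compact modulo $\group_\Sch$) region, Lemma \ref{L:desing} gives smooth dependence of $\Zscr_{\theta,\tau}$, hence of $H_{\theta,\tau}$, on $(\theta,\tau)$, so we may Taylor expand in $C^{0,\beta}(\Sch_{s\le s_0})$:
\[ H_{\theta,\tau}=H_{0,0}+\theta\,\partial_\theta H|_{(0,0)}+\tau\,\partial_\tau H|_{(0,0)}+O\big((|\theta|+|\tau|)^2\big). \]
Here $H_{0,0}=0$ since $\Sch$ is minimal; $\partial_\theta H|_{(0,0)}=\partial_\theta H_{\theta,0}|_{\theta=0}=w$ by \ref{D:sub-kernel}, since $\Zscr_{\theta,0}=\Xi_\theta|_\Sch$; and $\partial_\tau H|_{(0,0)}$ is a fixed bounded function. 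Using $\tau^2\le|\tau|$ and $|\theta\tau|\le\tfrac12(|\theta|^2+|\tau|)$ we get $H_{\theta,\tau}-\theta w=O(|\tau|+|\theta|^2)$ on $\Sch_{s\le s_0}$, and the weight is irrelevant there.

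On the wing region we treat each wing alike; take a vertical wing with base immersion $Y:=(X^\pm_{ver})_{\theta,\tau}$ (the horizontal case is identical and simpler, as there $\Xi_\theta=\mathrm{id}$ and $|A_Y|^2\equiv0$). Since $\tM=\R^3$ is flat and boundaryless, $\tnu=\nu$, so by \ref{P:local-estimate-H} the mean-curvature linearization over any base is the Jacobi operator $\Lcal_Y=\Delta_{g_Y}+|A_Y|^2$ with no $\tnu_{||}(H)\varphi$ term, and $Y$ is minimal by \ref{L:vertical-asymp}.ii. Using that $\Graph[X^\pm_{ver},\varphi_{wing}]$ is a piece of the minimal $\Sch$, i.e. $\Lcal_{X^\pm_{ver}}\varphi_{wing}+Q_{X^\pm_{ver},\varphi_{wing}}=0$, and writing $\psi_{trun}\varphi_{wing}=\varphi_{wing}-(1-\psi_{trun})\varphi_{wing}$, one decomposes $H_{\theta,\tau}$ on the wing as
\[ \big(\Lcal_Y-\Lcal_{X^\pm_{ver}}\big)(\psi_{trun}\varphi_{wing})\;+\;\Lcal_{X^\pm_{ver}}\big((\psi_{trun}-1)\varphi_{wing}\big)\;+\;\big(Q_{Y,\psi_{trun}\varphi_{wing}}-Q_{X^\pm_{ver},\varphi_{wing}}\big), \]
and estimates each summand on unit coordinate patches $B_p\subset\R^2_+$ with $s(p)\ge s_0$. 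The first summand is bounded by $C\big(\|g_Y-g_0:C^{1,\beta}(B_p)\|+\|\,|A_Y|^2:C^{0,\beta}(B_p)\|\big)\,\|\psi_{trun}\varphi_{wing}:C^{2,\beta}(B_p)\|$; since $g_Y$ is conformal to the flat metric with factor $\rhotilde_{\theta,\tau}^2=1+O(|\tau|(a+s))$ and $|A_Y|^2=O(\tau^2)$ (recall \ref{L:vertical-asymp}.iii) while $\|\psi_{trun}\varphi_{wing}:C^{2,\beta}(B_p)\|\le Ce^{-a}e^{-s(p)}$ (recall \ref{L:phi-wing}), division by $e^{-\gamma s(p)}$ together with $\gamma<1$ (so that $(a+s)e^{-(1-\gamma)s}$ stays bounded for $s\ge s_0$) gives $O(|\tau|)$ in the weighted norm. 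The last summand is handled by \ref{E:local-estimate-H}: precomposing $X^\pm_{ver}$ with a rigid motion of $\R^3$ (which leaves $H$, hence $Q$, unchanged) and an affine reparametrization so that $X^\pm_{ver}$ and $Y$ agree to first order at $p$, we have $\|\partial^2 Y:C^{1,\beta}(B_p)\|\le C|\tau|$ (recall \ref{L:vertical-asymp}.iii and $\partial^2 X^\pm_{ver}=0$), so \ref{E:local-estimate-H}, together with the super-exponentially small correction from replacing $\varphi_{wing}$ by $\psi_{trun}\varphi_{wing}$, again gives $O(|\tau|)$ in the weighted norm. The middle summand is supported where $s\ge3\ds|\tau|^{-1}$, where $(1-\psi_{trun})\varphi_{wing}$ and its derivatives are $\le Ce^{-a}e^{-3\ds|\tau|^{-1}}$, so after dividing by the weight it is $\le Ce^{-a}e^{-3(1-\gamma)\ds|\tau|^{-1}}\le C|\tau|$ for $|\tau|<\dtau$ small (again since $\gamma<1$). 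Summing over patches via \ref{E:norm-multi} yields the second estimate above.

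The main obstacle is precisely this wing analysis: one must keep every bound uniform in $\theta$, which is possible only because $Y=(X^\pm_{ver})_{\theta,\tau}$ is exactly minimal for all $\theta$ (by \ref{L:vertical-asymp}.ii), so its $\theta$-dependence is confined to $g_Y$ and $|A_Y|^2$ whose estimates in \ref{L:vertical-asymp}.iii are $\theta$-uniform; and one must see that the truncation by $\psi_{trun}$, although it destroys the exact minimality of the wing graphs, costs only a super-exponentially small error in the mean curvature. That error is dominated by the weight $e^{-\gamma s}$ exactly because $\gamma<1$ — which is the reason the proposition establishes the slower decay $e^{-\gamma s}$ rather than the sharp $e^{-s}$.
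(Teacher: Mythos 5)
Your proof is correct and follows essentially the same route as the paper: Taylor expansion in $(\theta,\tau)$ on the compact core (where $H_{0,0}=0$ and $\partial_\theta H_{\theta,0}|_{\theta=0}=w$), and on the wings a comparison of the graph of $\psi_{trun}\varphi_{wing}$ over the exactly minimal bent bases with the graph of $\varphi_{wing}$ over the unbent affine bases (a piece of $\Sch$), via \ref{P:local-estimate-H} and the strengthened estimate \ref{E:local-estimate-H} together with \ref{L:vertical-asymp}.iii, \ref{L:horizontal-asymp}.iii, \ref{L:phi-wing}, and $\gamma<1$; your three-term algebraic decomposition is just a reorganization of the paper's split of the wing into the three $s$-ranges determined by $\psi_{trun}$. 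One small point to tighten: the affine reparametrization you use to match first-order data is not innocuous for $Q$ (which is not invariant under domain reparametrization), so the replacement of $Q_{X^\pm_{ver},\varphi_{wing}}$ by its reparametrized version must be justified; the discrepancy is controlled by $|\rho(p)-1|=O(|\tau|(a+s))$ times quadratic-in-$\varphi_{wing}$ quantities and is therefore harmless.
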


\begin{proof}
Since the quotient $\Sch_{s \leq 5}/\group_\Sch^0$ is a fixed compact subset (recall \ref{D:GSch}), by \ref{L:desing} we have the required estimate on $\Sch_{s \leq 5}$ by Taylor expansion near $\tau=\theta=0$. It remains to prove the estimate on the wings of $\Sch$. By \ref{L:unbalance}.ii and \ref{D:sub-kernel}, $w$ is supported inside $\Sch_{core}$, so it suffices to prove the estimate without the $w$-term on the wings $\Sch^\pm_{hwing}$ and $\Sch^\pm_{vwing}$ (recall \ref{D:core-wings}). 

First we note that the complete Scherk surface $\Sch$ has injectivity radius greater than $1/10$. Let $B_p$ be the geodesic ball with radius $1/100$ in $(\Sch,g_\Sch)$ centered at some $p \in \Sch_{1 \leq s \leq 5 \ds |\tau|^{-1}}$. By \ref{D:weighted-Holder} it suffices to prove the following estimate
\[  e^{\gamma s} \|H_{\theta,\tau} :C^{0,\beta}(B_p, g_\Sch)\| \leq C(|\tau| + |\theta|^2).\]
Recall that by \ref{D:desing} the wings of $\Sch_{\theta,\tau}$ can be expressed as the graph of the function $\psi_{trun} \varphi_{wing}$ over its asymptotic catenoids or planes given by the minimal immersions $(X^\pm_{ver})_{\theta,\tau}$ and $(X^\pm_{hor})_\tau$ (recall \ref{L:vertical-asymp} and \ref{L:horizontal-asymp}). We will divide the proof into three cases: $s \in [0, 3 \ds |\tau|^{-1}]$, $s\in [3  \ds |\tau|^{-1},4 \ds |\tau|^{-1}]$ and $s \in [4  \ds |\tau|^{-1},5\ds |\tau|^{-1}]$. Note that if $\tau=0$, then $H_{\theta,0}$ is supported in $\Sch_{core}$ by \ref{L:unbalance}.ii so the estimate holds trivially in this case. We will assume from now on that $\tau \neq 0$.

By \ref{N:truncation}, $\Sch_{\theta,\tau}$ is the graph of $\psi_{trun} \varphi_{wing}$ over the minimal immersions $(X^\pm_{ver})_{\theta,\tau}$ and $(X^\pm_{hor})_\tau$. Hence it is minimal on $s \in [4  \ds |\tau|^{-1},5 \ds |\tau|^{-1}]$ (recall \ref{L:vertical-asymp}.ii and \ref{L:horizontal-asymp}.ii). Let $X:B^2(2) \to \R^3$ be the restriction of $(X^\pm_{ver})_{\theta,\tau}$ or $(X^\pm_{hor})_\tau$ to any disk $B^2(2)$ of radius $2$ contained in $((\R^2_+)_{s \leq 6 \ds |\tau|^{-1}},g_0)$. By \ref{L:vertical-asymp}.iii.c and \ref{L:horizontal-asymp}.iii.b, $X$ satisfies \ref{E:C-bounded-a} for some universal constant $c_1>0$. 
On the other hand, by \ref{E:norm-multi} and \ref{L:phi-wing}
\[ \| \psi_{trun} \varphi_{wing}:C^{2,\beta}(\R^2_+,g_0,e^{-s})\| \leq C e^{-a} .  \]
Thus the function $\psi_{trun} \varphi_{wing}$ would have $C^{2,\beta}$-norm less than $\epsilon_H(c_1)$ in \ref{P:local-estimate-H} 
if $a$ is chosen sufficiently large in absolute terms. 
Then \ref{P:local-estimate-H} gives (also using that the various metrics are uniformly equivalent by \ref{L:vertical-asymp}.iii.d and \ref{L:horizontal-asymp}.iii.b)
\[ e^{\gamma s} \|H_{\theta,\tau}  :C^{0,\beta}(B_p, g_\Sch)\| \leq C e^{-(1-\gamma)s} \leq C e^{-3(1-\gamma)\ds |\tau|^{-1} }\leq C |\tau|,\]
as long as $\tau$ is sufficiently small in terms of $\ds$ and $\gamma$ (recall \ref{Con:gamma}). 

It remains the case where $s \in [0, 3 \ds |\tau|^{-1}]$. We will need to use the strengthened estimate in \ref{P:local-estimate-H}. 
Let $X':B^2(2) \to \R^3$ be the affine linear map which is the linearization of $(X^\pm_{ver})_{\theta,0}$ or $X^\pm_{hor}$ at the center of $B^2(2)$. Obviously $X'$ also satisfies \ref{E:C-bounded-a} for the same $c_1>0$ and that $X'$ agrees with $X$ up to first order at the center of $D$. 
By taking $a$ in \ref{L:phi-wing} sufficiently large in terms of $\epsilon_H(c_1)$, 
\ref{P:local-estimate-H} can be applied to the graphs of $\varphi_{wing}$ over $X$ and $X'$. 
By \ref{D:desing} the graph of $\varphi_{wing}$ over $X$ lies inside $\Sch_{\theta,m}$ whose mean curvature can be given as in \ref{P:local-estimate-H} by \ref{D:linear-operators}, \ref{D:second-fundamental}, \ref{L:vertical-asymp}.ii, \ref{L:horizontal-asymp}.ii
\[ H_{\theta,\tau} = \rho^{-2} \Delta_{g_0} \varphi_{wing} + |A|^2 \varphi_{wing} + Q_{X,\varphi_{wing}}, \]
where $H_{\theta,\tau}$ is the mean curvature of $\Sch_{\theta,\tau}$ pulled back to a function on $B^2(2)$, $\rho=\tilde{\rho}_{\theta,\tau}$ or $\tilde{\rho}_\tau^\pm$ as in \ref{L:vertical-asymp}.ii and \ref{L:horizontal-asymp}.ii, and $|A|^2$ is the norm-squared second fundamental form of $X$ as defined in \ref{D:second-fundamental} and \ref{D:linear-operators}. On the other hand, the graph of $\varphi_{wing}$ over $X'$ lies inside $\Sch$ (up to a translation and rescaling in $\R^3$) and hence is minimal. Therefore, by \ref{P:local-estimate-H} we have similarly
\[ 0=\rho^{-2}(p_0) \Delta_{g_0} \varphi_{wing} +Q_{X',\varphi_{wing}}, \]
where $p_0$ is the center of the disk $B^2(2)$. Combining these two expressions and using \ref{L:vertical-asymp}.iii, \ref{L:horizontal-asymp}.iii, \ref{E:local-estimate-H} and \ref{L:phi-wing}
\[  e^s \|H_{\theta,\tau}  :C^{0,\beta}(B_p, g_\Sch )\| \leq C |\tau|,\]
where we have used that $\|\partial^2 X -\partial^2 X':C^{1,\beta}(B^2(2),g_0)\| \leq C |\tau|$. This proves the proposition.
\end{proof}


\subsection*{The initial surfaces}  
In this subsection, we construct for each large $m\in\N$ and small $\theta$ as in 
\ref{Con:delta},  
an initial surface $M_{\theta,m}$ which depends smoothly on $\theta$. 
In the proof of the main theorem \ref{T:finalthm} we will show that for each $m$ sufficiently large, 
we can use a fixed point argument to find $\theta^*$ (depending on $m$) 
such that there exists a function $\varphi^*$ whose twisted graph over the initial surface 
$M_{\theta^*,m}$ is a minimal surface which intersects $\Sph^2$ orthogonally. 

$M_{\theta,m}$ is constructed by desingularizing $\Wcal_\theta$ 
using surfaces $\Sigma_{\theta,m}$ obtained by shrinking and translating 
the desingularizing surfaces $\Sch_{\theta,1/m}$ defined in \ref{D:desing}. 
The scaling and translation have to be chosen carefully so that the ``axis'' of $\Sigma_{\theta,m}$ 
matches the circle of singularity $\Ctheta$ in the initial configuration $\Wcal_\theta$ (recall \ref{D:initial-config}): 

\begin{definition}[Scaled desingularizing surfaces] 
\label{D:homothety} 
We define (recall \ref{D:desing}) 
$$
\Sigma_{\theta,m} := \Hcal_{\theta,m} (\Sch_{\theta,1/m}) = 
\Zcal_{\theta,m} \,(\, \Sch_{s \leq 5 \delta_s m} \,)\,, 
$$  
where 
$\theta,m$ are as in \ref{Con:initial}, 
$\Zcal_{\theta,m} := \Hcal_{\theta,m} \circ \Zscr_{\theta,1/m} : \Sch \to \R^3$,  
and 
$\Hcal_{\theta,m} : \R^3 \to \R^3$ 
is the affine homothety defined by
\begin{equation}
\label{N:scale} 
\Hcal_{\theta,m}(x,y,z):= \, \lambda\,(x+m,y,z),  
\qquad \text{ where } 
\lambda:=m^{-1} r_\theta. 
\end{equation} 
\end{definition}

Note that by \ref{L:cat-ODE} $m \lambda=r_\theta<1$ is uniformly bounded away from $0$ 
and by \ref{D:GSch}, \ref{D:Gm}, \ref{L:unbalance}.iii, \ref{L:bending}.iii, \ref{L:Scherk-equivariant}, 
\ref{L:vertical-asymp}.iv and \ref{L:horizontal-asymp}.iv,   
\begin{equation} 
\label{E:equivariant} 
\Zcal_{\theta,m} \circ \YYbar= \YYbar' \circ \Zcal_{\theta,m}, \quad 
\Zcal_{\theta,m} \circ \YYbar_{\pi}= \YYbar'_{\frac{\pi}{m}} \circ \Zcal_{\theta,m}, \quad 
\Zcal_{\theta,m} \circ \YYhbar_{\frac{\pi}{2}} = \YYhbar'_{\frac{\pi}{2m}} \circ \Zcal_{\theta,m}, 
\end{equation} 
and therefore $\Zcal_{\theta,m}$ is equivariant with respect to $\group_\Sch$ and $\group_m$.  
Moreover 
$\Zcal_{\theta,m}$ 
maps the axis $\{x=z=0\}$ of the Scherk to $\Ctheta$ (recall \ref{D:initial-config}).  
This then implies that the four connected components of $\partial \Sigma_{\theta,m} $ 
have neighborhoods in $\Sigma_{\theta,m}$ 
which are actually contained in $\Wcal_\theta$  
(recall \ref{L:vertical-asymp}.ii, \ref{L:horizontal-asymp}.ii, \ref{D:desing}, \ref{D:initial-config}, and \ref{Con:delta}). 
We conclude that 
$\Wcaltilde_\theta \setminus \partial \Sigma_{\theta,m} $ consists of five connected 
components four of which are disjoint from the interior of $\Sigma_{\theta,m}$ and 
can be used to smoothly extend $\Sigma_{\theta,m} $: 

\begin{definition}[Initial surfaces]
\label{D:initial}
We  define $\Sigmatilde_{\theta,m}$ to be the union of 
$\Sigma_{\theta,m}$ and the four connected components 
of $\Wcaltilde_\theta\setminus \partial \Sigma_{\theta,m} $ 
which are disjoint from the interior of $\Sigma_{\theta,m}$. 
We define then the initial surfaces $M_{\theta,m}$ as
\[ M_{\theta,m}:= \Sigmatilde_{\theta,m} \cap \B^3 \subset \Sigmatilde_{\theta,m} . \] 
Note that for simplicity in subscripts we may write $M$ instead of $M_{\theta,m} $.  
For future reference 
we fix a continuous function $s$ on $\Sigmatilde_{\theta,m}$ which is $\group_m$-invariant (in the sense of \ref{D:sym-function})
and agrees with the pushforward by $\Hcal_{\theta,m}$ of $s$ on 
$ \Sch_{s \leq 5 \delta_s m} $  
and takes values in $(5 \delta_s m,6 \delta_s m]$ on $\Sigmatilde_{\theta,m} \setminus \Sigma_{\theta,m}$.  
\end{definition}

\begin{lemma}
\label{P:initial}
For $m$ large enough 
the initial surfaces $M_{\theta,m}$ are smooth, embedded, $\group_m$-invariant, 
compact oriented surfaces in $\B^3$, with genus $m-1$. 
They meet $\Sph^2$ orthogonally along their boundary which 
consists of three connected components and satisfies 
$\partial M_{\theta,m}= \Sph^2\cap M_{\theta,m} $.   
Moreover, 
as $m \to \infty$, 
the surfaces $M_{\theta,m}$ converge in the Hausdorff sense to $\Wcal_\theta$ 
and the convergence is smooth away from $\Ctheta$.
\end{lemma}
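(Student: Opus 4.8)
The plan is to extract every assertion from the equivariance and matching properties of $\Zcal_{\theta,m}$ and of the standard pieces assembled above, so most of the proof is bookkeeping; the one point that genuinely needs care is embeddedness. \textbf{Smoothness.} I would first note that by \ref{N:truncation} the cutoff $\psi_{trun}$ vanishes for $s\ge 4\ds m$, so on $\Sch_{4\ds m\le s\le 5\ds m}$ the immersion $\Zcal_{\theta,m}$ agrees with $\Hcal_{\theta,m}\circ(X^\pm_{hor})_{1/m}\circ(X^\pm_{hor})^{-1}$ on the horizontal wings and with $\Hcal_{\theta,m}\circ(X^\pm_{ver})_{\theta,1/m}\circ(X^\pm_{ver})^{-1}$ on the vertical wings; by the explicit formulas in \ref{D:horizontal-asymp}, \ref{D:vertical-asymp}, \ref{N:scale} and \ref{L:cat-ODE} these images are exactly pieces of $\{z=0\}$, $\Ktilde^+_\theta$ and $\Ktilde^-_\theta$. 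Hence $\Sigma_{\theta,m}$ coincides with $\Wcaltilde_\theta$ on a neighborhood of $\partial\Sigma_{\theta,m}$, so attaching the four components of $\Wcaltilde_\theta\setminus\partial\Sigma_{\theta,m}$ disjoint from the interior of $\Sigma_{\theta,m}$ (as in \ref{D:initial}) produces a surface smooth across $\partial\Sigma_{\theta,m}$; away from $\partial\Sigma_{\theta,m}$ it is smooth since $\Zcal_{\theta,m}$ is a smooth immersion on $\Sch_{s\le 5\ds m}$ and the attached pieces are smooth.

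\textbf{Invariance, embeddedness, orientability, compactness.} From \ref{E:equivariant} the map $\Zcal_{\theta,m}$ is $\group_m$-equivariant, and since $\YYbar_\pi\YYbar$ is the translation by $2\pi$ in $y$ and corresponds under $\Zcal_{\theta,m}$ to the rotation by $2\pi/m$ about the $z$-axis, $\Zcal_{\theta,m}$ descends to $\Sch_{s\le 5\ds m}/2m\pi\Z$, which is compact; together with the rotational symmetry of each boundary circle of $\Sigma_{\theta,m}$ and of each attached component this gives $\group_m$-invariance of $M_{\theta,m}$, and $M_{\theta,m}=\Sigmatilde_{\theta,m}\cap\B^3$ is closed and bounded, hence compact. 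For embeddedness I would argue by angular sectors: by \ref{L:bending}.ii the bending $\Bscr_{1/m}$ maps $\{|y|\le\pi\}$ into the wedge of angular width $2\pi/m$, so the $m$ translates of one period of $\Sch_{s\le 5\ds m}$ land in wedges with disjoint interiors; on one period, $\Zcal_{\theta,m}$ is an embedding for $m$ large and $|\theta|$ small, being a small perturbation (via \ref{L:unbalance}.ii, the diffeomorphisms $\Bscr_{1/m}$ and $\Hcal_{\theta,m}$, and the $C^2$-small wing corrections controlled by \ref{L:phi-wing}) of the restriction of the embedded surface $\Sch$ to a bounded set. Finally the four attached components have interiors pairwise disjoint and disjoint from $\Sigma_{\theta,m}$ because $\Wcaltilde_\theta\setminus\Ctheta$ is embedded and they lie beyond the wing collars. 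Orientability holds because $\Sch$ is oriented by $\nu_\Sch$, the maps $\Xi_\theta,\Bscr_{1/m},\Hcal_{\theta,m}$ preserve orientation, and the transported $\nu_\Sch$ matches the normals of $\D_\theta,\A_\theta,\K^\pm_\theta$ across $\partial\Sigma_{\theta,m}$.

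\textbf{Genus, boundary, orthogonality.} The quotient $\Sch_{s\le 5\ds m}/2\pi\Z$ is a compact genus-zero surface with four boundary circles (so $\chi=-2$), and $\Sigma_{\theta,m}\cong\Sch_{s\le 5\ds m}/2m\pi\Z$ is its connected $m$-fold cyclic cover in the $y$-direction, so $\chi(\Sigma_{\theta,m})=-2m$; each boundary circle of the base winds once around the $y$-direction and hence lifts to a single circle, so $\Sigma_{\theta,m}$ has four boundary circles. The four attached components are three annuli (the portions of $\{z=0\}$ out to $\partial\D$ and of $\Ktilde^\pm_\theta$ out to $\Sph^2$, each with $\chi=0$) and one disk (the portion of $\{z=0\}$ containing the origin, $\chi=1$); attaching them along the four boundary circles gives $\chi(M_{\theta,m})=-2m+1$ and caps off exactly one boundary circle, leaving the three boundary components $\partial\D$, $\partial\K^+_\theta\cap\Sph^2$ and $\partial\K^-_\theta\cap\Sph^2$, all on $\Sph^2$. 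Then $1-2m=2-2g-3$ forces $g=m-1$. That $M_{\theta,m}$ meets $\Sph^2$ orthogonally along these three circles and $\partial M_{\theta,m}=M_{\theta,m}\cap\Sph^2$ follows since near $\Sph^2$ the surface coincides with $\Wcal_\theta$, which by \ref{L:cat-ODE} meets $\Sph^2$ orthogonally along exactly these circles (trivially for $\D$), while $\Sigma_{\theta,m}$ stays in a fixed compact subset of the interior of $\B^3$ once $\ds$ is small and $m$ large.

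\textbf{Convergence and main obstacle.} As $m\to\infty$ the homothety ratio $\lambda=m^{-1}r_\theta\to 0$, so the $\Sch_{core}$ part of $\Sigma_{\theta,m}$ lies in an $O(1/m)$-neighborhood of $\Ctheta$, the wings are graphs of rescaled $\psi_{trun}\varphi_{wing}$ over pieces of $\{z=0\}$ and $\Ktilde^\pm_\theta$ whose size tends to $0$ in $C^\infty$ by \ref{L:phi-wing} and scaling, and the attached components lie exactly in $\Wcaltilde_\theta\cap\B^3=\Wcal_\theta$. This yields $M_{\theta,m}\to\Wcal_\theta$ in the Hausdorff sense, and on any compact subset of $\Wcal_\theta\setminus\Ctheta$ the surfaces are eventually graphs over $\Wcal_\theta$ of functions converging to $0$ in $C^\infty$, i.e.\ the convergence is smooth away from $\Ctheta$. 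I expect the embeddedness step to be the main obstacle, namely verifying that the $m$ desingularizing pieces distributed around $\Ctheta$ and their transitions to the outer components do not intersect; the disjointness of the angular wedges coming from \ref{L:bending}.ii and the smallness of a single rescaled period are the essential inputs there.
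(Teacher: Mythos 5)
Your proof is correct and follows essentially the same route as the paper, whose own proof is a one-line appeal to the equivariance \ref{E:equivariant}, the $y$-independence of $\psi_{trun}$, and the preceding discussion (namely that the truncated wings coincide exactly with pieces of $\Wcaltilde_\theta$ near $\partial\Sigma_{\theta,m}$, so the extension is smooth and the boundary behavior is inherited from $\Wcal_\theta$). You have simply written out in full the bookkeeping — the Euler characteristic count, the wedge argument for embeddedness, and the convergence — that the paper leaves implicit.
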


\begin{proof}
This follows from \ref{E:equivariant},  
that the function $\psi_{trun}$ is independent of $y$,  
and the preceding discussion. 
\end{proof}


\section{Solving the linearized equations}
\label{S:linear}

In Section \ref{S:initial}, we have constructed our initial surfaces $M_{\theta,m}$ which are free boundary surfaces but not minimal.  
In the next section we will estimate the nonlinear terms and then prove the main theorem. 

\subsection*{The linearized free boundary minimal surface equation}
We first discuss how the symmetries imposed apply to the functions we use to appropriately correct the initial surfaces.

\begin{definition}
\label{D:sym-function}
Let $f:\Omega \to \R$ be a function defined on a $\group_\Sch$-invariant subset $\Omega \subset \Sch$. We say that $f$ is $\group_\Sch$-invariant if it satisfies (recall \ref{D:GSch})
\[ f \circ \YYbar = f, \quad f \circ \YYbar_{\pi} = f, \quad f \circ \YYhbar_{\frac{\pi}{2}} = f.\]
We say that $f$ is $\group_\Sch$-symmetric if it satisfies
\[ f \circ \YYbar = f, \quad f \circ \YYbar_{\pi} = f, \quad f \circ \YYhbar_{\frac{\pi}{2}} = -f.\]
We define similarly for the group $\group_m$ (recall \ref{D:Gm}) if $f:\Omega' \to \R$ is a function defined on a $\group_m$-invariant subset $\Omega' \subset \R^3$ with $\YYbar$, $\YYbar_{\pi}$ and $\YYhbar_{\frac{\pi}{2}}$ replaced by $\YYbar'$, $\YYbar'_{\frac{\pi}{m}}$ and $\YYhbar'_{\frac{\pi}{2m}}$ respectively.
\end{definition}

\begin{notation}
\label{N:sym}
We use the subscript ``sym'' for subspaces of function spaces which are $\group_\Sch$-symmetric or $\group_m$-symmetric.
\qed 
\end{notation}

\begin{lemma} 
\label{L:sym} 
(i). 
The graph of a $\group_\Sch$-symmetric (or $\group_m$-symmetric) function over $\Sch$ 
(or $M_{\theta,m}$) is $\group_\Sch$-invariant (or $\group_m$-invariant). 
\\
(ii). 
The mean curvature of a graph as in (i) is 
$\group_\Sch$-symmetric (or $\group_m$-symmetric). 
\\
(iii). 
The product of a symmetric function with an invariant function is symmetric. 
\\
(iv).
The function $w$ defined in \ref{D:sub-kernel} is $\group_\Sch$-symmetric and supported inside $\Sch_{core}$.
\end{lemma}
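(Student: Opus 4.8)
The plan is to isolate one elementary fact---how each generating symmetry acts on the unit normal---and deduce all four statements formally from it.

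\emph{Sign of the normal.} Each generator $\gamma$ of $\group_\Sch$ (resp.\ $\group_m$) is an ambient isometry of $\R^3$ preserving $\Sch$ (resp.\ $M_{\theta,m}$), so $d\gamma\circ\nu=\epsilon_\gamma\,(\nu\circ\gamma)$ for a well-defined sign $\epsilon_\gamma\in\{+1,-1\}$, where $\nu$ denotes $\nu_\Sch$ (resp.\ the unit normal of $M_{\theta,m}$ fixed in \ref{P:initial}). An orientation count shows $\epsilon_\gamma=-\det(d\gamma)$ whenever $\gamma$ is orientation-reversing on the surface; by \ref{D:GSch} (and, for $\group_m$, by the equivariance \ref{E:equivariant} together with \ref{P:initial}) all three generators are orientation-reversing on the surface, so since $\YYbar,\YYbar_{\pi}$ (resp.\ $\YYbar',\YYbar'_{\frac{\pi}{m}}$) are plane reflections while $\YYhbar_{\frac{\pi}{2}}$ (resp.\ $\YYhbar'_{\frac{\pi}{2m}}$) is a rotation by $\pi$, we obtain $\epsilon=+1$ for the first two generators and $\epsilon=-1$ for the last. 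With this, \ref{D:sym-function} reads precisely: $f$ is symmetric iff $f\circ\gamma=\epsilon_\gamma f$ for every generator $\gamma$, and invariant iff $f\circ\gamma=f$ for every $\gamma$.

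\emph{Parts (i) and (ii).} Fix a generator $\gamma$ and a symmetric $\varphi$, so $\varphi\circ\gamma=\epsilon_\gamma\varphi$. For graphs over $\Sch$ we use the ordinary normal exponential map, which in $\R^3$ is $p\mapsto p+\varphi(p)\nu_\Sch(p)$ (recall \ref{d:imm}); applying the affine isometry $\gamma$ and invoking the sign rule gives $\gamma\circ\Immer[\varphi]=\Immer[\varphi]\circ\gamma$, hence the graph is $\gamma$-invariant. For graphs over $M_{\theta,m}$ we use the twisted graph $\widetilde{\Immer}[M_{\theta,m},\epsilon;\varphi,M_{\theta,m}]$ of \ref{D:twisted-graph} (with $\epsilon$ from \ref{L:C-bounded-eps}); the new point is that every ingredient of the twisted normal $\tnu$ in \ref{E:tN}---the normal of $M_{\theta,m}$, the signed distance to the extension $\Sigmatilde_{\theta,m}$, the (radial) normal $\nu_{\partial\B^3}$, the signed distance to $\partial\B^3=\Sph^2$, and the \emph{even} cut-off $\psi^\epsilon_{cut}$---transforms equivariantly under $\group_m$ (the signed distance to $\Sigmatilde_{\theta,m}$ carries the sign $\epsilon_\gamma$, which is then killed by $\psi^\epsilon_{cut}$), so $d\gamma\circ\tnu=\epsilon_\gamma(\tnu\circ\gamma)$. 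Hence the twisted flow $\tF_s$ of \ref{D:twisted-exp} satisfies $\gamma\circ\tF_s=\tF_{\epsilon_\gamma s}\circ\gamma$, and evaluating at $s=\varphi(p)$ with $\epsilon_\gamma\varphi(p)=\varphi(\gamma p)$ yields $\gamma\circ\widetilde{\Immer}[\varphi]=\widetilde{\Immer}[\varphi]\circ\gamma$, proving (i). For (ii): $\gamma$ maps the now-invariant graph to itself and acts on its continuously chosen unit normal by $\epsilon_\gamma$, so it conjugates the shape operator into $\epsilon_\gamma$ times itself; taking traces and pulling back through the graph parametrization gives $H_\varphi\circ\gamma=\epsilon_\gamma H_\varphi$, i.e.\ $H_\varphi$ is symmetric.

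\emph{Parts (iii) and (iv), and the main difficulty.} Part (iii) is immediate: for each generator $\gamma$, $(fh)\circ\gamma=(f\circ\gamma)(h\circ\gamma)=\epsilon_\gamma\,fh$ when $f$ is symmetric and $h$ invariant, which is the symmetry condition for $fh$. For (iv), the support statement holds because by \ref{D:unbalance} and \ref{L:unbalance} the map $\Xi_\theta$ restricts to an ambient isometry (either $\id_{\R^3}$ or a rotation about the $y$-axis) outside a bounded neighborhood of the $y$-axis, so $H_\theta$---and hence $w$---vanishes there; taking $a$ large in absolute terms (recall \ref{Con:gamma}) makes that neighborhood disjoint from $\Sch^\pm_{hwing}$ (where $|x|\ge a$) and from $\Sch^\pm_{vwing}$ (where $|z|\ge a$), hence contained in $\Sch_{core}$. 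The symmetry of $w$ uses \ref{L:unbalance}.iii: since $\Xi_\theta$ is $\group_\Sch$-equivariant, each $\gamma$ maps $\Xi_\theta(\Sch)$ to itself and (by continuity from $\theta=0$) acts on its normal by $\epsilon_\gamma$, so the pullback $H_\theta$ to $\Sch$ obeys $H_\theta\circ\gamma=\epsilon_\gamma H_\theta$, and differentiating at $\theta=0$ gives $w\circ\gamma=\epsilon_\gamma w$. I expect the one genuinely technical point to be the twisted-graph case of (i)---verifying the $\group_m$-equivariance of $\tnu$ (tracking which factors carry $\epsilon_\gamma$ and how the even cut-off removes it) and the identity $\gamma\circ\tF_s=\tF_{\epsilon_\gamma s}\circ\gamma$, with its harmless reversal of the flow parameter for the co-orientation-reversing generators; everything else is formal or a direct computation once the sign rule is established.
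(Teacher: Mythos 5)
Your proof is correct and follows essentially the same route as the paper's, which consists precisely of the sign rule $\nu\circ\YYbar=\YYbar\circ\nu$, $\nu\circ\YYbar_{\pi}=\YYbar_{\pi}\circ\nu$, $\nu\circ\YYhbar_{\frac{\pi}{2}}=-\YYhbar_{\frac{\pi}{2}}\circ\nu$ (and likewise for $\group_m$), with (iii) definitional and (iv) from \ref{L:unbalance}. Your additional verification that the twisted normal $\tnu$ and the flow $\tF_s$ inherit the sign $\epsilon_\gamma$ is a detail the paper leaves implicit, and you carry it out correctly.
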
 

\begin{proof}
(i) and (ii) follow from the observation that the Gauss map satisfies 
\[ \nu \circ \YYbar = \YYbar \circ \nu,  \qquad \nu \circ \YYbar_{\pi} = \YYbar_{\pi} \circ \nu, \qquad \nu \circ \YYhbar_{\frac{\pi}{2}} = - \YYhbar_{\frac{\pi}{2}} \circ \nu\]
and similarly for the other three isometries. 
(Equivalently all isometries in consideration reserve the orientation of the surface involved but only the first two 
of each group reverse the orientation of the ambient $\R^3$.) 
(iii) follows from the definitions and (iv) follows from \ref{L:unbalance}.
\end{proof} 

The linearized operators to the free boundary minimal surface equation $H=0$ and $\Theta=0$ 
is given below (recall \ref{P:local-estimate-Theta}, \ref{P:local-estimate-H} and \ref{R:H-linearized}).

\begin{definition}[Jacobi operators]
\label{D:linear-operator}
Let $S \subset \mathbb{B}^3$ be a smooth surface
with each of its boundary components either contained in $\Sph^2$ or completely disjoint from $\Sph^2$,  
and let $\Delta$ denote the intrinsic Laplace operator, 
$|A|^2$ the norm-squared of the second fundamental form with respect to the induced metric on $S \subset \R^3$, 
and $\eta$ the outward unit conormal of $\partial S$ with respect to $S$. 
We define the Jacobi operator $\Lcal=\Lcal_S: C^{2,\beta}(S) \to C^{0,\beta}(S)$ and the boundary Jacobi operator 
$\Bcal: C^{2,\beta}(S) \to C^{1,\beta}(\partial S\cap\Sph^2)$ by
\[ \Lcal v : = \Delta v + |A|^2 v \qquad \text{ and } \qquad  \Bcal v := -\frac{\partial v}{\partial \eta} +v. \]
\end{definition}

Given inhomogeneous data $(E,E^\partial) \in C^{2,\beta}(S) \times C^{1,\beta}(\partial S\cap\Sph^2)$ 
(with $S$ as in \ref{D:linear-operator}),   
we need to solve on $S$ the linearized free boundary minimal surface equation 
\begin{equation} 
\label{E:LE}
\Lcal  v = E  \text{ on } S, \qquad 
\Bcal  v = E^\partial  \text{ along } \partial S \cap \Sph^2, \qquad
v=0  \text{ along } \partial S\setminus \Sph^2. 
\end{equation} 

\subsection*{Solving the linearized equation on $\Wcal_\theta$} 

The main proposition \ref{P:linear-global} of this section shows that, 
modulo a one-dimensional cokernel (and suitable choice of parameters), 
the linearized equation \ref{E:LE} is solvable with estimates 
when $S=M_{\theta,m}$ defined as in \ref{D:initial}. 
(Note that $\partial S \subset   \Sph^2$ in this case.) 
This is achieved by combining approximate semi-local solutions 
on regions $\Mtilde_{\theta,m}[0]$ and $\Mtilde_{\theta,m}[1]$ inside $M$ 
and then iterating. 
The various regions will be defined in \ref{D:M-regions} and the semi-local solutions 
are obtained by solving on the model surfaces $\Sch$ and $\Wcal_\theta$ and then transferring 
to the corresponding regions of $M$ by using $\PiSch$ and $\PiWtheta$ which will be defined in \ref{D:Pi}.  
Recall now from \ref{D:initial-config} that the initial configuration $\Wcal_\theta$ is the union of $\K^+_\theta$, $\K^-_\theta$, $\A_\theta$ and $\D_\theta$. 
In the following lemma \ref{L:linear-ends}
we show that we can always solve the linearized equation \ref{E:LE} on 
$\Wcal_\theta$ due to the non-existence of kernels on each of the four pieces. 

\begin{definition}[H\"{o}lder norms on $\Wcal_\theta$]  
\label{D:normWcal}
Let $C^{k,\beta}(\Wcal_\theta)$ be the space of functions $u'$ on $\Wcal_\theta$ which have restrictions 
$\left. u'\right|_S\in C^{k,\beta}(S)$ for each 
$S=\K^+_\theta,\K^-_\theta,\A_\theta,\D_\theta$.  
(Note that specifying such a function is equivalent to specifying functions on 
$\K^+_\theta$, $\K^-_\theta$, $\A_\theta$ and $\D_\theta$ which agree on $\Circle_\theta$.)   
For $u'\in C^{k,\beta}(\Wcal_\theta)$ we define its norm 
$ \| u':C^{k,\beta}(\Wcal_\theta)\| :=\max_{S=\K^+_\theta,\K^-_\theta,\A_\theta,\D_\theta} \left\| \left. u'\right|_S:C^{k,\beta}(S)\right\|.$ 
For $u'\in C^{2,\beta}(\Wcal_\theta)$   
such that the restriction of $\Lcal_S\left(\left.u'\right|_S\right)$ to $\Ctheta$ is independent of 
$S=\K^+_\theta,\K^-_\theta,\A_\theta,\D_\theta$, 
we define  
$\Lcal_{\Wcal_\theta}u' \in C^{0,\beta}(\Wcal_\theta)$ 
by 
$\left.\left(\Lcal_{\Wcal_\theta}u'\right)\right|_S:= \Lcal_S\left(\left.u'\right|_S\right)$ 
for each 
$S=\K^+_\theta,\K^-_\theta,\A_\theta,\D_\theta$. 
\end{definition}

\begin{lemma}[Linear estimates on $\Wcal_\theta$]
\label{L:linear-ends}
If $m$ is sufficiently large in absolute terms, 
then there is a bounded linear map 
$\Rcal_{\Wcal_\theta} : C^{0,\beta}_{sym}(\Wcal_\theta) \times C^{1,\beta}_{sym}(\partial \Wcal_\theta) \to C^{2,\beta}_{sym}(\Wcal_\theta)$, 
such that given inhomogeneous data $E' \in C^{0,\beta}_{sym}(\Wcal_\theta)$ and $E'^\partial \in C^{1,\beta}_{sym}(\partial \Wcal_\theta)$, 
$u' =\Rcal_{\Wcal_\theta}(E',E'^\partial)$ restricts to the unique solution of the linearized equation 
\ref{E:LE} on each $S=\K^+_\theta, \K^-_\theta, \A_\theta, \D_\theta$, 
so that in the sense of \ref{D:normWcal} we have 
\[ \Lcal_{\Wcal_\theta} u'  = E'  \text{ on } \Wcal_\theta, \qquad 
\Bcal  u' = {E'}^\partial  \text{ on } \Wcal_\theta \cap \Sph^2, \qquad
u'=0  \text{ on } \Ctheta . \] 
Moreover, there is a universal constant $C>0$ such that 
\[ \|u':C^{2,\beta}(\Wcal_\theta)\| \leq \, C(\, \|E' : C^{0,\beta}(\Wcal_\theta)\| + \|E'^\partial:C^{1,\beta}(\Wcal_\theta \cap \Sph^2)\| \, ). \]
\end{lemma}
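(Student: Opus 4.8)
The plan is to reduce the problem on $\Wcal_\theta$ to four uncoupled boundary value problems, one on each of the standard pieces $\K^+_\theta$, $\K^-_\theta$, $\A_\theta$, $\D_\theta$ (recall \ref{D:initial-config}), to solve each of them using the triviality of the kernels established in \ref{L:linear-disk}, \ref{L:linear-annulus} and \ref{L:linear-cat}, and then to reassemble the pieces. First I would note that, since $\Ctheta$ is a boundary circle common to all four pieces and we require $u'=0$ there, prescribing $u'\in C^{2,\beta}(\Wcal_\theta)$ with $\Lcal_{\Wcal_\theta}u'=E'$, $\Bcal u'={E'}^\partial$ on $\Wcal_\theta\cap\Sph^2$ and $u'=0$ on $\Ctheta$ is exactly the same as prescribing, for each $S=\K^+_\theta,\K^-_\theta,\A_\theta,\D_\theta$, a solution $v_S$ of $\Lcal_S v_S=E'|_S$ on $S$ with $v_S=0$ on $S\cap\Ctheta$ and $\Bcal v_S={E'}^\partial|_{S\cap\Sph^2}$ on $S\cap\Sph^2$. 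Indeed, the $v_S$ automatically agree on $\Ctheta$ (they all vanish), and because $E'$ is a single function on $\Wcal_\theta$ the functions $\Lcal_S v_S$ agree on $\Ctheta$ as well, so $\Lcal_{\Wcal_\theta}$ applied to the glued function is well defined in the sense of \ref{D:normWcal} and equals $E'$. (The $\group_m$-symmetry built into the ``sym'' spaces forces $v_{\K^-_\theta}$ to be determined by $v_{\K^+_\theta}$, so really only three of the four problems are independent, but this is immaterial for the argument.)

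Next I would observe that each of these four problems is a second order elliptic boundary value problem, with the Dirichlet condition on the $\Ctheta$-component of $\partial S$ and the regular (Robin) condition $\Bcal$ on the $\Sph^2$-component, hence Fredholm of index zero on the relevant $\group_m$-symmetric Hölder spaces; so each is uniquely solvable with a bounded solution operator as soon as its homogeneous version has no nontrivial $\group_m$-symmetric solution. Take $\theta=0$. On $\D_0$ the maximum principle (that is, \ref{L:linear-disk}) kills the whole kernel, and on $\A_0$ the explicit mode-by-mode analysis of \ref{L:linear-annulus} does too. On $\K^\pm_0$ the zero Fourier mode is eliminated by \ref{L:linear-cat}, while every other Fourier mode occurring in a $\group_m$-symmetric function has order at least $m$; if $m$ is large in absolute terms this order exceeds the threshold $k_0$ above which the mode-$k$ operator on $\K^\pm_0$ (with these boundary conditions) is negative definite, hence invertible, because the angular contribution $-k^2(\cdots)$ pushes its spectrum below $0$. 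This is a role of the hypothesis that $m$ be large. Consequently, for $\theta=0$ all four homogeneous problems have trivial $\group_m$-symmetric kernel, the four solution operators exist and are bounded (uniformly, since the high-mode inverses have decreasing norm), and I would define $\Rcal_{\Wcal_0}$ by combining them through \ref{D:normWcal}; the asserted inequality is then the bounded inverse theorem on each piece followed by taking the maximum over $S$. That $u'$ is $\group_m$-symmetric is automatic from uniqueness, because the data, the equations and the boundary conditions are $\group_m$-equivariant.

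Finally, for $|\theta|$ small --- which is guaranteed under \ref{Con:parameters} --- I would transfer everything back to $\Wcal_0$ along the identification $\Fcal_{\Wcal_\theta}:\Wcal_0\to\Wcal_\theta$ of \ref{D:FW}, using the norm comparison \ref{L:K-norms}, and treat the $\theta$-dependent operators as a small perturbation of the $\theta=0$ ones; the Fredholm inverses then remain bounded, uniformly in $\theta$, yielding the universal constant $C$. (Alternatively one could rerun the previous paragraph verbatim, since each $S_\theta$ is again a piece of a catenoid, of the flat annulus, or of the disk, with the identical structure.) The hard part will be the step on the catenoidal pieces: extending the kernel triviality of \ref{L:linear-cat} from the rotationally symmetric case to the full symmetric class, that is, pinning down the threshold $k_0$ and controlling the sign of the Robin boundary term in the corresponding energy identity, and keeping all constants uniform as $\theta\to0$; the disk and the annulus contribute nothing difficult.
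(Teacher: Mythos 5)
Your proposal follows essentially the same route as the paper's proof: reduce to $\theta=0$ by treating the Jacobi operators on $\Wcal_\theta$ as small perturbations of those on $\Wcal_0$ (via \ref{D:FW} and \ref{L:K-norms}), use separation of variables together with the largeness of $m$ to show that any $\group_m$-symmetric kernel on the decoupled pieces must be rotationally invariant and hence trivial by \ref{L:linear-disk}, \ref{L:linear-annulus} and \ref{L:linear-cat}, and conclude by standard elliptic (Agmon--Douglis--Nirenberg) estimates. Your write-up merely makes explicit two points the paper leaves implicit — the decoupling into four independent boundary value problems forced by the Dirichlet condition on $\Ctheta$, and the negative-definiteness of the high Fourier-mode operators on the catenoidal pieces — both of which are correct.
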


\begin{proof}
Because of the smooth dependence on $\theta$ and the smallness of $\theta$ 
by \ref{Con:gamma} we can consider the Jacobi operators on $\Wcal_\theta$ 
as small perturbations of the ones on $\Wcal_0$ 
(use \ref{L:K-norms} for the equivalence of the norms), 
and therefore it is enough to prove the lemma in the case $\theta=0$. 
By assuming $m$ sufficiently large and separating variables we can easily ensure that 
any kernel for $\Lcal$ with Robin and Dirichlet boundary conditions as usual  
will have to be rotationally invariant.
Using \ref{L:linear-disk}, \ref{L:linear-annulus} and \ref{L:linear-cat}, we conclude that such kernel is trivial. 
Standard elliptic estimates from \cite{Agmon-Douglis-Nirenberg59} (see Theorem 7.3, and Remark 2 on p.669) imply then the lemma. 
\end{proof}

\subsection*{Solving the linearized equation on $\mathcal{S}$}

We now solve the linearized equation \ref{E:LE} on the model Scherk surface $\Sch$ 
which is complete with no boundary, hence instead of boundary conditions we impose 
exponential decay along the wings. 

It is a standard fact that the Gauss map $\nu_\Sch$ of the Scherk surface restricts to an anti-conformal diffeomorphism 
from a fundamental region $\Sch \cap \{0 \leq y \leq \pi\}$ (with respect to the group $\group^0_\Sch$ - recall \ref{D:GSch}) 
onto the hemisphere minus four points $\mathbb{S}^2 \cap \{y \leq 0\} \setminus \{(\pm 1,0,0) \cup (0,0,\pm 1)\}$. 
Hence, the Gauss map pulls back the metric $g_{\Sph^2}$ to a conformally equivalent metric $h$ with its associated linear operator defined as:
\[ h:=\frac{1}{2}|A|^2  g_\Sch \qquad  \text{ and} \qquad \mathcal{L}_h:= \Delta_h +2,  \]
where $A$ is the second fundamental form of $\Sch \subset \R^3$ and $\Delta_h$ is the intrinsic Laplacian with respect to the conformal metric $h$. By conformal invariance of the Laplacian in dimension two, $\Lcal_h=2 |A|^{-2} \Lcal_\Sch$ and hence the operators $\Lcal_h$ and $\Lcal_\Sch$ have the same kernel. 

It is well known that any ambient Killing vector field restricts to a Jacobi field on the minimal surface. Using the translations in $\R^3$, for any unit vector $e$, the function $e \cdot \nu_\Sch$ lies in the kernel of $\Lcal_\Sch$, and thus in the kernel of $\Lcal_h$. (Note that there are other Jacobi fields arising from rotations and scalings in $\R^3$. However, they are either unbounded or not $\group_\Sch$-symmetric in the sense of \ref{D:sym-function}). 
The following lemma below says that modulo the symmetries $\group_\Sch$, there is only a one-dimensional kernel. 

\begin{lemma}
\label{L:Scherk-kernel}
The kernel of $\Lcal_h$ on $\Sch$ which is $\group_\Sch$-symmetric and bounded is spanned by the function $e_x \cdot \nu_\Sch$.
\end{lemma}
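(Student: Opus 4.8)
The plan is to pass to the sphere via the Gauss map and reduce to an eigenvalue problem for the spherical Laplacian. Recall that $\nu_\Sch$ restricts to an anti-conformal diffeomorphism from the fundamental domain $\Sch\cap\{0\le y\le\pi\}$ onto the hemisphere $\Sph^2\cap\{y\le 0\}$ minus the four points $(\pm1,0,0)$ and $(0,0,\pm1)$, and that $\Lcal_h$ is conjugate under this map to $\Delta_{\Sph^2}+2$. A bounded $\group_\Sch$-symmetric element of the kernel of $\Lcal_h$ therefore corresponds to a bounded function $\phi$ on $\Sph^2\cap\{y\le0\}$ minus those four points satisfying $\Delta_{\Sph^2}\phi+2\phi=0$, together with boundary conditions along the images of the symmetry lines and removability conditions at the four punctures. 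Since $\phi$ is bounded and the punctures have zero capacity, $\phi$ extends smoothly across them, so $\phi$ is a genuine second spherical eigenfunction (eigenvalue $2=1\cdot(1+1)$) on the closed hemisphere $\Sph^2\cap\{y\le 0\}$ subject to homogeneous boundary conditions inherited from the reflections in $\group_\Sch$.

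First I would read off exactly what those boundary conditions are. The reflection $\YYbar$ (about $\{y=0\}$) is the identity on the boundary great circle $\{y=0\}$ of the hemisphere and $\group_\Sch$-symmetry $f\circ\YYbar=f$ forces a Neumann condition there; the reflection $\YYbar_\pi$ (about $\{y=\pi\}$) together with $f\circ\YYbar_\pi=f$ gives another Neumann condition along the image great circle; and $\YYhbar_{\pi/2}$ with $f\circ\YYhbar_{\pi/2}=-f$ forces an odd (hence Dirichlet-type) condition across the image of the line $\{y=\pi/2,z=0\}$, which sits inside the hemisphere. Note that the minus sign in the definition of $\group_\Sch$-symmetry is exactly compensated by the fact (stated in the excerpt, cf. the parenthetical in the proof of \ref{L:sym}) that $\nu_\Sch\circ\YYhbar_{\pi/2}=-\YYhbar_{\pi/2}\circ\nu_\Sch$, so on the sphere side $\phi$ is genuinely even across that circle in the relevant sense; I would be careful to track these signs correctly, as this is where a sign error would silently produce the wrong answer.

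Then I would solve the spherical problem explicitly. Second eigenfunctions of $\Sph^2$ are the restrictions of linear functions $a\,x+b\,y+c\,z$; the question is which linear combinations satisfy all three boundary conditions. The two Neumann conditions along the images of $\{y=0\}$ and $\{y=\pi\}$, and the reflection condition across the image of $\{y=\pi/2,z=0\}$, pin down the surviving combination. Pulling back, the three coordinate Jacobi fields on $\Sch$ are $e_x\cdot\nu_\Sch$, $e_y\cdot\nu_\Sch$, $e_z\cdot\nu_\Sch$; by \ref{L:ScherkSym} one checks directly which of these (and which combinations) are $\group_\Sch$-symmetric. The function $e_y\cdot\nu_\Sch$ is not bounded on $\Sch$ (the $y$-component of the normal does not decay to a single value along the wings — indeed the vertical and horizontal wings see $e_y\cdot\nu_\Sch$ behaving incompatibly), or alternatively it fails the symmetry; and $e_z\cdot\nu_\Sch$ fails one of the reflection conditions while $e_x\cdot\nu_\Sch$ satisfies all of them, leaving a one-dimensional space spanned by $e_x\cdot\nu_\Sch$. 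Since the eigenvalue $2$ eigenspace on the sphere is only three-dimensional to begin with and is spanned by these coordinate functions, once I have shown no bounded $\group_\Sch$-symmetric kernel can have a component orthogonal to this eigenspace (this is exactly the removable-singularity plus eigenvalue statement above), the proof is complete.

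I expect the main obstacle to be the removability/boundedness argument at the four punctures and, relatedly, the matching of "boundedness on the noncompact $\Sch$" with "smooth eigenfunction on the compact hemisphere" — one must argue that a bounded solution of $\Delta_h\phi+2\phi=0$ on a punctured disk in the conformal metric $h$ extends across the puncture, using that $h$ is a smooth metric degenerating only conformally there and that the punctures correspond on $\Sch$ to the ends where $|A|^2\to 0$; the exponential decay of the wings (\ref{L:phi-wing}) controls the behavior there. Once that is in hand the rest is the elementary linear algebra of picking out which $ax+by+cz$ survives the three explicit symmetry conditions.
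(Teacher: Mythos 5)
Your proposal is correct and reaches the same structural endpoint as the paper --- reduce via the Gauss map to the eigenvalue-$2$ problem for $\Delta_{\Sph^2}+2$, conclude the relevant eigenspace is the three coordinate functions of $\nu_\Sch$, and then check which of $e_x\cdot\nu_\Sch$, $e_y\cdot\nu_\Sch$, $e_z\cdot\nu_\Sch$ survive the symmetries in $\group_\Sch$ --- but the middle step is carried out differently. The paper extends the Gauss map holomorphically to a compact Riemann surface $\Sch^*$ and invokes Theorem 20 of Montiel--Ros, using that all branching values lie on an equator, to get that the multiplicity of the eigenvalue $2$ is exactly $3$; the implicit removability of the punctures for a \emph{bounded} kernel element is not spelled out there. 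You instead work directly on the fundamental domain, whose Gauss image is a hemisphere minus four boundary points, make the removable-singularity step explicit, and extend by even reflection across the boundary great circle to land in the classical fact that the eigenvalue-$2$ eigenspace of $\Sph^2$ consists of restrictions of linear functions. Since the Gauss map of the (quotiented) Scherk surface has degree one, your route is more elementary and self-contained, at the cost of having to track the boundary conditions and the sign conventions for $\nu_\Sch\circ\YYhbar_{\pi/2}=-\YYhbar_{\pi/2}\circ\nu_\Sch$ carefully --- which you do. One small slip: $e_y\cdot\nu_\Sch$ is certainly bounded (it is a component of a unit vector; every $e\cdot\nu_\Sch$ is), so your parenthetical claim that it is unbounded is wrong; the correct and sufficient reason to discard it is the one you give as the alternative, namely that it is anti-invariant under $\YYbar$ and so fails $f\circ\YYbar=f$. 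The unbounded Jacobi fields alluded to in the text come from rotations and dilations, not translations. With that correction your argument is complete.
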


\begin{proof}
Since $\Sch$ has asymptotically planar ends, its Gauss map can be extended to a non-constant holomorphic map from a compact Riemann surface $\Sch^*$ with all the branching values, $(\pm 1,0,0)$ and $(0,0,\pm 1)$, lying on an equator of $\Sph^2$. Therefore, we can apply Theorem 20 in \cite{Montiel-Ros91} to conclude that the multiplicity of the eigenvalue $2$ for the operator $\Lcal_h$ on $\Sch^*$ is exactly equal to $3$, which are generated by $e_x \cdot \nu_\Sch$, $e_y \cdot \nu_\Sch$ and $e_z \cdot \nu_\Sch$, where $e_x,e_y,e_z$ are the standard coordinate basis in $\R^3$. Among these, only $e_x \cdot \nu_\Sch$ is $\group_\Sch$-symmetric.
\end{proof}

Because of the existence of the one-dimensional kernel in \ref{L:Scherk-kernel}, we can at best solve the linearized equation \ref{E:LE} modulo a one-dimensional co-kernel. The next lemma shows that the function $w$ we defined in \ref{D:sub-kernel} serves the purpose of a (extended) substitute kernel.

\begin{lemma}
\label{L:sub-kernel}
Let $L^2(\Sch/\group^0_\Sch,h)$ be the Hilbert space of $L^2$-integrable functions with respect to the metric $h$. Then, the functions $2 |A|^{-2} w$ and $e_x \cdot \nu_\Sch$ belong to $L^2(\Sch/\group^0_\Sch,h)$ and are not orthogonal to each other. Therefore, for any $\Ehat \in C^{0,\beta}_{sym}(\Sch,g_\Sch,e^{-\gamma s})$, there exists $\mu \in \R$ such that $|\mu| \leq C \|2 |A|^{-2} \Ehat: L^2(\Sch/\group^0_\Sch,h)\|$ and $2 |A|^{-2}(\Ehat-\mu w)$ is orthogonal to $e_x \cdot \nu_\Sch$ in $L^2(\Sch/\group^0_\Sch,h)$.
\end{lemma}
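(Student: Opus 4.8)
The plan is to reduce the whole lemma to the single assertion that $\int_{\Sch/\group^0_\Sch} w\,(e_x\cdot\nu_\Sch)\,dA_{g_\Sch}\ne 0$, and to obtain everything else by bookkeeping. First I record the conformal rescaling: since $h=\tfrac12|A|^2 g_\Sch$, the area forms satisfy $dA_h=\tfrac12|A|^2\,dA_{g_\Sch}$, so for any functions $f_1,f_2$ one has $\langle f_1,f_2\rangle_h=\int f_1 f_2\,\tfrac12|A|^2\,dA_{g_\Sch}$; in particular the $|A|^2$ factors cancel in $\langle 2|A|^{-2}w,\,e_x\cdot\nu_\Sch\rangle_h=\int w\,(e_x\cdot\nu_\Sch)\,dA_{g_\Sch}$ and in $\langle 2|A|^{-2}\Ehat,\,e_x\cdot\nu_\Sch\rangle_h=\int\Ehat\,(e_x\cdot\nu_\Sch)\,dA_{g_\Sch}$. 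The $L^2$ membership claims are then immediate: $\nu_\Sch$ is an immersion on $\Sch$ (it restricts to a diffeomorphism on a fundamental domain), so $|A|^2>0$ everywhere on $\Sch$; by \ref{L:sym}.iv the function $w$ is smooth and supported in $\Sch_{core}$, which is compact modulo $\group^0_\Sch$, hence $2|A|^{-2}w$ is bounded with support compact modulo $\group^0_\Sch$ and so lies in $L^2(\Sch/\group^0_\Sch,h)$; and $e_x\cdot\nu_\Sch$ is bounded while $\operatorname{Area}_h(\Sch/\group^0_\Sch)$ equals the area of a hemisphere (via the anti-conformal diffeomorphism recalled before \ref{L:Scherk-kernel}), so $e_x\cdot\nu_\Sch\in L^2(\Sch/\group^0_\Sch,h)$ too.

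The heart of the matter is the non-orthogonality, which I would prove with the elementary flux identity: for a surface $\Sigma\subset\R^3$ with unit normal $N$, mean curvature $H$ (sum of principal curvatures), constant vector $e$ and domain $\Omega\subset\Sigma$, decomposing $e=e^\top+\langle e,N\rangle N$ gives $\operatorname{div}_\Sigma(e^\top)=H\langle e,N\rangle$, hence $\int_{\partial\Omega}\langle e,\eta\rangle\,d\ell=\int_\Omega H\langle e,N\rangle\,dA$ with $\eta$ the outward conormal. Apply this to $\Sigma=\Xi_\theta(\Sch)$ (with the unit normal carried over from $\nu_\Sch$ by $\Xi_\theta$), to $e=e_x$, and to $\Omega=\Omega_\theta$ the image under $\Xi_\theta$ of a fundamental domain for $\group^0_\Sch$, truncated far out along the wings beyond the support of $H_\theta$, which by \ref{L:sym}.iv and \ref{L:unbalance}.ii lies in a compact part of the core. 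Differentiating in $\theta$ at $\theta=0$ and using $H_\Sch\equiv0$, the right side becomes $\int_{\Sch/\group^0_\Sch} w\,(e_x\cdot\nu_\Sch)\,dA_{g_\Sch}$ (the variation of $\langle e_x,N_\theta\rangle\,dA_\theta$ is killed by the factor $H_\Sch=0$). On the left side: the pieces of $\partial\Omega_\theta$ lying on the symmetry planes $\{y=0\},\{y=\pi\}$ contribute $0$ for every $\theta$, because the $\group_\Sch$-equivariance of $\Xi_\theta$ forces the conormal there into $\R e_y\perp e_x$; the pieces on the horizontal wings are independent of $\theta$ since $\Xi_\theta=\operatorname{id}$ near those wings; and on the vertical wing(s), where $\Xi_\theta$ is the rigid rotation $R_\theta$ about the $y$-axis, conservation of flux on the minimal end lets us evaluate the flux on the rotated asymptotic half-plane, where the conormal has been turned by $\theta$, giving the value $c\sin\theta$ with $c\ne 0$. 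Hence $\int_{\Sch/\group^0_\Sch} w\,(e_x\cdot\nu_\Sch)\,dA_{g_\Sch}=\pm c\ne0$, i.e.\ $2|A|^{-2}w$ and $e_x\cdot\nu_\Sch$ are not $L^2(h)$-orthogonal. (Equivalently one may write $w=\Lcal_\Sch\langle\tfrac{d}{d\theta}|_0\Xi_\theta,\nu_\Sch\rangle$ and integrate by parts against the Jacobi field $e_x\cdot\nu_\Sch$; the only surviving boundary flux is exactly the one above at the vertical ends.)

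Granting this, the final assertion is routine. For $\Ehat\in C^{0,\beta}_{sym}(\Sch,g_\Sch,e^{-\gamma s})$ the pairing $\langle 2|A|^{-2}\Ehat,\,e_x\cdot\nu_\Sch\rangle_h=\int\Ehat\,(e_x\cdot\nu_\Sch)\,dA_{g_\Sch}$ converges, since $|\Ehat|\lesssim e^{-\gamma s}$ and $|e_x\cdot\nu_\Sch|\le1$, so we may set $\mu:=\langle 2|A|^{-2}\Ehat,\,e_x\cdot\nu_\Sch\rangle_h\big/\langle 2|A|^{-2}w,\,e_x\cdot\nu_\Sch\rangle_h$. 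Then $2|A|^{-2}(\Ehat-\mu w)$ is $L^2(\Sch/\group^0_\Sch,h)$-orthogonal to $e_x\cdot\nu_\Sch$ by construction, and Cauchy--Schwarz gives $|\mu|\le\big(\|e_x\cdot\nu_\Sch\|_{L^2(h)}/|\langle 2|A|^{-2}w,\,e_x\cdot\nu_\Sch\rangle_h|\big)\,\|2|A|^{-2}\Ehat\|_{L^2(h)}=C\,\|2|A|^{-2}\Ehat\|_{L^2(h)}$ with $C$ an absolute constant.

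I expect the main obstacle to be making the flux computation rigorous at the non-compact ends: one must check that the flux of $e_x$ through the wings is well defined (it is, because $\Xi_\theta(\Sch)$ is minimal outside the transition region, so the flux is independent of the truncation level), track the rotation of the conormal on the vertical end(s) carefully enough to see that the $\theta$-derivative of that boundary term is nonzero and, when there are two such ends, that the contributions add rather than cancel (they add: tilting either vertical end increases its $e_x$-flux), and confirm that the symmetry-plane and horizontal-end contributions genuinely drop out under $\partial_\theta|_0$. The remaining ingredients — the conformal identity, the $L^2$ memberships, and the last projection step — are straightforward.
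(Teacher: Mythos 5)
Your proposal is correct and follows essentially the same route as the paper: the $L^2$ memberships via compact support of $w$ modulo $\group^0_\Sch$ and finite $h$-area, and the non-orthogonality via the first variation at $\theta=0$ of the balancing (flux) identity for $e_x$ on $\Xi_\theta(\Sch)$, with the only surviving boundary contribution coming from the rotated conormal on the vertical wings. Your version just spells out more explicitly the cancellation of the symmetry-plane and horizontal-wing boundary terms, which the paper leaves implicit.
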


\begin{proof}
Since $|A|^2$ is $\group_\Sch$-invariant and by \ref{L:sym}.iv $w$ is $\group_\Sch$-symmetric 
with compact support (modulo $\group_\Sch^0$), $2 |A|^{-2} w \in L^2(\Sch/\group^0_\Sch,h)$. 
From \ref{L:Scherk-kernel} and that $\Sch/\group^0_\Sch$ has finite $h$-area, we have $e_x \cdot \nu_\Sch \in L^2(\Sch/\group^0_\Sch,h)$ as well. Recall that $w$ is supported on the core $\Sch_{core}$. Using the balancing formula for the Killing field $e_x$, the $L^2(\Sch/\group^0_\Sch,h)$ product of the two functions is 
\[ \int_{\Sch_{core}/ \group^0_\Sch} e_x \cdot w \nu_\Sch \, dA 
= \left. \frac{d}{d\theta}\right|_{\theta=0} \int_{(\Sch_{\theta,0}/ \group^0_\Sch)_{s \leq 0}} H_\theta \cdot e_x \, dA 
= \left. \frac{d}{d\theta}\right|_{\theta=0}\int_{\partial (\Sch_{\theta,0}/ \group^0_\Sch)_{s \leq 0}} \eta_\theta  \cdot e_x \, ds, \]
where $dA$ and $ds$ are with respect to the original Scherk metric $g_\Sch$. 
This is non-zero as $ \left. \frac{d}{d\theta}\right|_{\theta=0} \eta_\theta \approx e_x$ 
on the vertical wings of $\Sch$ and vanishes on the horizontal wings. 
Here $H_\theta$ is the mean curvature vector of $\Sch_{\theta,0}$ and $\eta_\theta$ is the outward unit conormal of 
$\partial (\Sch_{\theta,0})_{s \leq 0}$ relative to $(\Sch_{\theta,0})_{s \leq 0}$.
\end{proof}

We now solve the linearized equation on $\Sch$ with appropriate decay. Note that we only solve modulo a one-dimensional space which corresponds to the 
(approximate) kernel of the operator: 

\begin{lemma}[Linear estimates on $\Sch$]
\label{L:linear-Scherk}
For any $\Ehat \in C^{0,\beta}_{sym}(\Sch,g_\Sch,e^{-\gamma s})$, there exists unique $\uhat \in C^{2,\beta}_{sym}(\Sch,g_\Sch,e^{-\gamma s})$ and $\mu \in \R$ such that 
\[ \Lcal_\Sch \uhat = \Ehat + \mu w \qquad \text{ on } \Sch,\]
where $w$ is the function defined in \ref{D:sub-kernel} and $g_\Sch$ is the induced metric of $\Sch \subset \R^3$ with Jacobi operator $\Lcal_\Sch$ (recall \ref{D:linear-operator}). Moreover, we have 
\[ \|\uhat:C^{2,\beta}(\Sch,g_\Sch,e^{-\gamma s})\| + |\mu| \leq C \|\Ehat:C^{0,\beta}(\Sch,g_\Sch,e^{-\gamma s})\| \]
for some universal constant $C>0$.
\end{lemma}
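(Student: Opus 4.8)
The plan is to work with the conformally rescaled operator $\Lcal_h = \Delta_h + 2$ on the quotient $\Sch/\group^0_\Sch$ and exploit the fact that, after passing to the compact Riemann surface $\Sch^*$ compactifying the ends, the eigenvalue $2$ has finite multiplicity, so that the relevant kernel/cokernel is finite-dimensional. First I would reformulate the equation: since $\Lcal_h = 2|A|^{-2}\Lcal_\Sch$ and $|A|^2$ is $\group_\Sch$-invariant, solving $\Lcal_\Sch\uhat = \Ehat + \mu w$ is equivalent to solving $\Lcal_h \uhat = 2|A|^{-2}(\Ehat + \mu w)$. By \ref{L:sub-kernel} we may choose $\mu$ with $|\mu|\le C\|2|A|^{-2}\Ehat : L^2(\Sch/\group^0_\Sch,h)\|$ so that the right-hand side $2|A|^{-2}(\Ehat - \mu w)$ — note the sign is arranged by replacing $\mu$ by $-\mu$, or equivalently $w$ is the substitute kernel element — is $L^2(h)$-orthogonal to $e_x\cdot\nu_\Sch$, which by \ref{L:Scherk-kernel} spans the $\group_\Sch$-symmetric bounded kernel of $\Lcal_h$. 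This orthogonality, together with the self-adjointness of $\Lcal_h$ on the $\group_\Sch$-symmetric $L^2$-space on the compact surface $\Sch^*$, yields by the Fredholm alternative a weak solution $\uhat\in L^2_{sym}(\Sch/\group^0_\Sch,h)$, unique once we also require $\uhat\perp e_x\cdot\nu_\Sch$.

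Next I would upgrade regularity and, more importantly, obtain the weighted $C^{2,\beta}(\Sch,g_\Sch,e^{-\gamma s})$ estimate. Interior Schauder estimates on geodesic balls of radius $1/100$ (as in \ref{D:weighted-Holder}), applied to $\Lcal_\Sch\uhat = \Ehat + \mu w$ on $(\Sch,g_\Sch)$, give $\uhat\in C^{2,\beta}_{loc}$ with local bounds controlled by the local sup-norm of $\uhat$ plus the weighted norm of the right-hand side. The decay $\|\uhat:C^{2,\beta}(\Sch,g_\Sch,e^{-\gamma s})\|\le C\|\Ehat:C^{0,\beta}(\Sch,g_\Sch,e^{-\gamma s})\|$ then follows from a separation-of-variables / barrier argument along the wings: on each wing $\Sch^\pm_{hwing}$ or $\Sch^\pm_{vwing}$, the metric $g_\Sch$ is exponentially close to the flat half-plane metric and $|A|^2$ decays like $e^{-2s}$, so $\Lcal_\Sch$ is a small exponentially-decaying perturbation of the flat Laplacian $\Delta_{g_0}$ on $\R^2_+$; expanding in the $y$-Fourier modes compatible with $\group_\Sch$-symmetry (which excludes the constant mode because of the $\YYhbar_{\pi/2}$ antisymmetry, forcing the lowest admissible mode to decay at least like $e^{-s}$, hence like $e^{-\gamma s}$ since $\gamma<1$), one shows any bounded solution with $L^2$-control decays at the required rate, with the stated estimate on $|\mu|$ absorbed via $\|2|A|^{-2}\Ehat : L^2\| \le C\|\Ehat:C^{0,\beta}(\Sch,g_\Sch,e^{-\gamma s})\|$ (finite $h$-area of $\Sch/\group^0_\Sch$).

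The main obstacle I expect is the weighted decay estimate along the wings rather than the existence statement: one must carefully rule out the slowly-decaying or growing indicial solutions of the model operator on $\R^2_+$ using precisely the imposed symmetry group $\group_\Sch$ (in particular the line-reflection $\YYhbar_{\pi/2}$, which kills the would-be bounded-but-non-decaying mode), and then propagate this from the model back to $\Sch$ via the exponential closeness in \ref{L:desing}-type estimates and \ref{L:phi-wing}. Uniqueness of the pair $(\uhat,\mu)$ follows because any homogeneous solution in $C^{2,\beta}_{sym}(\Sch,g_\Sch,e^{-\gamma s})$ is in particular a bounded $\group_\Sch$-symmetric element of $\ker\Lcal_\Sch$, hence a multiple of $e_x\cdot\nu_\Sch$ by \ref{L:Scherk-kernel}; but $e_x\cdot\nu_\Sch$ does not decay (it is $O(1)$ on the wings asymptotic to $\{x=0\}$), so it is not in the weighted space, forcing $\uhat\equiv 0$ and then $\mu=0$ since $w\not\equiv0$.
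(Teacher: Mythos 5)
Your overall architecture (fix $\mu$ via \ref{L:sub-kernel}, solve in $L^2(\Sch/\group^0_\Sch,h)$ using the spectral information behind \ref{L:Scherk-kernel}, then upgrade to the weighted H\"older space by elliptic estimates plus mode analysis on the wings) is the same as the paper's, and your uniqueness argument is correct. But the decay step contains a genuine error. You claim that the $\YYhbar_{\frac{\pi}{2}}$-antisymmetry kills the constant (bounded, non-decaying) Fourier mode on the wings, so that the $L^2$-orthogonal solution automatically decays like $e^{-\gamma s}$. This is true only on the \emph{horizontal} wings, where $\YYhbar_{\frac{\pi}{2}}$ acts within each wing by $y\mapsto\pi-y$ and forces odd modes. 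On the \emph{vertical} wings $\YYhbar_{\frac{\pi}{2}}$ merely swaps $\Sch^+_{vwing}$ with $\Sch^-_{vwing}$ and imposes no constraint on the $y$-modes within a single wing; the constant mode survives in the $\group_\Sch$-symmetric class, as witnessed by $e_x\cdot\nu_\Sch$ itself, which is $\group_\Sch$-symmetric and tends to $\pm1$ on the vertical wings. Consequently the $L^2(h)$-orthogonal solution $\uhat$ need not lie in $C^{2,\beta}(\Sch,g_\Sch,e^{-\gamma s})$, and the solution the lemma asserts is in general \emph{not} the $L^2$-orthogonal one. The missing step (which the paper supplies) is to add a suitable multiple $c\,(e_x\cdot\nu_\Sch)$ of the kernel element so as to cancel the asymptotically constant part of $\uhat$ on the vertical wings; since both functions are $\group_\Sch$-symmetric a single constant $c$ works for both vertical wings, and $|c|$ is controlled by $\|\uhat:C^0(\partial\Sch_{s\le 2})\|$.

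A secondary gap: before invoking \ref{L:sub-kernel} and the $L^2(h)$ theory you need $2|A|^{-2}\Ehat\in L^2(\Sch/\group^0_\Sch,h)$, but on the wings $|A|^{-2}\sim e^{2s}$ while $dA_h\sim e^{-2s}\,ds\,dy$, so $\|2|A|^{-2}\Ehat\|^2_{L^2(h)}\sim\int e^{2(1-\gamma)s}\,ds$ diverges for $\gamma<1$ if $\Ehat$ only decays like $e^{-\gamma s}$. The paper first reduces (via the argument of Lemma 7.2 of \cite{Kapouleas97}, solving explicitly along the wings) to the case where $\Ehat$ is supported in $\Sch_{s\le 2}$; some such reduction, or a direct weighted argument on the wings, is needed before your Fredholm step is legitimate.
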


\begin{proof}
The uniqueness part follows \ref{L:sub-kernel} that if $\Ehat=0$ then we must have $\mu=0$ and hence $\uhat=0$ if it is decaying at the rate $e^{-\gamma s}$ 
since the kernel $e_x \cdot \nu_\Sch$ does not decay along the vertical wings. 
By an argument of \cite[Lemma 7.2]{Kapouleas97}, 
we can assume without loss of generality that the inhomogeneous term $E$ is supported in $\Sch_{s \leq 2}$. 
Let $\Ehat \in C^{0,\beta}_{sym}(\Sch,g_\Sch,e^{-\gamma s})$ be given and supported in $\Sch_{s \leq 2}$. 
By \ref{L:sub-kernel}, there exists $\mu \in \R$ such that $2 |A|^{-2} (\Ehat-\mu w)$ is orthogonal to $e_x \cdot \nu_\Sch$ in $L^2(\Sch/\group^0_\Sch,h)$ and 
\beq
\label{E:mu-bound}
 |\mu| \leq C \|2 |A|^{-2} \Ehat: L^2(\Sch/\group^0_\Sch,h) \| \leq C \|\Ehat :C^{0,\beta}(\Sch,g_\Sch,e^{-\gamma s})\|,
\eeq
since $|A|^2$ is uniformly bounded away from zero on $\Sch_{s \leq 2}$ and the area grows linearly and hence dominated by the exponential decay. 
Using \ref{L:Scherk-kernel}, there exists a unique $\uhat \in L^2(\Sch/\group^0_\Sch,h)$ which is orthogonal to $e_x \cdot \nu_\Sch$ and
\[ \Lcal_h \uhat=2 |A|^{-2} (\Ehat-\mu w), \]
with $\|\uhat: L^2(\Sch/\group^0_\Sch,h)\| \leq C \|\Ehat :C^{0,\beta}(\Sch,g_\Sch,e^{-\gamma s})\|$. Therefore $\uhat$ solves the desired linearized equation
\[ \Lcal_\Sch  \uhat = \Ehat-\mu w.\]
Note that for any $c \in \R$, $\uhat+c (e_x \cdot \nu_\Sch)$ is also a solution to the same equation. Therefore, to get the required estimate, it suffices to prove that there exists some $c \in \R$ such that 
\[ \|\uhat +c (e_x \cdot \nu_\Sch): C^{2,\beta}(\Sch,g_\Sch,e^{-\gamma s})\| \leq C \|\Ehat: C^{0,\beta}(\Sch,g_\Sch,e^{-\gamma s})\|.\]

We now prove the existence of such a constant $c$. Since $\Sch_{s \leq 2}$ has bounded geometry, by de-Giorgi-Nash-Moser theory and Schauder estimates in standard linear PDE theory and \ref{E:mu-bound}, we have
\[ \|\uhat:C^{2,\beta}(\Sch_{s \leq 3},g_\Sch)\| \leq C  \|\Ehat:C^{0,\beta}(\Sch,g_\Sch,e^{-\gamma s})\|.\]
In particular, $\|\uhat:C^0(\partial \Sch_{s \leq 2}) \|\leq C  \|\Ehat: C^{0,\beta}(\Sch,g_\Sch,e^{-\gamma s})\|$. Since both $\uhat$ and $e_x \cdot \nu_\Sch$ are $\group_\Sch$-symmetric functions, there exists a unique $c \in \mathbb{R}$ such that $-c(e_x \cdot \nu_\Sch)$ matches the first harmonics of $\uhat$ on $\Sch_{s \geq 2}$ and hence $\uhat+c(e_x \cdot \nu_\Sch)$ would have the required decay. The required estimate then follows from $\|\uhat:C^0(\partial \Sch_{s \leq 2}) \|\leq C  \|\Ehat: C^{0,\beta}(\Sch,g_\Sch,e^{-\gamma s})\|$.
\end{proof}

\subsection*{Solving the linearized equation on $M_{\theta,m}$}

In this subsection we state and prove Proposition 
\ref{P:linear-global} where 
we solve with estimates the linearized equation \ref{E:LE} 
on an initial surface $M_{\theta,m}$ defined as in \ref{D:initial}. 
We first define various domains of the initial surfaces $M_{\theta,m}$, 
projections to the standard models, 
some cutoff functions, and the global norms we will need: 

\begin{definition}
\label{D:M-regions}
Assuming \ref{Con:delta} 
so that $\abar:=\log \lambda^{-7} < 5\ds m-1$ (recall \ref{N:scale}), 
we define the following regions of $M_{\theta,m}$:
\[ M_{\theta,m}[0] := M_{\theta,m}  \cap \{ s \leq 5 \ds m-1 \}, \qquad \Mtilde_{\theta,m}[0]:= M_{\theta,m}   \cap \{s \leq 5 \ds m\}=\Sigma_{\theta,m}, \]
\[ M_{\theta,m}[1] := M_{\theta,m}   \cap \{ s \geq \abar +1\}, \qquad \Mtilde_{\theta,m}[1] := M_{\theta,m}   \cap \{ s \geq \abar \}.\] 
\end{definition}

By \ref{E:equivariant} $\Zcal_{\theta,m}$ is an infinite covering map onto its image. 
The group of deck transformations is generated by the translation $\Tcal_m:\R^3\to\R^3$ 
defined by 
\begin{equation}
\label{E:translation}
\Tcal_m(x,y,z):=(x, y+2 \pi m , z). 
\end{equation}
Hence 
$\Zcal_{\theta,m}:\Sch\to \R^3$ factors through an embedding (diffeomorphism onto its image) 
\begin{equation} 
\label{E:Zcalhat}
\Zcalhat_{\theta,m}:\Schhat_m \to \R^3,  
\qquad \text{ where } 
\Schhat_m := \Sch\,/\,\Tcal_m 
\end{equation} 
is the quotient surface under the identifications induced 
by the group generated by $\Tcal_m$, 
and the embeddedness follows from \ref{P:initial} and \ref{L:horizontal-asymp}.iv.  
Note that in the following definition $\PiSch$ involves scaling,  
$\Mtilde_{\theta,m}[1]$ is the graph of the function $\lambda \psi_{trun} \varphi_{wing}$ 
transplanted to a subset of $\Wcal_\theta\setminus\Circle_\theta$, 
and $\PiWtheta$ is the identity map on a neighborhood of $\partial M_{\theta,m}  $: 

\begin{definition}
\label{D:Pi}
We define a smooth map (diffeomorphism onto its image) 
$\PiSch: \Mtilde_{\theta,m}[0] \to \Schhat_m$ 
as the restriction to $\Mtilde_{\theta,m}[0]$ 
of the inverse of $\Zcalhat_{\theta,m}$, 
considered as a diffeomorphism from $\Schhat_m$ onto its image and defined as in \ref{E:Zcalhat}. 

We also define a smooth map (diffeomorphism onto its image) 
$\PiWtheta : \Mtilde_{\theta,m}[1] \to \Wcal_\theta\setminus\Circle_\theta$ 
to be the nearest point projection from $\Mtilde_{\theta,m}[1]$ to $\Wcal_\theta \setminus\Circle_\theta$ 
(recall \ref{R:graph}). 
\end{definition}

\begin{definition}
\label{D:psi01}
We define the cutoff functions $\psihat,\psi' \in C^\infty(M_{\theta,m}  )$ by (recall \ref{E:cutab})
\[ \psihat=\psi_{cut}[\ell,\ell-1] \circ s, \qquad  \psi'=\psi_{cut}[\abar,\abar+1] \circ s ,\]
where $s$ is the function on $M_{\theta,m}  $ as defined in \ref{D:initial}.
\end{definition}

\begin{lemma}
\label{L:psi01}
(i). $\psihat$ is supported on $\Mtilde_{\theta,m}[0]$ with $\psihat \equiv 1$ on $M_{\theta,m}[0] \subset \Mtilde_{\theta,m}[0]$.
\\
(ii). $\psi'$ is supported on $\Mtilde_{\theta,m}[1]$ with $\psi' \equiv 1$ on $M_{\theta,m}[1] \subset \Mtilde_{\theta,m}[1]$.
\\
(iii). $\psihat$, $\psi'$ are $\group_m$-invariant functions on $M_{\theta,m}  $ (recall \ref{D:sym-function}). 
\\
(iv). $\|\psihat \circ \Zcal_{\theta,m}:C^3(\Sch,g_\Sch)\| \leq C$ (recall \ref{D:homothety}) and $\|\psi':C^3(M_{\theta,m}  ,\lambda^{-2} g)\| \leq C $. 
\end{lemma}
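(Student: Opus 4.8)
Parts (i)--(iii) are essentially definitional and I would dispatch them first. From Definition \ref{D:cutoff} and the properties recorded after \ref{E:cutab}, $\psi_{cut}[a,b] = \Psi \circ L_{a,b}$ with $\Psi$ a fixed function and $L_{a,b}$ affine satisfying $L_{a,b}(a) = -3$, $L_{a,b}(b) = 3$; hence $\psi_{cut}[a,b]$ equals $0$ on $\{L_{a,b} \le -1\}$ and $1$ on $\{L_{a,b} \ge 1\}$, so it is $\equiv 0$ near $a$, $\equiv 1$ near $b$, and all its derivatives are supported in the sub-interval $\{-1 \le L_{a,b} \le 1\}$, which stays at distance $\tfrac{1}{3}|a-b|$ from each of $a,b$. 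Applying this with endpoints $(a,b) = (5\ds m, 5\ds m-1)$ to $\psihat$ and $(a,b) = (\abar, \abar+1)$ to $\psi'$ (reading $\ell = 5\ds m$ in \ref{D:psi01}), and comparing with the region definitions in \ref{D:M-regions}, gives (i) and (ii) at once. For (iii), the function $s$ on $\Sigmatilde_{\theta,m}$, hence on $M_{\theta,m}$, is $\group_m$-invariant in the sense of \ref{D:sym-function} by its defining property in \ref{D:initial}; post-composing with any map $\R \to \R$ preserves each of the three invariance identities, so $\psihat$ and $\psi'$ are $\group_m$-invariant.

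The content of the lemma is (iv), and the idea is simply that the transition region of each cutoff lies in a part of the initial surface whose geometry is uniformly controlled. Consider $\psihat$. By the defining property of $s$ in \ref{D:initial}, on $\Sigma_{\theta,m} = \Mtilde_{\theta,m}[0]$ we have $\psihat \circ \Zcal_{\theta,m} = \psi_{cut}[5\ds m, 5\ds m-1] \circ s$ with $s$ now the Scherk coordinate function of \ref{D:core-wings}; since this vanishes for $s \ge 5\ds m - \tfrac{1}{3}$, it extends by $0$ to all of $\Sch$ and there agrees with $\psihat \circ \Zcal_{\theta,m}$. Now $\psi_{cut}[5\ds m, 5\ds m-1] = \Psi \circ L$ with $L$ affine of slope $-6$ and $\Psi$ fixed, so $\|\psi_{cut}[5\ds m, 5\ds m-1]\|_{C^3(\R)} \le C$ and all its derivatives are supported in $[5\ds m - \tfrac{2}{3}, 5\ds m - \tfrac{1}{3}]$. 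For $m$ large this interval lies in the wings $\Sch^\pm_{hwing} \cup \Sch^\pm_{vwing}$, where by \ref{D:core-wings} $s$ is the flat coordinate on $\R^2_+$ pulled back by the wing immersions and, by \ref{L:phi-wing}, the induced metric $g_\Sch$ is uniformly $C^4$-close to $g_0$; thus $\nabla s$, $\nabla^2 s$, $\nabla^3 s$ are uniformly bounded there. By the chain rule, $\psi_{cut}[5\ds m, 5\ds m-1] \circ s$ then has $C^3$-norm $\le C$ over every $g_\Sch$-geodesic ball of radius $1/100$ --- trivially (norm $\le 1$) over balls disjoint from the transition interval, where it is constant in $\{0,1\}$. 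By \ref{D:weighted-Holder} with $f \equiv 1$ this is precisely $\|\psihat \circ \Zcal_{\theta,m} : C^3(\Sch, g_\Sch)\| \le C$.

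For $\psi'$ the argument is identical after the homothety $\Hcal_{\theta,m}$ of \ref{D:homothety}. Since $\abar = \log \lambda^{-7} = 7\log(m / r_\theta)$ and $r_\theta$ is bounded away from $0$ by \ref{L:cat-ODE}, we have $0 < \abar + 1 < 5\ds m$ for $m$ large (recall \ref{Con:delta}); hence the transition interval $[\abar + \tfrac{1}{3}, \abar + \tfrac{2}{3}]$ of $\psi_{cut}[\abar, \abar+1]$ corresponds, inside $\Sigma_{\theta,m}$ and pulled back by $\Zcal_{\theta,m}$, to a subset of the wings of $\Sch$. In the rescaled metric $\lambda^{-2} g$ the surface $\Sigma_{\theta,m} = \Hcal_{\theta,m}(\Sch_{\theta, 1/m})$ is isometric to $\Sch_{\theta, 1/m}$, whose metric is uniformly close to $g_\Sch$ by \ref{L:desing} (with $\ds + \dtheta$ small); so, exactly as above, $\nabla s$, $\nabla^2 s$, $\nabla^3 s$ are uniformly bounded on that region with respect to $\lambda^{-2} g$, while $\psi'$ is locally constant elsewhere. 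Combined with $\|\psi_{cut}[\abar, \abar+1]\|_{C^3(\R)} \le C$ this gives $\|\psi' : C^3(M_{\theta,m}, \lambda^{-2} g)\| \le C$.

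There is no genuine difficulty here beyond careful bookkeeping: the point that requires attention is confirming that the two transition intervals, $[5\ds m - \tfrac{2}{3}, 5\ds m - \tfrac{1}{3}]$ and $[\abar + \tfrac{1}{3}, \abar + \tfrac{2}{3}]$, really land inside the wing regions --- so that the (respectively rescaled) geometry is uniformly modelled on a fixed Scherk-type surface --- and being careful that the second $C^3$-norm in (iv) is taken with respect to $\lambda^{-2} g$, since that rescaling is exactly what renders the bound uniform in $m$.
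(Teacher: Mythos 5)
Your proof is correct and follows the same route as the paper's (which simply cites the definitions \ref{E:cutab}, \ref{D:M-regions}, \ref{D:initial}); you have just written out the bookkeeping the paper leaves implicit, including correctly reading the undefined $\ell$ in \ref{D:psi01} as $5\ds m$ and verifying that both transition intervals land in the wing regions where $s$ has uniformly bounded derivatives in the relevant (rescaled) metrics. Nothing further is needed.
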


\begin{proof}
(i) and (ii) follow from \ref{E:cutab} and \ref{D:M-regions}, while (iii) holds since $s$ is a $\group_m$-invariant function on $M_{\theta,m}  $ by \ref{D:initial}. 
(iv) follows from the definitions. 
\end{proof}

\begin{definition}[Global weighted norms]
\label{D:global-norm}
For $\Omega$ a $\group_m$-invariant domain in 
$M_{\theta,m}\subset \Munder_{\theta,m}$ (defined as in \ref{D:initial}),  
$k=0,2$, and 
$u\in C^{k,\beta}_{sym}(\Omega)$,  
we define 
$$
\|u\|_{k,\beta,\gamma;\Omega} :=  
\lambda^{-k+1} \, 
\|u :C^{k,\beta}(\, \Omega \, ,\,  \lambda^{-2} g\, ,\, f_k \,)\, \|, 
$$
where $f_k:=\max(e^{-\gamma s},b_k)$,  
$b_0:=e^{-5 \gamma \ds m}$,  
$b_2:=\lambda^{-6} b_0$, 
$\lambda$ is as in \ref{D:homothety}, 
and the weighted norm is as in 
\ref{D:weighted-Holder}.   
For $(E,E^\partial)\in C^{0,\beta}_{sym}(\Omega) \times C^{1,\beta}_{sym}(\partial M_{\theta,m}  \cap\Omega)$ 
we also define 
$$ 
\|\,(E,E^\partial)\,\|_{0,\beta,\gamma;\Omega}:= 
\max\left(
\|E\|_{0,\beta,\gamma;\Omega}\, , \,  
b_0^{-1} \|E^\partial :C^{1,\beta}(\partial M_{\theta,m}  \cap\Omega \, ,\, \lambda^{-2} g)\|
\right). 
$$
\end{definition}

\begin{lemma}[Norms and operators comparison]
\label{L:norm-comparison}
\label{L:operator-comparison}
With the assumptions and the notation of 
\ref{D:global-norm} and \ref{Con:delta}, 
we have the following: 
\\
(i). 
If $\Omega\subset \Mtilde_{\theta,m}[0]$  
and $\uhat\in C^{k,\beta}_{sym}(\, \PiSch(\Omega) \,)$, 
then we have with $\epsilon=C\,(\ds+\dtheta)$: 
\\
$\phantom{kkk}$ 
(a). $ \|\uhat\circ\PiSch \|_{k,\beta,\gamma;\Omega} 
\, \le \, 
\lambda^{-k+1} \, 
\|\uhat \circ\PiSch :C^{k,\beta}(\, \Omega \, ,\,  \lambda^{-2} g \, ,\, e^{-\gamma s} \,)\, \|\, 
$
\hfill 
\\
$\phantom{kkk}$ 
\hfill  
$\,\, \sim_{(1+\epsilon)} \,\,  
\lambda^{-k+1} \, 
\|\uhat :C^{k,\beta}(\, \PiSch(\Omega) \, ,\,  g_\Sch \, ,\, e^{-\gamma s} \,)\, \|\,. 
$ 
\\
$\phantom{kkk}$ 
(b). 
$\| \psihat \{\Lcal (\uhat \circ \PiSch)  - \lambda^{-2} (\Lcal_\Sch \uhat) \circ \PiSch\}\|_{0,\beta,\gamma;\Omega} 
\leq \, \epsilon \, 
\lambda^{-1} \, \|\uhat :C^{2,\beta}(\, \PiSch(\Omega) \, ,\,  g_\Sch \, ,\, e^{-\gamma s} \,)\, \| $ ($k=2$). 
\\
(ii). 
If $\Omega\subset\Mtilde_{\theta,m}[1]$  
and $u'\in C^{k,\beta}_{sym}(\, \PiWtheta(\Omega) \,)$, 
then we have 
\\
$\phantom{kkk}$ 
(a). 
$
\|u'\circ\PiWtheta :C^{k,\beta}(\, \Omega \, ,\,  \lambda^{-2} g \, )\, \| 
\,\, \sim_{(1+C\lambda^7)}\,\,  
\|u' :C^{k,\beta}(\, \PiWtheta(\Omega) \, ,\,  \lambda^{-2} g \, )\, \|  
$.  
\\
$\phantom{kkk}$ 
(b).  
$ \| \psi' \{\Lcal (u' \circ \PiWtheta) - (\Lcal_{\Wcal_\theta} u') \circ \PiWtheta\}\|_{0,\beta,\gamma;\Omega} \leq 
C \lambda^7 \|u'\circ\PiWtheta \|_{2,\beta,\gamma;\Omega} $ ($k=2$).
\\
$\phantom{kkk}$ 
(c). 
$\Bcal (u' \circ \PiWtheta) = (\Bcal u') \circ \PiWtheta$ on $\Omega\cap\partial M_{\theta,m}  $ ($k=2$). 
\end{lemma}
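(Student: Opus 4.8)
\emph{Proof proposal.} Each of the displayed assertions says, in substance, that the relevant projection is a near-isometry in the appropriate rescaled metric, that conjugating the model Jacobi operator by it produces the Jacobi operator of $M_{\theta,m}$ up to a controlled error, and --- for (ii)(c) --- that near $\partial M_{\theta,m}$ the projection is literally the identity. The plan is accordingly: first record the metric and second-fundamental-form comparisons for each projection; then deduce the corresponding operator comparisons; then substitute these into the definitions of the weighted global norms \ref{D:global-norm}, carefully tracking powers of $\lambda$ and using the multiplicative, scaling and monotonicity properties \ref{E:norm-multi}--\ref{E:norm-f12}. The symmetry bookkeeping is automatic throughout: $\Zcal_{\theta,m}$ and $\PiWtheta$ intertwine the symmetry groups (recall \ref{E:equivariant}) and the cutoffs $\psihat,\psi'$ are $\group_m$-invariant by \ref{L:psi01}, so all the maps below preserve the ``sym'' subspaces.

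For part (i): by \ref{E:Zcalhat} and \ref{D:Pi}, $\PiSch$ is the inverse of $\Zcalhat_{\theta,m}$, and $\Zcal_{\theta,m}=\Hcal_{\theta,m}\circ\Zscr_{\theta,1/m}$ with $\Hcal_{\theta,m}$ a homothety of ratio $\lambda=m^{-1}r_\theta$ (recall \ref{D:homothety} and \ref{N:scale}). Since a homothety of ratio $\lambda$ multiplies an induced metric by $\lambda^2$ and the squared second fundamental form by $\lambda^{-2}$, the map $\Zcalhat_{\theta,m}$ pulls $\lambda^{-2}g$ back to $\Zscr_{\theta,1/m}^*g$ and pulls $\lambda^{2}\Lcal_M$ back to the operator $\Delta_{\Zscr_{\theta,1/m}^*g}+(|A|^2\circ\Zscr_{\theta,1/m})$ on $\Sch$; by \ref{L:desing} the former is $\sim_{1+\epsilon}g_\Sch$ in $C^4$ and the latter equals $\Lcal_\Sch$ plus an operator whose coefficients are bounded by $\epsilon:=C(\ds+\dtheta)$ in $C^3$. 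Moreover $s$ is $\PiSch$-invariant by \ref{D:initial}, so $e^{-\gamma s}$ is preserved under $\PiSch$. Then the first inequality in (i)(a) is just \ref{E:norm-f12} (with $f_k\ge e^{-\gamma s}$) inside the definition $\|\cdot\|_{k,\beta,\gamma;\Omega}=\lambda^{-k+1}\|\cdot:C^{k,\beta}(\cdot,\lambda^{-2}g,f_k)\|$, and the $\sim_{1+\epsilon}$ is the metric comparison just recorded, the common factor $\lambda^{-k+1}$ being carried through. For (i)(b) I would write $\Lcal(\widehat u\circ\PiSch)-\lambda^{-2}(\Lcal_\Sch\widehat u)\circ\PiSch$ as $\lambda^{-2}$ times the $\PiSch$-transplant of the above error operator applied to $\widehat u$, bound its $C^{0,\beta}(\,\cdot\,,g_\Sch,e^{-\gamma s})$ norm by $C\epsilon$ times the $C^{2,\beta}(\,\cdot\,,g_\Sch,e^{-\gamma s})$ norm of $\widehat u$ (using that $\psihat$ is supported in $\Mtilde_{\theta,m}[0]$, where $\PiSch$ is defined, by \ref{L:psi01}), transfer to the metric $\lambda^{-2}g$ and the weight $f_0\ge e^{-\gamma s}$, and multiply by the prefactor $\lambda$ of $\|\cdot\|_{0,\beta,\gamma;\Omega}$; the factors $\lambda^{-2}$ and $\lambda$ combine to give the stated $\epsilon\lambda^{-1}$ after renaming $C\epsilon$ as $\epsilon$.

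For part (ii): on $\Mtilde_{\theta,m}[1]=M_{\theta,m}\cap\{s\ge\abar\}$ with $\abar=\log\lambda^{-7}$ (recall \ref{D:M-regions}), the surface $M_{\theta,m}$ is --- by \ref{D:desing} --- the graph over the corresponding piece of $\Wcal_\theta$ (this piece being $\K^\pm_\theta$, $\A_\theta$ or $\D_\theta$, by \ref{L:vertical-asymp}.ii, \ref{L:horizontal-asymp}.ii and the homothety \ref{D:homothety}) of the function obtained from $\lambda\,\psi_{trun}\varphi_{wing}$ via the relevant parametrization, and $\PiWtheta$ is the inverse of this graph map. In the scale $\lambda^{-2}g$ this graph function is just $\psi_{trun}\varphi_{wing}$, whose $C^5$ norm over $\{s\ge\abar\}$ is at most $Ce^{-a}e^{-\abar}=Ce^{-a}\lambda^7$ by \ref{L:phi-wing}, \ref{E:norm-multi} and $e^{-\abar}=\lambda^7$; hence $\PiWtheta$ is a $(1+C\lambda^7)$-quasi-isometry in that scale and conjugates $\Lcal_{\Wcal_\theta}$ to $\Lcal_M$ modulo an operator with coefficients $O(\lambda^7)$. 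This yields (ii)(a) at once, and (ii)(b) after exactly the bookkeeping of (i)(b), now with $\psi'$ (supported in $\Mtilde_{\theta,m}[1]$ by \ref{L:psi01}) in place of $\psihat$ and $C\lambda^7$ in place of $\epsilon$; the relation $b_2=\lambda^{-6}b_0$ built into the weights $f_k$ of \ref{D:global-norm} is exactly what absorbs the derivative losses here. Finally (ii)(c) is immediate: for $s\ge 4\ds m$ one has $\psi_{trun}\equiv 0$ (recall \ref{N:truncation}), so on a neighborhood of $\partial M_{\theta,m}$ (where $s$ is large) the surface $M_{\theta,m}$ coincides with a piece of $\Wcaltilde_\theta$ and $\PiWtheta$ is the identity inclusion there --- as already noted before \ref{D:Pi} --- whence $\Bcal$ commutes with composition by $\PiWtheta$.

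I expect the only real work to be this last power-of-$\lambda$ accounting: the contributions are the $\lambda^{-2}$ rescaling of the Jacobi operator, the $\lambda^{\pm1}$ prefactors and the weights $f_k=\max(e^{-\gamma s},b_k)$ with $b_2=\lambda^{-6}b_0$ in \ref{D:global-norm}, and $e^{-\abar}=\lambda^7$; one must check these conspire to give precisely $\epsilon\lambda^{-1}$ in (i)(b) and $C\lambda^7$ in (ii)(b), and also that the transition regions cut out by $\psihat$, $\psi'$ and $\psi_{trun}$ do not disturb uniformity, which they do not because those cutoffs have uniformly bounded derivatives in the relevant rescaled metrics by \ref{L:psi01} and \ref{N:truncation}. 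The geometric input --- \ref{L:desing} and \ref{L:phi-wing} --- is robust and requires no new ideas.
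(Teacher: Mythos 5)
Your proposal is correct in outline and takes the same route as the paper's (very terse) proof: part (i) from the homothety bookkeeping together with \ref{L:desing}, \ref{L:psi01}.iv and \ref{E:norm-f12}; part (ii)(a) from the fact that $\Mtilde_{\theta,m}[1]$ is the graph of $\lambda\,\psi_{trun}\varphi_{wing}$ over a subset of $\Wcal_\theta\setminus\Circle_0$, combined with $e^{-\abar}=\lambda^7$ and \ref{L:phi-wing}; part (ii)(c) from $\PiWtheta$ being the identity near $\partial M_{\theta,m}$. The power counting in (i)(a), (i)(b) and (ii)(a) is right.

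The one step that does not go through as you describe it is the weight bookkeeping in (ii)(b). It is \emph{not} ``exactly the bookkeeping of (i)(b)'': in (i)(b) the right-hand side carries the weight $e^{-\gamma s}\le f_0$, so \ref{E:norm-f12} is applied in the favorable direction, whereas in (ii)(b) the right-hand side is $\|u'\circ\PiWtheta\|_{2,\beta,\gamma;\Omega}$, whose weight is $f_2=\max(e^{-\gamma s},b_2)\ge f_0$, and the ratio $f_2/f_0$ can be as large as $b_2/b_0=\lambda^{-6}$. Estimating crudely over all of $\Omega$ therefore yields only $C\lambda^{7}\cdot\lambda^{-6}=C\lambda$, not $C\lambda^7$; and the relation $b_2=\lambda^{-6}b_0$ does not ``absorb the derivative losses'' here --- its role is played later, in the passage from \ref{E:u'-est-} to \ref{E:u'-est} inside the proof of \ref{P:linear-global}. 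What rescues the stated power is a support observation you do not make: the error $\psi'\{\Lcal(u'\circ\PiWtheta)-(\Lcal_{\Wcal_\theta}u')\circ\PiWtheta\}$ vanishes identically wherever $\psi_{trun}\varphi_{wing}$ vanishes, hence is supported in $\{s\le 4\ds m\}$; there $e^{-\gamma s}\ge e^{-4\gamma\ds m}\ge b_2=\lambda^{-6}e^{-5\gamma\ds m}$ once $\gamma\ds m\ge 6\log\lambda^{-1}$, which holds for $m$ large under \ref{Con:delta}, so $f_0=f_2=e^{-\gamma s}$ on the support of the error and no weight loss occurs. With that observation inserted, your computation gives exactly $C\lambda^7$ and the remainder of the proposal stands as written.
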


\begin{proof}
(i) follows from \ref{D:global-norm}, \ref{E:norm-f12}, \ref{L:desing} and \ref{L:psi01}.iv. 
(ii) (a) follows from the fact that 
that $\Mtilde_{\theta,m}[1]$ is the graph of the function $\lambda \psi_{trun} \varphi_{wing}$ 
over a subset of $\Wcal_\theta\setminus\Circle_\theta$, 
and by using the definition of $\abar$ (so that $e^{-\abar}=\lambda^7$) and \ref{L:phi-wing} to estimate. 
(ii) (b) follows from the same observation together with \ref{L:psi01}.iv. Note that there is a scaling factor of $\lambda^2$ which 
is offset by the difference of the powers of $\lambda$ in $\| \cdot \|_{k,\beta,\gamma;\Omega}$ for $k=0,2$ in \ref{D:global-norm}. 
(ii) (c) follows from the observation that $M_{\theta,m}$ agrees with $\Wcal_\theta$ in a neighborhood of $\partial M_{\theta,m}$ 
(so the Jacobi operators $\Bcal$ for $M_{\theta,m}$ and $\Wcal_\theta$ coincide).
\end{proof}

We consider now the linearized equation \ref{E:LE} with $S=M_{\theta,m}$. 
We define the ``extended substitute kernel'' on $M_{\theta,m}  $ to be the span of $w \circ \PiSch$. 
Given $(E,E^\partial) \in C^{0,\beta}_{sym}(M_{\theta,m}  ) \times C^{1,\beta}_{sym}(\partial M_{\theta,m}  )$, 
we construct an approximate solution $u_1$ modulo $w \circ \PiSch$,  
by combining semi-local approximate solutions as follows:  
Note that by \ref{L:sym}.iii and \ref{L:psi01}.i and iii, 
$\psihat E \in C^{0,\beta}_{sym}(M_{\theta,m}  )$ is supported inside $\Mtilde_{\theta,m}[0]$. 
Recall from \ref{D:Pi} that $\PiSch$ is a diffeomorphism from $\Mtilde_{\theta,m}[0]$ onto its image in $\Schhat_m$.  
We define uniquely $\Ehat \in C^{2,\beta}_{sym}(\Schhat_m,g_\Sch,e^{-\gamma s})$ supported in $\Schhat_m \cap \{s \leq 5 \ds m\}$ by 
\beq
\label{E:Ehat}
\Ehat \circ \PiSch = \lambda^2 \psihat E \qquad \text{ on } \Mtilde_{\theta,m}[0].
\eeq 
By \ref{L:linear-Scherk} then, there exist unique $\uhat \in C^{2,\beta}_{sym}(\Schhat_m,g_\Sch,e^{-\gamma s})$ and $\mu_1 \in \R$ such that
\begin{equation}
\label{E:uhat}
\Lcal_\Sch \uhat  = \Ehat + \mu_1 w \qquad \text{ on } \Schhat_m.
\end{equation}
Note that $w$ descends to a function on $\Schhat_m$ by \ref{L:sym}.iv  
and that $\partial M_{\theta,m}  =M_{\theta,m}   \cap \Sph^2=\Wcal_\theta\cap\Sph^2$ by the definitions. 
We define uniquely $E' \in C^{0,\beta}_{sym}(\Wcal_\theta)$ 
supported on $\PiWtheta(\Mtilde_{\theta,m}[1])$, 
and $E'^\partial \in C^{1,\beta}(\partial M_{\theta,m}  )$,  
by requesting 
\begin{equation}
\label{E:E'}
E' \circ \PiWtheta = (1-\psihat^2) E- [\Lcal,\psihat] (\uhat \circ \PiSch) \text{ on } \Mtilde_{\theta,m}[1], \qquad E'^\partial \circ \PiWtheta 
=E' \text{ along } \partial M_{\theta,m} .
\end{equation}
Note that by \ref{L:psi01}.i, 
$(1-\psihat^2)E$ is supported on $M_{\theta,m}   \setminus M_{\theta,m}[0] \subset \Mtilde_{\theta,m}[1]$ 
and 
$[\Lcal,\psihat] (\uhat \circ \PiSch)$ is supported on 
$\Mtilde_{\theta,m}[0] \setminus M_{\theta,m}[0] \subset \Mtilde_{\theta,m}[0] \cap \Mtilde_{\theta,m}[1]$, 
and therefore $E'$ is in fact supported on $\PiW(\Mtilde_{\theta,m}[1] \setminus M_{\theta,m}[0])$. 
Finally by appealing to \ref{L:linear-ends} we define 
\begin{equation}
\label{E:u1}
u' :=\Rcal_{\Wcal_\theta}(E',E'^\partial) \qquad \text{ and } \qquad 
u_1:=\psihat (\uhat \circ \PiSch) + \psi' (u' \circ \PiWtheta).  
\end{equation}
Note that by \ref{L:psi01}.i-ii $\psihat (\uhat \circ \PiSch)$ 
and $\psi' (u' \circ \PiW)$ are supported in 
$\Mtilde_{\theta,m}[0]$ and $\Mtilde_{\theta,m}[1]$ respectively.  

\begin{definition}
\label{D:Rappr}
We define a linear map 
$\Rcal_{M,appr}:C^{0,\beta}_{sym}(M_{\theta,m}  ) \times C^{1,\beta}_{sym}(\partial M_{\theta,m}  ) \to C^{2,\beta}_{sym}(M_{\theta,m}  ) \times \R \times C^{0,\beta}_{sym}(M_{\theta,m}  ) \times C^{1,\beta}_{sym}(\partial M_{\theta,m}  )$   
by taking $\Rcal_{M,appr}(E,E^\partial)=(u_1,\mu_1,E_1,E_1^\partial)$, 
where $\mu_1$ was defined in \ref{E:uhat}, $u_1$ in \ref{E:u1}, and 
\begin{equation}
\label{E:error-appr}
E_1:= \Lcal u_1 - E - \mu_1 \lambda^{-2} (w \circ \PiSch), \qquad E_1^\partial:= \Bcal u_1 - E^\partial ,
\end{equation}
where $\Lcal$ and $\Bcal$ are the Jacobi operators for the initial surface $M_{\theta,m}  $ as in \ref{D:linear-operator}. 
\end{definition}

\begin{prop}[Linear estimates on $M_{\theta,m}$]
\label{P:linear-global}
Assuming \ref{Con:delta} a linear map 
$\Rcal_M:C^{0,\beta}_{sym}(M_{\theta,m}  ) \times C^{1,\beta}_{sym}(\partial M_{\theta,m}  ) \to C^{2,\beta}_{sym}(M_{\theta,m}  ) \times \R$ 
can be defined by
\[ \Rcal_M (E,E^\partial):=(u,\mu):=\sum_{n=1}^\infty (u_n,\lambda^{-1} \mu_n) \in C^{2,\beta}_{sym}(M_{\theta,m}  ) \times \R \]
for $(E,E^\partial) \in C^{0,\beta}_{sym}(M_{\theta,m}  ) \times C^{1,\beta}_{sym}(\partial M_{\theta,m}  )$, 
where the sequence $\{(u_n,\mu_n,E_n,E_n^\partial)\}_{n \in \N}$ is defined inductively for $n \in \N$ by
\[ (u_n,\mu_n,E_n,E^\partial_n):=-\Rcal_{M,appr}(E_{n-1},E_{n-1}^\partial), \qquad (E_0,E_0^\partial)=-(E,E^\partial).\]
Moreover the following hold.
\\
(i). $\Lcal u =E+\mu \lambda^{-1} w \circ \PiSch$ on $M_{\theta,m}  $, $\Bcal u = E^\partial$ along $\partial M_{\theta,m}  $.
\\
(ii). $\|u\|_{2,\beta,\gamma;M} +|\mu| \leq C \|\,(E,E^\partial)\, \|_{0,\beta,\gamma;M}$.
\\
(iii). $\Rcal_M$ depends continuously on the parameter $\theta$.
\end{prop}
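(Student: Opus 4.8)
The plan is to reduce the whole statement to a single contraction estimate for the approximate solution operator $\Rcal_{M,appr}$ of \ref{D:Rappr} and then sum a geometric series. Writing $\Rcal_{M,appr}(E,E^\partial)=(u_1,\mu_1,E_1,E_1^\partial)$, I would first establish that for a universal constant $C>0$
\begin{equation}
\label{E:iter-bound}
\|u_1\|_{2,\beta,\gamma;M}+\lambda^{-1}|\mu_1| \,\le\, C\,\|(E,E^\partial)\|_{0,\beta,\gamma;M}\quad\text{and}\quad \|(E_1,E_1^\partial)\|_{0,\beta,\gamma;M}\,\le\,\tfrac12\,\|(E,E^\partial)\|_{0,\beta,\gamma;M}.
\end{equation}
Granting \ref{E:iter-bound}, the inductive definition $(u_n,\mu_n,E_n,E_n^\partial)=-\Rcal_{M,appr}(E_{n-1},E_{n-1}^\partial)$ with $(E_0,E_0^\partial)=-(E,E^\partial)$ gives $\|(E_n,E_n^\partial)\|_{0,\beta,\gamma;M}\le 2^{-n}\|(E,E^\partial)\|_{0,\beta,\gamma;M}$, whence $\sum_{n\ge1}(\|u_n\|_{2,\beta,\gamma;M}+\lambda^{-1}|\mu_n|)\le 2C\|(E,E^\partial)\|_{0,\beta,\gamma;M}$, so the series defining $(u,\mu)$ converges in $C^{2,\beta}_{sym}(M_{\theta,m})\times\R$ and (ii) holds. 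For (i) I would use the identities $\Lcal u_n=E_n-E_{n-1}+\mu_n\lambda^{-2}(w\circ\PiSch)$ and $\Bcal u_n=E_n^\partial-E_{n-1}^\partial$ that follow from \ref{E:error-appr}; summing over $n$, the right-hand sides telescope and $(E_N,E_N^\partial)\to0$, leaving $\Lcal u=E+\mu\lambda^{-1}(w\circ\PiSch)$ and $\Bcal u=E^\partial$.

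For the norm bound in \ref{E:iter-bound} I would chase the construction \ref{E:Ehat}--\ref{E:u1} through the two semi-local solution operators. Since $\Ehat\circ\PiSch=\lambda^2\psihat E$ (recall \ref{E:Ehat}) is supported on $\Mtilde_{\theta,m}[0]$, where the weight $f_0$ of \ref{D:global-norm} coincides with $e^{-\gamma s}$, \ref{L:norm-comparison}.i.a (with $k=0$) gives $\|\Ehat:C^{0,\beta}(\Schhat_m,g_\Sch,e^{-\gamma s})\|\le C\lambda\|(E,E^\partial)\|_{0,\beta,\gamma;M}$; then \ref{L:linear-Scherk} controls $\uhat$ and $\mu_1$ by the same quantity, and \ref{L:norm-comparison}.i.a (with $k=2$) together with \ref{L:psi01}.iv transplants this to a bound on $\|\psihat(\uhat\circ\PiSch)\|_{2,\beta,\gamma;M}$ and on $\lambda^{-1}|\mu_1|$. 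For the $\Wcal_\theta$-piece I would use that $E'$ of \ref{E:E'} is supported where $s\ge\abar$ with $e^{-\abar}=\lambda^7$ (recall \ref{D:M-regions}), together with the decay of $\uhat$, \ref{L:phi-wing}, \ref{L:psi01}.iv and the scaling/monotonicity rules \ref{E:norm-multi}--\ref{E:norm-f12}, to obtain $\|E':C^{0,\beta}(\Wcal_\theta)\|+\|E'^\partial:C^{1,\beta}(\Wcal_\theta\cap\Sph^2)\|\le C\lambda^{-5}b_0\|(E,E^\partial)\|_{0,\beta,\gamma;M}$; feeding this into \ref{L:linear-ends} and transplanting by \ref{L:norm-comparison}.ii.a (which costs only a factor $1+C\lambda^7$) bounds $\|\psi'(u'\circ\PiWtheta)\|_{2,\beta,\gamma;M}$, and adding the two pieces gives the first inequality in \ref{E:iter-bound}.

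The main obstacle is the error bound. First I would substitute $u_1=\psihat(\uhat\circ\PiSch)+\psi'(u'\circ\PiWtheta)$ into \ref{E:error-appr} and simplify using $\Lcal_\Sch\uhat=\Ehat+\mu_1 w$ with $w$ supported in $\Sch_{core}$ (recall \ref{E:uhat} and \ref{L:sym}.iv), $\Ehat\circ\PiSch=\lambda^2\psihat E$, $\Lcal_{\Wcal_\theta}u'=E'$, and the definitions \ref{E:E'} of $E'$ and $E'^\partial$; after this bookkeeping I expect the terms proportional to $E$ and to the commutator $[\Lcal,\psihat](\uhat\circ\PiSch)$ to cancel, leaving
\[
E_1=\psihat\bigl\{\Lcal(\uhat\circ\PiSch)-\lambda^{-2}(\Lcal_\Sch\uhat)\circ\PiSch\bigr\}+\psi'\bigl\{\Lcal(u'\circ\PiWtheta)-(\Lcal_{\Wcal_\theta}u')\circ\PiWtheta\bigr\}+[\Lcal,\psi'](u'\circ\PiWtheta).
\]
I would then bound the three terms by, respectively, \ref{L:norm-comparison}.i.b (which produces a factor $C(\ds+\dtheta)$), \ref{L:norm-comparison}.ii.b (a factor $C\lambda^7$), and a direct estimate of the commutator on the transition annulus $\{\abar\le s\le\abar+1\}$ via \ref{L:psi01}.iv, \ref{L:phi-wing} and $e^{-\abar}=\lambda^7$ (again $C\lambda^7$); the boundary error $E_1^\partial$ should vanish, since near $\partial M_{\theta,m}$ one has $\psihat\equiv0$, $\psi'\equiv1$, $\PiWtheta$ is the identity and $M_{\theta,m}$ coincides with $\Wcal_\theta$ (recall \ref{D:Pi} and the proof of \ref{L:norm-comparison}.ii.c), so $\Bcal u_1=\Bcal u'=E^\partial$ there. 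This yields $\|(E_1,E_1^\partial)\|_{0,\beta,\gamma;M}\le C(\ds+\dtheta+\lambda^7)\|(E,E^\partial)\|_{0,\beta,\gamma;M}$, and by the hierarchy in \ref{Con:delta} ($\dtheta,\ds$ small in absolute terms and $\lambda=m^{-1}r_\theta$ small for $m$ large) this is $\le\tfrac12\|(E,E^\partial)\|_{0,\beta,\gamma;M}$. The genuinely delicate point is not \ref{L:norm-comparison} itself but the careful tracking of the powers of $\lambda$ and of the floors $b_0$, $b_2=\lambda^{-6}b_0$ in \ref{D:global-norm}, since these are exactly what is needed to absorb the scaling distortions when passing between the models $\Sch$, $\Wcal_\theta$ and the initial surface $M_{\theta,m}$.

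Finally, (iii) would follow once the above is in place: $\PiSch$, $\PiWtheta$, the cutoffs, the operators $\Lcal$, $\Bcal$ on $M_{\theta,m}$ and $\Lcal_{\Wcal_\theta}$, and the solution operators of \ref{L:linear-Scherk} and \ref{L:linear-ends} all depend continuously on $\theta$, hence so do $\Rcal_{M,appr}$ and each $(u_n,\mu_n,E_n,E_n^\partial)$; since the convergence in \ref{E:iter-bound} is geometric uniformly for $|\theta|<\dtheta$, the limit $\Rcal_M$ depends continuously on $\theta$.
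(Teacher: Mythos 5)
Your proposal is correct and follows essentially the same route as the paper: the same decomposition of the error $E_1$ into the commutator term and the two operator-comparison terms (with the same cancellation of the leftover terms), the same vanishing of $E_1^\partial$ near $\partial M_{\theta,m}$, and the same geometric-series iteration. The only caveat is that some of your intermediate powers of $\lambda$ differ from the paper's (e.g.\ the commutator term $[\Lcal,\psi'](u'\circ\PiWtheta)$ is bounded there by $C\lambda^{-1-7/\gamma}b_0$ rather than $C\lambda^{7}$, since it lives where the weight $f_0$ has bottomed out at $b_0$), but you correctly identify this bookkeeping as the delicate point and the conclusion is unaffected.
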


\begin{proof}
By \ref{L:psi01}.i and iv, \ref{E:norm-multi}, 
\ref{E:gS-est}, 
and \ref{D:global-norm}, we have $\|\psihat E\|_{0,\beta,\gamma;M} \leq C \|E\|_{0,\beta,\gamma;M}$. 
By \ref{L:linear-Scherk} and \ref{E:gS-est},  
we have the following estimate for $\uhat$ and $\mu_1$:
\begin{equation}
\label{E:uhat-est}
\|\uhat:C^{2,\beta}(\Schhat_m,g_\Sch,e^{-\gamma s})\| + | \mu_1| 
\leq \|\Ehat:C^{0,\beta}(\Schhat_m,g_\Sch,e^{-\gamma s})\| 
\leq C \lambda \|E\|_{0,\beta,\gamma;M}.
\end{equation}
By \ref{E:E'}, \ref{L:psi01}.iv, \ref{L:norm-comparison}.i, and \ref{E:uhat-est}, 
we have that $\|E' \circ \PiWtheta\|_{0,\beta,\gamma;M} \leq C\|E\|_{0,\beta,\gamma;M}$. 
By \ref{L:norm-comparison}.ii, \ref{E:norm-scale}, \ref{E:norm-f12}, 
and since $f_0 \leq Cb_0$ on the support of $E'\circ \PiWtheta$ 
(which is contained in $\Mtilde_{\theta,m}[1] \setminus M_{\theta,m}[0] $), we have
\[ 
\|E':C^{0,\beta}(\Wcal_\theta, g \,)\, \| \leq C\lambda^{-\beta} \|E':C^{0,\beta}(\Wcal_\theta, \lambda^{-2}g \,)\, \| 
\leq C\lambda^{-1-\beta} b_0 \|E\|_{0,\beta,\gamma;M}.   \]
Estimating similarly $E^\partial$ and applying \ref{L:linear-ends}, we obtain the estimate 
\begin{equation}
\label{E:u'-est-} 
\|u':C^{2,\beta}(\Wcal_\theta, \lambda^{-2}g )\| 
\leq \|u':C^{2,\beta}(\Wcal_\theta, g )\| 
\leq C \lambda^{-2} b_0 \|\,(E,E^\partial)\,\|_{0,\beta,\gamma;M}.
\end{equation}
By \ref{L:norm-comparison}.ii.a, \ref{E:norm-f12}, and since $f_2 \geq \lambda^{-6} b_0$ (recall \ref{E:norm-f12}) we conclude that 
\begin{equation}
\label{E:u'-est} 
\|\, u'\circ\PiWtheta\,\|_{2,\beta,\gamma;M}
\leq C \lambda^{4} \|\,(E,E^\partial)\,\|_{0,\beta,\gamma;M}.
\end{equation}
Combining with 
\ref{E:u1}, \ref{L:psi01}.iv, \ref{L:norm-comparison}.i.a, and \ref{E:uhat-est} we have the estimate
\beq 
\label{E:u1-est}
\|u_1\|_{2,\beta,\gamma;M} \leq C \, \|\,(E,E^\partial)\,\|_{0,\beta,\gamma;M}.
\eeq
Using now \ref{E:u1} and \ref{E:error-appr} we obtain
\begin{eqnarray*}
E_1 &=&  \Lcal \{ \psihat (\uhat \circ \PiSch) \} + \Lcal \{ \psi' (u' \circ \PiWtheta) \} -E -\lambda^{-2} \mu_1 (w \circ \PiSch) \\
&=&  [\Lcal,\psihat](\uhat \circ \PiSch) +  \psihat \Lcal(\uhat \circ \PiSch) + [\Lcal,\psi'](u' \circ \PiWtheta) + \psi' \Lcal(u' \circ \PiWtheta) \\
&&-E -\lambda^{-2} \mu_1 (w \circ \PiSch) \\
&=& E_{1,I} +E_{1,II}+ E_{1,III} \\
&& +\lambda^{-2} \psihat (\Lcal_\Sch \uhat) \circ \PiSch +\psi' (\Lcal_{\Wcal_\theta} u') \circ \PiWtheta -E-\lambda^{-2} \mu_1 (w \circ \PiSch) + [\Lcal,\psihat](\uhat \circ \PiSch)
\end{eqnarray*}
where $E_{1,I}, E_{1,II}, E_{1,III} \in C^{0,\beta}_{sym}(M_{\theta,m}  )$ are supported respectively on $\Mtilde_{\theta,m} [1] \setminus M_{\theta,m}[1]$, $\Mtilde_{\theta,m}[0]$ and $\Mtilde_{\theta,m}[1]$ by \ref{L:psi01}.i and ii, and where they are defined by
\begin{equation}
\label{E:errors123}
\begin{array}{rl}
E_{1,I}&:= [\Lcal,\psi'] (u' \circ \PiWtheta), \\
E_{1,II}&:= \psihat \{ \Lcal(\uhat \circ \PiSch) -\lambda^{-2} (\Lcal_\Sch \uhat) \circ \PiSch\},\\
E_{1,III}&:= \psi' \{ \Lcal(u' \circ \PiWtheta) - (\Lcal_{\Wcal_\theta} u') \circ \PiWtheta \}.
\end{array}
\end{equation}
Using \ref{E:uhat}, \ref{E:u1}, \ref{L:sym}.iv, \ref{L:psi01} and that $E' \circ \PiWtheta$ is supported on $\Mtilde_{\theta,m}[1] \setminus M_{\theta,m}[0]$, 
the leftover terms on the right hand side above all cancel and we have the decomposition
\begin{equation}
\label{E:E1-decomp}
E_1=E_{1,I}+E_{1,II}+ E_{1,III}, 
\end{equation}
where the terms on the right hand side are defined in \ref{E:errors123}.

By \ref{E:errors123}, \ref{L:psi01}.iv, \ref{E:u'-est-},  
and $(\min f_0)^{-1}=e^{\gamma(\abar+1)} \leq C\lambda^{-7/\gamma}$ on $\Mtilde_{\theta,m}[1] \setminus M_{\theta,m}[1]$ (where $E_{1,I}$ is supported), 
we have 
\[ \|E_{1,I}\|_{0,\beta,\gamma;M} \leq C\lambda^{1-7/\gamma} \|\, u'\, :\, C^{2,\beta}(\Wcal_0 , \lambda^{-2} g \, )\, \| \leq 
C \lambda^{-1-7/\gamma} b_0 \|\,(E,E^\partial)\,\|_{0,\beta,\gamma;M} \,. \] 
By \ref{E:errors123}, \ref{L:operator-comparison}.i, and \ref{E:uhat-est}, we have
\[  \|E_{1,II}\|_{0,\beta,\gamma;M} \leq C \, (\ds+\dtheta)\, \lambda^{-1} \|\uhat:C^{2,\beta}(\Schhat_m,g_\Sch,e^{-\gamma s})\| \leq 
C \, (\ds+\dtheta)\, \|E\|_{0,\beta,\gamma;M}.\]
By \ref{E:errors123}, \ref{L:operator-comparison}.ii and \ref{E:u'-est},
\[ \|E_{1,III}\|_{0,\beta,\gamma;M} \leq \,C \,\lambda^{11} \, \|\,(E,E^\partial)\,\|_{0,\beta,\gamma;M} \,. \] 
Combining these estimates and by the decomposition \ref{E:E1-decomp}, we conclude  
\[ \|E_{1}\|_{0,\beta,\gamma;M} \, \leq \,C \,( \lambda^{-1-7/\gamma} b_0 + \ds + \dtheta +  \lambda^{11} ) \, 
\|\,(E,E^\partial)\,\|_{0,\beta,\gamma;M} \, 
\leq \frac12 \|\,(E,E^\partial)\,\|_{0,\beta,\gamma;M} \,, \] 
where for the last inequality we assumed that $\ds$ and $\dtheta$ 
are small enough in absolute terms and also that $m$ is large enough in accordance with \ref{Con:delta}. 
By \ref{L:norm-comparison}.ii.c, \ref{E:error-appr}, \ref{E:u1}, \ref{E:E'}, we have $E_1^\partial=0$. 
Arguing inductively we conclude that $\forall n\in\N$ we have $E_n^\partial=0$ and 
\[ \|E_{n}\|_{0,\beta,\gamma;M} \, \leq 
2^{-n} \|\,(E,E^\partial)\,\|_{0,\beta,\gamma;M} \,. \] 
The proof is then completed by using the earlier estimates.
\end{proof}


\section{Nonlinear terms and the fixed point argument}
\label{S:main}

In this section, we will give uniform estimates on the nonlinear terms of the mean curvature and the intersection function for the twisted graph of a function $\varphi$ over an initial surface $M=M_{\theta,m}$ (recall \ref{D:twisted-graph}). Then, we combine the results from all previous sections to prove the main theorem \ref{T:finalthm}, which implies \ref{T:mainthm} in the introduction.

\subsection*{The nonlinear terms}

We now prove a global version of the uniform estimates (\ref{P:local-estimate-Theta} and \ref{P:local-estimate-H}) 
for the mean curvature and the intersection function for twisted graphs of a function over our initial surfaces 
when the function is small with respect to the global weighted norms defined in \ref{D:global-norm}. 
Note that if $\varphi \in C^{2,\beta}_{sym}(M)$ with $\varphi$ sufficiently small so that 
(recall \ref{D:twisted-graph}) the twisted graph 
$\widetilde{\Graph}[\epsilon_0,\varphi;M] \subset \R^3$ is well-defined,  
then $\widetilde{\Graph}[\epsilon_0,\varphi;M]$ is $\group_m$-invariant by \ref{L:sym}.i. 
The $\epsilon_0>0$ can be chosen to be a sufficiently small universal constant since all our initial surfaces $M_{\theta,m}$ are free boundary minimal surfaces \emph{near $\partial \mathbb{B}^3$} with uniformly bounded geometry. 

\begin{prop}
\label{P:nonlinear-estimate}
There exists a universal constant $\epsilon_0>0$ such that if $M=M_{\theta,m}$ 
is as in \ref{D:initial} and $\varphi \in C^{2,\beta}(M)$ satisfies $\|\varphi\|_{2,\beta,\gamma;M} \leq \epsilon_0$, 
then $\varphi$ is admissible on $M$ (recall \ref{D:admissible}), 
$\widetilde{\Graph}[\epsilon_0,\varphi;M]$ is well defined and properly embedded.
Moreover, if $H_\varphi$ is the mean curvature of $\widetilde{\Graph}[\epsilon_0,\varphi;M]$ pulled back to $M$ by $\widetilde{\Immer}[\epsilon_0,\varphi;M]$,    
$H$ is the mean curvature of $M$, 
and $\Theta_\varphi$ is the perturbed intersection function as a function on $\partial M$ as in \ref{D:perturbed-Theta} 
for the proper immersion $\widetilde{\Immer}[\epsilon_0,\varphi;M]$, 
then we have (recall \ref{D:linear-operator})
\[ \| \, ( \, H_\varphi - H -\Lcal \varphi \, , \, \Theta_\varphi - \Bcal \varphi  \, ) \, \|_{0,\beta,\gamma;M}  \leq \,  C \, \|\varphi\|_{2,\beta,\gamma;M}^2. \]
\end{prop}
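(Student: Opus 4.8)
The plan is to reduce the global estimate to the two local estimates \ref{P:local-estimate-Theta} and \ref{P:local-estimate-H} by a covering argument adapted to the weighted norms of \ref{D:global-norm}. The surface $M=M_{\theta,m}$ is covered by the regions $\Mtilde_{\theta,m}[0]=\Sigma_{\theta,m}$ and $\Mtilde_{\theta,m}[1]$ from \ref{D:M-regions}, and on each we have a controlled model: near the core and the transition regions $M$ is, up to the homothety $\Hcal_{\theta,m}$ of \ref{D:homothety}, a piece of the desingularizing surface $\Sch_{\theta,\tau}$, whose geometry is uniformly controlled by \ref{L:desing}; near the boundary and out along the wings $M$ is the graph of $\lambda\,\psi_{trun}\varphi_{wing}$ over $\Wcal_\theta\setminus\Circle_\theta$, with $\Wcal_\theta$-geometry controlled by \ref{L:cat-ODE}, \ref{L:K-norms} and \ref{L:phi-wing}. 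In both cases one checks that $M$ with the rescaled metric $\lambda^{-2}g$ has $c_1$-bounded geometry in the sense of \ref{D:C-bdd-geom} for a \emph{uniform} $c_1$, using that $\partial M\subset\Sph^2$ and that $M$ meets $\Sph^2$ orthogonally there, so that the transversality bound \ref{E:C-bounded-b} holds uniformly. This is exactly the point where the choice of $\epsilon_0$ as a small universal constant is made, via \ref{L:C-bounded-eps}: the twisting parameter $\epsilon$ of \ref{D:twisted-normal} can be taken uniform on $M$.

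First I would fix, for each point $p\in M$, a coordinate chart on a ball of fixed radius in the metric $\lambda^{-2}g$ in which $X:\Sigma\to M$ (with $\Sigma=B^n_+$ if $p$ is near $\partial M$, $\Sigma=B^n$ otherwise) has $c_1$-bounded geometry; away from $\partial M$ one has moreover $\tnu=\nu$ by \ref{L:tN} and \ref{R:H-linearized}, so the twisting is invisible there and the linearization is the plain Jacobi operator $\Lcal$ of \ref{D:linear-operator}. Since $\|\varphi\|_{2,\beta,\gamma;M}\le\epsilon_0$ translates, on each such ball, into $\|\varphi:C^{2,\beta}(\Sigma,\lambda^{-2}g)\|\le C\epsilon_0\,f_2(p)\lambda^{k-1}$ which is $<\min(\epsilon_\Theta,\epsilon_H)$ for $\epsilon_0$ small, \ref{P:local-estimate-Theta} and \ref{P:local-estimate-H} apply on each ball and give, pointwise in $p$,
\[
\|H_\varphi-H-\Lcal\varphi:C^{0,\beta}(\Sigma\cap B_p,\lambda^{-2}g)\|\le C\,\|\varphi:C^{2,\beta}(\Sigma\cap B_p,\lambda^{-2}g)\|^2,
\]
and the analogous estimate for $\Theta_\varphi-\Bcal\varphi$ on boundary balls. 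Here I would invoke \ref{R:H-linearized} to drop the extra $\tnu_{||}(H)\varphi$ term in \ref{E:H-varphi-bound}: in the core region $M$ is genuinely a graph over $\Wcal_\theta$ near the boundary (where $\tnu=\nu$), while in the interior transition region, although $H\neq0$, the relevant term is quadratic after one more application of the smallness of $\varphi$ together with the bound $\|\tnu_{||}\|\le C$ from \ref{E:tN-bound}, so it gets absorbed; I would make this precise rather than waving at it. Dividing by $f_2(p)^2$, multiplying by $\lambda^{-1}$, using that $f_0\le f_2$ with the right power of $\lambda$ (the $b_0,b_2$ bookkeeping of \ref{D:global-norm}, $b_2=\lambda^{-6}b_0$) and the multiplicative property \ref{E:norm-multi}, and taking the supremum over $p\in M$ yields the claimed bound in $\|\cdot\|_{2,\beta,\gamma;M}^2$.

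The properness and embeddedness of $\widetilde{\Graph}[\epsilon_0,\varphi;M]$ follows from admissibility (which is the content of \ref{P:local-estimate-H} and \ref{P:local-estimate-Theta} once $\epsilon_0$ is small) together with the embeddedness of $M$ from \ref{P:initial}: a $C^1$-small twisted graph over an embedded compact surface with uniformly bounded geometry is again embedded, and it is proper because it meets $\Sph^2=\partial\B^3$ orthogonally (the boundary condition is built into the twisted exponential map, \ref{L:tN}.ii), so $\partial\widetilde{\Graph}\subset\Sph^2$. The $\group_m$-invariance is immediate from \ref{L:sym}.i since $\varphi$ is $\group_m$-symmetric.

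The main obstacle is the uniformity of the local hypotheses across the whole family: one must verify that the $c_1$ in \ref{D:C-bdd-geom} and the $\epsilon$ in \ref{L:C-bounded-eps} can be chosen independently of $m$ (hence of $\lambda=m^{-1}r_\theta\to0$) and of $\theta$. This is really a statement about the rescaled surfaces $\lambda^{-2}M_{\theta,m}$: near the singular circle they converge, after the homothety, to the fixed Scherk surface $\Sch$ (\ref{L:desing}), so bounded geometry there is uniform; but one must also control the matching with the $\Wcal_\theta$-part and, crucially, the transversality bound \ref{E:C-bounded-b} near $\partial M$ — the surface must not become nearly tangent to $\Sph^2$ anywhere. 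Here the free boundary (orthogonality) condition along $\partial M$, built into the construction of $M_{\theta,m}$ from $\Wcal_\theta$ near the boundary, saves the day: near $\partial M$ the surface literally coincides with $\Wcal_\theta$, which meets $\Sph^2$ orthogonally, giving a uniform lower bound in \ref{E:C-bounded-b}. Assembling these uniform bounds carefully, and checking the exponent bookkeeping between the $k=0$ and $k=2$ weights in \ref{D:global-norm} so that the square of a $\|\cdot\|_{2,\beta,\gamma}$-bounded function lands in $\|\cdot\|_{0,\beta,\gamma}$ with the stated constant, is the technical heart of the argument.
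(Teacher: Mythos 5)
Your proposal is correct and follows essentially the same route as the paper's proof: cover $M$ by geodesic balls of fixed radius in the rescaled metric $\lambda^{-2}g$, verify uniform $c_1$-bounded geometry (using that $M$ coincides with $\Wcal_\theta$ near $\partial\B^3$ for the transversality bound), apply \ref{P:local-estimate-Theta} and \ref{P:local-estimate-H} on each ball, and then carry out the $f_0,f_2,b_0,b_2$ weight bookkeeping of \ref{D:global-norm}. One small correction: the term $\tnu_{||}(H)\varphi$ is not ``absorbed as quadratic'' in the interior --- it vanishes identically there because $\tnu=\nu$ (so $\tnu_{||}=0$) away from $V_\epsilon(\partial M)$, while near $\partial M$ it vanishes because $H\equiv 0$ there; no extra smallness argument is needed or available, since that term is linear in $\varphi$.
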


\begin{proof}
The complete surface $\Sigmatilde_{\theta,m}$ has injectivity radius larger than $1/10$ with respect to the metric $\lambda^{-2} g$. Notice that each initial surface $M_{\theta,m}$ is a free boundary minimal surface in a neighborhood of $\partial \B^3$ (recall \ref{R:H-linearized}). Let $B_p$ be the geodesic ball of radius $1/100$ in $(\Sigmatilde_{\theta,m},\lambda^{-2}g)$ where $p \in M$. It is clear that we can define an immersion $X:B^2(2) \to (B_p,\lambda^{-2} g)$ such that it satisfies \ref{E:C-bounded-a} for some universal constant $c_1>0$. Therefore, \ref{P:local-estimate-H} and scaling implies that if $\| \lambda^{-1} \varphi:C^{2,\beta}(B_p,\lambda^{-2}g)\| < \epsilon_H(c_1)$, then 
\[ \| H_\varphi - H -\Lcal \varphi:C^{0,\beta}(B_p,\lambda^{-2}g) \| \leq C \lambda^{-4} \|  \varphi:C^{2,\beta}(B_p,\lambda^{-2}g)\|^2.\]
The estimate for the mean curvature then follows from \ref{D:global-norm} and that $\lambda^{-1} f_0(p)^{-1} f_2(p)^2 \leq \lambda^{-7} b_2 \leq C$. To estimate the intersection function, take $B_p$ to be centered at $p \in \partial M$ and one can define similarly an immersion $X:B^2_+ \to (B_p \cap M, \lambda^{-2} g)$ which has an extension $X:B^2(2) \to (B_p,\lambda^{-2} g)$ with $c_1$-bounded geometry as in \ref{D:C-bdd-geom} for some universal constant $c_1>0$. Thus,  \ref{P:local-estimate-Theta} and scaling implies that if $\| \lambda^{-1} \varphi:C^{2,\beta}(B_p \cap M,\lambda^{-2}g)\| < \epsilon_{\Theta}(c_1)$, then
\[ \| \Theta_\varphi -\Bcal \varphi: C^{1,\beta}(B_p \cap \partial M,\lambda^{-2} g)\| \leq C \lambda^{-2} \| \varphi:C^{2,\beta}(B_p \cap M,\lambda^{-2}g)\|^2.\]
The estimate for the intersection function then follows from \ref{D:global-norm} and that $b_0^{-1} f_2(p)^2 \leq C$. By our construction it is clear that the twisted graph $\widetilde{\Graph}[\epsilon_0,\varphi;M]$ is globally embedded. 
This finishes the proof of the proposition.
\end{proof}

\subsection*{The main theorem}

\begin{theorem}
\label{T:finalthm}
There is an absolute constants $\epsilon_0>0$ and $\Cbar>0$ such that if $m$ is sufficiently large depending on $\Cbar$, 
then there exists $\theta$ with $|\theta| \leq \Cbar m^{-1}$ and 
$\varphi \in C^{2,\beta}_{sym}(M)$ with $\|\varphi\|_{2,\beta,\gamma;M} \leq 2 \Cbar m^{-1}$, 
where $M=M_{\theta,m}$ is as in \ref{D:initial} and 
$\| \cdot \|_{2,\beta,\gamma;M}$ is as in \ref{D:global-norm}, 
such that $\Sigma_{m-1}:=\widetilde{\Graph}[\epsilon_0,\varphi;M]$ (recall \ref{D:twisted-graph})  
is a properly embedded free boundary minimal surface in $\B^3$
satisfying
\ref{T:mainthm}. 
\end{theorem}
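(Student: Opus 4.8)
The plan is to run the standard Kapouleas-style fixed point scheme. I would fix the absolute constants $\beta=\gamma=3/4$, and pick $a$ large, then $\dtheta,\ds$ small in terms of $a$, then $\dtau$ small in terms of all of these, exactly as in \ref{Con:parameters}--\ref{Con:delta}; the constant $\Cbar$ will be chosen below, and $m$ taken large in terms of $\Cbar$. The unknowns we solve for are the pair $(\theta,\varphi)$, with $\theta$ controlling the one-dimensional extended substitute kernel $w\circ\PiSch$ produced in \ref{P:linear-global}, and $\varphi\in C^{2,\beta}_{sym}(M_{\theta,m})$ the function whose twisted graph we want to be a free boundary minimal surface. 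By \ref{L:sym}.ii the mean curvature and boundary angle of such a twisted graph are $\group_m$-symmetric, so the equation stays inside the symmetric function spaces throughout.

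\textbf{Setting up the fixed point map.} For fixed $m$ large and $\theta\in[-\dtheta,\dtheta]$, I would define the map $\varphi\mapsto \mathcal{J}_{\theta,m}(\varphi)$ as follows. Given $\varphi$ with $\|\varphi\|_{2,\beta,\gamma;M}\le 2\Cbar m^{-1}$, use \ref{P:nonlinear-estimate} to write
\begin{equation*}
H_\varphi=H+\Lcal\varphi+Q_{\theta,m}(\varphi),\qquad \Theta_\varphi=\Bcal\varphi+Q^\partial_{\theta,m}(\varphi),
\end{equation*}
with $\|(Q_{\theta,m}(\varphi),Q^\partial_{\theta,m}(\varphi))\|_{0,\beta,\gamma;M}\le C\|\varphi\|_{2,\beta,\gamma;M}^2$. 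We want $H_\varphi=0$ on $M$ and $\Theta_\varphi=0$ along $\partial M$, i.e. $\Lcal\varphi=-H-Q_{\theta,m}(\varphi)$ and $\Bcal\varphi=-Q^\partial_{\theta,m}(\varphi)$. Applying the right inverse $\Rcal_M$ from \ref{P:linear-global} to the data $(E,E^\partial):=(-H-Q_{\theta,m}(\varphi),\,-Q^\partial_{\theta,m}(\varphi))$ produces $(\tilde\varphi,\mu)=\Rcal_M(E,E^\partial)$ solving $\Lcal\tilde\varphi=E+\mu\lambda^{-1}w\circ\PiSch$ and $\Bcal\tilde\varphi=E^\partial$. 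Set $\mathcal{J}_{\theta,m}(\varphi):=\tilde\varphi$. A fixed point $\varphi=\mathcal{J}_{\theta,m}(\varphi)$ then satisfies $H_\varphi=\mu\lambda^{-1}w\circ\PiSch$ and $\Theta_\varphi=0$, so we still have one residual error term proportional to $w\circ\PiSch$ that must be killed by choosing $\theta$.

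\textbf{The estimates and Schauder.} The key quantitative input is the size of the initial error. By \ref{L:desing-H-estimate} the mean curvature of the desingularizing region (transplanted to $M$ via $\PiSch$) satisfies $\|H_{\theta,\tau}-\theta w:C^{0,\beta}(\Sch_{s\le5\ds|\tau|^{-1}},g_\Sch,e^{-\gamma s})\|\le C(|\tau|+|\theta|^2)$; on the remainder $\Munder_{\theta,m}\setminus\Sigma_{\theta,m}$ the surface is exactly $\Wcal_\theta$, hence minimal, so $H=0$ there. Translating through \ref{D:global-norm} and the scaling $\lambda=m^{-1}r_\theta$ we get $\|(H,0)\|_{0,\beta,\gamma;M}\le C(m^{-1}+|\theta|)$, and after subtracting the $\theta w$ part (which is exactly the substitute kernel direction) the leftover is $O(m^{-1}+\theta^2)$. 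Combining with \ref{P:linear-global}.ii and \ref{P:nonlinear-estimate}: on the ball $\{\|\varphi\|_{2,\beta,\gamma;M}\le 2\Cbar m^{-1}\}$ we get $\|\mathcal{J}_{\theta,m}(\varphi)\|_{2,\beta,\gamma;M}\le C_1(m^{-1}+|\theta|+\|\varphi\|_{2,\beta,\gamma;M}^2)\le C_1 m^{-1}+C_1\dtheta+4C_1\Cbar^2 m^{-2}$, and by interpolation (the target space embeds compactly into a slightly weaker norm on the compact-mod-$\group_m$ surface, via Arzel\`a--Ascoli applied on each geodesic unit ball) $\mathcal{J}_{\theta,m}$ is continuous and maps the ball into a compact subset of itself once $\Cbar$ is chosen large in absolute terms and $m$ is large, provided $|\theta|\le\Cbar m^{-1}$. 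Schauder's fixed point theorem gives $\varphi=\varphi_{\theta,m}$ depending continuously on $\theta$ (using \ref{P:linear-global}.iii and smooth dependence of $M_{\theta,m}$ on $\theta$).

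\textbf{Killing the kernel via the intermediate value theorem.} For each $\theta$ we thus obtain $\mu=\mu(\theta,m)\in\R$ depending continuously on $\theta$, with $H_{\varphi_{\theta,m}}=\mu(\theta,m)\lambda^{-1}w\circ\PiSch$ and $\Theta_{\varphi_{\theta,m}}=0$. The main point is that $\mu(\theta,m)$ changes sign as $\theta$ ranges over $[-\Cbar m^{-1},\Cbar m^{-1}]$: indeed $\mu(\theta,m)$ is, to leading order, a positive multiple of $\theta$ (the $\theta w$ term in \ref{D:sub-kernel}--\ref{D:desing-H-estimate} feeds directly into the coefficient, while the nonlinear and off-model errors are $O(m^{-1}+\theta^2)$ and hence lower order on this interval), so for $\Cbar$ large in absolute terms we have $\mu(\Cbar m^{-1},m)>0>\mu(-\Cbar m^{-1},m)$. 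By the intermediate value theorem there is $\theta^*=\theta^*(m)\in(-\Cbar m^{-1},\Cbar m^{-1})$ with $\mu(\theta^*,m)=0$; then $\varphi^*:=\varphi_{\theta^*,m}$ satisfies $H_{\varphi^*}\equiv0$ and $\Theta_{\varphi^*}\equiv0$, so $\Sigma_{m-1}:=\widetilde{\Graph}[\epsilon_0,\varphi^*;M_{\theta^*,m}]$ is a free boundary minimal surface in $\B^3$, embedded and proper by \ref{P:nonlinear-estimate}. Its topological and symmetry properties ($\group_m$-invariant, genus $m-1$, three boundary components, $\partial\Sigma=\Sigma\cap\Sph^2$) follow from \ref{P:initial} since $\widetilde{\Graph}$ is a small graph over $M_{\theta^*,m}$; and as $m\to\infty$ the bound $\|\varphi^*\|_{2,\beta,\gamma;M}\le 2\Cbar m^{-1}\to0$ together with \ref{P:initial} gives Hausdorff convergence to $\K\cup\D$, smooth away from $\D\cap\K$, whence the area limit $\area(\K)+\pi$. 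Setting $g=m-1$ yields all statements of \ref{T:mainthm}.

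\textbf{Main obstacle.} The delicate point is the sign-change argument for $\mu(\theta,m)$: one must verify that the leading-order dependence of the residual kernel coefficient on $\theta$ is genuinely linear with a definite sign and is not swamped by the $O(m^{-1})$ off-model and nonlinear errors on the interval $|\theta|\le\Cbar m^{-1}$. This requires combining \ref{L:sub-kernel} (non-degeneracy of the pairing of $w$ against the approximate kernel $e_x\cdot\nu_\Sch$, which makes $\mu_1$ in \ref{E:uhat} genuinely detect the $\theta w$ component of the error) with the precise form of \ref{D:desing-H-estimate}, and then tracking the $\lambda^{-1}$ normalization in \ref{P:linear-global} carefully; once $\mu_1$ is shown to equal $c\,\theta+O(m^{-1}+\theta^2)$ with $c\ne0$ the rest is routine.
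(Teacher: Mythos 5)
Your overall architecture differs from the paper's in one essential respect: you decouple the problem into a Schauder fixed point in $\varphi$ for each frozen $\theta$, followed by an intermediate value argument in $\theta$ to kill the residual coefficient $\mu(\theta,m)$ of $w\circ\PiSch$. The paper instead runs a single Schauder fixed point jointly in the pair $(\varphi,\theta)$ on the compact convex set $K=\{\|\varphi\|_{2,\beta,\gamma;M_0}\le\Cbar m^{-1},\ |\theta|\le\Cbar m^{-1}\}\subset C^{2,\beta'}_{sym}(M_0)\times\R$, with the map $\Jcal(\varphi,\theta)=(-\phitilde\circ\Fcal_\theta,\ \theta_H+\thetatilde)$; the fixed point equations $\phitilde+\varphi^*\circ\Fcal_\theta^{-1}=0$ and $\theta=\theta_H+\thetatilde$ then annihilate the kernel coefficient automatically. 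This is not a cosmetic difference, and your route has a genuine gap precisely at the point where the two diverge: Schauder's theorem gives \emph{existence} of a fixed point $\varphi_{\theta,m}$ but neither uniqueness nor continuous dependence on the parameter $\theta$. Your sentence ``Schauder's fixed point theorem gives $\varphi=\varphi_{\theta,m}$ depending continuously on $\theta$'' is therefore unjustified, and without it the function $\theta\mapsto\mu(\theta,m)$ need not be continuous (or even well defined as a single-valued function), so the intermediate value theorem cannot be applied. To salvage the two-step scheme you would need a contraction (Banach) fixed point in $\varphi$, which requires Lipschitz — not merely quadratic — estimates on the nonlinear terms $Q_{\theta,m}$, i.e. a strengthening of \ref{P:nonlinear-estimate} that the paper never proves. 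The paper's joint fixed point is designed exactly to avoid this: $\theta$ is an unknown, so no continuity of a selected fixed point in $\theta$ is ever needed, only continuity of $\Jcal$ itself (which follows from \ref{P:linear-global}.iii and the smooth dependence of $M_{\theta,m}$ and $\Fcal_\theta$ on $\theta$).

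There is a second, quantitative problem in your Step ``The estimates and Schauder.'' You feed $E=-H-Q_{\theta,m}(\varphi)$ directly into $\Rcal_M$ and bound the output by $C_1(m^{-1}+|\theta|+\|\varphi\|^2)$. Since $\|H\|_{0,\beta,\gamma;M}$ genuinely contains a contribution of size $|\theta|\sim\Cbar m^{-1}$ from the $\theta\lambda^{-1}w\circ\PiSch$ term (see \ref{E:HH-estimate}), and \ref{P:linear-global}.ii only bounds $u$ by $C$ times the full norm of the data, the self-map inequality reads $C_1\Cbar m^{-1}\le 2\Cbar m^{-1}$, which fails unless the absolute constant $C_1$ happens to be less than $2$; your alternative bound $C_1\dtheta$ is a fixed constant, not $O(m^{-1})$, and is worse. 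The paper avoids this by first solving $(\varphi_H,\theta_H)=\Rcal_M(H-\theta\lambda^{-1}w\circ\PiSch,0)$, so that the data fed to $\Rcal_M$ is only the \emph{residual} $O(m^{-1}+\theta^2)$ of \ref{L:desing-H-estimate}, giving $\|\varphi_H\|\le C(m^{-1}+\Cbar^2m^{-2})$ with $C$ absolute; one then chooses $\Cbar$ larger than this $C$. You gesture at this (``after subtracting the $\theta w$ part'') but your map as defined does not perform the subtraction, and the needed fact that $\Rcal_M$ applied to a pure multiple of $w\circ\PiSch$ returns a small $u$-component is not something you can quote — it would have to be extracted from the linearity of the construction in \ref{D:Rappr}. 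Both gaps are repairable, but the repair essentially reproduces the paper's argument.
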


\begin{proof}
As usual \cite{Kapouleas90a,kapouleas:wente,Kapouleas97,Kapouleas-Yang10,Kapouleas14} 
the proof uses Schauder's fixed point theorem \cite[Theorem 11.1]{Gilbarg-Trudinger01}.  
This theorem asserts that any continuous mapping (not necessarily linear) from a compact convex set in a Banach space into itself must have a fixed point. 
The elemensts of the Banach space in our case are functions defined on the initial surfaces together with the unbalancing parameter $\theta$.  

Let $\Cbar>0$ be a constant to be chosen sufficiently large in absolute terms later. 
Let $m$ be a fixed positive interger which is sufficiently large in terms of $\Cbar$ as in \ref{Con:parameters}. 
We assume $|\theta| \leq \Cbar m^{-1}$ and write $M_\theta=M_{\theta,m}$ throughout the proof.

\textit{Step 1: Identifying the function spaces:} In order to define a continuous function on a fixed Banach space 
(independent of $\theta$), we have to first identify functions defined on different initial surfaces. 
Using the diffeomorphisms $\Fcal_{\Wcal_\theta}$ and $\Zcalhat_{\theta,m}$ defined in \ref{D:FW} and \ref{E:Zcalhat} respectively, 
we can construct a family of smooth $\group_m$-equivariant diffeomorphisms $\Fcal_\theta:M_0 \to M_\theta$ such that if $m$ is sufficiently large in terms of $\Cbar$, 
for any $\varphi \in C^{2,\beta}_{sym}(M_\theta)$,
\beq
\label{E:initial-equiv-norm}
\|\varphi\|_{2,\beta,\gamma;M_\theta} \sim_2 \|\varphi \circ \Fcal_\theta\|_{2,\beta,\gamma;M_0}, 
\eeq
where $\|\varphi\|_{2,\beta,\gamma;M_\theta}$ denotes the weighted global norm defined on $M_\theta$ in \ref{D:global-norm}.

\textit{Step 2: The compact convex set $K$:} Consider the set
\beq
\label{E:set-K}
K:=\{ (\varphi,\theta) \in C^{2,\beta}_{sym}(M_0) \times \R : \|\varphi\|_{2,\beta,\gamma;M_0} \leq \Cbar m^{-1}, |\theta| \leq \Cbar m^{-1} \}.
\eeq
We claim that $K$ is a compact convex subset of the Banach space $C^{2,\beta'}_{sym}(M_0) \times \R$ for any fixed $\beta' \in (0,\beta)$. Convexity is obvious since $\| \cdot \|_{2,\beta,\gamma}$ is a norm. Compactness follows from  Arzela-Ascoli's theorem since $M_0$ is compact.

\textit{Step 3: Defining the map $\Jcal$:} We define a nonlinear map 
\[ \Jcal:K \to C^{2,\beta'}_{sym}(M_0) \times \R \] 
as follows: 
Given $(\varphi,\theta) \in K$, let $\varphi_\theta \in C^{2,\beta}_{sym}(M_\theta)$ be defined by 
\beq
\label{E:varphi-theta}
\varphi_\theta:=\varphi \circ \Fcal_\theta^{-1} - \varphi_H,
\eeq
where $(\varphi_H,\theta_H) := \Rcal_M(H-\theta \lambda^{-1} w \circ \PiSch,0)$ with $\Rcal_M$ be the linear map defined in \ref{P:linear-global} and $H$ is the mean curvature of $M_\theta$. Therefore, we have
\beq
\label{E:varphi-H}
\Lcal \varphi_H = H + (\theta_H - \theta) \lambda^{-1} w \circ \PiSch \quad \text{ on } M_\theta, \qquad
\Bcal \varphi_H = 0 \quad \text{ along } \partial M_\theta.
\eeq
Let us assume first that $\| \varphi_\theta \|_{2,\beta,\gamma;M_\theta} < \epsilon_0$ where $\epsilon_0$ is the constant in \ref{P:nonlinear-estimate}, then the graph $\Graph[\varphi_\theta,M_\theta]$ is well-defined. Let $H_{\varphi_\theta}$ and $\Theta_{\varphi_\theta}$ be the mean curvature and intersection function of $\Graph[\varphi_\theta,M_\theta]$ pulled back to functions on $M_\theta$ and $\partial M_\theta$ respectively as in \ref{P:nonlinear-estimate}. Next, we define
\beq
\label{E:phitilde}
 (\phitilde,\thetatilde):=\Rcal_M(H_{\varphi_\theta}-H-\Lcal \varphi_\theta, \Theta_{\varphi_\theta} - \Bcal  \varphi_\theta),
\eeq
which by \ref{P:linear-global} satisfies
\beq
\label{E:phi-tilde}
\Lcal  \phitilde =H_{\varphi_\theta}-H-\Lcal \varphi_\theta +\thetatilde \lambda^{-1} w \circ \PiSch \quad \text{ on } M_\theta, \qquad
\Bcal \phitilde =\Theta_{\varphi_\theta} - \Bcal \varphi_\theta \quad \text{ along } \partial M_\theta. 
\eeq
Finally, we define $\Jcal(\varphi,\theta):=(-\phitilde \circ \Fcal_\theta,\theta_H + \thetatilde)$.

\textit{Step 4: $\Jcal$ is a well-defined contraction map:} 
We show now that if $\Cbar$ is sufficiently large in absolute terms and $m$ is sufficiently large in terms of $\Cbar$, 
then the map $\Jcal$ in step 2 is well-defined and is a contraction map on $K$, 
that is $\Jcal(K) \subset K$.

The first thing to check is that $\|\varphi_\theta\|_{2,\beta,\gamma;M_\theta} < \epsilon_0$ so that \ref{P:nonlinear-estimate} can be applied. 
Recall from the construction that $M \setminus \Sigma_{\theta,m} \subset \Wcal_\theta$ is minimal so $H -\theta \lambda^{-1} w \circ \PiSch$ is supported on $\Sigma_{\theta,m}=\Mtilde_{\theta,m}[0]$. 
Using \ref{D:global-norm} and \ref{L:norm-comparison}.i.a, we can rewrite the estimate in \ref{L:desing-H-estimate} as
\beq
\label{E:HH-estimate}
 \|H -\theta \lambda^{-1} w \circ \PiSch\|_{0,\beta,\gamma;M_\theta} \leq C(m^{-1}+|\theta|^2).
 \eeq
Since $|\theta| \leq \Cbar m^{-1}$, by \ref{P:linear-global} we have $|\theta_H|+\|\varphi_H\|_{0,\beta,\gamma;M_\theta} \leq C(m^{-1} + \Cbar^2 m^{-2})$.
Therefore, using \ref{E:initial-equiv-norm}, \ref{E:varphi-theta} and that $(\varphi,\theta) \in K$ as in \ref{E:set-K}, we get
\beq
\label{E:varphi:theta}
\|\varphi_\theta\|_{2,\beta,\gamma;M_\theta} \leq C \|\varphi\|_{2,\beta,\gamma;M_0} + \|\varphi_H\|_{2,\beta,\gamma;M_\theta} \leq C \Cbar^2 m^{-1}. 
\eeq
From the estimate above, we see that if $m$ is sufficiently large in terms of $\Cbar$, then we would have $\|\varphi_\theta\|_{2,\beta,\gamma;M_\theta} < \epsilon_0$ in \ref{P:nonlinear-estimate} and hence $\Graph[\epsilon_0,\varphi_\theta ; M_\theta]$ is a well-defined embedding and we have the quadratic estimate
\beq
\label{E:quad-estimate}
\| \, ( \, H_{\varphi_\theta}-H -\Lcal \varphi_\theta \, , \, \Theta_{\varphi_\theta} - \Bcal \varphi_\theta \, ) \, \|_{0,\beta;\partial M_\theta} \leq C \|\varphi_\theta\|_{2,\beta,\gamma;M_\theta}^2.
\eeq
Now, combining \ref{E:varphi-H}, \ref{E:phi-tilde}, and \ref{E:varphi-theta}, we have 
\beq
\label{E:H-final}
 H_{\varphi_\theta} = \Lcal (\phitilde+\varphi \circ \Fcal^{-1}_\theta) + (\theta-\theta_H-\thetatilde) \lambda^{-1} w \circ \PiSch  \qquad \text{ on } M_\theta,
\eeq
\beq
\label{E:Theta-final}
\Theta_{\varphi_\theta}= \Bcal (\phitilde+\varphi \circ \Fcal^{-1}_\theta)  \qquad \text{ along } \partial M_\theta.
\eeq
Recall that $\Jcal(\varphi,\theta):=(-\phitilde \circ \Fcal_\theta,\theta_H + \thetatilde)$. 
By \ref{E:phitilde}, \ref{P:linear-global}, \ref{E:initial-equiv-norm}, \ref{E:varphi:theta}, and \ref{E:quad-estimate}, we have the estimate
\[ |\thetatilde|+ \|\phitilde \circ \Fcal_\theta\|_{2,\beta,\gamma;M_0} \leq C \|  \varphi_\theta\|^2_{2,\beta,\gamma;M_\theta} \leq C \Cbar^4 m^{-2}.  \]
Using this, \ref{P:linear-global} and \ref{E:HH-estimate}, we have $|\theta_H+\thetatilde| \leq Cm^{-1} +C \Cbar^4 m^{-2}$.
Therefore, if we first choose $\Cbar$ sufficiently large in absolute terms and then choose $m$ large enough in terms of $\Cbar$, we can arrange that $\Jcal(K) \subset K$.

\textit{Step 5: The fixed point argument:} From step 4 we have a well-defined contraction map $\Jcal:K \to K$ where $K$ is a compact convex subset of a Banach space by step 2. Continuity of $\Jcal$ follows from the definitions and the continuity of the linear maps $\Rcal_M$ in $\theta$ and the diffeomophisms $\Fcal_\theta$. Therefore, we can apply Schauder's fixed point theorem to obtain a fixed point $(\varphi^*,\theta^*) \in K$ of $\Jcal$. The proof is finished once we show that the graph of $\varphi^*$ over the initial surface $M_{\theta^*}$ is a minimal surface intersecting $\Sph^2$ orthogonally. In other words, we have to show that $H_{\varphi^*} \equiv 0$ and $\Theta_{\varphi^*} \equiv 0$.

Since $(\varphi^*,\theta^*)$ is a fixed point of $\Jcal$, 
which means $\varphi^*= -\phitilde \circ \Fcal_\theta$ and $\theta^*=\theta_H + \thetatilde$. 
Hence, we have $\phitilde+\varphi^* \circ \Fcal^{-1}_\theta=0$ and $\theta-\theta_H-\thetatilde=0$. 
By \ref{E:H-final} and \ref{E:Theta-final} respectively, we get $H_{\varphi^*} \equiv 0$ and $\Theta_{\varphi^*} \equiv 0$ 
and the proof is complete. 
\end{proof}


\appendix

\section{Local exponential map estimates}
\label{A:exp}

\begin{prop}
\label{P:exp-estimate}
Let $g$ be a Riemannian metric on $B^n$ with coordinates $x^1,\cdots,x^n$. Let $g_{ij}:=g(\partial_{x^i},\partial_{x^j})$ be the metric components in this coordinate system, $g^{ij}$ be the inverse and $\Gamma_{ij}^k$ be the Christoffel symbols. Suppose that
\beq
\label{E:A1} \| g_{ij} : C^4 (B^n,g_0)\| \leq c_1,  \qquad \text{and} \qquad c_1^{-1} g_0 \leq g 
\eeq
for some constant $c_1>1$, then there exists a constant $C$ depending on $c_1$ (and $n$) such that
\beq
\label{E:A2}
\|g^{ij}:C^4(B^n,g_0)\| \leq C, \qquad \|\Gamma_{ij}^k :C^3(B^n,g_0)\| \leq C,
\eeq
and that the exponential map $\exp: B^n_{1-2C^{-1}} \times B^n_{C^{-1}} \to B^n$ with respect to $g$ is a well defined $C^3$ map such that for any multi-indices $I$, $J$ with $|I|+|J| \leq 3$, we have the pointwise estimates
\beq
\label{E:A3}
|\partial^{|I|}_{x^I} \partial^{|J|}_{v^J} (\exp(x,v) - x -v )| \leq C \, |v|^{\max(2-|J|,0)},
\eeq
where $| \cdot |$ denotes the norm of a vector with respect to the Euclidean metric $g_0$.
\end{prop}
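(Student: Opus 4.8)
\textbf{Step 1 (the algebraic estimates \ref{E:A2}).} By Cramer's rule each $g^{ij}$ is a cofactor of the matrix $(g_{kl})$ divided by $\det(g_{kl})$. The cofactors are polynomials in the entries $g_{kl}$, so by the multiplicative property \ref{E:norm-multi} their $C^4(B^n,g_0)$ norms are controlled by $c_1$ and $n$; moreover $c_1^{-1}g_0\le g$ in \ref{E:A1} forces $\det(g_{kl})\ge c_1^{-n}$ pointwise, so $1/\det(g_{kl})$ is a smooth function of the $g_{kl}$ with argument confined to a fixed compact set on which the denominator is bounded away from zero, and hence also has $C^4$ norm controlled by $c_1,n$. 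Multiplying (again via \ref{E:norm-multi}) gives the bound on $g^{ij}$. Since $\Gamma^k_{ij}=\tfrac12 g^{kl}\big(\partial_i g_{jl}+\partial_j g_{il}-\partial_l g_{ij}\big)$ is a product of a $C^4$ factor and a $C^3$ factor, \ref{E:norm-multi} yields $\|\Gamma^k_{ij}:C^3(B^n,g_0)\|\le C$.

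\textbf{Step 2 (the geodesic flow and its $C^3$ regularity).} Write the geodesic equation as the first order system $\dot x^k=v^k$, $\dot v^k=-\Gamma^k_{ij}(x)\,v^i v^j$ on $B^n\times\R^n$; by Step 1 its right hand side is $C^3$ with norm $\le C(c_1,n)$. Along a solution $\big|\tfrac{d}{dt}|v|^2\big|\le C|v|^3$ (differentiate and use Step 1), so if $|v(0)|\le C^{-1}$ with $C$ large then $\tfrac12|v(0)|\le|v(t)|\le 2|v(0)|$ for $|t|\le1$ in the maximal interval of existence, whence $|x(t)-x(0)|\le\int_0^{|t|}|v(s)|\,ds\le 2|v(0)|\le 2C^{-1}$. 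Thus for $x(0)\in B^n_{1-2C^{-1}}$ and $|v(0)|\le C^{-1}$ the solution remains in $B^n\times\R^n$ and exists on $[0,1]$, and we set $\exp(x,v):=\pi\big(\Phi_1(x,v)\big)$, where $\Phi_t$ is the time-$t$ flow and $\pi$ the projection to the base. Since the vector field is $C^3$, standard smooth dependence on initial conditions makes $\Phi_1$, and hence $\exp$, a $C^3$ map on $B^n_{1-2C^{-1}}\times B^n_{C^{-1}}$; in particular the derivatives in \ref{E:A3} make sense for $|I|+|J|\le3$.

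\textbf{Step 3 (the pointwise estimates \ref{E:A3}).} Set $F(x,v):=\exp(x,v)-x-v$ and $(x(t),v(t)):=\Phi_t(x,v)$. Integrating the system twice, $x(1)=x+v+\int_0^1\!\int_0^s\dot v(r)\,dr\,ds$ with $|\dot v(r)|=\big|\Gamma(x(r))[v(r),v(r)]\big|\le C|v|^2$, so $|F|\le C|v|^2$, which is the case $J=0$. For the remaining cases one differentiates the system in $(x,v)$: for each pair $(I,J)$ the functions $\partial_x^I\partial_v^J x(t)$ and $\partial_x^I\partial_v^J v(t)$ solve a linear ODE in $t$ whose coefficients are built from $\{\partial^a\Gamma(x(t))\}_{|a|\le3}$ (bounded by Step 1, since $\Gamma\in C^3$) and whose inhomogeneous term is a universal polynomial in those, in $v(t)$, and in the strictly lower order derivatives $\partial_x^{I'}\partial_v^{J'}(x(t),v(t))$. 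The essential structural point is that $(x,v)\mapsto-\Gamma(x)[v,v]$ is homogeneous of degree $2$ in $v$, so each differentiation in a $v$-variable lowers by one the degree of homogeneity in $v$ carried by the driving terms, while $x$-differentiations do not change it. An induction on $|I|+|J|$ together with Gr\"onwall's inequality on $[0,1]$ (using Step 1 for the coefficients and Step 2 for $|v(t)|$) then gives, for $|I|+|J|\le3$,
\[
\big|\partial_x^I\partial_v^J\big(x(t)-x-v\big)\big|\le C\,|v|^{\max(2-|J|,0)}\qquad(t\in[0,1]),
\]
the subtraction of $x+v$ removing precisely the two identity contributions — $\partial_x x(0)$ and the part of $\partial_v x(1)$ produced by $\partial_v v(0)=\mathrm{id}$ — which are the only pieces of the solution not already of the stated order. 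Setting $t=1$ yields \ref{E:A3}.

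\textbf{Main difficulty.} Steps 1 and 2 are routine. The real work lies in Step 3: organizing the variational equations so that the degree of homogeneity in $v$ is manifest, isolating the identity terms cancelled by $-x-v$, and verifying that the Gr\"onwall induction closes with all constants depending only on $c_1$ and $n$.
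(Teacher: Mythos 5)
Your proposal is correct and follows essentially the same route as the paper's proof: bound $g^{ij}$ and $\Gamma^k_{ij}$ algebraically, set up the geodesic ODE system, get the $C^0$ bound by integrating twice (the paper phrases this via Taylor's theorem and conservation of $\|\gamma'\|_g$), and then differentiate the system in $x$ and $v$ and close the derivative bounds with a Gr\"onwall argument, exactly as in the paper's equations \ref{E:A4}--\ref{E:A5}. Your Step 3 merely spells out more explicitly the homogeneity-in-$v$ bookkeeping for the higher-order cases that the paper dismisses as "similar."
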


\begin{proof}
The inverse of a matrix $A$ is given by $A^{-1}=(\det A)^{-1} \text{adj}(A)$ where $\text{adj}(A)$ is the adjoint matrix of $A$. From \ref{E:A1} we get the estimate \ref{E:A2} and that the metrics $g \sim_{c_1} g_0$ are uniformly equivalent (recall \ref{D:sim}). From the definition of exponential map:
\[ \exp(x,v):=\gamma_{x,v}(1), \]
where $\gamma_{x,v}(t):[0,1] \to B^n$ is the unique geodesic (relative to $g$) starting at $x$ with initial velocity $v$, 
that is $\gamma_{x,v}(0)=x$ and $\gamma'_{x,v}(0)=v$. 
In other words, $\gamma_{x,v}$ is the unique solution to the geodesic equation with such initial conditions (here $\gamma=(\gamma^1,\cdots,\gamma^n)$ are the coordinate expression of $\gamma$):
\begin{equation}
\label{E:geodesic}
 \left\{ \begin{array}{cr}
(\gamma^k)''(t)=\Gamma^k_{ij}(\gamma(t)) (\gamma^i)'(t) (\gamma^j)'(t), & k=1,2,\cdots,n\\
\gamma(0)=x, \; \gamma'(0)=v. & 
\end{array} \right. 
\end{equation}
By standard ODE theory and \ref{E:A2}, the exponential map $\exp$ is well defined for $(x,v) \in B^n_{1-2C^{-1}} \times B^n_{C^{-1}}$ for some constant $C$ depending on $c_1$.

It remains to prove \ref{E:A3}. The smoothness of the exponential map is a direct consequence of the smooth dependence on initial conditions $(x,v)$ for the solutions to the ODE system \ref{E:geodesic}. We will show how to get $C^1$-bounds here. The proof for higher derivatives are similar.

Let $| \cdot |$ and $\|\cdot \|$ be the norm of a vector with respect to $g_0$ and $g$ respectively. Since $\gamma$ is a geodesic, $\|\gamma'(t)\| \equiv \|\gamma'(0)\|=\|v\|$. Using \ref{E:A1} and $g_0 \sim_{c_1} g$, we have $| \gamma'(t)| \leq C |v|$. By Taylor's theorem, $g_0 \sim_{c_1} g$, \ref{E:A2} and \ref{E:geodesic}, we have the $C^0$-estimate:
\[ |\exp(x,v)-x-v|=| \gamma(1)-\gamma(0)-\gamma'(0)| \leq \max_{t \in [0,1]} |\frac{1}{2}\gamma''(t)| \leq C \max_{t \in [0,1]} |\gamma'(t)|^2 \leq C |v|^2.\]

For estimates on the derivatives, we differentiate the system \ref{E:geodesic}. For example, differentiating with respect to some $x^a$:
\beq
\label{E:A4}
 \left\{ \begin{array}{c} (\partial_{x^a} \gamma^k)'' =(\gamma^i)'(\gamma^j)'(\nabla \Gamma_{ij}^k \cdot \partial_{x^a} \gamma)+2\Gamma_{ij}^k (\gamma^i)' (\partial_{x^a} \gamma^j)', \\
\partial_{x^a} \gamma^k (0)=\delta^k_a, \; \partial_{x^a} (\gamma^k)'(0)=0. \end{array} \right. 
\eeq
Recall Kato's inequality that $|\alpha(t)|' \leq |\alpha'(t)|$ for any curve $\alpha(t)$ in $\mathbb{R}^n$, using \ref{E:A4} and \ref{E:A2}, we have 
$|\partial_{x^a} \gamma'|' \leq |\partial_{x^a} \gamma''| \leq C |v|^2 |\partial_{x^a} \gamma|+C |v| |\partial_{x^a} \gamma'|$. 
If we define the function $G:[0,1] \to \mathbb{R}$ by
\[ G(t) := \max_{s \in [0,t]} | \partial_{x^a} \gamma'(s)|, \]
then $G$ is a non-negative monotone increasing function hence differentiable a.e. and from \ref{E:A4}, we have the differential inequality
\[ G'(t) \leq C |v |^2 (1+G(t)) + C |v |G(t) \leq C|v|^2+ C |v| G(t) \]
with $G(0)=0$. Integrating the differential inequality gives
\[ G(t) \leq | v| (e^{C|v| t} -1) \leq C |v|^2,\]
provided $|v|$ is sufficiently small (but depending only on $c_1$). From this we have the pointwise estimate
\[ | \partial_{x^a} (\exp (x,v) - x -v)| \leq C|v|^2.\]

The estimate on the derivatives with respect to $v$ can be obtained similarly. We differentiate \ref{E:geodesic} with respect to some $v^a$
\beq
\label{E:A5}
 \left\{ \begin{array}{c} (\partial_{v^a} \gamma^k)'' =(\gamma^i)'(\gamma^j)'(\nabla \Gamma_{ij}^k \cdot \partial_{v^a} \gamma)+2\Gamma_{ij}^k (\gamma^i)' (\partial_{v^a} \gamma^j)', \\
\partial_{v^a} \gamma^k (0)=0, \; \partial_{v^a} (\gamma^k)'(0)=\delta^k_a. \end{array} \right. 
\eeq
Define $G(t): = \max_{s \in [0,t]} | \partial_{v^a} \gamma'(s)|$, we argue as before to obtain the differential inequality $G' \leq  C |v| G$
with initial condition $G(0)=1$, which implies that $G(t) \leq 1+C|v|$ provided that $|v|$ is sufficiently small (depending only on $c_1$). This implies the estimate
\[ | \partial_{v^a} (\exp (x,v) - x -v)| \leq C|v|.\]
Higher order derivative estimates can be obtained in a similar manner.
\end{proof}

As a corollary of the above exponential map estimates, one can prove the following lower bound on the injectivity radius.

\begin{corollary}
\label{C:inj-bound}
Under the same assumption as in \ref{P:exp-estimate}, then for all $x \in B^n_{1-2C^{-1}}$,
\[ \inj_x(B^n,g) \geq C^{-1} .\]
\end{corollary}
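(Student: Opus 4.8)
The plan is to deduce from the expansion \ref{E:A3} that $\exp_x$ is a small $C^1$-perturbation of the identity on a coordinate ball of radius a fixed multiple of $C^{-1}$, hence an injective immersion there and therefore a diffeomorphism onto an open subset of $B^n$; one then translates this statement about coordinate balls in $T_xB^n$ into one about geodesic balls of the metric $g$ by invoking the uniform equivalence $g\sim_{c_1}g_0$ established in \ref{P:exp-estimate}.

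First I would fix $x\in B^n_{1-2C^{-1}}$ and set $F(v):=\exp(x,v)-x-v$ for $v\in B^n_{C^{-1}}\subset T_xB^n\cong\R^n$, which is a well-defined $C^3$ map by \ref{P:exp-estimate}. The cases $|I|=0$, $|J|=0,1$ of \ref{E:A3} give $|F(v)|\le C|v|^2$ and $|\partial_{v^a}F(v)|\le C|v|$, so on the coordinate ball $B^n_r$ with $r$ a suitably small fixed multiple of $C^{-1}$ (depending only on $n$) we have $\sup_{B^n_r}\|dF_v\|\le\tfrac12$, whence $d(\exp_x)_v=\mathrm{Id}+dF_v$ is invertible; thus $\exp_x$ is an immersion on $B^n_r$. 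For injectivity, if $\exp_x(v_1)=\exp_x(v_2)$ with $v_1,v_2\in B^n_r$, then $v_1-v_2=F(v_2)-F(v_1)$, and the mean value theorem together with $\sup_{B^n_r}\|dF_v\|\le\tfrac12$ forces $|v_1-v_2|\le\tfrac12|v_1-v_2|$, so $v_1=v_2$. An injective immersion between manifolds of the same dimension is an open map that is globally injective, hence a diffeomorphism onto its image; so $\exp_x:B^n_r\to B^n$ is a diffeomorphism onto an open subset of $B^n$.

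It remains to pass from the coordinate ball $B^n_r$ to a $g_x$-geodesic ball. From \ref{E:A1} and \ref{E:A2} we have $g\sim_{c_1}g_0$ pointwise (recall \ref{D:sim}), so the $g_x$-ball $\{v\in T_xB^n:|v|_{g_x}<\rho\}$ is contained in the coordinate ball $B^n_{\sqrt{c_1}\,\rho}$; choosing $\rho:=(\sqrt{c_1}\,C')^{-1}$ with $C'$ the fixed multiple of $C$ from the previous step, $\exp_x$ restricts to a diffeomorphism of this $g_x$-ball onto its image, and since the corresponding geodesics remain inside $B^n$ by \ref{P:exp-estimate} (this is where $x\in B^n_{1-2C^{-1}}$ is used), this says exactly that $\inj_x(B^n,g)\ge(\sqrt{c_1}\,C')^{-1}$, which is of the asserted form after renaming the constant.

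The argument is essentially routine; the only care needed is bookkeeping the constants, namely checking that the radius on which $\exp_x$ is an embedding, and the comparison factor between coordinate balls and $g_x$-balls, both depend only on $c_1$ and $n$, so that the final lower bound does too. One should also note that at the interior points $x\in B^n_{1-2C^{-1}}$ the injectivity radius of $(B^n,g)$ is well-defined despite $B^n$ having a boundary, precisely because \ref{P:exp-estimate} guarantees that all geodesics of length up to $C^{-1}$ issuing from such $x$ stay in $B^n$.
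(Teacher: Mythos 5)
Your proof is correct and follows essentially the same route as the paper's: both deduce from \ref{E:A3} that $\exp_x$ is a controlled $C^1$-perturbation of the identity on a coordinate ball of radius comparable to $C^{-1}$, so that $D_v\exp(x,v)=\mathrm{Id}+dF_v$ is non-singular there. Your write-up is in fact more complete than the paper's two-line argument, which stops at non-singularity of the differential and leaves implicit both the injectivity-on-a-ball-of-definite-radius step (your mean value theorem estimate $|v_1-v_2|\le\tfrac12|v_1-v_2|$) and the $g\sim_{c_1}g_0$ comparison between coordinate balls and $g_x$-balls.
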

\begin{proof}
Since $D_v \exp (x,0)=id$ for all $x$, using the estimates in \ref{P:exp-estimate}, $D_v \exp(x,v)$ is a non-singular matrix for all $|v| \leq C^{-1}$. From this the assertion follows since we are looking at a local coordinate patch.
\end{proof}



\bibliographystyle{amsplain}
\bibliography{references-nicos}

\providecommand{\bysame}{\leavevmode\hbox to3em{\hrulefill}\thinspace}
\providecommand{\MR}{\relax\ifhmode\unskip\space\fi MR }
\providecommand{\MRhref}[2]{%
  \href{http://www.ams.org/mathscinet-getitem?mr=#1}{#2}
}
\providecommand{\href}[2]{#2}
\begin{thebibliography}{10}

\bibitem{Agmon-Douglis-Nirenberg59}
S.~Agmon, A.~Douglis, and L.~Nirenberg, \emph{Estimates near the boundary for
  solutions of elliptic partial differential equations satisfying general
  boundary conditions. {I}}, Comm. Pure Appl. Math. \textbf{12} (1959),
  623--727.

\bibitem{Bjorling}
E.~G. Bj\"{o}rling, \emph{In integrationem aequationis derivatarum partialum
  superfici, cujus in puncto unoquoque principales ambo radii curvedinis
  aequales sunt sngoque contrario}, Arch. Math. Phys \textbf{4} (1844),
  290--315.

\bibitem{Chen-Fraser-Pang15}
Jingyi Chen, Ailana Fraser, and Chao Pang, \emph{Minimal immersions of compact
  bordered {R}iemann surfaces with free boundary}, Trans. Amer. Math. Soc.
  \textbf{367} (2015), no.~4, 2487--2507.

\bibitem{Choe-Soret16}
Jaigyoung Choe and Marc Soret, \emph{New minimal surfaces in {$\Bbb S^3$}
  desingularizing the {C}lifford tori}, Math. Ann. \textbf{364} (2016),
  no.~3-4, 763--776.

\bibitem{Costa84}
Celso~J. Costa, \emph{Example of a complete minimal immersion in {${\bf R}^3$}
  of genus one and three embedded ends}, Bol. Soc. Brasil. Mat. \textbf{15}
  (1984), no.~1-2, 47--54.

\bibitem{Courant40}
R.~Courant, \emph{The existence of minimal surfaces of given topological
  structure under prescribed boundary conditions}, Acta Math. \textbf{72}
  (1940), 51--98.

\bibitem{Foote84}
Robert~L. Foote, \emph{Regularity of the distance function}, Proc. Amer. Math.
  Soc. \textbf{92} (1984), no.~1, 153--155. \MR{749908}

\bibitem{Fraser-Li14}
Ailana Fraser and Martin Man-chun Li, \emph{Compactness of the space of
  embedded minimal surfaces with free boundary in three-manifolds with
  nonnegative {R}icci curvature and convex boundary}, J. Differential Geom.
  \textbf{96} (2014), no.~2, 183--200.

\bibitem{Fraser-Schoen11}
Ailana Fraser and Richard Schoen, \emph{The first {S}teklov eigenvalue,
  conformal geometry, and minimal surfaces}, Adv. Math. \textbf{226} (2011),
  no.~5, 4011--4030.

\bibitem{Fraser-Schoen15a}
\bysame, \emph{Uniqueness {T}heorems for {F}ree {B}oundary {M}inimal {D}isks in
  {S}pace {F}orms}, Int. Math. Res. Notices (2015), 8268--8274.

\bibitem{Fraser-Schoen16}
\bysame, \emph{Sharp eigenvalue bounds and minimal surfaces in the ball},
  Invent. Math. \textbf{203} (2016), no.~3, 823--890.

\bibitem{Fraser00}
Ailana~M. Fraser, \emph{On the free boundary variational problem for minimal
  disks}, Comm. Pure Appl. Math. \textbf{53} (2000), no.~8, 931--971.

\bibitem{Fraser02}
\bysame, \emph{Minimal disks and two-convex hypersurfaces}, Amer. J. Math.
  \textbf{124} (2002), no.~3, 483--493.

\bibitem{Fraser07}
\bysame, \emph{Index estimates for minimal surfaces and {$k$}-convexity}, Proc.
  Amer. Math. Soc. \textbf{135} (2007), no.~11, 3733--3744.

\bibitem{Gilbarg-Trudinger01}
David Gilbarg and Neil~S. Trudinger, \emph{Elliptic partial differential
  equations of second order}, Classics in Mathematics, Springer-Verlag, Berlin,
  2001, Reprint of the 1998 edition. \MR{1814364 (2001k:35004)}

\bibitem{Gruter-Jost86}
M.~Gr{{\"u}}ter and J.~Jost, \emph{On embedded minimal disks in convex bodies},
  Ann. Inst. H. Poincar{\'e} Anal. Non Lin{\'e}aire \textbf{3} (1986), no.~5,
  345--390.

\bibitem{Hildebrandt-Nitsche79}
S.~Hildebrandt and J.~C.~C. Nitsche, \emph{Minimal surfaces with free
  boundaries}, Acta Math. \textbf{143} (1979), no.~3-4, 251--272.

\bibitem{Hoffman-Meeks88}
D.~Hoffman and W.~H. Meeks, III, \emph{A variational approach to the existence
  of complete embedded minimal surfaces}, Duke Math. J. \textbf{57} (1988),
  no.~3, 877--893.

\bibitem{Hoffman-Meeks90a}
David Hoffman and William~H. Meeks, III, \emph{Embedded minimal surfaces of
  finite topology}, Ann. of Math. (2) \textbf{131} (1990), no.~1, 1--34.

\bibitem{Hoffman-Meeks90c}
\bysame, \emph{Limits of minimal surfaces and {S}cherk's fifth surface}, Arch.
  Rational Mech. Anal. \textbf{111} (1990), no.~2, 181--195.

\bibitem{Hoffman-Meeks90b}
\bysame, \emph{Minimal surfaces based on the catenoid}, Amer. Math. Monthly
  \textbf{97} (1990), no.~8, 702--730.

\bibitem{Hoffman-Meeks85}
David~A. Hoffman and William Meeks, III, \emph{A complete embedded minimal
  surface in {${\bf R}^3$} with genus one and three ends}, J. Differential
  Geom. \textbf{21} (1985), no.~1, 109--127.

\bibitem{Jost86}
J{{\"u}}rgen Jost, \emph{Existence results for embedded minimal surfaces of
  controlled topological type. {I}}, Ann. Scuola Norm. Sup. Pisa Cl. Sci. (4)
  \textbf{13} (1986), no.~1, 15--50.

\bibitem{kapouleas:kleene:moller}
N.~Kapouleas, S.~J. Kleene, and N.~M. Moller, \emph{Mean curvature
  self-shrinkers of high genus: non-compact examples}, J. Reine Angew. Math.
  (2015), https://doi.org/10.1515/crelle-2015-0050.

\bibitem{kapouleas:compact}
Nikolaos Kapouleas, \emph{A general desingularization theorem for minimal
  surfaces in the compact case}, In preparation.

\bibitem{Kapouleas90a}
\bysame, \emph{Complete constant mean curvature surfaces in {E}uclidean
  three-space}, Ann. of Math. (2) \textbf{131} (1990), no.~2, 239--330.

\bibitem{Kapouleas:jdg}
\bysame, \emph{Compact constant mean curvature surfaces in {E}uclidean
  three-space}, J. Differential Geom. \textbf{33} (1991), no.~3, 683--715.

\bibitem{kapouleas:wente:announce}
\bysame, \emph{Constant mean curvature surfaces constructed by fusing {W}ente
  tori}, Proc. Nat. Acad. Sci. U.S.A. \textbf{89} (1992), no.~12, 5695--5698.

\bibitem{kapouleas:wente}
\bysame, \emph{Constant mean curvature surfaces constructed by fusing {W}ente
  tori}, Invent. Math. \textbf{119} (1995), no.~3, 443--518.

\bibitem{Kapouleas97}
\bysame, \emph{Complete embedded minimal surfaces of finite total curvature},
  J. Differential Geom. \textbf{47} (1997), no.~1, 95--169.

\bibitem{Kapouleas05}
\bysame, \emph{Constructions of minimal surfaces by gluing minimal immersions},
  Global theory of minimal surfaces, Clay Math. Proc., vol.~2, Amer. Math.
  Soc., Providence, RI, 2005, pp.~489--524. \MR{2167274 (2006e:53017)}

\bibitem{Kapouleas11}
\bysame, \emph{Doubling and desingularization constructions for minimal
  surfaces}, Surveys in geometric analysis and relativity, Adv. Lect. Math.
  (ALM), vol.~20, Int. Press, Somerville, MA, 2011, pp.~281--325. \MR{2906930}

\bibitem{Kapouleas14}
\bysame, \emph{{M}inimal {S}urfaces in the {R}ound {T}hree-sphere by {D}oubling
  the {E}quatorial {T}wo-sphere, {I}}, J. Differential Geom. \textbf{106}
  (2017), 393--449.

\bibitem{Kapouleas-Wiygul}
Nikolaos Kapouleas and David Wiygul, \emph{Minimal surfaces in the 3-sphere by
  desingulariziang intersecting {C}lifford tori}, arXiv:1701.05658.

\bibitem{Kapouleas-Yang10}
Nikolaos Kapouleas and Seong-Deog Yang, \emph{Minimal surfaces in the
  three-sphere by doubling the {C}lifford torus}, Amer. J. Math. \textbf{132}
  (2010), no.~2, 257--295.

\bibitem{Karcher-Pinkall-Sterling88}
H.~Karcher, U.~Pinkall, and I.~Sterling, \emph{New minimal surfaces in
  {$S^3$}}, J. Differential Geom. \textbf{28} (1988), no.~2, 169--185.

\bibitem{Ketover16}
Daniel Ketover, \emph{Free boundary minimal surfaces of unbounded genus},
  arXiv:1612.08691.

\bibitem{Lawson70}
H.~Blaine Lawson, Jr., \emph{Complete minimal surfaces in {$S^{3}$}}, Ann. of
  Math. (2) \textbf{92} (1970), 335--374.

\bibitem{Li15}
Martin Man-chun Li, \emph{A general existence theorem for embedded minimal
  surfaces with free boundary}, Comm. Pure Appl. Math. \textbf{68} (2015),
  no.~2, 286--331.

\bibitem{Li-Zhou}
Martin Man-chun Li and Xin Zhou, \emph{Min-max theory for free boundary minimal
  hypersurfaces {I}: regularity theory}, arXiv: 1611.02612.

\bibitem{Maximo-Nunes-Smith13}
Davi Maximo, Ivaldo Nunes, and Graham Smith, \emph{Free boundary minimal annuli
  in convex three-manifolds}, arXiv:1312.5392, to appear in J. Differential
  Geom.

\bibitem{Meeks-Simon-Yau82}
William Meeks, III, Leon Simon, and Shing~Tung Yau, \emph{Embedded minimal
  surfaces, exotic spheres, and manifolds with positive {R}icci curvature},
  Ann. of Math. (2) \textbf{116} (1982), no.~3, 621--659.

\bibitem{Montiel-Ros91}
Sebasti{{\'a}}n Montiel and Antonio Ros, \emph{Schr{\"o}dinger operators
  associated to a holomorphic map}, Global differential geometry and global
  analysis ({B}erlin, 1990), Lecture Notes in Math., vol. 1481, Springer,
  Berlin, 1991, pp.~147--174.

\bibitem{nguyen}
Xuan~Hien Nguyen, \emph{Construction of complete embedded self-similar surfaces
  under mean curvature flow, {P}art {III}}, Duke Math. J. \textbf{163} (2014),
  no.~11, 2023--2056.

\bibitem{Nitsche76}
Johannes C.~C. Nitsche, \emph{The regularity of the trace for minimal
  surfaces}, Ann. Scuola Norm. Sup. Pisa Cl. Sci. (4) \textbf{3} (1976), no.~1,
  139--155.

\bibitem{Nitsche85}
\bysame, \emph{Stationary partitioning of convex bodies}, Arch. Rational Mech.
  Anal. \textbf{89} (1985), no.~1, 1--19.

\bibitem{Pigola-Veronelli}
Stefano Pigola and Giona Veronelli, \emph{The smooth {R}iemannian extension
  problem}, arXiv:1606.08320.

\bibitem{Scherk35}
H.~F. Scherk, \emph{Bemerkungen \"{u}ber die kleinste fl\"{a}che innerhalb
  gegebener grenzen}, J. Reine Angew. Math. \textbf{13} (1835), 185--208.

\bibitem{Schoen88}
R.~Schoen, \emph{The existence of weak solutions with prescribed singular
  behavior for a conformally invariant scalar equation}, Comm. Pure Appl. Math.
  \textbf{41} (1988), no.~3, 317--392.

\bibitem{Schoen-Yau79}
R.~Schoen and Shing~Tung Yau, \emph{Existence of incompressible minimal
  surfaces and the topology of three-dimensional manifolds with nonnegative
  scalar curvature}, Ann. of Math. (2) \textbf{110} (1979), no.~1, 127--142.

\bibitem{Schoen-Yau82}
Richard Schoen and Shing~Tung Yau, \emph{Complete three-dimensional manifolds
  with positive {R}icci curvature and scalar curvature}, Seminar on
  {D}ifferential {G}eometry, Ann. of Math. Stud., vol. 102, Princeton Univ.
  Press, Princeton, N.J., 1982, pp.~209--228. \MR{645740 (83k:53060)}

\bibitem{Taylor77}
Jean~E. Taylor, \emph{Boundary regularity for solutions to various capillarity
  and free boundary problems}, Comm. Partial Differential Equations \textbf{2}
  (1977), no.~4, 323--357.

\bibitem{Traizet96}
Martin Traizet, \emph{Construction de surfaces minimales en recollant des
  surfaces de {S}cherk}, Ann. Inst. Fourier (Grenoble) \textbf{46} (1996),
  no.~5, 1385--1442.

\bibitem{Wente}
H.~C. Wente, \emph{Counterexample to a conjecture of {H}. {H}opf}, Pacific
  Jour. of Math. \textbf{121} (1986), 193--243.

\bibitem{Wiygul15}
David Wiygul, \emph{Minimal surfaces in the 3-sphere by stacking {C}lifford
  tori}, arXiv:1502.07420.

\bibitem{Wohlgemuth91}
Meinhard Wohlgemuth, \emph{Higher genus minimal surfaces by growing handles out
  of a catenoid}, Manuscripta Math. \textbf{70} (1991), no.~4, 397--428.

\end{thebibliography}
\end{document}